\numberwithin{equation}{section}
\tikzset{
    %Define standard arrow tip
    >=stealth',
    %Define style for boxes
    punkt/.style={
           rectangle,
           rounded corners,
           draw=black, very thick,
           text width=6.5em,
           minimum height=2em,
           text centered},
    % Define arrow style
    pil/.style={
           ->,
           thick,
           shorten <=2pt,
           shorten >=2pt,}
}
\newtheorem*{theorem*}{\textbf{Theorem}}
\newtheorem{theorem}{\textbf{Theorem}}[section]
\newtheorem{thmx}{Theorem}
\newtheorem{corollaryx}[thmx]{\textbf{Corollary}}
\newtheorem{definition}[theorem]{\textbf{Definition}}
\newtheorem{proposition}[theorem]{\textbf{Proposition}}
\newtheorem{lemma}[theorem]{\textbf{Lemma}}
\newtheorem{question}[theorem]{\textbf{Question}}
\newtheorem{axiom}[theorem]{\textbf{Axiom}}
\newtheorem{claim}[theorem]{\textbf{Claim}}
\newtheorem*{claim*}{\textbf{Claim}}
\newtheorem{corollary}[theorem]{\textbf{Corollary}}
\newtheorem{remark}[theorem]{\textbf{Remark}}
\newtheorem{example}[theorem]{\textbf{Example}}
\newtheorem{conjecture}[theorem]{\textbf{Conjecture}}
\newtheorem{definition/proposition}[theorem]{\textbf{Definition/Proposition}}
\def\N{{\mathbb N}}
\def\R{\mathbb{R}}
\def\Z{{\mathbb Z}}
\def\C{{\mathbb C}}
\def\D{{\mathbb D}}
\def\G{{\mathbb G}}
\def\Q{{\mathbb Q}}
\newcommand{\CP}{\mathbb{C}\mathbb{P}}
\def\cA{{\mathcal A}}
\def\cC{{\mathcal C}}
\def\cH{{\mathcal H}}
\def\cL{{\mathcal L}}
\def\cM{{\mathcal M}}
\def\cO{{\mathcal O}}
\def\cR{{\mathcal R}}
\def\cS{{\mathcal S}}
\def\cW{{\mathcal W}}
\def\bH{{\bm H}}
\def\bk{{\bm k}}
\def\rd{{\rm d}}
\DeclareMathOperator{\Ima}{im}
\DeclareMathOperator{\ind}{ind}
\DeclareMathOperator{\Hom}{Hom}
\DeclareMathOperator{\Id}{id}
\DeclareMathOperator{\supp}{supp}
\DeclareMathOperator{\rank}{rank}
\DeclareMathOperator{\codim}{codim}
\DeclareMathOperator{\vdim}{vdim}
\DeclareMathOperator{\vir}{vir}
\DeclareMathOperator{\rel}{rel}
\DeclareMathOperator{\ev}{ev}
\DeclareMathOperator{\ext}{ext}
\DeclareMathOperator{\Aut}{Aut}
\DeclareMathOperator{\cof}{cof}
\DeclareMathOperator{\Aug}{Aug}
\DeclareMathOperator{\RSFT}{RSFT}
\DeclareMathOperator{\SFT}{SFT}
\DeclareMathOperator{\com}{c}
\DeclareMathOperator{\Hcx}{H_{cx}}
\DeclareMathOperator{\Pl}{P}
\DeclareMathOperator{\PT}{APT}
\DeclareMathOperator{\SD}{SD}
\DeclareMathOperator{\U}{U}
\DeclareMathOperator{\AU}{AU}
\DeclareMathOperator{\TT}{T}
\DeclareMathOperator{\CHA}{CHA}
\DeclareMathOperator{\LCH}{LCH}
\DeclareMathOperator{\GW}{GW}
\def\cont{{\mathfrak{Con}}}
\newcommand{\Addresses}{{% additional braces for segregating \footnotesize
		\bigskip
		\footnotesize
		
	     Agustin Moreno, \par\nopagebreak
        \textsc{Institute for Advanced Study, Princeton, U.S. / Heidelberg Universität, Germany}\par\nopagebreak
         \textit{E-mail address:} \href{mailto:agustin.moreno2191@gmail.com}{agustin.moreno2191@gmail.com}
		
		\medskip
		
	    Zhengyi Zhou, \par\nopagebreak
	    \textsc{Morningside Center of Mathematics, Chinese Academy of Sciences}\par\nopagebreak
         \textsc{Academy of Mathematics and Systems Science, Chinese Academy of Sciences, China}\par\nopagebreak
		\textit{E-mail address}: \href{mailto:zhyzhou@amss.ac.cn}{zhyzhou@amss.ac.cn}

}}
\date{}
\title{A landscape of contact manifolds via rational SFT }
\author{Agustin Moreno, Zhengyi Zhou}
\begin{document}
	\maketitle
\begin{abstract}
We define a hierarchy functor from the exact symplectic cobordism category to a totally ordered set from a $BL_\infty$ (Bi-Lie) formalism of the rational symplectic field theory (RSFT). The hierarchy functor consists of three levels of structures, namely \emph{algebraic planar torsion}, \emph{order of semi-dilation} and \emph{planarity}, all taking values in $\N\cup \{\infty\}$, where algebraic planar torsion can be understood as the analogue of Latschev-Wendl's algebraic torsion \cite{LW} in the context of RSFT. The hierarchy functor is well-defined through a partial construction of RSFT and is within the scope of established virtual techniques. We develop computational tools for those functors and prove all three of them are surjective. In particular, the planarity functor is surjective in all dimension $\ge 3$. Then we use the hierarchy functor to study the existence of exact cobordisms. We discuss examples including iterated planar open books, spinal open books, affine varieties with uniruled compactification and links of singularities. 
\end{abstract}
%\tableofcontents
\section{Introduction} 
One central subject in symplectic and contact topology is the study of symplectic cobordisms. Unlike the usual cobordism relation in differential topology, a symplectic cobordism is asymmetric; the collection of such cobordisms endows the collection of contact manifolds with a structure similar to a partial order. The fundamental dichotomy between \emph{overtwisted} contact structures and \emph{tight} contact structures discovered by Eliashberg in dimension $3$ \cite{eliashberg1989classification} and Borman-Eliashberg-Murphy in higher dimensions \cite{borman2015existence} is reflected by the fact that overtwisted contact structures behave like least elements\footnote{A least element in a poset is an element that is smaller than any other elements. A minimal element in a poset is an element such that there is no smaller element. }. Namely, there is always an exact cobordism from an overtwisted contact manifold to any other contact manifold in dimension $3$ \cite{etnyre2002symplectic} and the same holds for higher dimensions when the obvious topological obstructions vanish \cite{eliashberg2015making}. Moreover, the overtwisted contact $3$-manifold behaves like minimal elements in the Weinstein cobordism category, as any contact $3$-manifold that is Weinstein cobordant to an overtwisted contact manifold is overtwisted by \cite{MR3418529}. To explore the realm of the more mysterious class of tight contact structures, the hierarchy imposed by the existence of symplectic cobordisms is a useful guiding principle, as the complexity of contact topology should not decrease in a cobordism. In dimension $3$, a further hierarchy in the world of tight contact manifolds was discovered by Giroux \cite{giroux1994structure} and Wendl \cite{wendl2013hierarchy}. In higher dimensions, the notion of Giroux torsion was generalized by Massot-Niederkr\"uger-Wendl \cite{massot2013weak}. 

On the other hand, since contact manifolds and (exact) symplectic cobordisms form a natural category, which we will refer to as the (exact) symplectic cobordism category $\cont$, one natural approach to study $\cont$ is by understanding functors from $\cont$ to some algebraic category, a.k.a.\ a field theory. \emph{Symplectic field theory} (SFT), as proposed by Eliashberg-Givental-Hofer in \cite{eliashberg2000introduction}, is a very general framework for defining such functors, and many invariants of contact manifolds and symplectic cobordisms can be defined via suitable counts of punctured holomorphic curves which approach Reeb orbits at their punctures.  The formidable algebraic richness of the general theory, together with the serious technical difficulties arising in building its analytical foundations,  conspire to make explicit computations a complicated matter. Therefore, rather than focusing on computing the full SFT invariant, one could focus on extracting simpler invariants from the general theory whose computation is in principle approachable via currently available techniques. An example of this philosophy is the notion of \emph{algebraic torsion} introduced by Latschev-Wendl in \cite{LW}, which associates to every contact manifold a number in $\N\cup\{\infty\}$ and can be viewed as the algebraic interpretation of the geometric concept of \emph{planar torsion} defined by Wendl \cite{wendl2013hierarchy}.

In this paper, we follow the same methodology of Latschev-Wendl to study the structure of $\cont$. Instead of the full SFT, we use the rational SFT  (RSFT), i.e.\ we only consider genus $0$ curves, to construct a functor from $\cont$ to a totally ordered set. Our main theorem is the following. 

\begin{thmx}\label{thm:main}
    We have the following monoidal functors:
    \begin{enumerate}
        \item \emph{Algebraic planar torsion} (Definition \ref{def:APT}) $\PT:\cont \to \N\cup \{\infty\}$, where the monoidal structure on  $(\N\cup \{\infty\},\le)$ is given by $a\otimes b:=\min \{a,b\}$ (Proposition \ref{prop:PT});
        \item \emph{Planarity} (Definition \ref{def:planarity}) $\Pl:\cont \to \N\cup \{\infty\}$, here the monoidal structure on  $(\N\cup \{\infty\},\le)$ is given by $a\otimes b:=\max\{a,b\}$ if $a,b\ne 0$ and $0\otimes a = a\otimes 0=0$ (Proposition \ref{prop:P_order});
        \item\label{SD} \emph{Order of semi-dilation} (\eqref{eqn:SD}) $\SD:\Pl^{-1}(1)\to (\N \cup \{\infty\},\le)$, here the monoidal structure on  $\N\cup \{\infty\}$ is given by $a\otimes b:=\max\{a,b\}$ (Proposition \ref{prop:SD}).
    \end{enumerate}
\end{thmx}
When $\PT$ is finite, it is necessary to have $\Pl=0$, i.e.\ $\PT$ and $\SD$ are refinements of the cases with $\Pl=0,1$. Therefore we can assemble all three functors into a functor $\Hcx$ measuring the contact complexity,
$$\Hcx:\cont \to \cH:=\{\underbrace{0^{\PT}<1^{\PT}<\ldots<\infty^{\PT}}_{0^{\Pl}}< \underbrace{0^{\SD}<1^{\SD}<\ldots<\infty^{\SD}}_{1^{\Pl}}< 2^{\Pl}<\ldots<\infty^{\Pl} \}\footnote{There is a (unique) morphism $a\to b$ iff $a\le b$, $a,b\in \cH$.}.$$
Here $k^{\PT}$ stands for $\Pl=0$ and $\PT=k$, $k^{\SD}$ stands for $\Pl=1$ and $\SD=k$, $k^{\Pl}$ stands for $\Pl=k$. $\PT$ can be viewed as the analogue of algebraic torsion in the context of RSFT. In particular, finiteness of $\PT(Y)$ implies that $Y$ has no strong filling just like algebraic torsion. However, $\Hcx$ goes well beyond non-fillable contact manifolds, i.e.\ $\SD,\Pl$ provide measurements for fillable contact manifolds. Roughly speaking, $\PT$ looks for rational curves without negative punctures and $\Pl$ looks for rational curves with a point constraint in symplectizations. And $\SD$ is defined using the $\Q[U]$-module structure on linearized contact homology introduced by Bourgeois-Oancea \cite{bourgeois2009exact}. $\PT$ measures the obstruction to augmentations of RSFT, while $\SD,\Pl$ can be phrased in the linearized theory, hence require the existence of augmentations. To make  $\SD,\Pl$ independent of the augmentation,  we need to define $\SD$ and $\Pl$ via traversing the set of all possible augmentations of the RSFT.

Of course, Theorem \ref{thm:main} as stated could be trivial, as the true content of the claim is contained in the algebraic construction. The following results endow the functors with geometric content.
\begin{thmx}\label{thm:RSFT}
	The functors above have the following properties.
	\begin{enumerate}
		\item\label{RSFT1} If $Y$ has planar $k$-torsion \cite{wendl2013hierarchy}, then $\PT(Y)\le k$ (\cite[Theorem 6]{LW}, Theorem \ref{thm:planar_torsion_1}). 
		\item\label{RSFT5} If $Y$ is overtwisted then $\PT(Y)=0$ (\cite{bourgeois2010contact}). 
		\item\label{Girouxtorsion} If $Y$ has (higher dimensional) Giroux torsion \cite{massot2013weak}, then $\PT(Y)\le 1$ (\cite[Theorem 1.7]{MorPhD}, Theorem \ref{thm:planar_torsion_3}).
		\item If $\PT(Y)<\infty$, then $Y$ is not strongly fillable (Corollary \ref{cor:nofilling}). If $Y$ admits an exact filling then $\Pl(Y)\ge 1$ (Proposition \ref{prop:aug}).
		\item\label{RSFT2} If $Y$ is an iterated planar open book \cite{acu2017weinstein} where the initial page has $k$-punctures, then $\Pl(Y)\le k$ (Theorem \ref{thm:upper}).
		\item\label{RSFT3} If $Y$ has an exact filling that is not $k$-uniruled \cite{mclean2014symplectic}, then $\Pl(Y)\ge k+1$ (Theorem \ref{thm:unirule}).
		\item\label{RSFT4} $\PT,\SD,\Pl$ are all surjective (\cite[Theorem 4]{LW}, Theorem \ref{thm:planar_torsion_2}, Theorem \ref{thm:brieskorn}, Corollary \ref{cor:product}). In particular, $\Pl$ is surjective in all odd dimension $\ge 3$ (Corollary \ref{cor:product}).
	\end{enumerate} 
\end{thmx}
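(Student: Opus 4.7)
The plan is to establish each item through one of three standard paradigms in the Latschev--Wendl style, now adapted to the $BL_\infty$/RSFT setting of Theorem~\ref{thm:main}: exhibit an explicit transversely cut-out rational curve in a local model to obtain an upper bound on $\PT$ or $\Pl$; exploit an augmentation of the $BL_\infty$ algebra coming from an exact filling to push the hierarchy upward; or use SFT compactness and positivity of $d\lambda$-area in a cobordism to obstruct the configurations produced by a small invariant. The whole argument is designed to stay within the portion of RSFT whose moduli spaces are classically transverse, so that Pardon's virtual perturbations do not alter the counts and the statement is independent of the choice of virtual machinery.

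For the upper bounds in items (1)--(3), I would first revisit Wendl's planar $k$-torsion domain: its partial Lefschetz fibration has generic pages that are rational curves with positive punctures only at the binding and no negative ends, and suitably chosen pages give the element witnessing $\PT \le k$ in the $BL_\infty$ language. Item (2) is the $k=0$ specialization via a pseudoholomorphic disk in a plastikstufe / overtwisted disk, and (3) is the $k=1$ specialization via the family of cylinders in a Giroux torsion domain of Massot--Niederkr\"uger--Wendl. For (4), finiteness of $\PT$ provides a $BL_\infty$ element built from rational curves with no negative punctures; were $Y$ strongly fillable, SFT compactness would glue this element into a closed rational curve of strictly positive $d\lambda$-area in the capped filling, contradicting the strong-filling condition on the cohomology class of $\omega$. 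The second half of (4) is the fact that an exact filling induces an augmentation of the $BL_\infty$ algebra, which forces $\PT = \infty$ and hence places $Y$ in the range $\Pl \ge 1$ by the definition of $\cH$.

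Items (5) and (6) are the $\Pl$-analogues. For (5), pages of an iterated planar open book are themselves regular (modulo automorphisms) pseudoholomorphic curves in the symplectization of $Y$, rational and with $k$ positive punctures, and they realize the $\Pl \le k$ witness directly once the point constraint is placed at the inductive core. For (6), starting from an exact filling that is not $k$-uniruled in McLean's sense, I would neck-stretch along $Y$: any SFT building realizing $\Pl(Y) = j$ with $j \le k$, together with the exact filling cap, would produce a closed rational curve of degree $\le k$ through a generic interior point, contradicting the non-uniruledness hypothesis and yielding $\Pl \ge k+1$ uniformly over all augmentations, as required by the infimum in the definition.

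For the surjectivity statement (7), I would construct explicit families realizing each integer value: iterated connected sums or plumbings of higher-dimensional planar-torsion pieces for $\PT$; Brieskorn-type links of singularities whose linearized contact homology has controlled $\Q[u]$-torsion order for $\SD$, exploiting the Bourgeois--Oancea description; and boundaries of affine hypersurfaces whose projective compactifications in $\CP^n$ are $k$-uniruled but not $(k-1)$-uniruled for $\Pl$, parametrized by degree so as to cover every odd dimension $\ge 3$. The main obstacle --- and the bulk of the work --- is matching the lower bounds in these examples: upper bounds are constructive, whereas lower bounds require showing that \emph{every} augmentation fails to detect a smaller invariant, which is particularly delicate for $\Pl$ and $\SD$; I would handle it by combining the geometric non-uniruledness (or controlled semi-dilation) input with the infimum over augmentations built into the very definition of the hierarchy functor $\Hcx$.
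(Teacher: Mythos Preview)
Your outline for items (1)--(3) and (5) matches the paper's approach: those items are proved by citing or reproducing the curve-counting arguments of Latschev--Wendl and the first author's thesis, and by the inductive holomorphic-open-book argument with Siefring intersection theory (Theorems~\ref{thm:planar_torsion_1}--\ref{thm:planar_torsion_3} and~\ref{thm:upper}). There are, however, two genuine gaps.

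\textbf{Item (4).} Your geometric sketch does not prove the strong-fillability obstruction. In a strong but non-exact filling $(W,\omega)$ there is no global primitive $\lambda$, so producing ``a closed rational curve of strictly positive $d\lambda$-area'' is meaningless, and nothing about the cohomology class of $\omega$ rules such curves out. The paper's argument is purely algebraic: a strong filling yields a $BL_\infty$ augmentation over the Novikov field $\Lambda$ by counting rigid planes weighted by $T^{\int\omega}$ (Proposition~\ref{prop:aug}), and finite torsion obstructs the existence of any augmentation (Proposition~\ref{prop:ob}); one then checks $\PT_\Lambda(Y)=\PT(Y)$ by a weight argument (Proposition~\ref{prop:PT_equal}). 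No compactness or closed-curve argument in the filling is involved.

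\textbf{Item (6).} You have the definition reversed: planarity is the \emph{maximum} of the order over all augmentations (Definition~\ref{def:planarity}), not an infimum. So to prove $\Pl(Y)\ge k+1$ you only need one augmentation with large order --- the one $\epsilon_W$ coming from the given filling --- and no uniformity over augmentations is needed or proved. The paper does not neck-stretch or build closed curves; it shows (Proposition~\ref{prop:equi}) that the linearized pointed map $\widehat{\ell}_{\bullet,\epsilon_W}$ is chain-homotopic, compatibly with the word-length filtration, to the map $\eta_W$ that directly counts punctured rational curves in $\widehat{W}$ through a point. If the order for $\epsilon_W$ were $\le k$, nontriviality of $\eta_W$ on a $\overline{B}^k$-class would then force a geometric curve with $\le k$ positive punctures through any point, i.e.\ $k$-uniruledness (Theorem~\ref{thm:unirule}).

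\textbf{Item (7).} Your proposed examples do not match what the paper establishes. Surjectivity of $\PT$ is obtained only in dimension~$3$, via the examples of \cite{LW} (Theorem~\ref{thm:planar_torsion_2}); no ``higher-dimensional planar-torsion pieces'' with prescribed $\PT$ are constructed. Surjectivity of $\Pl$ in every dimension comes from $\partial(S_k\times V)$ with $V$ an affine variety satisfying $\AU(V)\ge k$ (Corollary~\ref{cor:product}), not from hypersurface complements in $\CP^n$ (those give only a limited range, cf.\ Theorem~\ref{thm:proj}). Surjectivity of $\SD$ uses Brieskorn links (Theorem~\ref{thm:brisk}), as you suggest.
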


\subsection{Rational SFT}
The original algebraic formalism of SFT in \cite{eliashberg2000introduction} packaged the full SFT into a super Weyl algebra with a distinguished odd degree Hamiltonian $\bH$ such that $\bH \star \bH=0$.  Cieliebak-Latschev reformulated the algebra into a $BV_\infty$ algebra \cite{cieliebak2009role}, which was used in the definition of algebraic torsion \cite{LW}. The $BV_\infty$ algebra structure was further refined to an $IBL_\infty$ (\emph{\textbf{I}nvolutive \textbf{B}i-\textbf{L}ie infinity}) algebra by Cieliebak-Fukaya-Latschev \cite{cieliebak2015homological}, which roughly speaking, is precisely the boundary combinatorics for the SFT compactification \cite{bourgeois2003compactness}. For rational SFT, the original algebraic formalism was a Poisson algebra with a distinguished odd degree Hamiltonian $\bm{h}$ such that $\{\bm{h},\bm{h}\}=0$. Analogous reformulations of the algebraic structure of RSFT can be found in Hutchings' ``$q$-variable only RSFT'' \cite{rational}, and an $L_\infty$ formalism of RSFT by Siegel \cite{siegel2019higher}. In this paper, we introduce a notion of $BL_\infty$ (\emph{\textbf{B}i-\textbf{L}ie infinity}) algebra to describe RSFT, which precisely describes the boundary combinatorics for rational curves in the SFT compactification and is a specialization of the $IBL_\infty$ formalism. By building functors from the category of $BL_\infty$ algebras to totally ordered sets, we can build the hierarchy functor in Theorem \ref{thm:main} by a composition
\begin{equation}\label{eqn:compo}
\Hcx:\cont \stackrel{\RSFT}{\longrightarrow} BL_\infty \text{(with additional structures up to homotopy)} \longrightarrow \cH.
\end{equation}

On the other hand, the general holomorphic curve theory in manifolds with contact boundaries faces serious analytical challenges, which makes a complete construction of the first functor in \eqref{eqn:compo} a difficult task. To obtain a construction of SFT/RSFT, one needs to deploy more powerful virtual techniques, e.g.\ either \emph{polyfold} approaches \cite{SFT,hofer2017polyfold} by Fish-Hofer and Hofer-Wysocki-Zehnder, \emph{implicit atlases and virtual fundamental cycles} by Pardon \cite{pardon2016algebraic,pardon2019contact}, or \emph{Kuranishi} approaches by Ishikawa \cite{ishikawa2018construction}. However, for the purpose of defining $\Hcx$, it is sufficient to build $\RSFT$ partially. In particular, we do not need to discuss compositions and homotopies for morphisms of $BL_\infty$ algebras as $\cH$ is a totally ordered set, where there is no ambiguity for compositions and homotopy equivalences. This greatly simplifies our demands for virtual machinery, as homotopies in SFT is a subtle subject. Moreover, the combinatorics for a $BL_\infty$ algebra is ``tree-like", which is very similar to the combinatorics for contact homology. As a consequence, we can use Pardon's construction of contact homology \cite{pardon2019contact} to provide all the analytic foundation of the functor $\Hcx$. In particular, Theorem \ref{thm:main} is well-posed without any hidden hypotheses on virtual machinery (except for \eqref{SD} of Theorem \ref{thm:main}, for which we give a sketch and details will appear in a future work). Moreover, it is expected that any other virtual technique will suffice for Theorem \ref{thm:main}. We will also explain how to obtain another construction of $\Hcx$ from a small part of the polyfold construction of SFT \cite{SFT}.

In general, a full computation of RSFT and SFT is very difficult, as we need to understand many moduli spaces. On the other hand, the hierarchy functor $\Hcx$  extracts partial information from $BL_\infty$ algebras, so that only partial knowledge of the moduli spaces are needed. In particular, $\Hcx$ is relatively computable. It is a nontrivial question whether $\Hcx$ is independent of the choice of virtual technique. However, since every virtual technique has the property that we can count a compactified moduli space geometrically if it is cut out transversely in the classical sense, Theorem \ref{thm:RSFT} does not depend on the choice of virtual technique.

\subsection{Foundational claims.}\label{ss:FC} For the sake of clarity, let us here make precise what our claims are, pertaining to the foundations of RSFT. In particular, to avoid misunderstandings, we will clarify which parts of the necessary foundational work are carried out in this paper, and which ones are not. The following claims are proven in Theorem \ref{thm:BL}.

\begin{itemize}
    \item For a fixed choice of geometric data on a symplectization, there is a non-empty set of perturbation data in the context of Pardon’s VFC package, which allow to define (virtual) counts of rigid rational curves which satisfy a quadratic relation. RSFT can then be implemented as an instance of what we call a $\emph{BL}_\infty$-algebra, where the differential is given by these counts.
    \item For a fixed choice of geometric data on an exact cobordism (compatible with choices at the ends) there is a non-empty set of perturbation data (compatible with the perturbation data at the ends), which allow to define virtual counts of rigid rational curves in the cobordism satisfying a compatibility relation with the previously defined counts at the ends. This gives a morphism from the $BL_\infty$ algebra of the convex end, to that of the concave end, i.e.\ this establishes functoriality of the theory.
    \item In both cases above, there are versions of virtual counts of rational curves with a point constraint, again satisfying compatibility relations with the above differentials and morphisms.
    \item We make no claims about homotopies between morphisms obtained from different choices of geometric data and/or perturbation data. In particular, we do not claim to prove independence of the resulting \emph{full} algebraic structures from the auxiliary choices (contact form, almost complex structure, perturbations).
    \item However, our invariants algebraic planar torsion and planarity, which are defined purely at the algebraic level, i.e.\ for $BL_\infty$ algebras, are well-defined for RSFT. In other words, invariance under homotopies is not needed for their implementation. These invariants are moreover independent on the choice of virtual perturbation scheme for applications in this paper, provided very mild requirements are satisfied by the scheme (namely, that virtual counts coincide with geometric counts if transversality holds). All the results pertaining to these invariants are therefore rigorous.
    \item A full implementation of the order of semi-dilation requires a rigorous implementation of the $U$-map in linearized contact homology (cf.\ \cite{bourgeois2009exact}), e.g.\ within the framework of Pardon's VFC package, or other virtual technique. We give a heuristic approach at the end of Section \ref{sec:U-map}, which we defer to later work, and precisely state what is needed to define the order of semi-dilation (see Claim \ref{claim:u}). All results pertaining to this invariant are therefore conditional on the unproven Claim \ref{claim:u}. On the other hand, implementation of homotopies is also not necessary for this invariant to be well-defined.
\end{itemize}

%\begin{remark}
%The relation between $\PT$ and algebraic torsion $\mathrm{AT}$ \cite{LW} is not direct. In fact, they are both implied by a stronger notion of torsion, which is implied by planar $k$-torsion \cite{wendl2013hierarchy}, defined through an alternative representation of the $IBL_\infty$ algebra of the full SFT. There is a grid of torsions serving as obstructions to strong fillings, where algebraic planar torsion and algebraic torsion are two axes. The details, which were included in the previous version this paper, can be now found in \cite{supp}. The only common ground is that $0$-algebraic planar torsion is equivalent to $0$-algebraic torsion and \emph{algebraically overtwistedness} \cite{bourgeois2010towards}, which is implied by overtwistedness \cite{bourgeois2010contact,MR2230587}. Moreover, there are $5$-dimensional examples with underlying smooth manifold $Y=S^*X\times \Sigma_g$ (where $S^*X$ is the unit cotangent bundle of a hyperbolic surface $X$ and $\Sigma_g$ is the orientable surface of genus $g\geq 1$), considered originally in \cite{MorPhD}, which we conjecturally expect to have $\PT(Y)>1$ but $\mathrm{AT}(Y)=1$; see \cite[Sec.\ 6.5]{MorPhD}. This would provide concrete examples on which the two notions of torsion strictly differ. 
%\end{remark}

\subsection{Applications} Since $\Hcx$ is a measurement of the complexity of contact topology, the main application of $\Hcx$ is obstructing the existence of exact cobordisms. The following theorem answers a conjecture of Wendl \cite{wendl2013hierarchy} affirmatively, although the invariant we use is $\Pl$ instead of algebraic torsion (as opposed to the original conjecture).
\begin{thmx}[Theorem \ref{thm:main} + Corollary \ref{cor:product}]\label{thm:seq}
	For any dimension $\ge 3$, there exists an infinite sequence of contact manifolds $Y_1,Y_2\ldots,$ such that there is an exact cobordism from $Y_i$ to $Y_{i+1}$, but there is no exact cobordism from $Y_{i+1}$ to $Y_i$. 
\end{thmx}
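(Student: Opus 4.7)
The plan is to deploy the hierarchy functor $\Hcx$ of Theorem A as the obstructing invariant, together with the surjectivity of the planarity component $\Pl$ from item (\ref{RSFT4}) of Theorem B. By covariance of $\Hcx$, the existence of an exact cobordism $Y\to Y'$ forces $\Hcx(Y)\le \Hcx(Y')$ in the totally ordered set $\cH$; in particular, if $\Hcx(Y')>\Hcx(Y)$ then no exact cobordism in the reverse direction can exist. Thus it suffices to produce a sequence $Y_1,Y_2,\ldots$ of contact manifolds in the desired dimension, equipped with explicit exact cobordisms $Y_i\to Y_{i+1}$, such that the values $\Hcx(Y_i)\in \cH$ are strictly increasing.

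First, I would use surjectivity of $\Pl$ (valid in every odd dimension $\ge 3$) to choose $Y_k$ with $\Pl(Y_k)=k$, so that $\Hcx(Y_k)=k^{\Pl}$ and the hierarchy values strictly increase along the sequence. The substantive remaining task is to realize the $Y_k$ geometrically as the top ends of a cofinal tower of exact cobordisms. The natural source is the examples used to prove surjectivity of $\Pl$: iterated planar open books, which provide the upper bounds $\Pl(Y_k)\le k$ via item (\ref{RSFT2}), paired with Liouville fillings whose $k$-uniruledness is controlled, which by item (\ref{RSFT3}) delivers matching lower bounds $\Pl(Y_k)\ge k$. Concretely, I would try to exhibit the $Y_k$ as boundaries of a nested sequence of Liouville/Weinstein domains $W_1\subset W_2\subset\cdots$ (for instance Brieskorn-type fillings with growing exponents, or Lefschetz fibrations with successively stabilized pages); then each cobordism $\overline{W_{k+1}\setminus W_k}$ is automatically an exact cobordism from $Y_k$ to $Y_{k+1}$. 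Alternatively, one can attach carefully chosen Weinstein handles to an exact filling of $Y_k$ so as to strictly increase the planarity by one at each step, relying on the uniruledness lower bound to ensure the strict increase.

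The main obstacle is precisely this last point: surjectivity of $\Pl$ only produces some manifold with planarity $k$, with no a priori coherence between different values of $k$; to upgrade to an infinite chain we need a single geometric family in which the planarity can be computed on the nose (both bounds tight) and in which the cobordism to the next member does not accidentally preserve or decrease $\Pl$. The bookkeeping — matching the iterated planar open book description (for the upper bound) with a filling whose precise uniruledness order is known (for the lower bound), simultaneously with a handle-attachment description relating consecutive $Y_k$ — is where the content lies. Once such a nested family is in place, covariance of $\Hcx$ immediately blocks the reverse cobordisms and yields Theorem~\ref{thm:seq}.
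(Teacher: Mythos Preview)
Your strategy is exactly the paper's: use covariance of $\Hcx$ together with a nested family of Liouville domains whose boundaries have strictly increasing planarity. What you are missing is only the concrete family, and the paper's choice is much simpler than anything you suggest: take $Y_k=\partial(S_k^n)$, the boundary of the $n$-fold product of the $k$-punctured sphere (for $n\ge 2$). The exact cobordism $Y_k\to Y_{k+1}$ is immediate since $S_k$ embeds exactly into $S_{k+1}$, hence $S_k^n\hookrightarrow S_{k+1}^n$. The upper bound $\Pl(Y_k)\le k$ comes from the trivial planar spinal open book structure on $\partial(S_k\times M)$ (Theorem~\ref{thm:SOBD}), and the lower bound $\Pl(Y_k)\ge k$ from $\AU(S_k^n)=k$ together with Corollary~\ref{cor:lower}. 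More generally, $\partial(V\times S_k)$ works for any affine variety $V$ with $c_1(V)=0$ and $\AU(V)\ge k$ (Corollary~\ref{cor:product}, Theorem~\ref{thm:SK}).

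One of your candidate families does not work: Brieskorn links $LB(k,n)$ with growing $k$ do admit exact cobordisms in the right direction (Proposition~\ref{prop:embed}), but by Theorem~\ref{thm:brieskorn} their planarity is $1$ for all $k\le n$ and jumps to $\infty$ once $k>n+1$, so you cannot realize every finite value this way. The products $S_k^n$ avoid this precisely because the puncture count of the $S_k$ factor directly controls both the iterated-planar upper bound and the algebraic-uniruledness lower bound.
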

The above result was obtained in dimension $3$ in \cite{LW}. In fact, there are many examples of $Y_i$, the simplest example being the boundary of the product of $n$ copies of $S_k$--$2$-spheres with $k$ disks removed, as we will show in \S \ref{s6} that $\Pl(\partial(S_k)^n)=k$ for $n\ge 2$. There are many more examples for Theorem \ref{thm:seq} to hold; see e.g. Theorem \ref{thm:SK} below.

Following the definition of $\Pl$, it is easy to see that if $Y$ admits a contact structure without Reeb orbits, then $\Pl(Y)=\infty$. Therefore, as a corollary, we have the following.
\begin{corollaryx}\label{cor:Weinstein}
	If $\Pl(Y)< \infty$, then the Weinstein conjecture holds for $Y$.
\end{corollaryx}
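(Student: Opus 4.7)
The plan is to prove the contrapositive of the corollary: if $\alpha$ is a contact form defining $\xi$ on $Y$ with no closed Reeb orbit, then $\Pl(Y)=\infty$. Once this is established, the corollary follows immediately, since $\Pl(Y)<\infty$ then rules out the existence of any Reeb-orbit-free contact form defining $\xi$, which is precisely the Weinstein conjecture for $(Y,\xi)$.

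The key step, flagged in the paragraph preceding the statement as ``easy to see from the definition of $\Pl$'', is to verify that the geometric input of $\Pl$ becomes vacuous in the absence of closed Reeb orbits. First I would recall from \S \ref{s3}--\S \ref{s4} that $\Pl(Y)\le k$ is witnessed by a certain count of rational pseudoholomorphic curves in the symplectization $\R\times Y$, carrying a generic point constraint and with cylindrical asymptotes along closed Reeb orbits of $\alpha$. If $\alpha$ admits no closed Reeb orbit, then on the one hand the $BL_\infty$ algebra $\RSFT(Y,\alpha)$ has no generators beyond the unit, and on the other hand no nonconstant rational curve can occur in $\R\times Y$: a closed rational curve is forced to be constant by the maximum principle applied to the $\R$-coordinate, while any punctured one would require asymptotic Reeb orbits that do not exist. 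Since the point constraint forces the sought-after curve to be nonconstant and nontrivial, the condition $\Pl(Y)\le k$ cannot be witnessed for any $k\in\N$, and hence $\Pl(Y)=\infty$ by convention.

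Taking the contrapositive then delivers the corollary. The main — in fact essentially only — obstacle is to confirm that the convention built into the definition of $\Pl$ in \S \ref{s3}--\S \ref{s4} genuinely assigns the value $\infty$, rather than $0$, to the trivial case where no admissible curve can exist; this is a bookkeeping verification once the definition is unfolded, and requires no additional geometric input beyond what has already been set up in the construction of the hierarchy functor.
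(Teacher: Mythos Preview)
Your proposal is correct and matches the paper's approach, which simply states (in the paragraph preceding the corollary and again before Theorem \ref{thm:upper}) that this is immediate from the definition of $\Pl$. Your contrapositive argument is the natural way to unpack that remark.

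Regarding your one flagged concern about the conventions: you can resolve it directly. If $\alpha$ has no closed Reeb orbits then $V_\alpha=0$ is the trivial $BL_\infty$ algebra $\mathbf{0}$, which \emph{does} admit the trivial augmentation $\mathbf{0}\to\mathbf{0}$; hence $\Aug_{\Q}(V_\alpha)\ne\emptyset$ and the ``max of empty set is $0$'' clause in Definition \ref{def:planarity} is not triggered. For this (or any) augmentation one has $\overline{B}^kV_\alpha=0$ for all $k\ge 1$, so $\widehat{\ell}_{\bullet,\epsilon}$ is the zero map and $1$ never lies in its image; thus $O(V_\alpha,\epsilon,p_\bullet)=\infty$ by the ``min of empty set is $\infty$'' clause in Definition \ref{def:order}, and $\Pl(Y)=\infty$. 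Your geometric remarks about the absence of nonconstant curves are correct but not strictly needed, since the conclusion is already forced at the algebraic level by $V_\alpha=0$.
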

In other words, counterexamples to the Weinstein conjecture (if any) should be looked for in the highest complexity level $\infty^P$. In particular, the combination of \eqref{RSFT2} in Theorem \ref{thm:RSFT} and Corollary \ref{cor:Weinstein} yields a proof of the Weinstein conjecture for iterated planar open books, which was previously obtained for dimension $3$ in \cite{abbas2005weinstein} and higher dimensions in \cite{acu2017weinstein,acu2018planarity}. In some sense, the proof of \eqref{RSFT2} of Theorem \ref{thm:RSFT} endows the ruling holomorphic curve in \cite{abbas2005weinstein,acu2017weinstein,acu2018planarity} with a homological meaning, i.e.\ the ruling curve defines a map that is visible on homology; in particular, such curve can not be eliminated by perturbing the contact form. On the other hand, not every contact manifold with finite planarity is iterated planar: for example $\Pl(T^3,\xi_{std})=2$ by Corollary \ref{cor:product}, while $(T^3,\xi_{std})$ is not supported by a planar open book by \cite{etnyre2004planar} (it is, however, supported by a planar \emph{spinal} open book \cite{wendl2010strongly,LVHMW}). By functoriality, if there is an exact cobordism from $Y$ to $Y'$ with $\Pl(Y')<\infty$, then the Weinstein conjecture holds for $Y$. 

The study of planar open books in dimension $3$ has a very long history, since they enjoy nice properties like equivalence of weak fillability and Weinstein fillability \cite{niederkruger2011weak,wendl2010strongly}. We refer readers to the introduction of \cite{acu2020introduction} for a comprehensive summary on the subject. Obstructions to planar open book structures were obtained in \cite{etnyre2004planar,ozsvath2005planar}. In higher dimensions, obstructions to supporting an iterated planar open book were found in \cite{acu2018planarity}. By \eqref{RSFT2} of Theorem \ref{thm:RSFT}, infinite planarity is an obstruction to an iterated planar structure. In particular, we answer \cite[Question 1.14]{acu2020generalizations} negatively by the following general result.
\begin{corollaryx}[Corollary \ref{cor:noIP}] 
	In all dimensions $\ge 5$, consider $(Y,J)$ an almost contact manifold which has an exactly fillable contact representative $(Y,\xi)$. Then there is a contact structure $\xi'$ in the homotopy class of $J$, such that $(Y,\xi')$ is not iterated planar.
\end{corollaryx}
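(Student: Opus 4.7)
My plan is to produce a contact structure $\xi'$ on $Y$ in the almost contact class of $J$ with $\Pl(Y,\xi')=\infty$; the contrapositive of \eqref{RSFT2} in Theorem~\ref{thm:RSFT} will then force $(Y,\xi')$ not to be iterated planar. The construction proceeds by contact connect sum with a carefully chosen sphere of infinite planarity.

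First, I would invoke \eqref{RSFT4} of Theorem~\ref{thm:RSFT} together with \eqref{RSFT3} to produce a contact sphere $(S^{2n+1},\xi_\infty)$ lying in the standard almost contact class and satisfying $\Pl(S^{2n+1},\xi_\infty)=\infty$. Concretely, I would pick an isolated hypersurface singularity whose link is diffeomorphic to $S^{2n+1}$ (e.g.\ a suitable Brieskorn singularity) and whose Milnor fibre fails to be $k$-uniruled for every $k$; such examples arise from McLean's discrepancy estimates \cite{mclean2014symplectic} and are precisely the type of input used in the proof of the surjectivity of $\Pl$. The contact structure descends from the ambient complex hypersurface, so it automatically sits in the standard almost contact class, and \eqref{RSFT3} applied to its Milnor fibre delivers infinite planarity.

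Then I set $\xi':=\xi\#\xi_\infty$, the contact connect sum performed at a pair of Darboux balls. Smoothly $Y\#S^{2n+1}=Y$, and since $\xi_\infty$ lies in the standard almost contact class of the sphere --- the neutral element for the connect-sum operation on almost contact structures --- $\xi'$ lies in the homotopy class of $J$. The Weinstein $1$-handle realising this connect sum is an exact cobordism from $(Y,\xi)\sqcup(S^{2n+1},\xi_\infty)$ to $(Y,\xi')$, so functoriality in Theorem~\ref{thm:main} yields $\Hcx\bigl((Y,\xi)\sqcup(S^{2n+1},\xi_\infty)\bigr)\le\Hcx(Y,\xi')$ in $\cH$. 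Unpacking the monoidal structure, the planarity of a disjoint union is the maximum of the planarities of its components --- a rational curve through a generic point of the symplectization of $Y_1\sqcup Y_2$ must be connected and hence live in, and use the planarity of, a single component --- so the left-hand side is $\max(\Pl(Y,\xi),\infty)=\infty$, whence $\Pl(Y,\xi')=\infty$.

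The main obstacle is the sphere-model step: one must simultaneously arrange that the non-uniruled-filling construction behind the surjectivity of $\Pl$ can be realised on a contact manifold whose underlying smooth manifold is $S^{2n+1}$ and whose almost contact class is the standard one, in every dimension $2n+1\ge5$. Suitable Brieskorn exponents (or more general weighted-homogeneous data) can be tuned to hit the required diffeomorphism type while simultaneously driving the obstruction to uniruledness to infinity, but verifying both constraints together is the only non-formal input. Once this model is in hand, the rest of the argument is purely formal from the functoriality and monoidal properties of $\Hcx$ recorded in Theorem~\ref{thm:main}.
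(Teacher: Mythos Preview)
Your approach is essentially the paper's. Two points are worth sharpening.

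First, the monoidal formula for $\Pl$ is not simply the maximum: the paper records $0\otimes a=0$ for all $a$ and $a\otimes b=\max\{a,b\}$ only when $a,b\ge 1$. Your heuristic about rational curves living in a single component does not detect this asymmetry (it is really about augmentations: a $BL_\infty$ augmentation of a disjoint union restricts to one on each component, so if either component fails to admit one, so does the union). You therefore need to observe explicitly that $\Pl(Y,\xi)\ge 1$ because $(Y,\xi)$ is exactly fillable, and likewise $\Pl(S^{2n+1},\xi_\infty)\ge 1$ because the Milnor fibre is an exact filling; then the $\max$ formula applies and your computation goes through. The paper states exactly this: ``$\Pl(Y,\xi)>0$ as $(Y,\xi)$ has an exact filling''.

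Second, for the sphere model you correctly isolate the nontrivial input but leave it as a sketch. The paper's construction is clean and worth knowing: start from the Brieskorn link $LB(n+2,n)$, which has $\Pl=\infty$ by Theorem~\ref{thm:brisk}; increase the exponents (which only raises $\Hcx$ by Proposition~\ref{prop:embed} and functoriality) until the link becomes a topological sphere, possibly exotic; then take an iterated contact connected sum $\#^k Y'$ to kill the exotic smooth structure, using finiteness of the group of homotopy spheres. The monoidal structure again gives $\Pl(\#^k Y')=\infty$, and the resulting contact sphere sits in the standard almost contact class. This avoids having to tune Brieskorn exponents to hit the standard sphere directly in every dimension.
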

In particular, since every simply connected almost contact $5$-manifold is almost Weinstein fillable \cite{geiges1991contact}, there is a contact structure in each homotopy class of almost complex structures that is not iterated planar for every simply connected $5$-manifold.

\subsection{Examples}  In addition to Theorem \ref{thm:RSFT}, there are many situations where we can compute or estimate $\Hcx$. By \eqref{RSFT3} of Theorem \ref{thm:RSFT}, it is natural to look at affine varieties with a uniruled projective compactification. One special case is affine varieties with a $\CP^n$ compactification. 

\begin{thmx}[Theorem \ref{thm:CP}]\label{thm:proj}
	Let $D$ be $k$ generic hyperplanes in $\CP^n$ for $n\ge 2$, then we have the following.
\begin{enumerate}
	\item $\Pl(\partial D^{\com})\ge k+1-n$ for $k>n+1$, where $\partial D^{\com}$ is the contact boundary of the affine variety $D^{\com}:=\CP^n\backslash D$ (see \S \ref{ss:affine}).
	\item $\Pl(\partial D^{\com})= k+1-n$ for  $n+1<k<\frac{3n-1}{2}$ and $n$ odd.
	\item $\Pl(\partial D^{\com})=2$ for $k=n+1$. 
	\item $\Hcx(\partial D^{\com})=0^{\SD}$ for $k\le n$.
\end{enumerate}
\end{thmx}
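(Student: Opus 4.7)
The plan is to combine items (5) and (6) of Theorem~\ref{thm:RSFT} with a relative Gromov--Witten computation for the pair $(\CP^n, D)$, where $D = \bigcup_{i=1}^k H_i$ with generic hyperplanes $H_i$. The key geometric input is that a rational curve of degree $d$ in $\CP^n$ meets $D$ with total intersection number $dk$; in particular, a line through a generic point of $\CP^n$ gives a rational curve in the affine complement $D^{\com}$ with exactly $k$ punctures of multiplicity one. All four statements will be deduced by comparing this curve count against the uniruledness and planarity obstructions packaged into the hierarchy functor $\Hcx$.

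For the lower bound $\Pl(\partial D^{\com}) \ge k+1-n$ in (1), and thus also the lower half of (2), I would invoke item (6) of Theorem~\ref{thm:RSFT}: it suffices to show that the exact filling $D^{\com}$ is not $(k-n)$-uniruled. Unwinding McLean's definition, this reduces to showing that any non-vanishing relative genus-zero GW invariant of $(\CP^n, D)$ with a point constraint has total tangency complexity at least $k-n+1$. I would do this by a dimension count on the relevant moduli space: with one generic point constraint, the minimal-degree contribution comes from lines carrying $k$ distinct intersections with $D$, and subtracting the $n-1$ free directions in the family of lines through a point yields the numerical value $k+1-n$.

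For the upper bounds in (2) and (3), I would produce explicit ruling rational curves. In (3), when $k = n+1$, the complement is $(\C^*)^n$; here I would use the toric Liouville structure and a Morse--Bott computation at the degenerate Reeb orbits on the boundary to read off $\Pl \le 2$ from the $BL_\infty$ structure of the linearized contact homology of the toric filling, beyond the obvious bound $\Pl \le n+1$ coming from the line ruling. In (2), the range $n+1 < k < (3n-1)/2$ with $n$ odd is arranged so that after imposing the point constraint, only the degree-one stratum in the moduli of rational curves contributes: expected dimensions of degree-$d$ strata for $d \ge 2$ become negative in this range, and odd parity of $n$ prevents sign cancellations in the virtual count. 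Combined with the lower bound, this yields equality. For (4), when $k \le n$, we may normalize the hyperplanes so that $D^{\com}$ contains an affine linear subspace and deformation retracts onto a configuration admitting a dilation in the sense of Seidel--Solomon, which via the $\Q[u]$-module structure of \cite{bourgeois2009exact} forces $\SD = 0$; fillability then forces $\Pl = 1$, giving $\Hcx(\partial D^{\com}) = 0^{\SD}$.

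The main obstacle in executing this plan is the precise non-vanishing computation for the relative GW invariants of $(\CP^n, D)$ with reducible divisor $D$, because the divisor and degeneration axioms of relative GW theory are delicate for simple normal crossing $D$ and one must carefully bookkeep tangency profiles along each component $H_i$. I would expect to sidestep a direct enumeration by using a toric degeneration of the hyperplane arrangement, or by exploiting the identification of the small quantum cohomology of $\CP^n$ with $\Q[h]/(h^{n+1} - q)$ together with Ionel--Parker-type formulas, to reduce the needed invariants to a tractable combinatorial count.
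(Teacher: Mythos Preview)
Your proposal has a genuine gap: you have not addressed the \emph{independence of augmentation}, which is the central difficulty for the upper bounds in (2), (3), and (4). Recall that $\Pl(Y)$ and $\SD(Y)$ are defined as a \emph{maximum} over all $BL_\infty$ augmentations of $\RSFT(Y)$, not just the one coming from the given filling. Producing a ruling curve in $D^{\com}$, or a dilation for the filling, only bounds the order for the augmentation $\epsilon_{D^{\com}}$; it says nothing about other augmentations. The paper makes this issue explicit: Theorem~\ref{thm:infinity} shows that for a smooth degree $k>2n-3$ hypersurface with $n$ odd, the trivial augmentation gives infinite order while the filling augmentation gives finite order, so different augmentations genuinely yield different answers.

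You also misidentify the role of the numerical conditions. The paper does \emph{not} work directly with the normal crossing divisor $D$ for the upper bound. Instead it passes to the smooth degree $k$ hypersurface $D_s$, uses Gathmann's relative Gromov--Witten computation for the simpler pair $(\CP^n,D_s)$ to produce the element $q_{\gamma^n_{p_{\min}}}q^{k-n}_{\gamma_{p_{\min}}}$, and then exploits the exact cobordism $X$ from $\partial D^{\com}$ to $\partial D_s^{\com}$ together with functoriality. The condition $n$ odd ensures all generators of $\RSFT(\partial D_s^{\com})$ have even SFT grading, so the candidate element is automatically closed for \emph{any} augmentation; it is not about sign cancellations in the curve count. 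The condition $k<\tfrac{3n-1}{2}$ enters through an index computation on moduli spaces in the cobordism $X$ (Proposition~\ref{prop:ind}), which forces $\widehat{\ell}_{\bullet,\epsilon\circ\phi}(q_{\gamma^n_{p_{\min}}}q^{k-n}_{\gamma_{p_{\min}}})$ to be independent of the augmentation $\epsilon$ of $\RSFT(\partial D^{\com})$; it has nothing to do with higher-degree strata becoming negative-dimensional. For (3) the paper identifies $D^{\com}=(\C^*)^n=(S_2)^n$ and applies the product result Corollary~\ref{cor:product}; for (4) it identifies $D^{\com}=T^*T^{k-1}\times\C^{n-k+1}$ and applies Theorem~\ref{thm:product}, both of which contain the augmentation-independence argument internally. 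Your proposed toric/dilation arguments, as stated, only handle the filling augmentation.
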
	
The condition on $n$ being odd (also for Theorem \ref{thm:gen}, \ref{thm:SD} below) is not essential. We use it to obtain automatic closedness of a chain in the computation of planarity for any augmentation. In Remark \ref{rmk:n=3}, we explain how one can drop this condition using polyfold techniques in \cite{zhou2020quotient}. On the other hand, the role of $k<\frac{3n-1}{2}$ is more mysterious. Although it is unlikely to be optimal, whether an upper bound is necessary is unclear. An extreme case is when $n=1$, then $\partial D^{\com}$ is a disjoint union of circles, then we have $\Pl(\partial D^{\com})=\infty$ and unlike the situation in Theorem \ref{thm:proj} there is no obstruction to exact cobordisms for different $k$ when $n=1$. One difficulty of computing $\Pl$ and obtaining cobordism obstructions is that we need to carry out computation for all hypothetical ``fillings", i.e.\ augmentations. Indeed, different choices of augmentations will affect the computation dramatically. For example, there exists affine varieties with a $\CP^n$ compactification whose contact boundary has infinite planarity, cf.\ Theorem \ref{thm:infinity}. However, if we use the augmentation from the affine variety, then the planarity is finite. 

\begin{remark}
Based on the notion of asymptotically dynamically convex manifolds introduced by Lazarev \cite{MR4081058}, \cite{zhou2019symplectic,zhou2019symplecticI} exploited the uniqueness of $\Z$ graded (dga) augmentations to the contact homology to obtain some cobordism obstructions. However, to maintain the effectiveness of the $\Z$-grading, one needs to make additional topological assumptions (vanishing of first Chern class, injectivity of the fundamental groups etc.) for the argument in \cite{zhou2019symplectic,zhou2019symplecticI} to work. Dropping the $\Z$-grading condition will almost certainly result in multiple augmentations (if an augmentation exists). Moreover, having a unique $\Z$-graded $BL_\infty$ augmentation requires an index assumption much more restrictive than asymptotically dynamical convexity, which, in the context of flexibly fillable contact manifolds with vanishing first Chern class, requires that the flexible filling has only $0,1,2,3$ handles. The strategy in this paper is very different. Instead of using the uniqueness of certain augmentations, we search for examples and  structures on the RSFT that are independent of all augmentations (which are not unique). Showing such independence is the main challenge in the proof of Theorem \ref{thm:proj}. 
\end{remark}

On the other hand, $D_{k}^{\com}$ embeds exactly into $D_{k+1}^{\com}$, which follows from a general construction, as follows. Let $\cL$ be a very ample line bundle over a smooth projective variety $X$. Then for any nonzero holomorphic section $s\in H^0(\cL)$, $X\backslash s^{-1}(0)$ is an affine variety whose contact boundary is denoted by $Y_s$. The projective space $\mathbb{P}H^0(\cL)$ should be stratified by the singularity type of $s^{-1}(0)$\footnote{It is quite a nontrivial task to make this stratification precise, as in general we do not have a classification of the possible singularities of the divisor.}, with the top stratum corresponding to the case where $s^{-1}(0)$ is smooth with multiplicity $1$. We say that there is a morphism from stratum $A$ to stratum $B$, if we can change $s^{-1}(0)$ from $A$ to $B$ by an arbitrarily small perturbation of the section $s$, i.e.\ $A$ is contained in the closure of $B$. Moreover, one obtains an exact cobordism from $Y_s$ to $Y_{s^\prime}$, where $s^\prime$ is the perturbed section. Then we have a natural functor from the category of strata to $\cont$. As a concrete example, consider $\cL=\cO(2)$ on $\CP^2$, then the category of stratification is the graph $A_1\to A_2\to A_3$, where $A_1,A_2,A_3$ correspond to a double line, two generic lines and a smooth quadratic curve as the divisor, respectively. The corresponding affine varieties as exact domains are $\C^2,\C\times T^*S^1,T^*{\mathbb{RP}^2}$, which clearly have the exact embedding relations as claimed. 

In view of this, when we view one of the $k$ hyperplanes in $D_k$ as having multiplicity $2$ (a ``double'' hyperplane), we can get an exact cobordism from $\partial D_{k}^{\com}$ to $\partial D_{k+1}^{\com}$, by perturbing the double hyperplane to two distinct hyperplanes. Then Theorem \ref{thm:proj} asserts that a reversed exact cobordism can not be found if $n\le k<\frac{3n-1}{2}$ and $n$ is odd. Note that the natural inclusion $D^{\com}_{k+1}\subset D^{\com}_{k}$ is symplectic, hence we always have a strong cobordism from $\partial D^{\com}_{k+1}$ to $\partial D^{\com}_{k}$, which shows the essential difference between these two notions of cobordisms and the obstruction from $\Pl$ is not topological. When $k\le n$, $D_{k}^{\com}$ is in fact subcritical and they can be embedded exactly into each other regardless of $k$. 

As a concept closely related to $\cont$, we introduce $\cont_*$ as the under category of $\cont$ under $\emptyset$, i.e.\ the objects of $\cont_*$ are pairs of contact manifolds with exact fillings and morphisms are exact embeddings. Then $\SD,\Pl$ can be defined on $\cont_*$\footnote{The functorial property of $\Pl$ requires a full construction of RSFT, including compositions and homotopies. In particular, it makes more demands for virtual constructions than explained in this paper. On the other hand, the functorial property of $\SD$ follows from the Viterbo transfer map of $S^1$-equivariant symplectic cohomology and the isomoprhism between the positive $S^1$-equivariant symplectic cohomology and the linearized contact homology \cite{bourgeois2009exact}.} using the augmentation from the given exact filling. Moreover, we recall another functor $\U$, called the \emph{order of uniruledness}, which is defined to be the minimal $k$ such that an exact domain $W$ is $k$-uniruled in the sense of McLean \cite{mclean2014symplectic}. That $\U$ is a functor from $\cont_*\to \N_+\cup \{\infty\}$  was proven in \cite{mclean2014symplectic}. By \eqref{RSFT3} of Theorem \ref{thm:RSFT}, $\Pl(Y)$ is bounded below by $\U(W)$ for an exact filling $W$ of $Y$. The functor $\U$ measures the complexity of exact domains and serves as an exact embedding obstruction. An interesting aspect of $\U$ is that the well-definedness and basic properties of $\U$ do not depend on any Floer theory. As a byproduct of the proof of Theorem \ref{thm:proj}, we have following for any $n\ge 1$.
\begin{thmx}[Theorem \ref{thm:embedding}]\label{thm:unirle}
	Let $D_k$ denote the divisor of $k$ generic hyperplanes in $\CP^n$ for $n\ge 1$ and $D_k^{\com}$ denote the complement affine variety. Then $\U(D_k^{\com})=\max\{1,k+1-n\}$. In particular, $D_{k+1}^{\com}$ can not be embedded into $D_{k}^{\com}$ exactly for $k\ge n$.
\end{thmx}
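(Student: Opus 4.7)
The plan is to first invoke McLean's algebraic characterization of the order of uniruledness applied to the smooth projective compactification $(\CP^n, D_k)$ of $D_k^{\com}$: namely, $\U(D_k^{\com})$ equals the minimum, over rational curves $\phi : \CP^1 \to \CP^n$ through a generic $p \in D_k^{\com}$ with $\phi(\CP^1) \not\subset D_k$, of the number $m := |\phi^{-1}(D_k)|$ of distinct preimages of the divisor. For the upper bound $\U(D_k^{\com}) \le \max\{1, k+1-n\}$, I would construct explicit lines: when $k \le n$, the intersection $\bigcap_{i=1}^{k} H_i$ is a non-empty linear subspace of dimension $n-k$ by general position, so for any $q$ in it the line $\overline{pq}$ meets $D_k$ only at $q$ (since $\overline{pq} \not\subset H_i$ as $p \notin H_i$), giving $m = 1$; when $k > n$, choose $n$ of the hyperplanes with common intersection point $q$ and observe that $\overline{pq}$ meets these $n$ hyperplanes at $q$ and each of the remaining $k-n$ at a separate generic point, yielding $m = 1 + (k-n) = k+1-n$.

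For the lower bound, the key ingredient is the logarithmic Riemann--Hurwitz formula for the non-constant morphism $\phi : \CP^1 \to \CP^n$ with $\phi(\CP^1) \not\subset D_k$, asserting
\[
K_{\CP^1} + \phi^{-1}(D_k)_{\mathrm{red}} = \phi^*(K_{\CP^n} + D_k) + R
\]
for an effective divisor $R \ge 0$. Taking degrees, using $\deg K_{\CP^1} = -2$, $\deg \phi^{-1}(D_k)_{\mathrm{red}} = m$, the adjunction $K_{\CP^n} + D_k \sim (k-n-1)H$, and $\phi_* [\CP^1] = d \cdot [\mathrm{line}]$ with $d \ge 1$, the effectivity of $R$ yields
\[
m \ge 2 + d(k - n - 1).
\]
For $k \ge n+1$ the right-hand side is monotone increasing in $d$ and so is minimized at $d=1$, giving $m \ge k+1-n$; for $k \le n$ this bound is vacuous but the trivial $m \ge 1$ suffices. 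Together, $\U(D_k^{\com}) \ge \max\{1, k+1-n\}$, matching the upper bound and thereby establishing the equality.

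For the non-embedding assertion, I would invoke the functoriality of $\U : \cont_* \to \N_+ \cup \{\infty\}$ from \cite{mclean2014symplectic}, namely that an exact embedding $W_1 \hookrightarrow W_2$ forces $\U(W_1) \le \U(W_2)$. If $D_{k+1}^{\com}$ exactly embedded into $D_k^{\com}$ for some $k \ge n$, we would conclude $\U(D_{k+1}^{\com}) = k+2-n \le \U(D_k^{\com}) = k+1-n$, an evident contradiction. The main obstacle is not in any individual step but in the need to import McLean's results as black boxes: his algebraic characterization of $\U$ in terms of rational curves in the compactification, and his functoriality theorem under exact embeddings (the latter proven via neck-stretching). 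The rest of the argument is a clean interplay between the explicit line constructions for the upper bound and the log Riemann--Hurwitz bound for the lower bound, which is sharp at $d=1$ precisely because lines through suitable intersection strata realize the upper bound.
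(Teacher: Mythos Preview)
Your overall strategy---reduce to computing $\AU(D_k^{\com})$ via McLean's identification $\U = \AU$---is different from the paper's, and the upper bound via explicit lines is correct and pleasantly more elementary than the paper's route through relative Gromov--Witten invariants and neck-stretching (Proposition~\ref{prop:curve}).

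The lower bound, however, has a genuine gap. The ``logarithmic Riemann--Hurwitz formula'' you invoke,
\[
K_{\CP^1} + \phi^{-1}(D_k)_{\mathrm{red}} \sim \phi^*(K_{\CP^n} + D_k) + R, \qquad R \ge 0,
\]
is \emph{not} a valid theorem for nonconstant maps $\phi:\CP^1 \to \CP^n$ with $n\ge 2$. Take $n=2$, $k=4$, and let $\phi$ parametrize the line through $H_1\cap H_2$ and $H_3\cap H_4$: then $m=2$ while your formula would force $m \ge 2 + 1\cdot(4-3) = 3$. Equivalently, the degree of the putative $R$ is $(m-2) - d(k-n-1) = -1$, so $R$ cannot be effective. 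The classical Riemann--Hurwitz gives such a statement only for maps between \emph{curves}; for higher-dimensional targets the cokernel of $T_{\CP^1}(-\log E) \hookrightarrow \phi^* T_{\CP^n}(-\log D_k)$ need not have nonnegative degree. Of course my counterexample line does not pass through a generic point, and the inequality $m \ge 2 + d(k-n-1)$ may well hold for curves through generic $p$---but establishing \emph{that} requires a genericity or dimension-counting argument, which is exactly the content you are trying to prove and which you do not supply. The paper fills this gap by the Fredholm index computation of Proposition~\ref{prop:unirule}: it shows the moduli spaces $\overline{\cM}_{D^{\com},A,o}(\Gamma,\emptyset)$ with $|\Gamma| < k+1-n$ have negative virtual dimension, then appeals to somewhere injectivity to get classical transversality and hence emptiness, and finally invokes Proposition~\ref{prop:unirule_equiv} to conclude $\U(D^{\com}) \ge k+1-n$.
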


\begin{remark}
	The same embedding question is studied independently by Ganatra and Siegel \cite{LC}, where more general normal crossing divisors in $\CP^n$ are studied. The planarity for exact domains mentioned above is equivalent to $\G\langle p \rangle$ in \cite{LC}. The authors of \cite{LC} also consider holomorphic curves with local tangent constraints to define functors  $\G\langle T^m p \rangle$ on $\cont_*$. In view of the local tangent constraints, one can define an analogous order of uniruledness with local tangent conditions, the well-definedness and functorial property of such invariants follows from the same argument of \cite{mclean2014symplectic}. Such functor can also serve as an embedding obstruction as $\U$ in Theorem \ref{thm:unirle}. It is an interesting question on whether those geometric invariants are the same as the algebraic invariants (defined via RSFT in \cite{LC}), which is the case for Theorem \ref{thm:unirle}.
\end{remark}

In view of \eqref{RSFT3} of Theorem \ref{thm:RSFT}, one can also consider affine varieties with uniruled compactification, in particular those affine varieties  with Fano hypersurfaces as compactification. In general, we have the following.

\begin{thmx}[Theorem \ref{thm:generalization}]\label{thm:gen}
	Let $X$ be a smooth degree $m$ hypersurface in $\CP^{n+1}$ for $2\le m< \frac{n+1}{2}\le n$ and $D$ be $k\ge n$ generic hyperplanes, i.e.\ $D=(H_1\cup \ldots \cup H_k)\cap X$ for $H_i$ is a hyperplane in $\CP^{n+1}$ in generic position with each other and $X$, then $\Pl(\partial D^{\com})=k+m-n$ for $n$ odd and $k+m<\frac{3n+1}{2}$.
\end{thmx}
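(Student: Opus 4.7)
The plan is to prove matching upper and lower bounds for $\Pl(\partial D^{\com})$ at the value $k+m-n$, in direct parallel to Theorem~\ref{thm:proj} with the hypersurface $M:=X$ playing the role of $\CP^n$. Write $W:=D^{\com}=X\setminus D$, an exact filling of $Y:=\partial D^{\com}$, and observe that $2\le m<(n+1)/2$ forces $n\ge 4$, so that by the Lefschetz hyperplane theorem $H^2(X;\Z)=\Z\langle H|_X\rangle$ and every effective curve class on $X$ is of the form $d[\ell]$ with $C\cdot D=dk$.

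For the lower bound $\Pl(Y)\ge k+m-n$, one applies Theorem~\ref{thm:RSFT}\ref{RSFT3} and reduces to showing $\U(W)\ge k+m-n$, i.e., no rational curve in $X$ through a generic point meets $D$ in fewer than $k+m-n$ distinct geometric points. For $d=1$ the local intersection multiplicity of a line $\ell\subset X$ with $D$ at any single point is bounded by $n-m+1$, since at most $n-m+1$ of the hyperplanes can simultaneously contain a given line direction in $X$; hence $\ell$ meets $D$ in at least $k-(n-m+1)+1=k+m-n$ distinct points. For $d\ge 2$ one combines $C\cdot D\ge 2k$ with a uniform bound on the local multiplicities of a degree-$d$ rational curve with the hyperplanes, which in the range $k+m<(3n+1)/2$ still yields at least $k+m-n$ distinct intersections.

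For the upper bound $\Pl(Y)\le k+m-n$, the idea is to construct a suitable rational curve in $X$ and then neck-stretch. Through a generic point $p\in X$ there exists a line $\ell\subset\CP^{n+1}$ lying in $X$ and passing through some point $q\in Z:=H_1\cap\cdots\cap H_{n-m+1}\cap X$: the Fano scheme of lines on $X$ through $p$ has dimension $n-m$, and the incidence condition $\ell\cap Z\ne\emptyset$ is a codimension-$(n-m)$ condition (the cone of lines from $p$ in $X$ has dimension $n-m+1$, while $Z$ has codimension $n-m+1$ in $X$), cutting the family down to a finite count that is nonzero by a standard Schubert/incidence calculation. Such a line meets $D$ in exactly $k+m-n$ distinct geometric points --- namely $q$, with local intersection multiplicity $n-m+1$, together with $k-(n-m+1)$ transverse intersections on the remaining hyperplanes. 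Neck-stretching $\ell$ along $Y$ produces a holomorphic building whose main level is a rational curve in $W$ with a point constraint at $p$ and $k+m-n$ negative punctures asymptotic to Reeb orbits above those intersection points; counting this curve against the augmentation $\varepsilon_W$ on the RSFT of $Y$ induced by $W$ yields a nonzero element in the planarity chain at level $k+m-n$.

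The principal technical obstacle is verifying that this count survives at the level of the $BL_\infty$-chain defining planarity. Three ingredients are required: (i)~transversality, or more precisely a virtual replacement via the construction of \cite{pardon2019contact}, for the moduli space of the constrained lines; (ii)~a compactness argument excluding competing holomorphic buildings in the SFT compactification that could cancel the contribution of $\ell$, which is precisely where the hypothesis $k+m<(3n+1)/2$ enters by ruling out the energy splittings into lower-degree curves with caps; and (iii)~closedness of the associated chain in the planarity complex, which is guaranteed by the parity assumption that $n$ is odd via the grading conventions of the $BL_\infty$-algebra, exactly as in the proof of Theorem~\ref{thm:proj}. The higher-degree $d\ge 2$ part of the lower bound is also delicate but reduces to a bounded-multiplicity argument analogous to the $d=1$ case once $H^2(X;\Z)=\Z$ has been exploited.
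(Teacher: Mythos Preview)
Your upper bound has a genuine gap: you establish the planarity count only for the single augmentation $\epsilon_W$ induced by the filling $W=D^{\com}$, but $\Pl(Y)$ is by definition the \emph{maximum} of $O(V_\alpha,\epsilon,p_\bullet)$ over \emph{all} $BL_\infty$ augmentations. Showing that your chain maps to $1$ under $\widehat\ell_{\bullet,\epsilon_W}$ gives no upper bound on $\Pl(Y)$; indeed Theorem~\ref{thm:infinity} exhibits contact boundaries where the filling augmentation yields a finite order while other augmentations yield $\infty$. The paper handles this by passing to the smoothing $D_s$ (a smooth degree-$k$ hypersurface section of $X$) and the exact cobordism $X$ from $\partial D^{\com}$ to $\partial D_s^{\com}$: the nonvanishing relative Gromov--Witten invariant $\GW^{X,D_s}_{0,1,(n+1-m,1,\dots,1),A}$ from \cite{gathmann2002absolute} supplies a distinguished chain $q_{\gamma^{n+1-m}_{p_{\min}}}q_{\gamma_{p_{\min}}}^{\,k+m-n-1}$ on $\partial D_s^{\com}$, and for an \emph{arbitrary} augmentation $\epsilon$ of $\RSFT(\partial D^{\com})$ one pulls back to $\epsilon\circ\phi$ on $\RSFT(\partial D_s^{\com})$. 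The hypothesis $k+m<\tfrac{3n+1}{2}$ is used not (as you suggest) to exclude competing SFT buildings for the line $\ell$, but in the analogue of Proposition~\ref{prop:ind}: it gives $k+m-n<\tfrac{n+1}{2}$, which forces $\vdim\cM_X(\Gamma^+,\Gamma^-)<0$ whenever $\Gamma^+$ has at most $k+m-n$ elements and $\Gamma^-\ne\emptyset$, so that $\widehat\ell_{\bullet,\epsilon\circ\phi}$ evaluated on the distinguished chain cannot pick up any contribution from $\epsilon$.

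Your closedness argument also fails on $\partial D^{\com}$. The implication ``$n$ odd $\Rightarrow$ all SFT gradings are even'' holds on $\partial D_s^{\com}$, because $D_s$ is a closed complex $(n-1)$-fold admitting a perfect Morse function with only even indices, and that is precisely where the paper invokes it. On $\partial D^{\com}$ the Reeb orbits over a stratum $\check D_I$ arise from Morse--Bott $T^{|I|-1}$-families and after perturbation have SFT-grading parity $\ind(p)+j$ with $0\le j\le|I|-1$; since the Weinstein manifolds $\check D_I$ carry Morse indices of both parities, generators of both parities occur regardless of $n$. Hence one cannot conclude closedness of a chain on $\overline S V_{\partial D^{\com}}$ for arbitrary $\epsilon$ by parity alone; the paper circumvents this by proving closedness on $\partial D_s^{\com}$ and then pushing forward via $\widehat\phi^1_\epsilon$. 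Finally, your lower bound via algebraic uniruledness is a different route from the paper's one-line virtual-dimension computation (which needs no hypothesis on $k+m$ or parity), and your $d\ge 2$ case is left unjustified.
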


The following results provide affine variety examples with nontrivial $\SD$.
\begin{thmx}[Theorem \ref{thm:P1}]\label{thm:SD}
	Assume $D_s$ is a smooth degree $2\le k<n$ hypersurface in $\CP^n$ for $n\ge 3$ odd, then $(k-1)^{\SD}\le \Hcx(\partial D_s^{\com})\le (2k-2)^{\SD}$. When $n$ is even and $2\le k <\frac{n+1}{2}$, then  we have $\Hcx(\partial D_s^{\com})\le (2k-2)^{\SD}$.
\end{thmx}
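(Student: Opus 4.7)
The approach is computational, using a Morse--Bott contact form on $\partial D_s^{\com}$ coming from its identification with the unit normal circle bundle of $D_s\subset\CP^n$, whose normal bundle is $\cO_{\CP^n}(k)|_{D_s}$. With the natural $S^1$-invariant contact form, the Reeb orbits form Morse--Bott families parametrised by $D_s$ at every multiplicity $m\in\N_+$, and after perturbation by a Morse function on $D_s$, the Conley--Zehnder index of the orbit of multiplicity $m$ at a critical point of Morse index $j$ (relative to the trivialisation coming from the filling) takes the form $2m(n-k)+j-(n-1)$. Through the given exact filling $W=D_s^{\com}$, one has the canonical augmentation $\epsilon_W$ and the Bourgeois--Oancea identification of the linearised contact homology $CH^{\mathrm{lin}}_*(\partial D_s^{\com};\epsilon_W)$ with the positive $S^1$-equivariant symplectic cohomology of $W$.

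For the upper bound $\Hcx(\partial D_s^{\com})\le(2k-2)^{\SD}$ (which covers both parities of $n$), I would exhibit an explicit semi-dilating element. The top class of the multiplicity-$1$ Bott family yields a cycle in the maximal degree of that family, and repeated application of the $u$-action (which either descends within a Bott family or transfers to higher multiplicity) lowers its degree. Exactly $2k-2$ applications of $u$ bring it to the degree of a generator realising the required semi-dilation relation, witnessed by a nonzero count of rational curves in $\CP^n$ passing through a prescribed point and meeting $D_s$ with the appropriate tangency; the stability hypothesis $k<(n+1)/2$ (even $n$) or $k<n$ (odd $n$) keeps the relevant moduli spaces in a transversally cut-out, dimensionally minimal range and kills possible higher corrections.

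The lower bound $(k-1)^{\SD}\le\Hcx(\partial D_s^{\com})$ in the odd $n$ case is the main challenge, because $\SD$ is defined by traversing over all augmentations of the RSFT, not only $\epsilon_W$. I would use the odd $n$ hypothesis to secure automatic closedness of the candidate chains, analogous to Remark~\ref{rmk:n=3}, which bypasses the subtleties of virtual perturbations on open chains. A parity and index argument then shows that for any augmentation $\epsilon$, a semi-dilating element of order $<k-1$ would force a nonzero matrix coefficient of $u^j$, $j<k-1$, landing on a generator supported in a multiplicity-$m$ Bott family with $m\le k-2$; but the index formula above places such generators outside the admissible degree window, ruling this out uniformly in $\epsilon$. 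The hard part is keeping the index argument augmentation-independent: one must show that no augmentation can shift the relevant matrix coefficients into the forbidden degrees, which is precisely where the odd-$n$ automatic closedness and the degree gap between consecutive Bott strata are essential, and is the reason the lower bound is not proved in the even-$n$ case.
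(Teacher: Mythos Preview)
Your setup (Morse--Bott contact form on the circle bundle, perturbation by a perfect Morse function on $D_s$, use of the contact action filtration) matches the paper's, but two specifics are misidentified and they derail both halves of the argument.

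First, the semi-dilation witness is not a multiplicity-$1$ orbit. The paper uses $q_{\gamma^k_p}$, the $k$-fold cover over a critical point $p$ of Morse index $2(n-k)$: this is the orbit for which the count of planes through a generic point in $D_s^{\com}$ is nonzero, coming from lines in $\CP^n$ tangent to $D_s$ to order $k$ (Proposition~\ref{prop:curve}). A simple orbit cannot serve as the witness for the filling augmentation, since the fibre class generates $H_1(D_s^{\com})\cong\Z/k$ and is not null-homologous, so no rational curve without negative ends is asymptotic to it.

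Second, the $U$-map \emph{decreases} contact action and hence can only lower the multiplicity (or keep it fixed while raising the Morse index by $2$); your claim that $U$ ``transfers to higher multiplicity'' is backwards. With the correct direction the upper bound is immediate: starting from $(d,\ind)=(k,2(n-k))$, each application of $U$ either lowers $d$ by at least $1$ (with $\ind$ nondecreasing, by the action arrangement \eqref{eqn:action}) or raises $\ind$ by $2$; since $d\geq 1$ and $\ind\leq 2n-2$, at most $(k-1)+(k-1)=2k-2$ iterations are possible, so $U^{2k-1}(q_{\gamma^k_p})=0$. The hypotheses on $k$ and $n$ enter only to ensure that $q_{\gamma^k_p}$ is closed and that $\ell^1_{\bullet,\epsilon}(q_{\gamma^k_p})$ is augmentation-independent, by ruling out curves with $\Gamma^-\neq\emptyset$ on action/index grounds. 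For the lower bound the paper does not argue by an index window excluding hypothetical low-order witnesses; rather it computes directly: on the associated graded at multiplicity $k$ the $U$-map is cup product with $c_1(\cO(k)|_{D_s})$, giving $U^{k-1}(q_{\gamma^k_p})=k^{k-1}q_{\gamma^k_{p_{\max}}}+(\text{lower-multiplicity terms})$, which is nonzero in homology since for odd $n$ every generator has even SFT degree and is automatically a cycle.
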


As explained before, the difficultly of computing $\Pl$ and $\SD$ is from enumerating through all possible augmentations. The strategy of proving Theorem \ref{thm:proj}, \ref{thm:gen}, \ref{thm:SD} is finding the curve responsible for $\Pl,\SD$ with low energy, so that there are no room for the dependence on augmentations. This is the reason why the results require that the hyperspaces to have low degrees. In particular, a degree $m\le n$ smooth hypersurface in $\CP^{n+1}$ is uniruled by degree $1$ curves. Our proof also uses somewhere injectivity of degree $1$ curves to obtain transversality in various places, hence this low degree condition is needed for technical reasons as well. It is interesting to look at the case of degree $n+1$ smooth hypersurfaces in $\CP^{n+1}$, which is uniruled but not by degree $1$ curves. A more systematic way to study $\Hcx$ is deriving formulas for RSFT of affine varieties with normal crossing divisor complement using log/relative Gromov-Witten invariants similar to the formula for symplectic cohomology in \cite{diogo2019symplectic}.

Another rich class of contact manifolds comes from links of isolated singularities. They provide examples with every order of semi-dilation based on computations in \cite{zhou2019symplectic}.
\begin{thmx}[Theorem \ref{thm:brieskorn}]\label{thm:brisk}
	We use $LB(k,n)$ to denote the contact link of the Brieskorn singularity $x_0^{k}+\ldots+x_n^k=0$,  then $\Hcx(LB(k,n))$ is
	\begin{enumerate}
		\item $(k-1)^{\SD}$ if $k<n$ and is $\ge (k-1)^{\SD}$ if $k=n$;
		\item $>1^{\Pl}$ if $k=n+1$;
		\item $\infty^{\Pl}$, if $k>n+1$.
	\end{enumerate}	
\end{thmx}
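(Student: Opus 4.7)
The strategy is to use the Milnor fiber $F_k = \{x_0^k + \cdots + x_n^k = 1\} \subset \C^{n+1}$, a Weinstein filling of $LB(k,n)$, as the canonical augmentation of the RSFT. The projective compactification $F_k \hookrightarrow X$ is given by the smooth degree-$k$ hypersurface $X = \{\sum_{i=0}^n y_i^k = y_{n+1}^k\} \subset \CP^{n+1}$ with smooth divisor at infinity $D_\infty = X \cap \{y_{n+1}=0\}$. By adjunction $K_X + D_\infty = \cO(k - n - 1)|_X$, so the log pair $(X, D_\infty)$ is log Fano for $k \leq n$, log Calabi--Yau for $k = n+1$, and log general type for $k > n+1$. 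These three regimes correspond precisely to the three cases in the theorem.

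For parts (2) and (3) the plan is to apply Theorem~\ref{thm:RSFT}\eqref{RSFT3} together with McLean's uniruledness results~\cite{mclean2014symplectic}. When $k = n+1$, the log Calabi--Yau condition obstructs the existence of a rational curve in $F_{n+1}$ with a single positive end and a point constraint (no $1$-uniruling exists), which by \eqref{RSFT3} gives $\Pl(LB(n+1,n)) \geq 2$ and hence $\Hcx > 1^{\Pl}$. When $k > n+1$, the log general type condition combined with McLean's theorem shows $F_k$ is not $m$-uniruled for any $m \in \N$; applying \eqref{RSFT3} for every $m$ gives $\Pl = \infty$ and $\Hcx = \infty^{\Pl}$.

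For part (1) with $k \leq n$: in the log Fano regime, through every point of $X$ passes a projective line meeting $D_\infty$ transversally at a single point. Neck stretching along $LB(k,n) \subset F_k$ converts such a line into a rational holomorphic plane in $\R \times LB(k,n)$ with a point constraint, asymptotic to a Reeb orbit whose multiplicity is dictated by the natural $S^1$-Morse--Bott Reeb dynamics on $LB(k,n)$. In the Bourgeois--Oancea $\Q[u]$-module structure on linearized contact homology, this plane records a semi-dilation of order $k-1$. For $k < n$ the space of lines through a point has positive dimension, giving enough flexibility to perturb to transversality and yielding the upper bound $\Hcx(LB(k,n)) \leq (k-1)^{\SD}$; for the borderline $k = n$ the space of lines is $0$-dimensional and this upper bound is not asserted. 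The lower bound $\SD \geq k-1$ (underlying both cases) follows from a Morse--Bott computation of the positive $S^1$-equivariant symplectic cohomology of $F_k$, identified with linearized contact homology via~\cite{bourgeois2009exact}, whose index structure obstructs semi-dilations of lower order.

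The main obstacle is that the lower bound in part (1) must hold for every RSFT augmentation, not merely the one from the Milnor fiber. The plan is to use an action filtration argument: below a threshold determined by the subleading family of Reeb orbits, any augmentation must agree with the Morse--Bott data intrinsic to $LB(k,n)$, and the obstruction to a semi-dilation of order $<k-1$ already manifests below this threshold.
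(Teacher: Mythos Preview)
Your treatment of cases (2) and (3) is essentially the paper's argument: log-Kodaira dimension $\ge 0$ obstructs algebraic $1$-uniruledness (resp.\ any uniruledness), and then Corollary~\ref{cor:lower} (which is Theorem~\ref{thm:RSFT}\eqref{RSFT3}) gives the planarity bound.

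For case (1), however, you have inverted the roles of the upper and lower bounds with respect to augmentations. Recall that $\SD(Y)$ is defined as a \emph{maximum} over all augmentations $\epsilon$ of the quantity $\SD(Y,\epsilon,o)$, and similarly $\Pl(Y)$ is a maximum. Consequently, exhibiting the single augmentation $\epsilon_{F_k}$ from the Milnor fiber with $\SD(F_k)=k-1$ (this is the content of \cite[Theorem A]{zhou2019symplectic}, translated via Corollary~\ref{cor:SD}) already gives the \emph{lower} bound $\Hcx(LB(k,n))\ge (k-1)^{\SD}$. No traversal of augmentations is needed here, and this lower bound holds for all $k\le n$.

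The genuine obstacle is the \emph{upper} bound $\Hcx(LB(k,n))\le (k-1)^{\SD}$, which requires showing that for \emph{every} augmentation $\epsilon$ one has $\Pl\le 1$ and $\SD(Y,\epsilon)\le k-1$. Your action-filtration heuristic (``any augmentation must agree with the Morse--Bott data below a threshold'') is too vague; augmentations need not agree on any action window. The paper instead shows that the relevant moduli spaces are empty or have negative virtual dimension, so the augmentation simply cannot enter the computation. Concretely, the semi-dilation witness is $q_{\gamma_p}$ with $\ind(p)=2n-2k$; one checks that $\overline{\cM}_{LB(k,n),o}(\{\gamma_p\},\{\gamma_q\})$ has expected dimension $\ind(q)+2k+2-4n<0$ when $k<n$ (since $\ind(q)\le 2n-2$), so $\ell^1_{\bullet,\epsilon}(q_{\gamma_p})$ is augmentation-independent. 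Likewise $U^i(q_{\gamma_p})$ is augmentation-independent because $\gamma_p$ sits at minimal period and there is no action room for extra terms. For $k=n$ this dimension count yields only $\le 0$, not $<0$, which is why the upper bound is not claimed there; your line-counting observation is geometrically consistent with this but is not the actual mechanism.
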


Another type of singularity is the quotient singularity, whose contact links are not exact fillable in many cases \cite{zhou2020mathbb}. In fact, the symplectic aspect of the proof in \cite{zhou2020mathbb} can be restated as a computation of $\Hcx$ as follows.
\begin{thmx}[Theorem \ref{thm:quotient}]
	Let $Y$ be the quotient $(S^{2n-1}/\Z_k,\xi_{std})$ by the diagonal action of $e^{\frac{2\pi i}{k}}$ for $n\ge 2$.
    \begin{enumerate}
		\item If $n>k$, we have $\Hcx(Y)=0^{\SD}$.
	    \item If $n\le k$, we have $0^{\SD} \le \Hcx(Y) \le (n-1)^{\SD}$. When $n=k$, we have $\Hcx(Y)\ge 1^{\SD}$.
    \end{enumerate}
\end{thmx}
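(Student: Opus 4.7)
The plan is to compute $\Hcx(Y)$ via the natural Weinstein filling $W := \cO_{\CP^{n-1}}(-k)$ of $Y = S^{2n-1}/\Z_k$, combined with the Bourgeois-Oancea isomorphism identifying $\LCH(Y)$ with the positive $S^1$-equivariant symplectic cohomology of $W$; under this identification the $\Q[u]$-module structure that defines $\SD$ corresponds to the standard $u$-action. The relevant invariant is $c_1(W) = (n-k) H$ pulled back from the zero section $\CP^{n-1}$, which is positive, zero, or negative exactly according to the trichotomy $n>k$, $n=k$, $n<k$, and governs the three cases of the theorem. The Reeb flow on the standard $Y$ is Morse-Bott with orbit space $\CP^{n-1}$ at each multiplicity $m \ge 1$; after perturbing by a perfect Morse function on $\CP^{n-1}$, each multiplicity contributes $n$ generators whose Conley-Zehnder gradings are explicit linear functions of $m$, the Morse index, $n$, and $k$. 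The zero section of $W$ is uniruled by lines lifted from $\CP^{n-1}$, which exhibits $\Pl(Y) = 1$ and hence $\Hcx(Y) \ge 0^{\SD}$.

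For the upper bounds, I would argue that the $u$-action on each multiplicity-$m$ Morse-Bott tower reproduces, up to higher-multiplicity corrections, the classical $u$-action on $H^*(\CP^{n-1};\Q) \cong \Q[u]/u^n$ of height $n$. For case (1), $n>k$, the Reeb orbits at multiplicity $1$ have large enough Conley-Zehnder index that the ruling line itself produces a class realizing a dilation, i.e.\ $\SD(Y)=0$ and $\Hcx(Y)=0^{\SD}$. For case (2), $n\le k$, a chain at the top of the multiplicity-$1$ Morse-Bott tower is connected to the unit after at most $n-1$ applications of $u$, yielding $\Hcx(Y)\le (n-1)^{\SD}$.

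For the remaining lower bound $\Hcx(Y)\ge 1^{\SD}$ when $n=k$, the filling is Calabi-Yau ($c_1(W)=0$), so $\LCH(Y)$ is $\Z$-graded, and a parity/degree count (following the argument of \cite{zhou2020mathbb}) shows that no generator has the correct grading to hit the unit under a single BV/$u^0$ operation, ruling out a genuine dilation. The main technical obstacle is verifying the identification of the Bourgeois-Oancea $u$-action with the topological $u$-action on the Morse-Bott $\CP^{n-1}$-families and controlling contributions from higher multiplicities; I expect this to follow from a standard neck-stretching/Morse-Bott gluing argument, with care taken to ensure that any correction terms strictly raise multiplicity and so cannot alter the $\Q[u]/u^n$-tower structure within a fixed multiplicity stratum. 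Granted this identification, the theorem reduces to elementary grading bookkeeping.
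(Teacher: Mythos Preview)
There is a fundamental gap in your approach. First, $W=\cO_{\CP^{n-1}}(-k)$ is \emph{not} an exact (let alone Weinstein) filling of $Y$: the zero section $\CP^{n-1}$ is a closed symplectic submanifold, so the symplectic form on $W$ cannot be exact. It is only a strong filling, hence yields at best a $BL_\infty$ augmentation over $\Lambda$, not over $\Q$. Second, and more seriously, $\Pl(Y)$ and $\SD(Y)$ are by definition the \emph{maximum} over all $BL_\infty$ augmentations of $\RSFT(Y)$ (Definitions~\ref{def:planarity} and~\ref{def:SD}), not merely over those coming from a filling. Computing with a single filling therefore cannot give the upper bounds in (1) and (2); one must argue uniformly for every augmentation. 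The paper does this by working directly in the symplectization of $Y$: it shows $\#\overline{\cM}_{Y,o}(\{\gamma_0^k\},\emptyset)=k$ and then uses contact-action and dimension constraints (imported from \cite{zhou2020mathbb}) to prove that all moduli spaces entering $\widehat{\ell}_{\bullet,\epsilon}$ and the $U$-map on the relevant generators are either empty or independent of $\epsilon$. The hypothesis $n>k$ enters precisely in this dimension count.

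For the lower bound $\Hcx(Y)\ge 1^{\SD}$ when $n=k$, your plan fails outright: the augmentation induced by the prequantization-bundle filling $W$ has order of semi-dilation $0$, because $SH^*(W)=0$ by \cite{ritter2014floer}; the paper states this explicitly in the remark following the proof of Theorem~\ref{thm:quotient}. A grading argument alone cannot rescue this---whether a $0$-semi-dilation exists depends on the augmentation, not just on the $\Z$-grading. The paper instead constructs an explicit \emph{algebraic} augmentation, namely $\epsilon^1(q_{\gamma_0})=-n$ and all other $\epsilon^k=0$, not arising from any filling, and then computes the $U$-map for this augmentation by hand to exhibit $\SD\ge 1$.
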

The second case of the above theorem is another situation where the computation depends on the augmentation. Roughly speaking, $\Hcx(Y)=0^{\SD}$ means that any exact filling of $Y$ has vanishing symplectic cohomology. And if there is a (possibly strong) filling with vanishing symplectic cohomology, then the order of semi-dilation using the induced augmentation from the filling is $0$.\footnote{Assuming the filling is monotone, so that we can evaluate $T=1$ in the Novikov coefficient to get back to $\Q$-coefficient.} The natural pre-quantization bundle filling provides augmentations such that the symplectic cohomology vanishes \cite{ritter2014floer}. On the other hand, there are other augmentations with positive orders of semi-dilation. For example the exact filling $T^*S^2$ of $(\mathbb{RP}^3,\xi_{std})$ has order of semi-dilation $1$, such phenomenon was also explained in \cite[Remark 2.16]{zhou2020mathbb}. 

\begin{thmx}\label{thm:SK}
	Let $V$ be an exact domain with $c_1(V)=0$ and $S_k$ be the $k$-punctured sphere. Then 
	\begin{enumerate}
		\item\label{product1} $\Pl(\partial(V\times S_k))\le k$ (Theorem \ref{thm:SOBD}).
		\item\label{product2} If $V$ is an affine variety that is not $(k-1)$-uniruled, then $\Pl(\partial(V\times S_k))= k$ (Corollary \ref{cor:product}).
		\item\label{product3} $\Hcx(\partial (V\times \D))=0^{\SD}$ (Theorem \ref{thm:product}).
	\end{enumerate}
\end{thmx}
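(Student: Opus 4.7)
The plan is to exhibit explicit product rational curves in the exact filling $V\times S_k$ and, for the matching lower bound in \eqref{product2}, invoke the filling-to-boundary obstruction \eqref{RSFT3} of Theorem \ref{thm:RSFT}.

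For \eqref{product1}, fix a generic point $p\in V$ and consider the product $\{p\}\times S_k\subset V\times S_k$, viewed as a genus-zero holomorphic map from the $k$-punctured sphere (after corner-smoothing of the product boundary). It has $k$ positive ends at the Reeb orbits sitting over the $k$ boundary components of $S_k$ and passes through a prescribed interior point $(p,q)$. The hypothesis $c_1(V)=0$ makes its Fredholm index match the expected dimension for a $\Pl\le k$ witness, and automatic transversality for somewhere injective planar curves in a split almost complex structure (with trivial normal bundle along the $S_k$ factor) yields regularity. A neck stretch along $\partial(V\times S_k)$ then converts this into a rigid symplectization curve with $k$ positive ends and an interior point constraint, giving $\Pl(\partial(V\times S_k))\le k$.

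For \eqref{product2}, combined with \eqref{product1} it remains to prove $\Pl\ge k$. By \eqref{RSFT3} of Theorem \ref{thm:RSFT} this reduces to showing that $V\times S_k$ is not $(k-1)$-uniruled. Given any candidate rational curve $u$ with one positive end through a generic $(p,q)\in V\times S_k$ realizing putative $(k-1)$-uniruledness, project to $V$. If the positive end of $u$ projects to a Reeb orbit on $\partial V$ and $\pi_V\circ u$ is nonconstant, the index-energy relation of \cite{mclean2014symplectic} (together with $c_1(V)=0$) yields a $(k-1)$-uniruling curve in $V$, contradicting the hypothesis; if the projected positive end degenerates to a point, then exactness of $V$ forces $\pi_V\circ u$ constant, so $u$ lies entirely in the fiber $\{p\}\times S_k$. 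The only rational curves in the open surface $S_k$ with a single positive end are branched covers of the $k$ Liouville disks cobounding the boundary circles, and a direct energy comparison (using that every point of $S_k$ must be swept and the $k$ boundary components are independent) rules out $(k-1)$-uniruledness of $V\times S_k$ via such covers.

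For \eqref{product3}, the case $k=1$ of \eqref{product1} gives $\Pl(\partial(V\times\D))\le 1$, and $\Pl\ge 1$ is automatic from exact fillability. To show $\SD=0$, use the augmentation from the exact filling $V\times\D$: by the K\"unneth formula for symplectic cohomology of Liouville domains, $SH^*(V\times\D)\cong SH^*(V)\otimes SH^*(\D)=0$ since $SH^*(\D)=0$. Under the Bourgeois-Oancea isomorphism \cite{bourgeois2009exact} between positive $S^1$-equivariant symplectic cohomology and linearized contact homology, together with the $\Q[u]$-module structure defining $\SD$, vanishing of $SH^*(V\times\D)$ forces $\SD=0$. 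The main obstacle is in \eqref{product2}: fibered rational curves with constant projection to $V$ are never somewhere injective, so standard transversality breaks down, and one must use either an $S^1$-equivariant perturbation of the split almost complex structure or Pardon's virtual machinery to make the uniruledness count well-defined, then rule out the fibered contribution via the $c_1(V)=0$ index constraint.
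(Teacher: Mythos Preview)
Your proposal has a fundamental gap that affects all three parts: you treat $\Pl$ and $\SD$ as if they were invariants of the filling $V\times S_k$, when in fact both are defined as a \emph{maximum over all $BL_\infty$ augmentations} of $\RSFT(\partial(V\times S_k))$ (Definitions \ref{def:planarity} and \ref{def:SD}). Finding a curve in the filling, or invoking $SH^*(V\times\D)=0$ via K\"unneth, only addresses the single augmentation $\epsilon_{V\times S_k}$. This is not a technicality: Theorem~\ref{thm:infinity} exhibits contact boundaries where the planarity for the filling augmentation is finite while $\Pl(Y)=\infty$. The paper's proof of \eqref{product1} (Theorem~\ref{thm:SOBD}) hinges on Proposition~\ref{prop:SOBD}, which classifies \emph{all} rational curves in $\R\times Y$ with positive asymptotics among the orbits $\gamma_{p,i}$: they are either trivial cylinders, Morse flow-line cylinders, or curves with exactly $k$ positive ends and no negative ends. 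This classification is what makes $\widehat{\ell}_\epsilon(q^{\Gamma^+})$ and $\widehat{\ell}_{\bullet,\epsilon}(q^{\Gamma^+})$ independent of $\epsilon$ and allows one to invoke (the analogue of) Lemma~\ref{lemma:upperbound}. Your neck-stretching sketch produces a curve but says nothing about the absence of curves with negative ends, which is exactly what controls the $\epsilon$-dependence. Similarly for \eqref{product3}, the paper argues that the relevant $U$-map computation occurs at minimal contact action, leaving no room for augmentation dependence; your K\"unneth argument gives only $\SD(V\times\D)=0$ in the sense of Definition~\ref{def:PSD}, not $\SD(\partial(V\times\D))=0$.

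For \eqref{product2}, your projection argument has an additional flaw: symplectic $(k-1)$-uniruledness is tested against \emph{all} convex almost complex structures, and for a non-split $J$ the projections $\pi_V\circ u$ and $\pi_{S_k}\circ u$ are not holomorphic, so ``project to $V$'' is not a valid operation. The paper instead uses the hypothesis that $V$ is an affine variety together with Proposition~\ref{prop:Ulower}: algebraic curves in $V\times S_k$ do project to algebraic curves in each factor, giving $\AU(V\times S_k)\ge k$, and then Corollary~\ref{cor:lower} supplies the lower bound. Your concluding remarks about ``fibered curves never being somewhere injective'' and needing equivariant perturbations misidentify the obstacle; the real issue is augmentation-independence, and the paper's analysis of the spinal open book structure (spine $\partial S_k\times V$, paper $S_k\times\partial V$) together with energy estimates is what resolves it.
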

In particular, \eqref{product2} in Theorem \ref{thm:SK} provides many examples to Theorem \ref{thm:seq} and \eqref{product3} is a reformulation of the symplectic step in \cite{zhou2020filings} to obtain uniqueness results on fillings of  $\partial(V\times \D)$.

\subsection*{Organization of the paper}
We introduce the concept of $BL_\infty$ algebra in \S \ref{s2} and then define algebraic planar torsion as well as planarity at the level of algebra. In \S \ref{s3}, we implement Pardon's VFC \cite{pardon2019contact} to define $\PT$ and $\Pl$. We recall in \S \ref{s4} the $\Q[U]$ module structure on linearized contact homology following \cite{bourgeois2009exact} to define $\SD$ and finish the proof of Theorem \ref{thm:main}. We give a lower bound for $\Pl$ in \S \ref{s5} and an upper bound for $\Pl$ in  \S \ref{s6}. We discuss examples, applications, and finish the proof of Theorem \ref{thm:RSFT} in \S \ref{s7}. 
\subsection*{Acknowledgments}
We would like express our gratitude to Helmut Hofer, Mark McLean, John Pardon and Chris Wendl for helpful conversations.
We are very grateful to the anonymous referee for the significant effort put into assessing our paper and many suggestions that significantly improve the quality of the work. A.M.\ acknowledges the support by the Swedish Research Council under grant no.\ 2016-06596, while the author was in residence at Institut Mittag-Leffler in Djursholm, Sweden. He also acknowledges the support of the National Science Foundation under Grant No.\ DMS-1926686, by the Sonderforschungsbereich TRR 191 Symplectic Structures in Geometry, Algebra and Dynamics, funded by the DFG (Projektnummer 281071066 – TRR 191), and also by the DFG under Germany's Excellence Strategy EXC 2181/1 - 390900948 (the Heidelberg STRUCTURES Excellence Cluster). Z.Z.\ is supported by National Key R\&D Program of China under Grant No.\ 2023YFA1010500, National Science Foundation under Grant No.\ DMS-1926686, the National Natural Science Foundation of China under Grant No.\ 12288201 and 12231010.
\section{$L_\infty$ algebras and $BL_\infty$ algebras}\label{s2}
In this section, we recall the basics of $L_\infty$ algebras and introduce $BL_\infty$ algebras, which serve as the underlying algebraic structures for rational symplectic field theory. The algebraic formalism here is essentially the  $q$-variable only reformulation in \cite{rational} and the $L_\infty$ algebra formalism on contact homology algebra in \cite{siegel2019higher}, but we make the compatibility of the algebraic structure on the contact homology algebra with the $L_\infty$ structure more precise and define such an object as a $BL_\infty$ algebra, which is a specialization of the $IBL_\infty$ algebra in \cite{cieliebak2015homological} and the homotopic version of bi-Lie algebras, with (co)-curvature. The algebraic relations in $BL_\infty$ algebra are precisely the boundary combinatorics of the moduli spaces of rational curves in the SFT compactification. We then introduce algebraic planar torsion and planarity at the algebraic level.

\subsection{$L_\infty$ algebras}\label{SS:L}
Throughout this section, we assume $\bk$ is a field with characteristic $0$ for simplicity, although the discussion works for any commutative ring.  Let $V$ be a $\Z_2$-graded $\bk$-vector space. Then we have the $\Z_2$-graded symmetric algebra $S V:=\bigoplus_{k\ge 0} S^k V$ and the non-unital symmetric algebra $\overline{S} V =\bigoplus_{k\ge 1} S^k V$, where $S^kV=\otimes^k V/Sym_k$ in the graded sense.  In particular, we have
$$a b=(-1)^{|a||b|} b a$$
for homogeneous elements $a,b$ in $SV,\overline{S}V$. Therefore $S^kV$ is spanned by vectors of the form $v_1 \ldots  v_k$ for $v_i\in V$. However, to introduce the $L_\infty$ algebra, we will view $SV,\overline{S}V$ as coalgebras by the following co-product operation:
$$\Delta(v_1 \ldots  v_k)=\sum_{i=1}^{k-1}\sum_{\sigma \in Sh(i,k-i)}(-1)^\diamond(v_{\sigma(1)} \ldots  v_{\sigma(i)})\otimes (v_{\sigma(i+1)} \ldots  v_{\sigma(k)}),$$
where $Sh(i,k-i)$ is the subset of permutations $\sigma$ such that $\sigma(1)<\ldots<\sigma(i)$ and $\sigma(i+1)<\ldots < \sigma(k)$ and 
$$\diamond=\sum_{\substack {1\le i < j \le k\\ \sigma(i)>\sigma(j)}}|v_i||v_j|.$$
Then both $S V$ and $\overline{S}V$ satisfy the coassociativity property $(\Id\otimes \Delta)\circ \Delta= (\Delta \otimes \Id)\circ \Delta$, and the cocommutativity property $\mathbf{R}\circ \Delta=\Delta$, where $\mathbf{R}: S V \otimes S V \rightarrow S V \otimes S V$ is given by $\mathbf{R}(x \otimes y)=(-1)^{|x||y|}y \otimes x$ for homogeneous elements $x,y$. A coderivation of the coalgebra $(\overline{S}V,\Delta)$ is a $\bk$-linear map $\delta: \overline{S}V \rightarrow \overline{S}V$ satisfying the coLeibniz rule $\Delta \circ \delta= (\delta \otimes \Id)\circ \Delta+ (\Id \otimes \delta)\circ \Delta $. Here we use the Koszul-Quillen sign convention that $(f\otimes g)(x\otimes y) = (-1)^{|x||g|}f(x)\otimes g(y)$, for $x,y \in V, W$ and $f,g \in V^{\vee},W^{\vee}$.

\begin{definition}\label{def:L_alg}
	We use $sV$ to denote $V[1]$. An $L_\infty$ algebra structure on $V$ is a degree $1$ coderivation $\widehat{\ell}$ on $\overline{S}sV$ satisfying $\widehat{\ell}^2=0$.
\end{definition}

Note that we have a well-defined degree $-1$ map $s:V\to sV$. The coderivation property of $\widehat{\ell}$ implies that it is determined by maps $\ell^k:S^ksV \to sV$ defined by the composition $S^k sV \hookrightarrow \overline{S}sV \overset{\widehat{\ell}}{\rightarrow} \overline{S}sV \rightarrow sV$, where the first map is the natural inclusion and the last map is the natural projection, satisfying the quadratic relation
\begin{equation}\label{eqn:L_infinity}
\sum_{k=1}^n \sum_{\sigma \in Sh(k,n-k)}(-1)^{\diamond'} \ell^{n-k+1}(\ell^k(sv_{\sigma(1)} \ldots  sv_{\sigma(k)}) sv_{\sigma(k+1)} \ldots  sv_{\sigma(n)})=0,
\end{equation}
where 
$$\diamond'=\sum_{\substack {1\le i < j \le k\\ \sigma(i)>\sigma(j)}}(|v_i|-1)(|v_j|-1).$$
$(\overline{S}sV,\widehat{\ell})$ is called the reduced bar complex. The word length filtration $\overline{B}^1sV \subset \overline{B}^2sV\subset \ldots \subset \overline{S}sV$ is compatible with the differential, where $\overline{B}^ksV:=\bigoplus_{j=1}^ kS^j sV$.

\begin{definition}\label{def:L_mor}
	An $L_\infty$ homomorphism from $(V,\ell)$ to $(V',\ell')$ is a degree $0$ coalgebra map $\widehat{\phi}:\overline{S}sV\to \overline{S}sV'$ such that $\widehat{\phi}\circ \widehat{\ell} = \widehat{\ell}'\circ \widehat{\phi}$.
\end{definition}
Given $\phi_i:S^{k_i} sV \to sV', 1\le i \le n$ and $m=\sum_{i=1}^n k_i$, we define $\phi_{1} \ldots  \phi_n:S^{m}sV \to S^n sV'$ by sending $sv_1 \ldots  sv_{m}$ to 
$$\pi \left(\sum_{\sigma} \frac{(-1)^{\diamond'}}{k_1!\ldots k_n!} (\phi_{1}\otimes \ldots \otimes \phi_n)((sv_{\sigma(1)} \ldots  sv_{\sigma(k_1)})\otimes \ldots \otimes (sv_{\sigma(m-k_n+1)} \ldots  sv_{\sigma(m)}))\right).\footnote{The factorial here is a consequence of the redundant summing over all permutations. However, the construction works any coefficient ring, this is easier to see using the tree description in \S \ref{ss:tree}.}$$
Here $\pi$ is the natural map $\otimes^k sV \to S^k sV$. By the coalgebra property, if $\widehat{\phi}$ is an $L_\infty$ morphism, we know that $\widehat{\phi}$ is determined by $\{\phi^k:S^ksV \to \overline{S} sV \stackrel{\widehat{\phi}}{\to} \overline{S}sV'\to sV'\}_{k\ge 1}$. More explicitly, $\widehat{\phi}$ is defined by the following formula,
$$\widehat{\phi}(sv_1\ldots sv_n)=\sum_{\substack{k\ge 1\\i_1+\ldots+i_k=n}}\frac{1}{k!}(\phi^{i_1}\ldots\phi^{i_k})(sv_{1}\ldots sv_{n}).$$
The relation $\widehat{\phi}\circ \widehat{\ell}=\widehat{\ell'}\circ \widehat{\phi}$ can be written as
\begin{align*} \sum_{p+q=n+1}\sum_{\sigma\in Sh(q,n-q)} (-1)^{\diamond'}\phi^{p}(\ell^q(sv_{\sigma(1)} &  \ldots sv_{\sigma(q)})  sv_{\sigma(q+1)} \ldots  sv_{\sigma(n)})= \\ &
\sum_{\substack{k\ge 1\\i_1+\ldots+i_k=n}} \frac{1}{k!}\ell^k(\phi^{i_1}\ldots\phi^{i_k})(sv_{1}\ldots sv_{n}).
\end{align*}
In particular, $\widehat{\phi}$ preserves the word length filtration. The composition of $L_\infty$ homomorphism is the naive composition $\widehat{\phi}\circ \widehat{\psi}$, which is clearly a coalgebra chain map. Unwrapping the definition, we have
$$(\phi\circ \psi)^n=\sum_{\substack{k\ge 1\\i_1+\ldots+i_k=n}}\frac{1}{k!}\phi^k((\psi^{i_1}\ldots\psi^{i_k})(sv_{1}\ldots sv_{n})).$$

\subsection{$BL_\infty$ algebras}\label{ss:BL}
In this section, we define the $BL_\infty$ (bi-Lie-infinity) algebra structure on a $\Z_2$ graded vector space $V$, which will govern the rational symplectic field theory. Let $EV$ denote $\overline{S}SV$. Given a linear operator $p^{k,l}:S^kV \to S^l V$ for $k\ge 1, l\ge 0$, we will define a map $\widehat{p}^{k,l}:S^k SV \to SV$. To emphasize the differences between products on two symmetric algebras, we use $\odot$ for the product on the outside symmetric product $\overline{S}$ and $\ast$ for the product on the inside symmetric product $S$ when it can not be abbreviated. We will first describe the definition using formulas and then introduce a graph description, which is very convenient to describe $BL_\infty$ algebras as well as various related structures and also governs all the signs and coefficients. Let $w_1,\ldots, w_k\in SV$, then $\widehat{p}^{k,l}$ is defined by the following properties.
\begin{enumerate}
	\item $\widehat{p}^{k,l}|_{\odot^k V\subset S^kSV}$ is defined by $p^{k,l}$.
	\item If $w_i\in \bk$, then $\widehat{p}^{k,l}(w_1\odot \ldots \odot w_k)=0$.
	\item\label{leb} $\widehat{p}^{k,l}$ satisfies the Leibniz rule in each argument, i.e.\ we have 
\begin{equation}\label{eq:p_hat}
 \widehat{p}^{k,l}(w_1\odot \ldots \odot w_k)=\sum_{j=1}^m(-1)^{\square} v_1 \ldots  v_{j-1} \widehat{p}^{k,l}(w_1\odot \ldots \odot v_j \odot \ldots \odot w_k) v_{j+1} \ldots  v_m.
 \end{equation}
	Here $w_i=v_1 \ldots  v_m$ and 
\begin{equation}\label{eqn:sign}
    \square = \sum_{s=1}^{i-1}|w_s|\cdot \sum_{s=1}^{j-1}|v_s|+\sum_{s=1}^{j-1}|v_s||p^{k,l}|+\sum_{s=i+1}^n |w_s|\cdot \sum_{s=j+1}^{m}|v_s|.
\end{equation}
\end{enumerate}
It is clear from the definition that $\widehat{p}^{k,l}$ is determined uniquely by the above three conditions. More explicitly, $\widehat{p}^{k,l}$ is defined by the following:
\begin{equation}\label{eq:p_hat_1}
    w_1\odot \ldots \odot w_k \mapsto \sum_{\substack{(i_1,\ldots,i_k) \\ 1\le i_j\le n_j}} (-1)^{\bigcirc} p^{k,l}(v^1_{i_1}\odot\ldots \odot v^k_{i_k}) \check{w}_1 \ldots \check{w}_k,
\end{equation}
where $w_j=v^j_1 \ldots  v^j_{n_j}$, $\check{w}_j=v^j_1 \ldots  \check{v}^j_{i_j} \ldots  v^j_{n_j}$ and $w_1 \ldots  w_k=(-1)^\bigcirc v^1_{i_1} \ldots  v^k_{i_k} \check{w}_1 \ldots  \check{w}_k$. Then we define $\widehat{p}^k:S^k S V \to S V$ by $\bigoplus_{l\ge 0} \widehat{p}^{k,l}$. To assure it is well-defined, we need to assume for any $v_1,\ldots,v_k\in V$, there are at most finitely many $l$ such that $p^{k,l}(v_1\odot\ldots\odot v_k)\ne 0$. Then we can define $\widehat{p}:EV \to EV$ by
\begin{equation}\label{eq:p_hat2}
    w_1\odot \ldots \odot w_n \mapsto \sum_{k=1}^n\sum_{\sigma \in Sh(k,n-k)}(-1)^{\diamond} \widehat{p}^k(w_{\sigma(1)}\odot \ldots \odot w_{\sigma(k)})\odot w_{\sigma(k+1)}\odot \ldots \odot w_{\sigma(n)},
\end{equation}
i.e.\ following the same rule of $\widehat{\ell}$ from $\ell^k$.  
\begin{definition}\label{def:BL}
$(V,\{p^{k,l}\}_{k\ge 1, l \ge 0})$ is a $BL_\infty$ algebra if $\widehat{p}\circ \widehat{p}=0$ and $|\widehat{p}|=1$.
\end{definition}
To explain the terminology, assume $p^{1,0}=0,p^{2,0}=0$. Then $p^{1,1}$ defines a differential on $V$, such that $p^{2,1}$ defines a Lie bracket on the homology of $(V, p^{1,1})$ and $p^{1,2}$ defines a Lie cobracket on the homology. The compatibility is that $p^{1,2}\circ p^{2,1}=0$ on the homology level. The main difference with the $IBL_\infty$ algebra \cite[Definition 2.3]{cieliebak2015homological} is that we will not consider the compatibility condition on $p^{2,1}\circ p^{1,2}=0$, which will increase genus\footnote{The other difference is that the $IBL_\infty$ algebra in \cite{cieliebak2015homological} describes the algebra for linearized SFT, where $p^{k,0,g}=0$ for any number of positive punctures $k$ and genus $g$.}. A direct consequence of the definition is that $(SV, \widehat{p}^1)$ is a chain complex and the $\widehat{p}^k$ define an $L_\infty$ structure on $(SV)[-1]$. As noted in \cite[Remark 3.12]{siegel2019higher}, $SV$ carries a natural commutative algebra structure, the Leibniz rule in the definition of $\widehat{p}^{k,l}$ implies the $L_\infty$ structure is compatible with the algebra structure, and $(SV)[-1]$ should be some version of a $G_\infty$ algebra.  Definition \ref{def:BL} can be viewed as one method of making the compatibility precise.

\begin{remark}[Shift v.s.\ no shift]
    The degree shift in \S \ref{SS:L} is the classical sign convention introduced by Stasheff \cite[Page 133]{MR1183483}, as $L_\infty$ algebra is a higher generalization of Lie algebra, where the Lie bracket is skew-symmetric and has degree $0$. The $IBL_\infty$ formalism in \cite{cieliebak2015homological} kept such tradition of shifting degrees by $1$, as a result, in the SFT context, generators are graded by the SFT degree shifted by $1$ \cite[\S 7]{cieliebak2015homological} to cancel the shift in the definition. Here, we choose to drop the degree shift in Definition \ref{def:BL}, so that our generators will be graded by the SFT degree in the context of SFT, since operations from counting holomorphic curves are naturally super-symmetric w.r.t.\ the SFT degrees due to the orientation scheme in SFT. As a consequence, $((SV)[-1],\{\widehat{p}^k\})$ and $(V[-1],\{p^{k,1}\})$ (assuming all other $p^{k,l}$ is zero in the latter case) are $L_\infty$ algebras in the sense of Definition \ref{def:L_alg}. If we use a degree shifted version of Definition \ref{def:BL}, $(V,\{p^{k,1}\})$ (under the same vanishing assumption) is an $L_\infty$ algebra, while neither $SV$ nor $SsV$ are $L_\infty$ algebras, i.e.\ extra shift is inevitable. All shifts can go away if one is willing to adopt a version of $L_\infty$ algebra without degree shift.  
\end{remark}

\begin{remark}
	$BL_\infty$ algebra is not a ``direct" specialization of the $IBL_\infty$ algebra as introduced in \cite{cieliebak2015homological}. However, there is an equivalent reformulation of the $IBL_\infty$ relations\footnote{In the special case of setting $\tau=1$ in \cite[Definition 2.3]{cieliebak2015homological}}, from which one can see that an $IBL_\infty$ algebra contains a $BL_\infty$ algebra, as well as algebras with any genus upper bound, see \cite[\S 5.2, Proposition 5.10, Corollary 5.12]{supp} for details.
\end{remark}

\subsection{The rules for tree calculus}\label{ss:tree}
A useful way to explain the combinatorics of operations is the following description using graphs, which appeared in \cite[\S 3.4.2]{siegel2019higher}. The combinatorics is also relevant in the virtual technique setup, see \S \ref{s:virtual}. The main advantage of such graphical language is freeing us from the book-keeping of signs and explicit components of compositions, e.g.\ in \eqref{eq:p_hat}, which are governed by graphs. 

Let $w\in S^kV$, we can represent $w$ by an element $\overline{w}$ in $\otimes^k V$, that is $\overline{w}=\sum_{i=1}^N c_iv^i_1\otimes \ldots \otimes v^i_k$ for $c_i\in \bk$ and $v_*^*\in V$, such that $\pi(\overline{w})=w$ for $\pi:\otimes^kV \to S^kV$. We represent it by a rooted tree with $k$ leaves (represented by $\bullet$) labeled by $\overline{w}$. The leaves are ordered from left to right to indicate the $k$ copies of $V$ in $\otimes^k V$. When $\overline{\omega} = v_1\otimes \ldots \otimes v_k$, we may label the leaves by $v_1,\ldots,v_k$ to mean the same thing. We can view a general labeled tree as a formal linear combination of such trees with leaves labeled. 
    	\begin{center}
        \begin{tikzpicture}
        \node at (0,0) [circle,fill,inner sep=1.5pt] {};
		\node at (1,0) [circle,fill,inner sep=1.5pt] {};
		\node at (2,0) [circle,fill,inner sep=1.5pt] {};
		\draw (0,0) to (1,1) to (1,0);
		\draw (1,1) to (2,0);
        \node at (2,1) {$\overline{w}\in \otimes^3 V$};
        \end{tikzpicture}
        \qquad
		\begin{tikzpicture}
		\node at (0,0) [circle,fill,inner sep=1.5pt] {};
        \node at (0.3,0) {$v_1$};
		\node at (1,0) [circle,fill,inner sep=1.5pt] {};
        \node at (1.3,0) {$v_2$};
		\node at (2,0) [circle,fill,inner sep=1.5pt] {};
        \node at (2.3,0) {$v_3$};
		\draw (0,0) to (1,1) to (1,0);
		\draw (1,1) to (2,0);
		\end{tikzpicture}
	\end{center}
Now let $s\in S^kSV$, we can represent $s$ by $\overline{s}\in \boxtimes^k TV$, where $TV=\oplus_{k\in \N}(\otimes^k V)$. Here we use $\boxtimes$ to differentiate it from the inner tensor $\otimes$. We write
$$\overline{s}=\sum_{i=1}^N c_i\overline{w}^i_1 \boxtimes \ldots  \boxtimes \overline{w}^i_k, \quad c_i\in \bk, \overline{w}_*^*\in \otimes^{m^*_*}V.$$
We represent $\overline{w}^i_1 \boxtimes\ldots  \boxtimes \overline{w}^i_k$ by an ordered forest of labeled trees as follows. Then $\overline{s}$ is a formal linear combination of such forests.
\begin{figure}[H]
    \begin{center}
		\begin{tikzpicture}
		\node at (0,0) [circle,fill,inner sep=1.5pt] {};
		\node at (1,0) [circle,fill,inner sep=1.5pt] {};
		\node at (2,0) [circle,fill,inner sep=1.5pt] {};
		\node at (3,0) [circle,fill,inner sep=1.5pt] {};
		\node at (4,0) [circle,fill,inner sep=1.5pt] {};
		\node at (5,0) [circle,fill,inner sep=1.5pt] {};
		\node at (6,0) [circle,fill,inner sep=1.5pt] {};
		\node at (7,0) [circle,fill,inner sep=1.5pt] {};
		
		\draw (0,0) to (1,1) to (1,0);
		\draw (1,1) to (2,0);
		\draw (3,0) to (4,1) to (4,0);
		\draw (4,1) to (5,0);
		\draw (6,0) to (6.5,1) to (7,0);

       \node at (2,1) {$\overline{w}_1\in \otimes^3V$};
       \node at (5,1) {$\overline{w}_2\in \otimes^3V$};
       \node at (7.5, 1) {$\overline{w}_3\in \otimes^2V$};
	\end{tikzpicture}
	\end{center}
    \caption{forest of labeled trees}
    \label{fig:forest}
\end{figure}
We represent the operation $p^{k,l}:S^kV\to S^lV$ by a graph with $k+l+1$ vertices,  $k$ top input vertices, $l$ bottom output vertices and one middle vertex $\tikz\draw[black,fill=white] (0,-1) circle (0.4em);$ labeled by $p^{k,l}$ representing the operation type. 
    \begin{center}
        \begin{tikzpicture}
        \node at (2,0) [circle,fill,inner sep=1.5pt] {};
		\node at (3,0) [circle,fill,inner sep=1.5pt] {};
        \draw (2,0) to (2.5,-1) to (3,0);
		\draw (2,-2) to (2.5,-1) to (2.5,-2);
		\draw (2.5,-1) to (3,-2);
		\node at (2.5,-1) [circle, fill=white, draw, outer sep=0pt, inner sep=3 pt] {};
        \node at (2,-2) [circle,fill,inner sep=1.5pt] {};
	    \node at (3,-2) [circle,fill,inner sep=1.5pt] {};
	    \node at (2.5,-2) [circle,fill,inner sep=1.5pt] {};
        \node at (3,-1) {$p^{2,3}$};
        \end{tikzpicture}
    \end{center}
So far the discussion is completely formal without any actual content, the real content is in the following interpretation of a glued graph, whose definition will be clear from one example.
\begin{figure}[H]
	\begin{center}
		\begin{tikzpicture}
		\node at (0,0) [circle,fill,inner sep=1.5pt] {};
		\node at (1,0) [circle,fill,inner sep=1.5pt] {};
		\node at (2,0) [circle,fill,inner sep=1.5pt] {};
		\node at (3,0) [circle,fill,inner sep=1.5pt] {};
		\node at (4,0) [circle,fill,inner sep=1.5pt] {};
		\node at (5,0) [circle,fill,inner sep=1.5pt] {};
		\node at (6,0) [circle,fill,inner sep=1.5pt] {};
		\node at (7,0) [circle,fill,inner sep=1.5pt] {};
		
		\draw (0,0) to (1,1) to (1,0);
		\draw (1,1) to (2,0);
		\draw (3,0) to (4,1) to (4,0);
		\draw (4,1) to (5,0);
		\draw (6,0) to (6.5,1) to (7,0);

		\draw (2,0) to (2.5,-1) to (3,0);
		\draw (2,-2) to (2.5,-1) to (2.5,-2);
		\draw (2.5,-1) to (3,-2);
		\node at (2.5,-1) [circle, fill=white, draw, outer sep=0pt, inner sep=3 pt] {};
		
		\node at (0,-2) [circle,fill,inner sep=1.5pt] {};
		\node at (1,-2) [circle,fill,inner sep=1.5pt] {};
		\node at (2,-2) [circle,fill,inner sep=1.5pt] {};
		\node at (3,-2) [circle,fill,inner sep=1.5pt] {};
		\node at (4,-2) [circle,fill,inner sep=1.5pt] {};
		\node at (5,-2) [circle,fill,inner sep=1.5pt] {};
		\node at (6,-2) [circle,fill,inner sep=1.5pt] {};
		\node at (7,-2) [circle,fill,inner sep=1.5pt] {};
		\node at (2.5,-2) [circle,fill,inner sep=1.5pt] {};
		
		\draw[dashed] (0,0) to (0,-2);
		\draw[dashed] (1,0) to (1,-2);
		\draw[dashed] (4,0) to (4,-2);
		\draw[dashed] (5,0) to (5,-2);
		\draw[dashed] (6,0) to (6,-2);
		\draw[dashed] (7,0) to (7,-2);
		\node at (0.4,0) {$v_1$};
		\node at (1.4,0) {$v_2$};
		\node at (2.4,0) {$v_3$};
		\node at (3.4,0) {$v_4$};
		\node at (4.4,0) {$v_5$};
		\node at (5.4,0) {$v_6$};
		\node at (6.4,0) {$v_7$};
		\node at (7.4,0) {$v_8$};
		\end{tikzpicture}
	\end{center}
    \caption{Gluing forests $\Leftrightarrow$ applying operations}
    \label{fig:gluing}
\end{figure}
The above glued graph represents a forest: we first fix a representative $\overline{p}$ of $p^{2,3}(v_3v_4)$ in $\otimes^3V$. The glued forest in Figure \ref{fig:gluing} represents $\pm (v_1\otimes v_2 \otimes \overline{p}\otimes v_5\otimes v_6)\boxtimes (v_7\otimes v_8)$. In the gluing, we do not create cycles in the glued graph, each dashed line represents the identity map,  and each connected component represents a tree in the output. \textbf{The trees and forests are considered as abstract trees and forests, the inclusion into the plane/space is not part of the information. Drawing the input element as a forest of ordered trees with ordered leaves means that we are choosing representatives from the tensor product, not the symmetric product.} Finally, when we draw the glued graph as above, i.e.\ choosing an order of the trees and leaves (hence edges will cross over each other if we draw it on a plane),  will determine a representative in the tensor product, i.e.\ we view different orders as equivalent up to the obvious sign change. For example, the following is an equivalent gluing as that in Figure \ref{fig:gluing} but with an extra sign when viewing it in the tensor product. The extra sign is $(-1)^{|v_5||b_3|}$ if the representative $\overline{p}$ is $b_1\otimes b_2\otimes b_3$.
\begin{figure}[H]
	\begin{center}
		\begin{tikzpicture}
		\node at (0,0) [circle,fill,inner sep=1.5pt] {};
		\node at (1,0) [circle,fill,inner sep=1.5pt] {};
		\node at (2,0) [circle,fill,inner sep=1.5pt] {};
		\node at (3,0) [circle,fill,inner sep=1.5pt] {};
		\node at (4,0) [circle,fill,inner sep=1.5pt] {};
		\node at (5,0) [circle,fill,inner sep=1.5pt] {};
		\node at (6,0) [circle,fill,inner sep=1.5pt] {};
		\node at (7,0) [circle,fill,inner sep=1.5pt] {};
		
		\draw (0,0) to (1,1) to (1,0);
		\draw (1,1) to (2,0);
		\draw (3,0) to (4,1) to (4,0);
		\draw (4,1) to (5,0);
		\draw (6,0) to (6.5,1) to (7,0);

		\draw (2,0) to (2.5,-1) to (3,0);
		\draw (2,-2) to (2.5,-1) to (2.5,-2);
		\draw (2.5,-1) to (4.5,-2);
		\node at (2.5,-1) [circle, fill=white, draw, outer sep=0pt, inner sep=3 pt] {};
		
		\node at (0,-2) [circle,fill,inner sep=1.5pt] {};
		\node at (1,-2) [circle,fill,inner sep=1.5pt] {};
		\node at (2,-2) [circle,fill,inner sep=1.5pt] {};
		\node at (4.5,-2) [circle,fill,inner sep=1.5pt] {};
		\node at (4,-2) [circle,fill,inner sep=1.5pt] {};
		\node at (5,-2) [circle,fill,inner sep=1.5pt] {};
		\node at (6,-2) [circle,fill,inner sep=1.5pt] {};
		\node at (7,-2) [circle,fill,inner sep=1.5pt] {};
		\node at (2.5,-2) [circle,fill,inner sep=1.5pt] {};
		
		\draw[dashed] (0,0) to (0,-2);
		\draw[dashed] (1,0) to (1,-2);
		\draw[dashed] (4,0) to (4,-2);
		\draw[dashed] (5,0) to (5,-2);
		\draw[dashed] (6,0) to (6,-2);
		\draw[dashed] (7,0) to (7,-2);
		\node at (0.4,0) {$v_1$};
		\node at (1.4,0) {$v_2$};
		\node at (2.4,0) {$v_3$};
		\node at (3.4,0) {$v_4$};
		\node at (4.4,0) {$v_5$};
		\node at (5.4,0) {$v_6$};
		\node at (6.4,0) {$v_7$};
		\node at (7.4,0) {$v_8$};
		\end{tikzpicture}
	\end{center}
    \caption{If we switch the output order, it still represents the same glued forest as Figure \ref{fig:gluing}.}
    \label{fig:switch}
\end{figure}
The sign is determined similarly to \eqref{eqn:sign}, in this case of Figure \ref{fig:gluing}, the sign is $(-1)^{(|v_1|+|v_2|)|p^{2,3}|}$. In a formal description,  we apply order changes to the input forest (edges can cross when it is drawn in a row), then we glue $p^{k,l}$, such that there is no new edge crossing (this corresponds to that $p^{k,l}$ acts on  $k$ consecutive leaves), finally, we change the output order back to the chosen one (e.g.\ an order prescribed in Figure \ref{fig:switch}), the final sign is given by the product of the sign changes of the two order changes and the sign of the composition using the Koszul-Quillen convention. 
\begin{example}\label{ex:sign}
In the following, we work out an explicit example which involves the features explained above. We consider the following glued graph representing a component of $p^{2,3}$ acting on $v_1v_2v_3\odot v_4v_5v_6\odot v_7v_8$.
\begin{figure}[H]
\begin{overpic}[scale=0.8]
{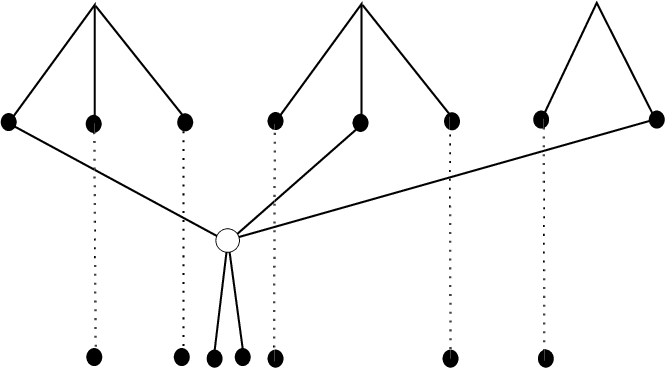}
\put (-4,36) {$v_1$}
\put (8,36) {$v_2$}
\put (22,36) {$v_3$}
\put (35,36) {$v_4$}
\put (48,36) {$v_5$}
\put (62,36) {$v_6$}
\put (75,36) {$v_7$}
\put (93,36) {$v_8$}
\put (33,23) {$p^{2,3}$}
\end{overpic}
\end{figure}
\noindent
We first switch the input leaves of the input forest, so that the insertion of $p^{2,3}$ will not create crossings at the input edges of $p^{2,3}$ as follows, this will pick up a sign $(-1)^{|v_5|(|v_2|+|v_3|+|v_4|)+|v_8|(|v_2|+|v_3|+|v_4|+|v_6|+|v_7|)}$.
\begin{figure}[H]
\begin{overpic}[scale=0.8]
{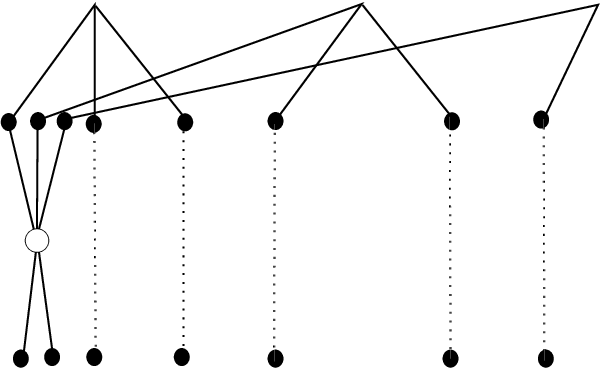}
\put (-4,36) {$v_1$}
\put (4,36) {$v_5$}
\put (9,36) {$v_8$}
\put (18,36) {$v_2$}
\put (30,36) {$v_3$}
\put (46,36) {$v_4$}
\put (75,36) {$v_6$}
\put (93,36) {$v_7$}
\put (-4,21) {$p^{2,3}$}
\end{overpic}
\end{figure}
\noindent
Now the composition with $p^{2,3}$ has no sign from the  Koszul-Quillen convention, the output is a forest, which is a single tree here, representing 
\begin{equation}\label{eqn:comp1}
    (-1)^{|v_5|(|v_2|+|v_3|+|v_4|)+|v_8|(|v_2|+|v_3|+|v_4|+|v_6|+|v_7|)}p^{2,3}(v_1v_5v_8)v_2v_3v_4v_6v_7.
\end{equation}
Alternatively, we can do the following switch of the input leaves, resulting in a sign change by $$(-1)^{|v_1|(|v_2|+|v_3|+|v_4|)+|v_8|(|v_6|+|v_7|)}.$$
\begin{figure}[H]
\begin{overpic}[scale=0.8]
{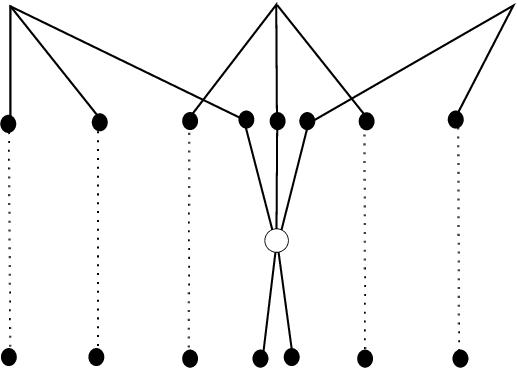}
\put (-2,43) {$v_2$}
\put (53,43) {$v_5$}
\put (60,43) {$v_8$}
\put (18,43) {$v_3$}
\put (32,43) {$v_4$}
\put (43,43) {$v_1$}
\put (70,43) {$v_6$}
\put (90,43) {$v_7$}
\put (56,21) {$p^{2,3}$}
\end{overpic}
\end{figure}
\noindent
Now the composition will pick up a sign $(-1)^{|p^{2,3}|(|v_2|+|v_3|+|v_4|)}$ by the Koszul-Quillen convention, so the output is 
$$(-1)^{|v_1|(|v_2|+|v_3|+|v_4|)+|v_8|(|v_6|+|v_7|)+|p^{2,3}|(|v_2|+|v_3|+|v_4|)}v_2v_3v_4 p^{2,3}(v_1v_5v_8)v_6v_7,$$
which is 
$$(-1)^{|v_1|(|v_2|+|v_3|+|v_4|)+|v_8|(|v_6|+|v_7|)+|p^{2,3}|(|v_2|+|v_3|+|v_4|)+(|p^{2,3}|+|v_1|+|v_5|+|v_8|)(|v_2|+|v_3|+|v_4|)}p^{2,3}(v_1v_5v_8)v_2v_3v_4v_6v_7.$$
This expression is exactly \eqref{eqn:comp1}, as there are sign cancelations. And indeed they are the same glued tree/forest, as the different ordering is just choosing a different representative in the tensor product. There is more freedom in terms of choosing representatives, e.g.\ switching the leaves in the input for $p^{k,l}$, and switching as in Figure \eqref{fig:switch}. They all result in choosing different representatives in the same equivalence class of the same glued forests. We can also have part of the forest not interacting with $p^{k,l}$ (but after choosing an order, it can have crossings with the part interacting with $p^{k,l}$), but this will not change the discussion. In other words, since everything is graded and super-commutative, the application of the Koszul-Quillen convention guarantees everything is well-defined as equivalence classes in the symmetric product.
\end{example}

Writing the forest using a glued graph as in Figure \ref{fig:gluing} contains slightly more refined information than just labeling the forest as in Figure \ref{fig:forest}, namely we keep track of which leaves are from $p^{k,l}$ in a representative. From the discussion in Eample \ref{ex:sign}, the following observation is tautological.
\begin{proposition}
    The output of a glued forest is well-defined in $EV$.
\end{proposition}
To enumerate through all admissible gluings, each output leaf and tree are considered as different.  However, we do not differentiate the input leaves of $p^{k,l}$. Therefore when we glue a $p^{k,l}$ component, we pick $k$ trees (this is $Sh(k,n-k)$ in \eqref{eq:p_hat2} ) from the forest and then one leaf from each chosen tree (that is $1\le i_j\le n_j$ in \eqref{eq:p_hat_1}) to glue to $p^{k,l}$. For example, in the situation of Figure \ref{fig:gluing}, we have $3*3+3*2+3*2=21$ direct ways to glue $p^{2,3}$. The ambiguity from choosing a representative of the input is then eliminated from summing over all possible gluing by the following tautological observation. 
\begin{proposition}
    When summed over all possible gluing of one $p^{k,l}$, the output is independent of the choice of representatives of the input forest.
\end{proposition}
Combining the above two propositions, we see that gluing forests corresponds to operations on $EV$.  Indeed, the language of trees and forests is just packaging the signs and components in \eqref{eq:p_hat}, \eqref{eq:p_hat_1} and \eqref{eq:p_hat2} by providing a geometric intuition. The translation into forests makes it easier to understand algebraic relations, for example, many relations come from interpreting the same glued forests in two different ways. The following is the dictionary of the algebraic formulae in \S \ref{ss:BL} in terms of forests, where the signs can be compared directly from the rule convention before Example \ref{ex:sign} and signs in \S \ref{ss:BL}.
\begin{center}
\begin{tabular}{c|c}
  $\widehat{p}^{k,l}$ on $\odot^kV$ & \makecell{(unique) gluing of $p^{k,l}$ to a  \\ forest of $k$ trees of single leaf} \\
  \hline
  $\widehat{p}^{k,l}$ in \eqref{eq:p_hat_1}  & \makecell{sum of gluing of $p^{k,l}$ to a \\ forest of $k$ trees to get a tree}    \\
  \hline
  $\widehat{p}^k$  & \makecell{sum of gluing of $p^{k,*}$ to a \\ forest of $k$ trees to get a tree}  \\
  \hline
   $\widehat{p}$ in  \eqref{eq:p_hat2}  & \makecell{sum of gluing of $p^{*,*}$ to a \\ forest to get a forest} \\
   \hline
   $\widehat{p}^{k,l}(w_1\odot \ldots \odot w_k)=0$ if $w_i\in \bk$ & \makecell{no way to glue $p^{k,l}$ to a forest \\with a tree without leaf to get a  tree}\\
\end{tabular}
\end{center}
We use $p^{k,l}_2:S^kV \to S^lV$ for $k\ge 1, l\ge 0$ to denote the sum of all \emph{connected} graphs with two levels of $\tikz\draw[black,fill=white] (0,-1) circle (0.4em);$ vertices, $k$ input vertices and $l$ output vertices as follows. 
	\begin{center}
		\begin{tikzpicture}
		\node at (0,0) [circle,fill,inner sep=1.5pt] {};
		\node at (1,0) [circle,fill,inner sep=1.5pt] {};
		\node at (2,0) [circle,fill,inner sep=1.5pt] {};
		\draw (0,0) to (0.5,-1) to (1,0);
		\draw (1,-4) to (1.5,-3) to (2,-4);
        \draw (0,-2) to (0.5,-1) to (1.5,-3) to (2,-2);
        \draw[dashed](0,-2) to (0,-4);
        \draw[dashed] (2,0) to (2,-2);
		\node at (0.5,-1) [circle, fill=white, draw, outer sep=0pt, inner sep=3 pt] {};
		\node at (1.5,-3) [circle, fill=white, draw, outer sep=0pt, inner sep=3 pt] {};
		\node at (0,-2) [circle,fill,inner sep=1.5pt] {};
		\node at (1,-2) [circle,fill,inner sep=1.5pt] {};
		\node at (2,-2) [circle,fill,inner sep=1.5pt] {};
        \node at (0,-4) [circle,fill,inner sep=1.5pt] {};
		\node at (1,-4) [circle,fill,inner sep=1.5pt] {};
		\node at (2,-4) [circle,fill,inner sep=1.5pt] {};
		\end{tikzpicture}
	\end{center}
In terms of a formula, we have that $p^{k,l}_2= \pi_{1,l}\circ \widehat{p}^2|_{\odot^k V}$, where $\pi_{1,l}$ denotes the projection $EV\to S^1SV \to S^lV$. This follows from that elements in $\odot^k V$ are represented by forests consisting of trees of a single leaf, to get a single tree after applying $\widehat{p}^2$, we must have the two $p^{*,*}$ components connected to each other directly. Note that, in the applications we have in mind, i.e.\ rational SFT, $p^{k,l}_2$ can be viewed as the codimension $1$ boundary of the rational SFT moduli space. The following proposition shows that the $BL_\infty$ algebra structure captures exactly such combinatorics.
\begin{proposition}\label{prop:2level}
	$\{p^{k,l}\}_{k\ge 1,\l\ge 0}$ forms a $BL_\infty$ algebra if and only if  $p^{k,l}_2=0$ for $k\ge 1, l\ge 0$. 
\end{proposition}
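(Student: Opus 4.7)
The plan is to prove the stronger identity $\widehat{p}\circ\widehat{p} = \widehat{p_2}$, where $\widehat{p_2}$ denotes the operator on $EV$ obtained by Leibniz-extending the collection $\{p_2^{k,l}\}_{k\ge 1, l\ge 0}$ via exactly the same three-rule prescription used to extend $\{p^{k,l}\}$ to $\widehat{p}$. Granted this identity, the proposition is immediate. The assignment $q \mapsto \widehat{q}$ is manifestly linear and injective: restricting $\widehat{q}^{k,l}:S^kSV \to SV$ to the subspace $\odot^k V \subset S^kSV$ and projecting to $S^lV \subset SV$ recovers $q^{k,l}$ itself. Hence $\widehat{p}\circ\widehat{p}=0$ iff $\widehat{p_2}=0$ iff $p_2^{k,l}=0$ for all $k,l$, which is precisely the statement of the proposition.

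To establish $\widehat{p}\circ\widehat{p} = \widehat{p_2}$, I would expand $\widehat{p}\circ\widehat{p}(w_1\odot\ldots\odot w_n)$ in the graphical language introduced above. Each summand is a two-level graph: a top $\tikz\draw[black,fill=white] (0,-1) circle (0.4em);$ representing some $p^{a_1,b_1}$ attached to certain top-row placeholders, and a bottom $\tikz\draw[black,fill=white] (0,-1) circle (0.4em);$ representing some $p^{a_2,b_2}$ whose inputs come either from outputs of the top operation or from the remaining top-row placeholders via identity strands, and whose outputs occupy the bottom row. I would partition these graphs into two classes: \emph{linked} configurations, in which the two operation vertices lie in a single connected component (i.e.\ at least one strand runs directly between them), and \emph{disjoint} configurations, in which they lie in distinct components.

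For the linked class, the subgraph consisting of the two operation vertices together with their connecting strands and external legs is, by definition, exactly one of the connected two-level graphs counted by some $p_2^{a,b}$. Reassembling the full configuration with the uninvolved placeholders of the $w_i$'s and the surrounding identity strands is precisely the recipe by which $\widehat{p_2}$ is built from $\{p_2^{k,l}\}$. Conversely, each connected two-level graph arises from exactly one ordered pair of basic operations, so the linked contributions to $\widehat{p}\circ\widehat{p}$ sum to $\widehat{p_2}$ on the nose.

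For the disjoint class, a configuration is determined by two non-interacting operations attached to disjoint subsets of placeholders; each such configuration appears in $\widehat{p}\circ\widehat{p}$ twice, once with either operation as the outer application. Because $|\widehat{p}|=1$, swapping the two applications introduces a Koszul sign of $(-1)^{1\cdot 1}=-1$, and the two contributions cancel pairwise. The principal obstacle will be the careful verification of signs in both cases: tracking the $\diamond$, $\square$, and $\bigcirc$ conventions through two nested Leibniz extensions is tedious but routine, and one expects the graphical description to be engineered precisely so that the signs produced by $\widehat{p}\circ\widehat{p}$ agree term-by-term with those in the Leibniz extension of $\{p_2^{k,l}\}$.
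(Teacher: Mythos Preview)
Your approach is correct and closely parallels the paper's, though you organize the argument differently. The paper proves the forward direction the same way you do (restrict and project), but for the converse it argues by induction on the word lengths $i_1,\dots,i_k$ of the input $S^{i_1}V\odot\cdots\odot S^{i_k}V$: the base case $\widehat p^2|_{\odot^k V}=\sum p_2^{i,l}\odot^{k-i}\Id$ is exactly your identity $\widehat p^2=\widehat{p_2}$ restricted to singleton clusters, and the inductive step decomposes $\widehat p^2((ab)\odot w_2\odot\cdots)$ according to how the two operations meet the first cluster, reducing all cases to smaller ones except the case where each operation hits a different factor of $ab$, which cancels by the same swap you describe. So the core mechanism---linked configurations assemble into $p_2$-type terms, disjoint configurations cancel in pairs because $|\widehat p|$ is odd---is identical; you prove the global identity $\widehat p^2=\widehat{p_2}$ directly, while the paper establishes it on $\odot^k V$ and then propagates $\widehat p^2=0$ inductively.

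One point worth making explicit in your write-up: for the bijection between linked configurations and terms of $\widehat{p_2}$ you need that the second operation takes \emph{exactly one} bullet from the merged cluster $M$ produced by the first operation (the no-cycle condition for the outer $\widehat p$), and that in the linked case this one bullet is an output of the first operation rather than an identity-passed leftover. This forces all remaining inputs of the second operation to come from clusters untouched by the first, so the combined two-level graph really is a $p_2^{k,l}$ attached to $k$ \emph{distinct} original clusters, matching the Leibniz prescription for $\widehat{p_2}$. Similarly, the validity of the swap in the disjoint case uses that the bipartite incidence graph between original clusters and the two operations is a forest, which is symmetric in the two operations. These are exactly the checks your last paragraph alludes to, and they go through.
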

\begin{proof}
	Since $p^{k,l}_2= \pi_{1,l}\circ \widehat{p}^2|_{\odot^k V}$, if $\{p^{k,l}\}_{k\ge 1,\l\ge 0}$ forms a $BL_\infty$ algebra then $p^{k,l}_2=0$ for $k\ge 1, l\ge 0$. Now assume  $p^{k,l}_2=0$ for $k\ge 1, l\ge 0$. In the glued forests representing $\widehat{p}^2$ acting on an element in $EV$, there are two cases: (1) the two $p^{*,*}$-components are glued to each other directly, those are zero because  $p^{k,l}_2=0$; (2) the two $p^{*,*}$ components are not glued to each other (but they could be in the same tree after gluing), by switching the levels of those two $p^{*,*}$-components, we see that they pair up and cancel with each other as $|p^{*,*}|=1$.
	\begin{center}
		\begin{tikzpicture}
		\node at (0,0) [circle,fill,inner sep=1.5pt] {};
		\node at (1,0) [circle,fill,inner sep=1.5pt] {};
		\node at (2,0) [circle,fill,inner sep=1.5pt] {};
        \node at (3,0) [circle,fill,inner sep=1.5pt] {};
		\draw (0,0) to (0.5,-1) to (1,0);
        \draw (0,-2) to (0.5,-1) to (1,-2);
		\draw (2,-4) to (2.5,-3) to (3,-4);
  	\draw (2,-2) to (2.5,-3) to (3,-2);
        \draw[dashed](0,-2) to (0,-4);
        \draw[dashed](1,-2) to (1,-4);
        \node at (1.5,-2) {$\ldots$};
        \node at (-.5,-2) {$\ldots$};
        \node at (3.5,-2) {$\ldots$};
        \draw[dashed](3,-2) to (3,0);
        \draw[dashed] (2,0) to (2,-2);
		\node at (0.5,-1) [circle, fill=white, draw, outer sep=0pt, inner sep=3 pt] {};
		\node at (2.5,-3) [circle, fill=white, draw, outer sep=0pt, inner sep=3 pt] {};
		\node at (0,-2) [circle,fill,inner sep=1.5pt] {};
		\node at (1,-2) [circle,fill,inner sep=1.5pt] {};
		\node at (2,-2) [circle,fill,inner sep=1.5pt] {};
        \node at (3,-2) [circle,fill,inner sep=1.5pt] {};
        \node at (0,-4) [circle,fill,inner sep=1.5pt] {};
		\node at (1,-4) [circle,fill,inner sep=1.5pt] {};
		\node at (2,-4) [circle,fill,inner sep=1.5pt] {};
        \node at (3,-4) [circle,fill,inner sep=1.5pt] {};
		\end{tikzpicture}
        \hspace{1cm}
  	\begin{tikzpicture}
		\node at (0,0) [circle,fill,inner sep=1.5pt] {};
		\node at (1,0) [circle,fill,inner sep=1.5pt] {};
		\node at (2,0) [circle,fill,inner sep=1.5pt] {};
        \node at (3,0) [circle,fill,inner sep=1.5pt] {};
		\draw (2,0) to (2.5,-1) to (3,0);
        \draw (2,-2) to (2.5,-1) to (3,-2);
		\draw (0,-4) to (.5,-3) to (1,-4);
  	\draw (0,-2) to (.5,-3) to (1,-2);
        \draw[dashed](0,0) to (0,-2);
        \draw[dashed](1,0) to (1,-2);
        \node at (1.5,-2) {$\ldots$};
        \node at (-.5,-2) {$\ldots$};
        \node at (3.5,-2) {$\ldots$};
        \draw[dashed](3,-2) to (3,-4);
        \draw[dashed] (2,-4) to (2,-2);
		\node at (2.5,-1) [circle, fill=white, draw, outer sep=0pt, inner sep=3 pt] {};
		\node at (0.5,-3) [circle, fill=white, draw, outer sep=0pt, inner sep=3 pt] {};
		\node at (0,-2) [circle,fill,inner sep=1.5pt] {};
		\node at (1,-2) [circle,fill,inner sep=1.5pt] {};
		\node at (2,-2) [circle,fill,inner sep=1.5pt] {};
        \node at (3,-2) [circle,fill,inner sep=1.5pt] {};
        \node at (0,-4) [circle,fill,inner sep=1.5pt] {};
		\node at (1,-4) [circle,fill,inner sep=1.5pt] {};
		\node at (2,-4) [circle,fill,inner sep=1.5pt] {};
        \node at (3,-4) [circle,fill,inner sep=1.5pt] {};
		\end{tikzpicture}
	\end{center}
\end{proof}

\subsection{$BL_\infty$ morphisms}
In the following, we define morphisms between $BL_{\infty}$ algebras. Given a family of operators $\{\phi^{k,l}:S^k V\to S^l V'\}_{k\ge 1,l\ge 0}$ of degree $0\in \Z_2$,  such that for any $v_1 \ldots  v_k\in S^kV$, there are at most finitely many $l$, such that $\phi^{k,l}(v_1 \ldots  v_k)\ne 0$. To explain the map $\widehat{\phi}:EV\to EV'$, we will use the description of graphs. To represent $\phi^{k,l}$, we use a graph similar to the one representing $p^{k,l}$ but replace $\tikz\draw[black,fill=white] (0,-1) circle (0.4em);$ by $\tikz\draw[black,fill=black] (0,-1) circle (0.4em);$ to indicate that they are maps of different roles.

Then $\widehat{\phi}$ is represented by the sum of all possible gluing of a whole layer or $\phi^{k,l}$, such that no cycles are created and every leaf of the input forest is glued.  Unlike the definition of $\widehat{p}$ that we need to glue exactly one $p^{k,l}$ graph, it is possible that we do not glue in any $\phi^{k,l}$ graphs.  This is the case when the input in $\odot^m\bk$, i.e.\ the input is represented by trees without leaves. In particular, $\widehat{\phi}$ is identity in such case, i.e.\ $\widehat{\phi}(1\odot \ldots \odot 1) = 1\odot \ldots \odot 1$. All the rules, like orders, signs, and the well-definedness on $EV$ are similar to the $\widehat{p}$ case. 
\begin{figure}[H]
	\begin{center}
		\begin{tikzpicture}
		\node at (0,0) [circle,fill,inner sep=1.5pt] {};
		\node at (1,0) [circle,fill,inner sep=1.5pt] {};
		\node at (2,0) [circle,fill,inner sep=1.5pt] {};
		\node at (3,0) [circle,fill,inner sep=1.5pt] {};
		\node at (4,0) [circle,fill,inner sep=1.5pt] {};
		\node at (5,0) [circle,fill,inner sep=1.5pt] {};
		\node at (6,0) [circle,fill,inner sep=1.5pt] {};
		\node at (7,0) [circle,fill,inner sep=1.5pt] {};
		
		\draw (0,0) to (1,1) to (1,0);
		\draw (1,1) to (2,0);
		\draw (3,0) to (4,1) to (4,0);
		\draw (4,1) to (5,0);
		\draw (6,0) to (6.5,1) to (7,0);

		\draw (2,0) to (2.5,-1) to (3,0);
		\draw (2,-2) to (2.5,-1) to (2.5,-2);
		\draw (2.5,-1) to (3,-2);
		\node at (2.5,-1) [circle, fill, draw, outer sep=0pt, inner sep=3 pt] {};
		
		\node at (0,-2) [circle,fill,inner sep=1.5pt] {};
		\node at (1,-2) [circle,fill,inner sep=1.5pt] {};
		\node at (2,-2) [circle,fill,inner sep=1.5pt] {};
		\node at (3,-2) [circle,fill,inner sep=1.5pt] {};
		\node at (4,-2) [circle,fill,inner sep=1.5pt] {};
		\node at (2.5,-2) [circle,fill,inner sep=1.5pt] {};
	
		\draw (0,0) to (0,-2);
		\draw (1,0) to (1,-2);
		\draw (4,0) to (4,-2);
		\draw (5,0) to (5.5,-1);
		\draw (5.5,-1) to (6,0);
		\draw (7,0) to (7,-1);
		
		\node at (0,-1) [circle, fill, draw, outer sep=0pt, inner sep=3 pt] {};
		\node at (1,-1) [circle, fill, draw, outer sep=0pt, inner sep=3 pt] {};
		\node at (4,-1) [circle, fill, draw, outer sep=0pt, inner sep=3 pt] {};
		\node at (5.5,-1) [circle, fill, draw, outer sep=0pt, inner sep=3 pt] {};
		\node at (7,-1) [circle, fill, draw, outer sep=0pt, inner sep=3 pt] {};
		\end{tikzpicture}
	\end{center}
	\caption{A component of $\widehat{\phi}$ }
\end{figure}

In terms of formulae, we first define $\widehat{\phi}^k:S^k S V \to S V^\prime$.
It is determined by the following.
\begin{enumerate}
	\item $\widehat{\phi}^{k+1}(w_1\odot\ldots \odot w_{k}\odot 1)=0$ for $k\ge 1$ and $\widehat{\phi}^1(1)=1$.
	\item $\widehat{\phi}^k:\odot^k V \subset S^k SV \to S V^\prime$ is defined by $\sum_{l\ge 0} \phi^{k,l}$.
	\item Let $\{i_j\}_{1\le j \le k}$ be a sequence of positive integers. We define $N:=\sum_{j=1}^{k}i_j$ and $N_i:=\sum_{j=1}^{i}i_j$.  Let $w_i=v_{N_{i-1}+1} \ldots  v_{N_i}$. The following sum is over all partitions $J_1\sqcup\ldots \sqcup J_b=\{1,\ldots, N\}$, such that the graph with $k+b+N$ vertices $A_1,\ldots, A_k, B_1,\ldots,B_b, v_1,\ldots, v_N$ with $A_i$ connected to $v_{N_{i-1}+1}, \ldots, v_{N_i}$ and $B_i$  connected to $v_j$ iff $j\in J_i$, is connected and has no cycles.
	$$\widehat{\phi}^k(w_1\odot\ldots \odot w_k)=\sum_{\substack{\text{admissible partitions }\\ J_1\sqcup\ldots \sqcup J_b}}\frac{(-1)^\bigcirc}{b!}\sum_{l_1,\ldots,l_b=0}^{\infty} \phi^{|J_1|,l_1}(v^{J_1}) * \ldots * \phi^{|J_b|,l_b}(v^{J_b}),$$
where $w_1 \ldots  w_k=(-1)^{\bigcirc}v^{J_1} \ldots  v^{J_b}$. There is no extra sign as we assume $\phi^{k,l}$ has degree $0\in \Z_2$. The reduction by $b!$ is because a different order of the partition does not count as a different gluing, but the order will affect the sign when viewing it as elements in the tensor product. The appearance of such factor is precisely the reason why we want to use tree descriptions, as algebraic formulae (when not phrased in an optimal form) might give us the \emph{wrong} impression that such structure can only defined over a coefficient ring where $b!\ne 0$. The number of such partitions divided by $b!$ is exactly the number of ways of gluing a layer of $\sum_{l=0}^{\infty}\phi^{*,l}$.
\end{enumerate}
\begin{figure}[H]
	\begin{center}
		\begin{tikzpicture}
		\node at (0,0) [circle,fill,inner sep=1.5pt] {};
		\node at (1,0) [circle,fill,inner sep=1.5pt] {};
		\node at (2,0) [circle,fill,inner sep=1.5pt] {};
		\node at (3,0) [circle,fill,inner sep=1.5pt] {};
		\node at (4,0) [circle,fill,inner sep=1.5pt] {};
		\node at (5,0) [circle,fill,inner sep=1.5pt] {};
		\node at (6,0) [circle,fill,inner sep=1.5pt] {};
		\node at (7,0) [circle,fill,inner sep=1.5pt] {};
		\node at (0.3,0) {$v_1$};
		\node at (1.3,0) {$v_2$};
		\node at (2.3,0) {$v_3$};
		\node at (3.3,0) {$v_4$};
		\node at (4.3,0) {$v_5$};
		\node at (5.3,0) {$v_6$};
		\node at (6.3,0) {$v_7$};
		\node at (7.3,0) {$v_8$};
		\node at (1.3,1) {$A_1$};
		\node at (4.3,1) {$A_2$};
		\node at (6.8,1) {$A_3$};
		\node at (0.4,-1) {$B_1$};
		\node at (1.4,-1) {$B_2$};
		\node at (2.9,-1) {$B_3$};
		\node at (4.4,-1) {$B_4$};
		\node at (5.9,-1) {$B_5$};
		\node at (7.4,-1) {$B_6$};

		\draw (0,0) to (1,1) to (1,0);
		\draw (1,1) to (2,0);
		\draw (3,0) to (4,1) to (4,0);
		\draw (4,1) to (5,0);
		\draw (6,0) to (6.5,1) to (7,0);

		\draw (2,0) to (2.5,-1) to (3,0);
		\node at (2.5,-1) [circle, fill, draw, outer sep=0pt, inner sep=3 pt] {};

		\draw (0,0) to (0,-1);
		\draw (1,0) to (1,-1);
		\draw (4,0) to (4,-1);
		\draw (5,0) to (5.5,-1);
		\draw (5.5,-1) to (6,0);
		\draw (7,0) to (7,-1);
		
		\node at (0,-1) [circle, fill, draw, outer sep=0pt, inner sep=3 pt] {};
		\node at (1,-1) [circle, fill, draw, outer sep=0pt, inner sep=3 pt] {};
		\node at (4,-1) [circle, fill, draw, outer sep=0pt, inner sep=3 pt] {};
		\node at (5.5,-1) [circle, fill, draw, outer sep=0pt, inner sep=3 pt] {};
		\node at (7,-1) [circle, fill, draw, outer sep=0pt, inner sep=3 pt] {};

		\end{tikzpicture}
	\end{center}
	\caption{An admissible partition}
\end{figure}

Then we define $\widehat{\phi}$ from $\widehat{\phi}^k$ just like the $L_\infty$ morphism $\widehat{\phi}$ built from $\phi^k$. Here is dictionary between algebraic description and tree description. 

\begin{center}
\begin{tabular}{c|c}
  $\widehat{\phi}^{k+1}(w_1\odot\ldots \odot w_{k}\odot 1)=0$ & \makecell{no way to glue a forest with at least two trees\\ with one of them having no leave, to a single tree } \\
  \hline
  $\widehat{\phi}^1(1)=1$  & \makecell{ trivial/empty gluing to a leafless tree}    \\
  \hline
  $\widehat{\phi}^k$ on $\odot^kV$ & \makecell{sum of gluings of $\phi^{k,*}$ to a  forest of \\ $k$ trees of single leaf, one gluing for each $*\in \N$}  \\
  \hline
  $\widehat{\phi}^k$    & \makecell{sum of gluing of $\phi^{k,*}$ to a \\ forest of $k$ trees to get a tree} \\
   \hline
   $\widehat{\phi}$  & \makecell{sum of gluing of $\phi^{*,*}$ to a \\ forest  to get a forest} \\
\end{tabular}
\end{center}

\begin{definition}\label{def:morphism}
 $\{\phi^{k,l}\}_{k\ge 1, l\ge 0}$ is a $BL_{\infty}$ morphism from $(V,p)$ to $(V',p')$ if $\widehat{\phi}\circ \widehat{p}=\widehat{p}'\circ \widehat{\phi}$ and $|\widehat{\phi}|=0$.
\end{definition}

The composition of $\phi:V\to V'$ and $\psi:V'\to V''$ is defined by 
$$(\psi\circ \phi)^{k,l}=\pi_{1,l}\circ \widehat{\psi}\circ \widehat{\phi}|_{\odot^kV}.$$
The more explicit algebraic description is as follows. Let $I=\{1,\ldots,k\}$ and $I_1\sqcup \ldots \sqcup I_a$ be any partition of $I$ (any partition is admissible as the input is a forest of single-leaf trees), then 
$$(\psi\circ \phi)^{k,l}(v_1 \ldots  v_k)= \pi_l\left(\sum_{\substack{\text{partitions }\\ I_1\sqcup\ldots \sqcup I_a}} \widehat{\psi}^a\left(\frac{(-1)^\bigcirc}{a!}\sum_{l_1,\ldots,l_a=0}^\infty \phi^{|I_1|,l_1}(v^{I_1})\odot \ldots \odot\phi^{|I_a|,l_a}(v^{I_a})\right)\right),$$
where $v_1 \ldots  v_k=(-1)^{\bigcirc}v^{I_1} \ldots  v^{I_a}$ and $\pi_l$ is the projection $S V'' \to S^l V''$. 
It is clear that the graph representing $\widehat{\psi}\circ \widehat{\phi}$ has no cycle, and that $(\psi\circ \phi)^{k,l}$ is represented by connected graphs without cycles glued from one level from $\phi$ and one level from $\psi$. It is clear from the graph description that $\widehat{\psi\circ \phi}=\widehat{\psi}\circ \widehat{\phi}$. It is simply two ways to interpret the same gluing of two layers.

Similar to the definition of $p^{k,2}$, we define $(\phi \circ p)^{k,l}:=\pi_{1,l}\circ \widehat{\phi}\circ \widehat{p}|_{\odot^k V}$, where $\pi_{1,l}$ denotes the projection $EV\to S^1SV \to S^lV$, and we define $(p \circ \phi)^{k,l}:=\pi_{1,l}\circ \widehat{p}\circ \widehat{\phi}|_{\odot^k V}$. Therefore, $(\phi \circ p)^{k,l}$ is the sum of all \emph{connected} graphs with first a level of a single $\tikz\draw[black,fill=white] (0,-1) circle (0.4em);$ vertex followed by a level of $\tikz\draw[black,fill=black] (0,-1) circle (0.4em);$ vertices, similarly for $(p \circ \phi)^{k,l}$. Similar to Proposition \ref{prop:2level}, we have the following, which allows us to only consider degeneration from connected curves in an analytical setup. 
\begin{proposition}\label{prop:mor2level}
$\{\phi^{k,l}\}_{k\ge 1,\l\ge 0}$ forms a $BL_\infty$ morphism from $\{V,\{p^{k,l}\}_{k\ge 1,\l\ge 0}\}$ to $\{U,\{q^{k,l}\}_{k\ge 1,\l\ge 0}\}$
if and only if  $(\phi \circ p)^{k,l} = (q \circ \phi)^{k,l}$ for $k\ge 1, l\ge 0$.     
\end{proposition}
To motivate the proof below, we first work out a simple example by applying $\widehat{\phi} \circ \widehat{p}$ and $\widehat{q}\circ \widehat{\phi}$ to $v_1v_2\odot u\in S^2V\odot S^1V$ for $v_1,v_2,u\in V$.
\begin{eqnarray}
    \widehat{\phi} \circ \widehat{p}(v_1v_2\odot u) & = & (-1)^{|v_1|} \left(\sum_{l=0}^\infty \phi^{1,l}(v_1) \sum_{l=0}^\infty(\phi\circ p)^{2,l}(v_2\odot u)\right)+(-1)^{|v_2||u|}\left( \sum_{l=0}^\infty(\phi\circ p)^{2,l}(v_1\odot u)\sum_{l=0}^\infty \phi^{1,l}(v_2)\right) \nonumber\\
    & & + \left(\sum_{l=0}^\infty (\phi\circ p)^{1,l}(v_1) \sum_{l=0}^\infty \phi^{1,l}(v_2)\right) \odot \sum_{l=0}^\infty \phi^{1,l}(u) \nonumber \\
    & & +  (-1)^{|v_1|}\left(\sum_{l=0}^\infty \phi^{1,l}(v_2) \sum_{l=0}^\infty (\phi\circ p)^{1,l}(v_2)\right) \odot \sum_{l=0}^\infty \phi^{1,l}(u)\nonumber\\
    & & (-1)^{|v_1|+|v_2|}\left(\sum_{l=0}^\infty \phi^{1,l}(v_1) \sum_{l=0}^\infty \phi^{1,l}(v_2)\right) \odot \sum_{l=0}^\infty (\phi\circ p)^{1,l}(u) \label{ex:phi_p}.
\end{eqnarray}
On the other hand, we have 
\begin{eqnarray}
     \widehat{q}\circ \widehat{\phi} (v_1v_2\odot u) & = &  (-1)^{|v_1|} \left(\sum_{l=0}^\infty \phi^{1,l}(v_1) \sum_{l=0}^\infty(q\circ \phi)^{2,l}(v_2\odot u)\right)+(-1)^{|v_2||u|}\left( \sum_{l=0}^\infty(q\circ \phi)^{2,l}(v_1\odot u)\sum_{l=0}^\infty \phi^{1,l}(v_2)\right) \nonumber \\
     & &  + \left(\sum_{l=0}^\infty (q\circ \phi)^{1,l}(v_1) \sum_{l=0}^\infty \phi^{1,l}(v_2)\right) \odot \sum_{l=0}^\infty \phi^{1,l}(u) \nonumber\\
    & & +  (-1)^{|v_1|}\left(\sum_{l=0}^\infty \phi^{1,l}(v_2) \sum_{l=0}^\infty (q\circ \phi)^{1,l}(v_2)\right) \odot \sum_{l=0}^\infty \phi^{1,l}(u) \nonumber\\
    & & (-1)^{|v_1|+|v_2|}\left(\sum_{l=0}^\infty \phi^{1,l}(v_1) \sum_{l=0}^\infty \phi^{1,l}(v_2)\right) \odot \sum_{l=0}^\infty (q\circ \phi)^{1,l}(u) \label{ex:q_phi}.
\end{eqnarray}
Therefore,  we have that $(\phi \circ p)^{k,l} = (q \circ \phi)^{k,l}$ for $k=1,2, l\ge 0$ implies that  $\widehat{\phi} \circ \widehat{p}(v_1v_2\odot u)= \widehat{q}\circ \widehat{\phi} (v_1v_2\odot u)$. Here we emphasize that $\phi^{1,l}(v_1/v_2/u)$ in \eqref{ex:phi_p} is represented by a dashed line (identity map) in the upper level glued with some $\phi^{1,l}$ in  $\widehat{\phi} \circ \widehat{p}$, while $\phi^{1,l}(v_1/v_2/u)$ in \eqref{ex:q_phi} is represented by some $\phi^{1,l}$ glued with a level of dashed lines in $\widehat{q}\circ \widehat{\phi}$. In terms of forests, they are identified through shifting the isolated $\phi^{1,l}$ components (in the sense of not being connected to the tree representing $(\phi \circ p)^{k,l}$) to the upper level and replacing  $(\phi \circ p)^{k,l}$ by $(q\circ \phi)^{k,l}$.

\begin{proof}[Proof of Proposition \ref{prop:mor2level}]
    If $\{\phi^{k,l}\}_{k\ge 1,\l\ge 0}$ forms a $BL_\infty$ morphism, then
    $$(\phi \circ p)^{k,l} - (q \circ \phi)^{k,l}= \pi_{1,l}\circ \left.\left(\widehat{\phi}\circ \widehat{p}-\widehat{q}\circ \widehat{\phi}\right)\right|_{\odot^kV}=0.$$
    Reversely, we assume that $(\phi \circ p)^{k,l} = (q \circ \phi)^{k,l}$. In the glued forests representing $\widehat{\phi}\circ \widehat{p}$, by looking at the levels containing $p^{*,*}$ and $\phi^{*,*}$, there is exactly one connected graph representing a component in $(\phi \circ p)^{k,l}$, and many $\phi^{*,*}$ components in the second level. Fixing the $(\phi \circ p)^{k,l}$ component, by shifting the remaining isolated $\phi^{*,*}$ components up to the upper level and replacing the $(\phi \circ p)^{k,l}$ component by $(q\circ \phi)^{k,l}$, we find a correspondence between components in $\widehat{\phi}\circ \widehat{p}$ involving $(\phi \circ p)^{k,l}$ and components in $\widehat{q}\circ \widehat{\phi}$ involving $(q\circ \phi)^{k,l}$ as in the concrete example before the proof (or Figure \ref{fig:pair}). They evaluate to the same value as $(\phi \circ p)^{k,l} = (q \circ \phi)^{k,l}$ and there is no extra sign from this shifting (i.e.\ changing the order of applying the operations) since $|\phi^{*,*}|=0$.
\end{proof}

\begin{figure}[H]
\begin{overpic}[scale=0.8]
{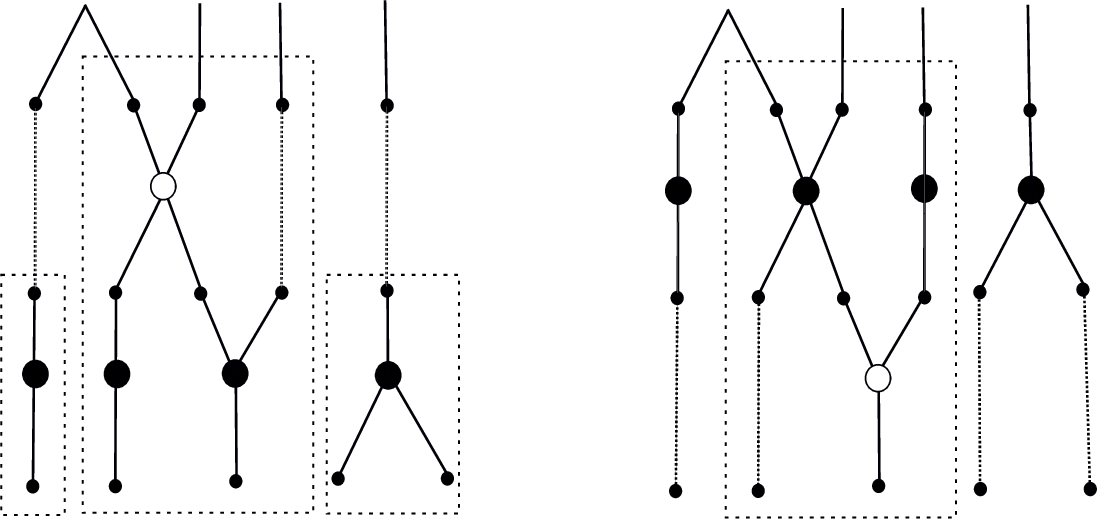}
\put (7,-2) {a component of $(\phi \circ p)^{3,2}$}
\put (65,-2) {a component of $(q \circ \phi)^{3,2}$}
\put (30,24) {an isolated $\phi^{1,2}$}
\put (-5,24) {an isolated $\phi^{1,1}$}
\end{overpic}
\caption{Components of $\widehat{\phi}\circ \widehat{p}$ and  $\widehat{q}\circ \widehat{\phi}$ and the pairing by shifting up isolated $\phi^{*,*}$ components and replacing the $(\phi \circ p)^{k,l}$ by $(q\circ \phi)^{k,l}$ }\label{fig:pair}
\end{figure}

\begin{remark}
	There are different notions of homotopies between $BL_\infty$ morphisms if we wish to define notions of homotopy equivalences of $BL_\infty$ algebras. In practice, we can not associate a canonical $BL_\infty$ algebra to a contact manifold but one depends on various choices and is only well-defined up to homotopy. However, for the purpose of this paper, as we are constructing functors from $\cont$ to a totally ordered set, homotopy invariance is not needed.  The homotopy in SFT is one of most subtle aspects of the theory both algebraically and analytically. Nevertheless, we have the following brief remarks on homotopy.
	\begin{enumerate}
		\item One can define a notion of homotopy, which is a homotopy on the bar/cobar complex. That is one can define a map by counting rigid but \emph{disconnected} curves in a one-parameter family. One advantage of such definition is that it is easier to construct as we will neglect the structures from each connected component.  Any homological structure on the level of bar/cobar complex will be an invariant. For example, the contact homology in \cite{pardon2019contact} used this notion of homotopy. However, homotopic augmentations in this sense do not give rise to homotopic linearized theory.
		\item Another notion of homotopy is defined through the notion of path objects, e.g.\ \cite[Definition 4.1]{cieliebak2015homological}, see also \cite[Definition 2.9]{siegel2019higher} for the homotopy in the $L_\infty$ context with a specific path object model. This definition is the right one to discuss linearized theory but is more involved to get in the construction of SFT. In particular, homotopic augmentations give rise to homotopic linearized theories with such notion of homotopy. Such homotopy is expected to be derived from the homotopy used in \cite[\S 2.4]{eliashberg2000introduction}. However, from the curve counting point of view, such construction is more subtle.   
	\end{enumerate}
\end{remark}

\subsection{Augmentations}
When $V=\{0\}$, it has a unique trivial $BL_\infty$ algebra structure by $p^{k,l}=0$. We use $\mathbf{0}$ to denote this trivial $BL_\infty$ algebra. Note that $\mathbf{0}$ is the initial object in the category of $BL_\infty$ algebras, with $\mathbf{0}\to V$ defined by $\phi^{k,l}=0$. 
\begin{definition}
A $BL_\infty$ augmentation is a $BL_\infty$ morphism $\epsilon:V \to \mathbf{0}$, i.e.\ a family of operators $\epsilon^k:S^kV \to \bk$ satisfying Definition \ref{def:morphism}.
\end{definition}

For a $BL_\infty$ algebra $V$, we define $E^kV=\overline{B}^kSV$, which is a filtration on $EV$ compatible with the differential $\widehat{p}$. Note that $E\mathbf{0}=\bk \oplus S^2\bk \oplus +\ldots$ with $\widehat{p}=0$, we have $H_*(E\mathbf{0})=E\mathbf{0}$. Similarly we have $H_*(E^k\mathbf{0})=E^k\mathbf{0}$ for all $k\ge 1$. We define $1_\mathbf{0}$ be the generator in $E^1\mathbf{0}$, then $1_{\mathbf{0}}\ne 0\in H_*(E^k\mathbf{0})$ for all $k\ge 1$. Then we define $1_V\in H_*(E^kV)$ to be the image of $1_{\mathbf{0}}$ under the chain map $E^k\mathbf{0}\to E^kV$ induced by the trivial $BL_\infty$ morphism $\mathbf{0}\to V$.

\begin{proposition}\label{prop:ob}
 If there exists $k\ge 1$, such that $1_V\in H_*(E^kV)$ is zero, then $V$ has no $BL_\infty$ augmentation.
\end{proposition}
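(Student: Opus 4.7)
The plan is to exploit functoriality of $\widehat{(-)}$ together with the filtration-preserving nature of $BL_\infty$ morphisms. First I would verify that for any $BL_\infty$ morphism $\phi:V\to V'$, the induced chain map $\widehat{\phi}:EV\to EV'$ sends $E^kV$ into $E^kV'$ for every $k\ge 1$. From the graph description, $\widehat{\phi}$ applied to an input with $n$ top clusters is a sum over admissible gluings with $b$ bottom clusters, one per $\phi^{k_j,l_j}$-vertex used (plus trivial pass-through clusters for top factors that lie in $\bk\subset SV$). Since each $\phi^{k,l}$ has $k\ge 1$ and must absorb at least one $\bullet$-carrying top cluster, and since top factors in $\bk$ pass through to bottom factors in $\bk$ without increasing count, the number of output clusters satisfies $b\le n$. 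Thus $\widehat{\phi}$ preserves the word length filtration and descends to maps $\widehat{\phi}_*:H_*(E^kV)\to H_*(E^kV')$.

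Next I would appeal to functoriality of $\widehat{(-)}$ (proved earlier via the graph description of composition: stacking two levels of connected, acyclic gluings yields exactly the graphs representing $\widehat{\psi\circ\phi}=\widehat{\psi}\circ\widehat{\phi}$). Applied to the trivial morphism $\iota:\mathbf{0}\to V$, this gives by definition $\widehat{\iota}_*(1_\mathbf{0})=1_V$ in $H_*(E^kV)$, and applied to any morphism $\phi:V\to V'$, it yields $\widehat{\phi}_*(1_V)=1_{V'}$ in $H_*(E^kV')$.

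Now suppose for contradiction that $\epsilon:V\to\mathbf{0}$ is a $BL_\infty$ augmentation while $1_V=0$ in $H_*(E^kV)$ for some $k\ge 1$. Consider the composition
\[
\mathbf{0}\xrightarrow{\iota} V\xrightarrow{\epsilon}\mathbf{0}.
\]
Since $\mathbf{0}$ has no nonzero component in $S^jV$ for $j\ge 1$, there is a unique morphism $\mathbf{0}\to\mathbf{0}$, and its induced map on $E\mathbf{0}$ is the identity (by the convention $\widehat{\phi}(1\odot\cdots\odot 1)=1\odot\cdots\odot 1$). Hence the composite $\widehat{\epsilon}_*\circ\widehat{\iota}_*$ on $H_*(E^k\mathbf{0})$ is the identity, so $1_\mathbf{0}\mapsto 1_\mathbf{0}\ne 0$. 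On the other hand, the first arrow sends $1_\mathbf{0}$ to $1_V=0$, so the composite sends $1_\mathbf{0}$ to $0$, a contradiction.

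The main obstacle I expect is the first step, namely a careful bookkeeping on the graph side that each $\phi^{k,l}$-vertex genuinely absorbs $\ge 1$ top cluster and produces exactly one output connected component, so that $b\le n$ and the filtration is preserved; once this is in place, the rest is a formal diagram chase through the initial object $\mathbf{0}$.
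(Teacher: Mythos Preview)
Your proposal is correct and follows essentially the same approach as the paper: compose the initial morphism $\mathbf{0}\to V$ with a hypothetical augmentation $\epsilon:V\to\mathbf{0}$, observe that the composite is the identity on $E^k\mathbf{0}$, and derive a contradiction from $1_V=0$ since $1_{\mathbf{0}}\ne 0$. One minor bookkeeping slip: it is not true that each output $\odot$-cluster corresponds to exactly one $\phi^{k,l}$-vertex (a connected component can contain several), but you don't need that---filtration preservation follows directly because $\widehat{\phi}$ is assembled from the $\widehat{\phi}^k:S^kSV\to SV'$ via the $L_\infty$ recipe, which the paper already notes preserves word length.
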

\begin{proof}
If there is an augmentation $\epsilon:V\to \mathbf{0}$, then the sequences of $BL_{\infty}$ morphisms $\mathbf{0}\to V \stackrel{\epsilon}{\to} \mathbf{0}$ induce a chain morphism $E^k\mathbf{0}\mapsto E^k V \mapsto E^k\mathbf{0}$. It is direct to check the composition is identity by definition. If $1_V\in H^*(E^k V)$ is zero, then we have a contradiction since $1_{\mathbf{0}}\ne 0 \in H^*(E^k\mathbf{0})$.
\end{proof}

\begin{definition}
We define the torsion of a $BL_\infty$ algebra $V$ to be 
$$\TT(V):= \min\{k-1| 1_V=0 \in H^*(E^kV),k\ge 1\}.$$
Here the minimum of an empty set is defined to be $\infty$.
\end{definition}
By definition, we have that $\TT(V)=0$ iff $1_V\in H^*(SV,\widehat{p}^1)$ is zero. Since $H^*(SV,\widehat{p}^1)$ is an algebra with $1_V$ a unit, we have $H^*(SV,\widehat{p}^1)=0$.  In the context of SFT, $\TT(V)=0$ iff the contact homology vanishes, i.e.\ algebraically overtwisted \cite{bourgeois2010towards}.

Since a $BL_\infty$ morphism preserves the word filtration on the bar complex, we know that if there is a $BL_\infty$ morphism from $V$ to $V'$ then $\TT(V)\ge \TT(V')$. Therefore we have the following obvious property, which is crucial for the invariant property for our applications in symplectic topology.
\begin{proposition}\label{prop:torsion}	
	If there are $BL_\infty$ morphisms between $V,V'$ in both directions, then we have $\TT(V)=\TT(V')$.
\end{proposition}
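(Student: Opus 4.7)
The plan is to upgrade the one-sided inequality asserted in the paragraph preceding the proposition into the desired equality, by applying it in both directions. So the real content to spell out is why a single $BL_\infty$ morphism $\phi\colon V\to V'$ already forces $\TT(V)\ge \TT(V')$; the equality then follows by symmetry.

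First, I would record that $\widehat{\phi}\colon EV\to EV'$ preserves the word-length filtration $E^\bullet$. From the graph picture of Definition \ref{def:morphism}, every configuration contributing to $\widehat{\phi}$ acting on $E^kV=\overline{B}^kSV$ has at most $k$ top-row clusters; each glued component $\widehat{\phi}^j$ takes $j$ input clusters to a single output cluster, and the ``no gluing'' components keep one input cluster as one output cluster. Hence the number of output $\odot$-clusters is bounded by $k$, and $\widehat{\phi}$ restricts to a chain map $E^kV\to E^kV'$ for every $k\ge 1$, inducing $\widehat{\phi}_*\colon H^*(E^kV)\to H^*(E^kV')$.

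Second, I would check functoriality of the distinguished class. By definition, $1_V\in H^*(E^kV)$ is the image of $1_{\mathbf{0}}\in E^k\mathbf{0}$ under the unique $BL_\infty$ morphism $\mathbf{0}\to V$ (uniqueness is forced by $S\mathbf{0}=\bk$, which makes $\phi^{k,l}=0$ the only choice). The composition $\mathbf{0}\to V\xrightarrow{\phi}V'$ is a $BL_\infty$ morphism from the initial object and thus coincides with the canonical map $\mathbf{0}\to V'$. Using $\widehat{\psi\circ\phi}=\widehat{\psi}\circ\widehat{\phi}$ and passing to homology gives $\widehat{\phi}_*(1_V)=1_{V'}$.

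Third, I conclude: if $\TT(V)=k-1<\infty$, then $1_V=0$ in $H^*(E^kV)$, so $1_{V'}=\widehat{\phi}_*(1_V)=0$ in $H^*(E^kV')$, whence $\TT(V')\le k-1=\TT(V)$; the case $\TT(V)=\infty$ is trivial. Applying the same argument to the reverse morphism $V'\to V$ yields $\TT(V)\le \TT(V')$, and together we obtain $\TT(V)=\TT(V')$. There is no serious obstacle in this argument; the only point that requires mild care is the first step, namely extracting the filtered chain map property from the coalgebra extension of $\{\phi^{k,l}\}$, but this is entirely analogous to (and in fact simpler than) the verification already implicit in Proposition \ref{prop:ob}.
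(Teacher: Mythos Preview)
Your proof is correct and follows the same approach as the paper. The paper's argument is essentially the sentence immediately preceding the proposition (a $BL_\infty$ morphism preserves the word-length filtration, so $\TT(V)\ge \TT(V')$), and you have simply spelled out the details the paper leaves implicit, including the functoriality check $\widehat{\phi}_*(1_V)=1_{V'}$ via initiality of $\mathbf{0}$.
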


Given a $BL_\infty$ augmentation $\epsilon$, we can linearize w.r.t. $\epsilon$ by the following procedure. More precisely, there is a change of coordinate to kill off all constant terms $p^{k,0}$. We define $F^{1,1}_{\epsilon}=\Id_V$ and $F^{k,0}_{\epsilon}=\epsilon^{k}$ and all other $F^{k,l}_{\epsilon}=0$. Then following the recipe of constructing $\widehat{\phi}$ from $\phi^{k,l}$, we can define $\widehat{F}_{\epsilon}$ on $EV$. Then $\widehat{F}_{\epsilon}$ preserves the word length filtration and on the diagonal $\pi_k\circ\widehat{F}_{\epsilon}|_{S^k S V}$ is $\odot^k \widehat{F}^1_{\epsilon}$, where $\widehat{F}^1_{\epsilon}$ is an algebra isomorphism determined by $\widehat{F}^1_{\epsilon}(x)=x+\epsilon^{1}(x)$ and $\pi_k$ is the projection $EV\to S^kSV$. Indeed the inverse is given by the following proposition. 

\begin{proposition}\label{prop:inverse}
	Let $\widehat{F}_{-\epsilon}$ denote the map on $EV$ defined by $F^{1,1}_{-\epsilon}=\Id_V$ and $F^{k,0}_{-\epsilon}=-\epsilon^{k,0}$ and all other $F^{k,l}_{-\epsilon}=0$, then $\widehat{F}_{-\epsilon}$ is the inverse of $\widehat{F}_{\epsilon}$.
\end{proposition}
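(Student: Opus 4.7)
The goal is to verify $\widehat{F}_{-\epsilon}\circ\widehat{F}_\epsilon=\Id_{EV}$ and $\widehat{F}_\epsilon\circ\widehat{F}_{-\epsilon}=\Id_{EV}$; the symmetry $\epsilon\leftrightarrow -\epsilon$ in the construction of $\widehat{F}_{(\cdot)}$ reduces both statements to the first. My plan is to unpack the composition using the graph calculus introduced in this section, separate the diagonal part (which preserves the cluster decomposition) from the off-diagonal part (which strictly merges clusters), and show that the former equals $\Id_{EV}$ while the latter vanishes by a sign-reversing involution.

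The structural input I would use is that the only non-zero components of $F_{\pm\epsilon}$ are $F^{1,1}_{\pm\epsilon}=\Id_V$ and $F^{k,0}_{\pm\epsilon}=\pm\epsilon^k$. By the no-cycle requirement of the graph calculus, every $F^{k,0}$-vertex with $k\ge 2$ must draw its $k$ inputs from $k$ distinct top-row clusters, so the effect of such a vertex is to contribute the scalar $\pm\epsilon^k$ on the chosen elements and amalgamate those $k$ clusters into one, while $F^{1,1}$- and $F^{1,0}$-vertices act within a single cluster. Each term of $\widehat{F}_{-\epsilon}(\widehat{F}_\epsilon(w_1\odot\cdots\odot w_n))$ is therefore indexed by a two-level graph: a lower level of $F_\epsilon$-vertices acting on the input clusters, followed by an upper level of $F_{-\epsilon}$-vertices acting on the intermediate output.

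For the diagonal contribution I restrict to graphs that use no $F^{k,0}$ with $k\ge 2$ at either level. Each input cluster is then processed only by $F^{1,1}_\epsilon$'s and $F^{1,0}_\epsilon$'s yielding the algebra automorphism $\widehat{F}^1_\epsilon$ of $SV$ (determined on generators by $v\mapsto v+\epsilon^1(v)\cdot 1_\bk$), and similarly on top with $\widehat{F}^1_{-\epsilon}$. A direct computation on $v\in V$ gives $\widehat{F}^1_{-\epsilon}(\widehat{F}^1_\epsilon(v))=(v+\epsilon^1(v)\cdot 1_\bk)-\epsilon^1(v)\cdot 1_\bk=v$, and this extends to $SV$ by multiplicativity; hence the diagonal contribution equals $\odot^n(\widehat{F}^1_{-\epsilon}\circ\widehat{F}^1_\epsilon)=\Id$ on $E^nV$.

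The main obstacle is the vanishing of the off-diagonal part, namely the signed sum over two-level graphs that use at least one $F^{k,0}$ with $k\ge 2$. My plan is to construct a sign-reversing fixed-point-free involution on this set by choosing a canonical distinguished merging vertex (for instance, the one incident to the lowest-indexed input cluster, with ties broken by a fixed ordering on the chosen input elements) and toggling whether it lies on the lower level (contributing $+\epsilon^k$) or the upper level (contributing $-\epsilon^k$); because only the sign of this component differs and the remaining part of the graph is combinatorially untouched, the two paired configurations produce opposite contributions of equal magnitude. The delicate points to verify are that the selection rule is genuinely invariant under the toggle, that the intermediate cluster types and the Koszul symbols $(-1)^\bigcirc$ and symmetry factors $1/b!$ match correctly between the paired graphs, and that the toggle respects the no-cycle condition (which it does, since the two positions differ only by which level hosts the merger). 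Combining this vanishing with the diagonal calculation gives $\widehat{F}_{-\epsilon}\circ\widehat{F}_\epsilon=\Id_{EV}$, and the symmetric statement follows by swapping $\epsilon$ with $-\epsilon$.
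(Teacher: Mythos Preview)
Your proposal is correct and follows essentially the same approach as the paper: both arguments use the graph calculus and a sign-reversing involution that pairs a configuration carrying an $\epsilon^{k}$-vertex at one level with the same configuration carrying $-\epsilon^{k}$ at the other level. The only cosmetic difference is that you handle the $\epsilon^{1}$ contributions separately via the algebra-automorphism identity $\widehat F^{1}_{-\epsilon}\circ\widehat F^{1}_{\epsilon}=\Id_{SV}$ and restrict the involution to merging vertices with $k\ge 2$, whereas the paper's pairing toggles \emph{any} $\epsilon^{k}$ (including $k=1$), so that after cancellation only the pure $\Id$-term survives.
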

\begin{proof}
In the gluing of forests representing $\widehat{F}_{-\epsilon}\circ \widehat{F}_{\epsilon}(x)$, the $\epsilon$ and $-\epsilon$ components are not connected directly as they have no outputs. Therefore we can shift up a $-\epsilon$ in the second layer to $\epsilon$, which is still an admissible gluing, or reversely, e.g.\ the figure below. 
		\begin{center}
			\begin{tikzpicture}
			\node at (0,0) {$\ldots$};
			\node at (1,0) {$\ldots$};
			\node at (4,0) {$\ldots$};
			\node at (5,0) {$\ldots$};
			\node at (2,0) [circle,fill,inner sep=1.5pt] {};
			\node at (3,0) [circle,fill,inner sep=1.5pt] {};
			\draw (0,0.1) to (1,1) to (1,0.1);
			\draw (1,1) to (2,0);
			\draw (3,0) to (4,1);
			\draw (4,0.1) to (4,1) to (5,0.1);
			
			\node at (2.5,-1) [circle, fill, draw, outer sep=0pt, inner sep=3 pt] {};
			\node at (2.7,-1.3) {$\epsilon^2$};
			\draw (2,0) to (2.5,-1) to (3,0);
			
			\node at (0,-2) {$\ldots$};
			\node at (1,-2) {$\ldots$};
			\node at (4,-2) {$\ldots$};
			\node at (5,-2) {$\ldots$};
			
		    \node at (0,-4) {$\ldots$};
			\node at (1,-4) {$\ldots$};
			\node at (4,-4) {$\ldots$};
			\node at (5,-4) {$\ldots$};
			
			\draw (0,-0.1) to (0,-1.9);
			\draw (1,-0.1) to (1,-1.9);
			\draw (4,-0.1) to (4,-1.9);
			\draw (5,-0.1) to (5,-1.9);
		    \draw (0,-2.1) to (0,-3.9);
			\draw (1,-2.1) to (1,-3.9);
			\draw (4,-2.1) to (4,-3.9);
			\draw (5,-2.1) to (5,-3.9);
			
			\node at (-0.5,0.5) {$\ldots$};
			\node at (5.5,0.5) {$\ldots$};
			\end{tikzpicture}
			\begin{tikzpicture}
			\node at (0,0) {$\ldots$};
			\node at (1,0) {$\ldots$};
			\node at (4,0) {$\ldots$};
			\node at (5,0) {$\ldots$};
			\node at (2,0) [circle,fill,inner sep=1.5pt] {};
			\node at (3,0) [circle,fill,inner sep=1.5pt] {};
			\draw (0,0.1) to (1,1) to (1,0.1);
			\draw (1,1) to (2,0);
			\draw (3,0) to (4,1);
			\draw (4,0.1) to (4,1) to (5,0.1);
			
			\node at (2,-1) [circle, fill, draw, outer sep=0pt, inner sep=3 pt] {};
			\node at (3,-1) [circle, fill, draw, outer sep=0pt, inner sep=3 pt] {};
			\node at (2,-2) [circle,fill,inner sep=1.5pt] {};
			\node at (3,-2) [circle,fill,inner sep=1.5pt] {};
			\draw (2,0) to (2,-2);
			\draw (3,0) to (3,-2);
			\node at (1.7,-1) {$\Id$};
			\node at (3.3,-1) {$\Id$};
			
			\node at (2.5,-3) [circle, fill, draw, outer sep=0pt, inner sep=3 pt] {};
			\node at (2.7,-3.3) {$-\epsilon^2$};
			\draw (2,-2) to (2.5,-3) to (3,-2);
			
			\node at (0,-2) {$\ldots$};
			\node at (1,-2) {$\ldots$};
			\node at (4,-2) {$\ldots$};
			\node at (5,-2) {$\ldots$};
			
			\node at (0,-4) {$\ldots$};
			\node at (1,-4) {$\ldots$};
			\node at (4,-4) {$\ldots$};
			\node at (5,-4) {$\ldots$};
			
			\draw (0,-0.1) to (0,-1.9);
			\draw (1,-0.1) to (1,-1.9);
			\draw (4,-0.1) to (4,-1.9);
			\draw (5,-0.1) to (5,-1.9);
			\draw (0,-2.1) to (0,-3.9);
			\draw (1,-2.1) to (1,-3.9);
			\draw (4,-2.1) to (4,-3.9);
			\draw (5,-2.1) to (5,-3.9);
			
			\node at (-0.5,0.5) {$\ldots$};
			\node at (5.5,0.5) {$\ldots$};
			\end{tikzpicture}
        \end{center}
    We can group all gluings into those that can be related by these moves. Then there is one group containing the unique gluing containing $\Id$ only. All other groups have $2^N$ gluings, where $N>0$ is the number of $\pm \epsilon$ involved. It is clear that we pair up gluings in a group such that they cancel with each other by a move at a chosen position, e.g. the figure above. Therefore we have $\widehat{F}_{-\epsilon}\circ \widehat{F}_{\epsilon}=\Id$, similarly for $\widehat{F}_{\epsilon}\circ \widehat{F}_{-\epsilon}=\Id$.
\end{proof}

We use $\widehat{F}_{\epsilon}$ as a change of coordinate on $EV$ and consider $\widehat{p}_{\epsilon}:=\widehat{F}_{\epsilon}\circ \widehat{p}\circ \widehat{F}_{-\epsilon}:EV \to EV$, then $\widehat{p}_{\epsilon}^2=0$. We can define 
$$p^{k,l}_{\epsilon}:= \pi_{1,l} \circ \widehat{F}_{\epsilon}\circ \widehat{p}\circ \widehat{F}_{-\epsilon}|_{\odot^k V},$$
where $\pi_{1,l}$ is the projection $EV \to S^1S^lV$. We claim that $p_{\epsilon}^{k,l}=\pi_{1,l}\circ \widehat{F}_{\epsilon}\circ \widehat{p}|_{\odot^k V}$. To see this, note that $\widehat{F}_{-\epsilon}$ restricted on $\odot^k V$ is the identity map plus extra terms landing in $(\odot^m V) \odot (\odot^l \bm{k})$ for $l+m\le k,l>0$. When we apply $\widehat{p}$ on the extra term, we must have the output is in $\Ima \widehat{p}|_{\odot^m V}\odot (\odot^l \bm{k})$. Then image after applied $\pi_{1,l}\circ \widehat{F}_{\epsilon}$ must be zero since it contains at least two $\odot$ components. In other words, those extra terms represent disconnected graphs in the description of $\widehat{F}_{\epsilon}\circ \widehat{p}\circ \widehat{F}_{-\epsilon}|_{\odot^k V}$, hence is projected to zero by $\pi_{1,l}$. Then $p^{k,l}_{\epsilon}$ is represented by the sum of following connected graphs without cycle with $k$ inputs, $l$ outputs, one $\bigcirc$ component and possibly several components from $\epsilon$.
\begin{figure}[H]
	\begin{center}
		\begin{tikzpicture}
		\node at (0,0) [circle,fill,inner sep=1.5pt] {};
		\node at (1,0) [circle,fill,inner sep=1.5pt] {};
		\node at (2,0) [circle,fill,inner sep=1.5pt] {};
		\node at (3,0) [circle,fill,inner sep=1.5pt] {};
		\node at (0,-2) [circle,fill,inner sep=1.5pt] {};
		\node at (1,-2) [circle,fill,inner sep=1.5pt] {};
		\node at (1.5,-2) [circle,fill,inner sep=1.5pt] {};
		\node at (2,-2) [circle,fill,inner sep=1.5pt] {};
		\node at (3,-2) [circle,fill,inner sep=1.5pt] {};
		\draw[dashed] (0,0) to (0,-2);
		\draw[dashed] (3,0) to (3,-2);
		\draw (1,0) to (1.5,-1) to (1,-2) to (0.5,-3) to (0,-2);
		\draw (2,0) to (1.5,-1) to (2,-2) to (2.5,-3) to (3,-2);
		\draw (1.5,-1) to (1.5,-2);
		\node at (0.5,-3) [circle, fill, draw, outer sep=0pt, inner sep=3 pt] {};
		\node at (2.5,-3) [circle, fill, draw, outer sep=0pt, inner sep=3 pt] {};
		\node at (1.5,-1) [circle, fill=white, draw, outer sep=0pt, inner sep=3 pt] {};
		\node at (0.5,-3.4) {$\epsilon^2$};
		\node at (2.5,-3.4) {$\epsilon^2$};
		\node at (2, -1) {$p^{2,3}$};
		\end{tikzpicture}
	\end{center}	
    \caption{A component of $p^{4,1}_{\epsilon}$}
\end{figure}

\begin{proposition}\label{prop:linear}
	$\widehat{p}_\epsilon$ is determined by $p^{k,l}_{\epsilon}$ following the same recipe for $\widehat{p}$ from $p^{k,l}$. Moreover, we have $p^{k,0}_{\epsilon}=0$ for all $k$.
\end{proposition}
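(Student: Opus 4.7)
My plan is to separate the two assertions of the proposition and handle them with different techniques: the first is formal and follows from the coderivation calculus on symmetric coalgebras, while the second is a direct translation of the augmentation equation $\widehat{\epsilon}\circ\widehat{p}=0$ into the graph formalism.

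For the first claim, I would observe that the maps $\widehat{F}_\epsilon$ and $\widehat{F}_{-\epsilon}$ are assembled from $\{F^{k,l}_{\pm\epsilon}\}$ by exactly the same recipe as $\widehat{\phi}$ is from $\{\phi^{k,l}\}$, so they are coalgebra endomorphisms of $EV=\overline{S}SV$, and by Proposition~\ref{prop:inverse} they are mutually inverse. Conjugating a coderivation by a coalgebra automorphism yields a coderivation, so $\widehat{p}_\epsilon=\widehat{F}_\epsilon\circ\widehat{p}\circ\widehat{F}_{-\epsilon}$ is a coderivation of $EV$. Any coderivation of $\overline{S}SV$ is, in turn, reconstructed from its composition with the rank-one projection $\pi_1\colon \overline{S}SV\to SV$ via exactly the Leibniz/coalgebra procedure used to build $\widehat{p}$ from $\{p^{k,l}\}$, and decomposing $\pi_1\circ\widehat{p}_\epsilon|_{\odot^kV}$ by output word-length into $S^lV$ recovers the $p^{k,l}_\epsilon$ by definition. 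This gives the first part.

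For the vanishing $p^{k,0}_\epsilon=0$, I would first recall the observation preceding the proposition: on $\odot^kV$ the $\widehat{F}_{-\epsilon}$ factor differs from the identity only by terms whose image under $\widehat{p}$ lies in a product of at least two $\odot$-factors and is therefore annihilated by $\pi_{1,l}\circ\widehat{F}_\epsilon$, so
$$ p^{k,0}_\epsilon=\pi_{1,0}\circ\widehat{F}_\epsilon\circ\widehat{p}|_{\odot^kV}. $$
Then, in the graph description, a configuration contributing to $\pi_{1,0}\circ\widehat{F}_\epsilon\circ\widehat{p}$ must produce a single output cluster with no surviving $V$-vertex; the identity component $F^{1,1}=\Id$ of $\widehat{F}_\epsilon$ would output a $V$-vertex, so every vertex emerging from the $p$-vertex must be absorbed by an $\epsilon^{m,0}$ component of $\widehat{F}_\epsilon$. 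The resulting graphs are precisely those appearing in $\pi_1\circ\widehat{\epsilon}\circ\widehat{p}|_{\odot^kV}$, and so
$$ p^{k,0}_\epsilon=\pi_1\circ\widehat{\epsilon}\circ\widehat{p}|_{\odot^kV}=0 $$
by the augmentation equation.

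The only point where I expect to slow down is verifying the identification $\pi_{1,0}\circ\widehat{F}_\epsilon=\pi_1\circ\widehat{\epsilon}$ on the image of $\widehat{p}|_{\odot^kV}$: one has to check that the $1/b!$ symmetry factors and Koszul signs produced by the admissible-partition formula agree on both sides. Because both sides arise as the \emph{connected piece} in the coalgebra formalism applied to the same data $\{\epsilon^{k,0}\}$, this match is forced by the symmetry of the recipes rather than by a separate sign-tracking computation; everything else is combinatorial bookkeeping in the graph calculus.
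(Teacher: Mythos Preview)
Your argument for $p^{k,0}_\epsilon=0$ is fine and coincides with the paper's: both identify $\pi_{1,0}\circ\widehat{F}_\epsilon\circ\widehat{p}$ with (the $\bk$-component of) $\widehat{\epsilon}\circ\widehat{p}$ and invoke the augmentation equation.

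The gap is in your treatment of the first assertion. The ``recipe'' building $\widehat{p}$ from $p^{k,l}$ has \emph{two} layers: an outer co-Leibniz rule on the $\odot$-product of $EV=\overline{S}SV$, and an inner Leibniz rule (property~\eqref{leb}) on the $*$-product inside each $SV$-factor. Your conjugation argument correctly shows that $\widehat{p}_\epsilon$ is a coderivation of $\overline{S}SV$, hence determined by the maps $\widehat{p}^k_\epsilon:=\pi_1\circ\widehat{p}_\epsilon|_{S^kSV}\colon S^kSV\to SV$. But an arbitrary coderivation of $\overline{S}SV$ need \emph{not} have components satisfying the inner Leibniz rule in each $SV$-argument; that is extra structure, not forced by coalgebra formalism. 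Your sentence ``Any coderivation of $\overline{S}SV$ is reconstructed \ldots\ via exactly the Leibniz/coalgebra procedure'' elides this distinction: it recovers the outer procedure, not the inner one. So you have not yet shown that $\widehat{p}^k_\epsilon$ is the Leibniz extension of its restriction $p^{k,l}_\epsilon$ to $\odot^kV$.

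The paper closes this by a direct graph argument rather than abstract coalgebra: in the expansion of $\widehat{F}_\epsilon\circ\widehat{p}\circ\widehat{F}_{-\epsilon}$ on a general input $w_1\odot\cdots\odot w_n$, every occurrence of an extra $\pm\epsilon$-component (either in a connected component disjoint from the $p$-vertex, or attached to the $p$-component beyond the subgraph defining $p^{k,l}_\epsilon$) pairs with its opposite sign and cancels, exactly as in Proposition~\ref{prop:inverse}. What survives is precisely the set of graphs produced by applying the full (inner Leibniz $+$ outer co-Leibniz) recipe to $p^{k,l}_\epsilon$. To salvage your abstract route you would need an additional argument that conjugation by $\widehat{F}_{\pm\epsilon}$ preserves the inner multiderivation property---e.g.\ by exploiting that $\widehat{F}^1_\epsilon$ is an algebra automorphism of $SV$ and analyzing how the higher $\widehat{F}^k_\epsilon$ interact with the $*$-product---but that is not automatic and you have not supplied it.
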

\begin{proof}
	In the graph representation of $\widehat{F}_{\epsilon}\circ \widehat{p}\circ \widehat{F}_{-\epsilon}$ on $S^{i_1}V\odot \ldots \odot S^{i_k}V$, there is exactly one component containing a $p^{k,l}_{\epsilon}$ as a subgraph. All the other $\pm \epsilon$ components (i.e.\ those not in $p^{k,l}_{\epsilon}$) are not connected to the $p^{*,*}$ component directly (they could be in the same tree), we call them isolated $\pm \epsilon$ components. Similar to the proof of Proposition \ref{prop:inverse}, we get collections of gluings that can be related by moving up/down the isolated $\pm \epsilon$ components. Hence those with isolated $\pm$ components in  $\widehat{F}_{\epsilon}\circ \widehat{p}\circ \widehat{F}_{-\epsilon}$ sum to zero, that is  $\widehat{F}_{\epsilon}\circ \widehat{p}\circ \widehat{F}_{-\epsilon}=\widehat{p}_{\epsilon}$ is determined by gluing only $p^{k,l}_{\epsilon}$. 
 
    Finally, because $\epsilon$ is an augmentation, we have $\pi^{1,0}\circ \widehat{F}_{\epsilon}\circ \widehat{p}=\widehat{\epsilon}\circ \widehat{p}=0$. Therefore $p^{k,0}_{\epsilon}=\pi^{1,0}\circ \widehat{F}_{\epsilon}\circ \widehat{p}|_{\odot^kV}$ for all $k$.
\end{proof}

As a corollary of Proposition \ref{prop:linear}, $\ell^k_{\epsilon}:=p^{k,1}_{\epsilon}$ defines an $L_\infty$ structure on $V[-1]$. Next we introduce the structure which will be relevant to the definition of planarity. Let $p_\bullet^{k,l}:S^kV \to S^lV, k\ge 1, l\ge 0$ be a family of linear maps. We can define $\widehat{p}^{k,l}_\bullet$ and $\widehat{p}_\bullet$ just like $\widehat{p}^{k,l}$ and $\widehat{p}$ (see \eqref{eq:p_hat}), i.e\ by component-wise Leibniz rule and coLeibniz rule with the modification that $|p_{\bullet}^{k,l}|$ is not necessarily $1\in \Z_2$. 

\begin{definition}\label{def:pointedmap}
	We say $\{p_\bullet^{k,l}\}$ is a pointed map, if $\widehat{p}_\bullet\circ \widehat{p}=(-1)^{|\widehat{p}_{\bullet}|}\widehat{p}\circ \widehat{p}_\bullet$.
\end{definition}

\begin{proposition}\label{prop:pointed_2level}
    $\{p_\bullet^{k,l}\}$ is a pointed map if and only if 
    $$\pi_{1,l}\circ \left.\left( \widehat{p}_\bullet\circ \widehat{p}-(-1)^{|\widehat{p}_{\bullet}|}\widehat{p}\circ \widehat{p}_\bullet\right)\right|_{\odot^kV} =0, \quad \forall k\ge 1, l\ge 0.$$
\end{proposition}
\begin{proof}
    The proof is identical to the proof of Proposition \ref{prop:2level}, the extra sign is consistent with the fact that $|p^{*,*}|=1$ in view of the paring from switching levels.
\end{proof}
The relation $\pi_{1,l}\circ \left( \widehat{p}_\bullet\circ \widehat{p}-(-1)^{|\widehat{p}_{\bullet}|}\widehat{p}\circ \widehat{p}_\bullet\right)\left|_{\odot^kV} \right.=0$ only involves degeneration from connected curves in an analytical setup. In applications, $p_\bullet^{k,l}$ will come from counting holomorphic curves with one interior marked point subject to a constraint from $H_*(Y)$. The degree of $p_\bullet$ is the same as the degree of the constraint. Typically we will only consider a point constraint, then the degree is $0$. Note that it does not define $BL_\infty$ morphisms as the combinatorics for packaging $\widehat{p}_\bullet$ is different from $\widehat{\phi}$. Nevertheless, $\widehat{p}_\bullet$ still defines a morphism on the bar complex and preserves the word length filtration.

Then by the same argument in Proposition \ref{prop:linear}, we can define $\widehat{p}_{\bullet,\epsilon}:=\widehat{F}_{\epsilon}\circ \widehat{p}_{\bullet}\circ \widehat{F}_{-\epsilon}$ and $\widehat{p}_{\bullet,\epsilon}$ is determined by $p^{k,l}_{\bullet,\epsilon}$, which is defined similarly to $p^{k,l}_{\epsilon}$. Note that we also have $\widehat{p}_{\epsilon}\circ \widehat{p}_{\bullet,\epsilon}=\widehat{p}_{\bullet,\epsilon}\circ \widehat{p}_{\epsilon}$. However, it is not necessarily true that $p^{k,0}_{\bullet,\epsilon}=0$. In fact, the failure of this property on the homological level will be another hierarchy that we are interested in.  We define $\ell_{\bullet,\epsilon}^{k,0}$ by $p^{k,0}_{\bullet,\epsilon}$. Then $\widehat{\ell}_{\bullet,\epsilon}:=\sum_{k\ge 0} \ell_{\bullet,\epsilon}^{k,0}$ defines a chain morphism $(\overline{S} V, \widehat{\ell}_{\epsilon}) \to \bk$. That $\widehat{\ell}_{\bullet,\epsilon}\circ \widehat{\ell}_{\epsilon}=0$ follows from $0=\pi_{\bk}\circ \widehat{p}_{\epsilon}\circ  \widehat{p}_{\bullet,\epsilon}=\pi_{\bk}\circ \widehat{p}_{\bullet,\epsilon}\circ \widehat{p}_{\epsilon}$ restricted to $\overline{S}V=\overline{S}S^1V\subset EV$ and $\pi_{\bk}$ is the projection from $EV$ to $\bk\subset S^1SV\subset EV$.  

\begin{definition}\label{def:order}
		Given a $BL_{\infty}$ augmentation and a pointed map $p_{\bullet}$, the $(\epsilon,p_{\bullet})$ order of $V$ is defined to be 
		$$O(V,\epsilon,p_{\bullet}):=\min \left\{k\left|1\in \Ima \widehat{\ell}_{\bullet,\epsilon}|_{H_*(\overline{B}^k V, \widehat{\ell}_{\epsilon})}\right. \right\},$$
		where the minimum of an empty set is defined to be $\infty$.
\end{definition}

Next, we need to compare the construction under $BL_{\infty}$ morphisms. Given a $BL_\infty$ morphism $\phi:(V,p) \to (V',q)$ and a family of morphisms $\phi^{k,l}_{\bullet}:S^k V \to S^l V^\prime$, then we can define $\widehat{\phi}_{\bullet}:EV\to EV'$ by the same rule of $\widehat{\phi}$ with exactly one $\phi^{k,l}_{\bullet}$ component and all the others are $\phi^{k,l}$ components. Since we must have one $\phi^{k,l}_{\bullet}$ component, we have $\widehat{\phi}_{\bullet}(1\odot \ldots \odot 1) = 0$. 

In terms of formulae, we define $\widehat{\phi}^k_{\bullet}:S^k S V \to S V^\prime$, which corresponds to the tree component of $\widehat{\phi}$ containing $\phi^{*,*}_{\bullet}$. It is determined by the following.
\begin{enumerate}
	\item $\widehat{\phi}_{\bullet}^{k}(w_1\odot\ldots \odot w_{k-1}\odot 1)=0$ for $k\ge 0$.
	\item $\widehat{\phi}^k_{\bullet}:\odot^k V \subset S^k SV \to S V^\prime$ is defined by $\sum_{l\ge 0} \phi^{k,l}_{\bullet}$.
	\item Let $\{i_j\}_{1\le j \le k}$ be a sequence of positive integers. We define $N:=\sum_{j=1}^{k}i_j$ and $N_i:=\sum_{j=1}^{i}i_j$.  Let $w_i=v_{N_{i-1}+1} \ldots  v_{N_i}$. The following sum is over all partitions $J_1\sqcup\ldots \sqcup J_b=\{1,\ldots, N\}$, such that the graph with $k+b+N$ vertices $A_1,\ldots, A_k, B_1,\ldots,B_b, v_1,\ldots, v_N$ with $A_i$ connected to $v_{N_{i-1}+1}, \ldots, v_{N_i}$ and $B_i$  connected to $v_j$ iff $j\in J_i$, is connected and has no cycles:
	$$\widehat{\phi}^k_{\bullet}(w_1\odot\ldots \odot w_k)=\sum_{\substack{\text{admissible partitions }\\ J_1\sqcup\ldots \sqcup J_b}}\frac{(-1)^\bigcirc}{(b-1)!}\sum_{l_1,\ldots,l_b=0}^{\infty} \phi^{|J_1|,l_1}_{\bullet}(v^{J_1}) * \phi^{|J_2|,l_2}(v^{J_2})* \ldots * \phi^{|J_b|,l_b}(v^{J_b}),$$
where $w_1 \ldots  w_k=(-1)^{\bigcirc}v^{J_1} \ldots  v^{J_b}$. There is no extra sign as we assume $\phi^{k,l}$ has degree $0\in \Z_2$. The reduction by $(b-1)!$ is because a different order of the partition for the $\phi^{*,*}$ components does not count as a different gluing. As a consequence, $\widehat{\phi}^k_{\bullet}$ is the sum of glued trees from a forest of $k$ tress glued with one $\phi^{*,*}_{\bullet}$ components and multiple $\phi^{*,*}$ components.
\end{enumerate}
Then we define $\widehat{\phi}_{\bullet}:EV\to EV'$ similarly to $\widehat{\phi}$, but with exactly one $\widehat{\phi}^{*}_{\bullet}$ term. In terms of formulae, we have 
\begin{eqnarray*}
\widehat{\phi}_{\bullet}(w_1\odot \ldots \odot w_n) & = & \sum_{\substack{k\ge 1\\ i_1+\ldots+i_k=n}} \frac{1}{(k-1)!}(\widehat{\phi}^{i_1}_{\bullet} \widehat{\phi}^{i_2} \ldots \widehat{\phi}^{i_k})(w_1\odot\ldots \odot w_n)
\end{eqnarray*}
where $(\widehat{\phi}^{i_1}_{\bullet} \widehat{\phi}^{i_2} \ldots \widehat{\phi}^{i_k})$ is defined after Definition \ref{def:L_mor} with $sV$ replaced by $SV$ here.

\begin{definition}\label{def:compatible}
Assume $p_{\bullet},q_{\bullet}$ are two pointed maps of $(V,p),(V',q)$ respectively, of the same degree, and $\phi$ is a $BL_\infty$ morphism from $(V,p)$ to $(V',q)$. We say $p_{\bullet},q_{\bullet},\phi$ are compatible, if there is a family of $\phi^{k,l}_{\bullet}$ such that $\widehat{q}_\bullet\circ \widehat{\phi}-(-1)^{|\widehat{q}_{\bullet}|}\widehat{\phi}\circ \widehat{p}_{\bullet}=\widehat{q}\circ \widehat{\phi}_{\bullet}-(-1)^{|\widehat{\phi}_{\bullet}|}\widehat{\phi}_{\bullet}\circ \widehat{p}$ and $|\widehat{\phi}_{\bullet}|=|\widehat{p}_{\bullet}|+1$.
\end{definition}

\begin{proposition}\label{prop:pqphi_2level}
    $p_{\bullet},q_{\bullet},\phi$ are compatible using $\phi_{\bullet}$ if and only if 
    $$\pi_{1,l}\left.\left( \widehat{q}_\bullet\circ \widehat{\phi}-(-1)^{|\widehat{q}_{\bullet}|}\widehat{\phi}\circ \widehat{p}_{\bullet}-\widehat{q}\circ \widehat{\phi}_{\bullet}+(-1)^{|\widehat{\phi}_{\bullet}|}\widehat{\phi}_{\bullet}\circ \widehat{p}\right)\right|_{\odot^kV}=0, \quad\forall k\ge 1, l\ge 0.$$
\end{proposition}
\begin{proof}
    The proof is similar to the proof of Proposition \ref{prop:mor2level} with one difference: in the middle two levels presenting 
    $$ \widehat{q}_\bullet\circ \widehat{\phi}-(-1)^{|\widehat{q}_{\bullet}|}\widehat{\phi}\circ \widehat{p}_{\bullet}-\widehat{q}\circ \widehat{\phi}_{\bullet}+(-1)^{|\widehat{\phi}_{\bullet}|}\widehat{\phi}_{\bullet}\circ \widehat{p},$$
    the $p^{*,*}$, $q^{*,*}$ components may not glue to the $\phi^{*,*}_{\bullet}$ components. However, those pieces correspond to  $(\phi \circ p)^{k,l}$ and $(q \circ \phi)^{k,l}$ along with one $\phi^{*,*}_{\bullet}$ and multiple $\phi^{*,*}$ components. Since $(\phi \circ p)^{k,l}=(q \circ \phi)^{k,l}$ from Proposition \ref{prop:mor2level}, those extra terms sum to zero as $|p^{*,*}|=1$.
\end{proof}

In practice, $\phi^{k,l}_{\bullet}$ is defined by counting connected rational holomorphic curves in the cobordism with a marked point passing through a cobordism between the constraints in the definition of $p_{\bullet},q_{\bullet}$. In our typical case of point constraint, the cobordism will be a path connecting the point constraints, where we have $|\widehat{\phi}_{\bullet}|=1$. In principle, we can consider the category consisting of pairs $(p,p_\bullet)$ with morphisms given by pairs $(\phi,\phi_\bullet)$ with a suitable definition of composition. Then the definition of orders is functorial. For our purpose, we only need the following property without the precise definition of a composition\footnote{It is easy to spell out the composition using the graph description.}.
\begin{proposition}\label{prop:order}
	Assume $\phi$ is a $BL_{\infty}$-morphism from $(V,p)$ to $(V',q)$ with pointed maps $p_{\bullet},q_{\bullet}$ of degree $0$ respectively, such that $p_{\bullet},q_{\bullet},\phi$ are compatible. Then for any $BL_{\infty}$ augmentation $\epsilon$ of $V'$, we have $O(V,\epsilon\circ \phi,p_{\bullet})\ge O(V',\epsilon,q_{\bullet})$.
\end{proposition}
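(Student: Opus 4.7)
The plan is to linearize both $\phi$ and $\phi_\bullet$ against the augmentations $\epsilon$ on $V'$ and $\epsilon' := \epsilon \circ \phi$ on $V$, producing a chain map together with a chain homotopy that intertwine the data defining $O$ and $\tilde O$, and then to run a short diagram chase on cycles of outer word length $\le k$.

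First I would set $\widehat{\phi}_\epsilon := \widehat{F}_\epsilon \circ \widehat{\phi} \circ \widehat{F}_{-\epsilon'}$ and $\widehat{\phi}_{\bullet,\epsilon} := \widehat{F}_\epsilon \circ \widehat{\phi}_\bullet \circ \widehat{F}_{-\epsilon'}$, mirroring the construction of $\widehat{p}_\epsilon$ and $\widehat{p}_{\bullet,\epsilon}$ in \S \ref{s2}. Sandwiching the identities $\widehat{\phi} \circ \widehat{p} = \widehat{q} \circ \widehat{\phi}$ and $\widehat{q}_\bullet \circ \widehat{\phi} - \widehat{\phi} \circ \widehat{p}_\bullet = \widehat{q} \circ \widehat{\phi}_\bullet + \widehat{\phi}_\bullet \circ \widehat{p}$ (Definition \ref{def:compatible} with $|q_\bullet|=0$, $|\widehat{\phi}_\bullet|=1$) by $\widehat{F}_{\pm\epsilon}$ and $\widehat{F}_{\mp\epsilon'}$, and applying Proposition \ref{prop:inverse}, gives $\widehat{\phi}_\epsilon \circ \widehat{p}_{\epsilon'} = \widehat{q}_\epsilon \circ \widehat{\phi}_\epsilon$ together with
\begin{equation*}
\widehat{q}_{\bullet,\epsilon} \circ \widehat{\phi}_\epsilon - \widehat{\phi}_\epsilon \circ \widehat{p}_{\bullet,\epsilon'} = \widehat{q}_\epsilon \circ \widehat{\phi}_{\bullet,\epsilon} + \widehat{\phi}_{\bullet,\epsilon} \circ \widehat{p}_{\epsilon'}.
\end{equation*}

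Next I would record three structural properties of $\widehat{\phi}_\epsilon$: (i) it preserves the outer word length filtration $\overline{B}^k SV$, which is immediate since $\widehat{F}_{\pm\epsilon}$, $\widehat{F}_{\pm\epsilon'}$ and $\widehat{\phi}$ all do; (ii) it restricts to the identity on $\bk \subset S^1 SV$, by direct graph inspection; (iii) $\phi^{k,0}_\epsilon := \pi_{1,0} \circ \widehat{\phi}_\epsilon|_{\odot^k V} = 0$, so $\widehat{\phi}_\epsilon(\overline{E}V) \subseteq \overline{E}V'$. Property (iii) is the analogue for $\widehat{\phi}_\epsilon$ of the vanishing $p^{k,0}_\epsilon = 0$ established in Proposition \ref{prop:linear}, and I would prove it by the same pairing-cancellation graph argument, using crucially that $\epsilon' = \epsilon \circ \phi$, so that otherwise-surviving $\bk$-valued terms coming from ``$-\epsilon'$'' components exactly cancel with those coming from threading ``$\phi$ into $\epsilon$''.

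For the $\tilde O$ inequality, let $\alpha \in \overline{B}^k \overline{S}V$ be a $\widehat{p}_{\epsilon'}$-cycle with $\pi_\bk(\widehat{p}_{\bullet,\epsilon'}(\alpha)) = 1$. By (i) and (iii), $\beta := \widehat{\phi}_\epsilon(\alpha) \in \overline{B}^k \overline{S}V'$, and by the chain map identity $\beta$ is a $\widehat{q}_\epsilon$-cycle. Applying $\pi_\bk$ to the homotopy identity evaluated at $\alpha$ gives
\begin{equation*}
\pi_\bk \widehat{q}_{\bullet,\epsilon}(\beta) = \pi_\bk \widehat{\phi}_\epsilon(\widehat{p}_{\bullet,\epsilon'}(\alpha)) + \pi_\bk \widehat{q}_\epsilon \widehat{\phi}_{\bullet,\epsilon}(\alpha) + \pi_\bk \widehat{\phi}_{\bullet,\epsilon} \widehat{p}_{\epsilon'}(\alpha);
\end{equation*}
the last term vanishes since $\alpha$ is a cycle, the middle term vanishes because $q^{k,0}_\epsilon = 0$ forces $\pi_\bk \circ \widehat{q}_\epsilon = 0$ (producing an outer-length-$1$, inner-length-$0$ output would require a $q^{k,0}_\epsilon$-component), and the first term equals $1$ by (ii) and (iii). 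This shows $\tilde O(V,\epsilon \circ \phi,p_\bullet) \ge \tilde O(V',\epsilon,q_\bullet)$. The $O$ inequality follows from the same chase carried out on the $L_\infty$ quotient $(\overline{B}^k V, \widehat{\ell}_\epsilon)$ identified in the proof of Proposition \ref{prop:inequ}: the components $\phi^{k,1}_\epsilon$ induce an $L_\infty$ chain map and $\phi^{k,0}_{\bullet,\epsilon}$ induces the homotopy into $\bk$. The main obstacle is verifying property (iii); once it is in hand, all remaining work is essentially formal diagram manipulation.
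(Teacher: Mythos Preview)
Your proposal is correct and follows essentially the same approach as the paper: define $\widehat{\phi}_\epsilon$ and $\widehat{\phi}_{\bullet,\epsilon}$ by conjugation with $\widehat{F}_{\pm\epsilon}$, derive the chain-map and homotopy identities, establish the key vanishing $\phi^{k,0}_\epsilon = 0$ by the pairing-cancellation argument (using $\epsilon' = \epsilon\circ\phi$), and then conclude both inequalities from filtration preservation and the induced $L_\infty$ quotient. Your write-up is in fact slightly more explicit than the paper's in unpacking the $\pi_\bk$-chase for $\tilde O$, but the ingredients and logic are identical.
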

\begin{proof}
	By the definition of compatibility, we compose $\widehat{F}_{\epsilon}$ in the front and $\widehat{F}_{-\epsilon\circ \phi}$ in the bottom to have
	\begin{equation}\label{eqn:homotopy}
	    \widehat{q}_{\bullet,\epsilon}\circ \widehat{F}_{\epsilon}\circ \widehat{\phi}\circ \widehat{F}_{-\epsilon\circ \phi}-\widehat{F}_{\epsilon}\circ \widehat{\phi}\circ \widehat{F}_{-\epsilon \circ \phi}\circ \widehat{p}_{\bullet,\epsilon\circ \phi}=\widehat{F}_{\epsilon}\circ \widehat{\phi}_{\bullet}\circ \widehat{F}_{-\epsilon \circ \phi}\circ \widehat{p}_{\epsilon\circ \phi}+\widehat{q}_{\epsilon}\circ \widehat{F}_{\epsilon}\circ \widehat{\phi}_{\bullet}\circ \widehat{F}_{-\epsilon\circ \phi}.
	\end{equation}

	We have a diagram (not commutative) 
	$$
		\xymatrix{
		\overline{S} V \ar[rr]^{\widehat{\ell}_{\bullet,\epsilon\circ \phi}} \ar[d]^{\widehat{\phi}^1_{\epsilon}} & &\bk \ar[d]^{\Id} \\
		\overline{S} V' \ar[rr]^{\widehat{\ell}_{\bullet,\epsilon}} & & \bk
	}	
	$$
	where $\widehat{\phi}^1_{\epsilon}$ is determined by the $L_\infty$ morphism $\phi^{k,1}_{\epsilon}$ since $\phi^{k,0}_{\epsilon}=0$. In other words, $\widehat{\phi}^1_{\epsilon}|_{\odot^k V} = \sum_{l\ge 1}\pi_{l,1}\circ \widehat{\phi}_{\epsilon}|_{\odot^k V}$, where $\pi_{l,1}$ is the projection from $EV$ to $\odot^l V$. We claim that the diagram is commutative up to homotopy $\widehat{\phi}^0_{\bullet,\epsilon}:\overline{S}V\to \bk$ defined by $\sum_{k\ge 1} \phi^{k,0}_{\bullet,\epsilon}$, where $\phi^{k,0}_{\bullet,\epsilon}=\pi_{\bk}\circ \widehat{F}_{\epsilon}\circ \widehat{\phi}_{\bullet}\circ \widehat{F}_{-\epsilon\circ \phi}|_{\odot^kV}$.  Indeed, this homotopy relation is exactly \eqref{eqn:homotopy} restricted to $\overline{S}V=\oplus_{k=1}^\infty \odot^k V$ and then projected to $\bk$. It is clear that $\widehat{\phi}^1_{\epsilon},\widehat{\phi}^0_{\bullet,\epsilon}$ preserve the length filtration,  therefore  $1\in \Ima \widehat{\ell}_{\bullet,\epsilon\circ \phi}|_{H_*(\overline{B}^kV,\widehat{\ell}_{\epsilon\circ \phi})}$ implies that $1\in \Ima \widehat{\ell}_{\bullet,\epsilon}|_{H_*(\overline{B}^kV',\widehat{\ell}_{\epsilon})}$. Hence $O(V,\epsilon\circ \phi,p_{\bullet})\ge O(V',\epsilon,q_{\bullet})$.
\end{proof}	
\begin{remark}
    There are various generalizations of $O(V,\epsilon,p_{\bullet})$, some of the associated spectral invariants are Siegel's higher symplectic capacities with multiple point constraints \cite{siegel2019higher}, see \cite[\S 5.1]{supp} for details of the construction and relation.
\end{remark}

\section{Rational symplectic field theory}\label{s3}
In this section, we explain the construction of rational symplectic field theory (RSFT) as $BL_\infty$ algebras. RSFT was original packaged into a Poisson algebra with a distinguished odd degree class $\bm{h}$ such that $\{\bm{h},\bm{h}\}=0$ in \cite[\S 2.1]{eliashberg2000introduction}. However for the purpose of building hierarchy functors from contact manifolds, it is useful to reformulate RSFT as $BL_\infty$ algebras. It is important to note that we will use same moduli spaces of holomorphic curves as the original RSFT but reinterpret the relations as other algebraic structures. 
\subsection{Notations on symplectic topology}
We first briefly recall the basics of symplectic and contact topology. A (co-oriented) contact manifold $(Y,\xi)$ is a $2n-1$ dimensional manifold with a (co-oriented) hyperplane distribution $\xi$ such that there is a one form $\alpha$ with $\xi=\ker \alpha$, $\alpha \wedge (\rd \alpha)^{n-1}\ne0$ and $\alpha$ induces the co-orientation on $\xi$. Such one form $\alpha$ is called a contact form and we will call $(Y,\alpha)$ a strict contact manifold. The manifold $Y$ will always assumed to be closed. Given a contact form $\alpha$, the Reeb vector field $R_{\alpha}$ is characterized by $\alpha(R_{\alpha})=1,\iota_{R_\alpha}\rd \alpha=0$. We say a contact form $\alpha$ is non-degenerate iff all Reeb orbits are non-degenerate. Any contact form can be perturbed into a non-degenerate contact form and in particular, every contact manifold admits non-degenerate contact forms. Throughout this paper $(Y,\alpha)$ is always assumed to be a strict contact manifold with a non-degenerate contact form unless specified otherwise. 

\begin{definition}
	A compact symplectic manifold $(X,\omega)$ with $\partial W=Y_-\sqcup Y_+$ is
	\begin{enumerate}
		\item a strong cobordism from $(Y_-,\xi_-)$ to $(Y_+,\xi_+)$ iff $\omega=\rd \lambda_{\pm}$ near $Y_{\pm}$ with $\xi_{\pm}=\ker \lambda_{\pm}$ such that, if we define $V_{\pm}$ by $\iota_{V_{\pm}}\omega=\lambda_{\pm}$, then $V_+$ points out along $Y_+$ and $V_-$ points in along $Y_-$;
		\item an exact cobordism from $(Y_-,\xi_-)$ to $(Y_+,\xi_+)$  if moreover $\omega=\rd \lambda$ on $X$. The vector field $V$ defined by $\iota_V \omega=\lambda$ is called the Liouville vector field;
		\item a Weinstein cobordism from $(Y_-,\xi_-)$ to $(Y_+,\xi_+)$  if moreover the Liouville vector field is gradient like for some Morse function $\phi$ with $Y_{\pm}$ as the regular level sets of maximum/minimum value for $\phi$.
	\end{enumerate}
\end{definition}

We say a cobordism $(X,\omega)$ from $(Y_-,\alpha_-)$ to $(Y,\alpha_+)$ is strict iff $\lambda_{\pm}|_{Y_{\pm}}=\alpha_{\pm}$. It is clear from definition that we can glue strict cobordisms to get a strict cobordism.  In general, given two exact cobordisms $W_1,W_2$ from $Y_1,Y_2$ to $Y_2,Y_3$ respectively, the composition $W_2\circ W_1$ from $Y_1$ to $Y_3$ is not uniquely defined, but up to homotopies of Liouville structures \cite[\S 11.2]{cieliebak2012stein}, it is well-defined. The central geometric object of our interests is the following cobordism category.

\begin{definition}
	The exact cobordism category of contact manifolds $\cont$ is defined to be the category whose objects are contact manifolds and morphisms are exact cobordisms up to homotopy. The composition is given by gluing cobordisms. We will use $\cont^{2k-1}$ to denote the subcategory of $2k-1$ dimensional contact manifolds. Similarly, we use $\cont_W$ to denote the Weinstein cobordism category and $\cont_S$ to denote the strong cobordism category. 
\end{definition}
All of the categories above have monoidal structures given by the disjoint union. It is clear that we have natural functors $\cont_{W}\to \cont \to \cont_S$, which are identities on the object level.

\begin{remark}
	There is a forgetful functor from $\cont$ to the cobordism category of almost contact manifolds, where the cobordisms are almost symplectic cobordisms. In the case of $\cont_W$, there is a forgetful map to the almost Weinstein cobordism category of almost contact manifolds. These are purely topological objects, the latter was studied thoroughly in \cite{bowden2014topology,bowden2015topology}.
\end{remark}

Roughly speaking, the principle in the symplectic cobordism category is that the complexity of contact geometry increases in the direction of the cobordism. In view of this, we can introduce the following category, which only remembers if there exists a cobordism.
\begin{definition}
	We define $\cont_{\le }$ to be the category of contact manifolds, such that there is at most one arrow between two contact manifolds and the arrow exists iff there is an exact cobordism. Similarly, we can define $\cont_{\le ,W}$ and $\cont_{\le,S}$.
\end{definition}

It is a natural question to ask whether $\cont_{\le }$ is a poset. It is clear that we only need to prove that $Y_1\le Y_2, Y_2\le Y_1$ implies that $Y_1=Y_2$. Unfortunately, this is not the case, as we may take $Y_1,Y_2$ as two different $3$-dimensional overtwisted contact manifolds \cite{etnyre2002symplectic} or suitable flexibly fillable contact manifolds. One extreme case is that $Y_1$ can be different from $Y_2$ (as \emph{smooth} manifolds) even if the cobordisms are inverse to each other \cite{courte2014contact}. However, we can mod out this ambiguity to get a poset. It is clear that the existence of (exact) cobordisms between $Y_1,Y_2$ in both directions defines an equivalence relation. We denote by $[Y]$ the corresponding equivalence class of the contact manifold $Y$.

\begin{definition}
	We define $\overline{\cont}_{\le}$ to be the poset, such that the objects are the equivalence classes of contact manifolds with respect to the above equivalence relation, and there is a morphism $[Y_1]\le [Y_2]$ iff there is an exact cobordism from $Y_1$ to $Y_2$. Similarly, we can define the posets $\overline{\cont}_{\le,W}$ and $\overline{\cont}_{\le,S}$.
\end{definition}
Under this condition, all three dimensional overtwisted contact manifolds become the same least object in $\overline{\cont}_{\le}$ \cite{etnyre2002symplectic}. In higher dimensions, overtwisted contact manifolds are least objects up to topological constrains \cite{eliashberg2015making}. It is clear that we have functors $\cont \to \cont_{\le} \to \overline{\cont}_{\le}$. The theme of this paper is constructing functors from $\cont$ to some totally ordered set. Since it always descends to $\overline{\cont}_{\le}$, results in this paper can be understood as some structures on the poset $\overline{\cont}_{\le}$. It is also a natural question on whether $\overline{\cont}_{\le}$ is a totally ordered set\footnote{For the total order, we will not consider $\emptyset$ as an object in $\overline{\cont}_{\le }$, as overtwisted contact manifolds and $\emptyset$ are not comparable in $\overline{\cont}_{\le }$, by well-known results.}, which is addressed in the negative \cite{gironella2021exact} using exact orbifolds.

An exact (Weinstein, strong) cobordism from $\emptyset$ to $Y$ is called an exact (Weinstein, strong) filling of $Y$. We also introduce a category $\cont_*$ as the under category under the empty set.
\begin{definition}
	The objects of $\cont_*$ are pairs $(Y,W)$, where $W$ is an exact filling of $Y$ up to homotopy. A morphism from $(Y_1,W_1)$ to $(Y_2,W_2)$ is an exact cobordism $X$ from $Y_1$ to $Y_2$ such that $X\circ W_1 =W_2$ up to homotopy, or equivalently an exact embedding of $W_1$ into $W_2$ up to homotopy. 
\end{definition}

\begin{example}
The	symplectic cohomology is a functor from $\cont_*$ to the category of BV algebras, where the functoriality follows from the Viterbo transfer map. The $S^1$-equivariant symplectic cochain complex is also a functor $\cont_*$ to the homotopy category of $S^1$-cochain complexes. The order of dilation and the order of semi-dilation in \cite[Corollary D]{zhou2019symplectic} are functors from $\cont_*$ to $\N\cup \{\infty\}$.
\end{example}

\subsection{Geometric setups for holomorphic curves}
As usual, an almost complex structure $J$ on the symplectization $(\R_s \times Y, \rd(e^s \alpha))$ is said to be admissible iff 
\begin{enumerate}
	\item $J$ is invariant under the $s$-translation and restricts to a tame almost complex structure on $(\xi = \ker \alpha,\rd \alpha)$,
	\item $J$ sends $\partial_s$ to the Reeb vector $R_\alpha$.
\end{enumerate}
Let $(W,\lambda)$ be an exact filling and $(X,\lambda)$ an exact cobordism. An almost complex structure $J$ on completions $(\widehat{W},\widehat{\lambda})$ or  $(\widehat{X},\widehat{\lambda})$ is admissible iff
\begin{enumerate}
	\item $J$ is tame for $\rd\widehat{\lambda}$,
	\item $J$ is admissible on cylindrical ends.
\end{enumerate}
Occasionally, we will also consider strong fillings $(W,\omega)$, where the definition of admissible almost complex structure on $\widehat{W}$ is similar. For each Reeb orbit $\gamma$, we can fix a basepoint $b_{\gamma}$ on the image. Now fix an admissible $J$, and consider two collections of Reeb orbits $\gamma_1^+, \ldots, \gamma_{s^+}^+$ and $\gamma_1^{-}, \ldots, \gamma_{s^-}^-$, possibly with duplicates. A pseudoholomorphic map in the symplectization $\R \times Y$ or completions $\widehat{W},\widehat{X}$ with positive asymptotics $\gamma_1^+,\ldots,\gamma^+_{s^+}$ and negative asymptotics $\gamma_1^-,\ldots, \gamma_{s^-}^-$ consists of:
\begin{enumerate}
	\item a sphere $\Sigma$, with a complex structure denoted by $j$,
	\item a collection of pairwise distinct points $z_1^+, \ldots, z^+_{s^+}, z^-_{1}, \ldots, z^-_{s^-} \in \Sigma$, each equipped with an asymptotic marker, i.e.\ a direction in the tangent sphere bundle $S_{z^{\pm}_i}\Sigma$,
	\item a map $\dot{\Sigma} \to \R \times Y, \widehat{W},\widehat{X}$ satisfying $\rd u \circ j = J \circ \rd u$, where $\dot{\Sigma}$ denotes the punctured Riemann surface $\Sigma \backslash \{z_1^+, \ldots, z^+_{s^+},z^-_{1},\ldots, z^-_{s^-}\}$,
	\item for each $z_i^+$ with corresponding polar coordinates $(r,\theta)$ around $z_i^+$ such that the asymptotic marker corresponds to $\theta=0$, we have 
	\begin{equation}\label{eqn:asymp1}
	\lim_{r\to 0}(\pi_{\R} \circ u)(re^{i\theta})=+\infty, \quad u_{z_i^+}(\theta):=\displaystyle \lim_{r\to 0}(\pi_Y \circ u)(re^{i\theta}) = \gamma^+_i\left(\frac{1}{2\pi}T_i^+\theta\right)
	\end{equation}
	where $T_i^+$ is the period of the parameterized orbit $\gamma^+_i$ and $\gamma^+_i(0)=b_{\gamma^+_i}$,
	\item  for each $z_i^-$ with corresponding polar coordinates $(r,\theta)$ compatible with asymptotic marker, we have 
	\begin{equation}\label{eqn:asymp2}
	\lim_{r\to 0}(\pi_{\R} \circ u)(re^{i\theta})=-\infty, \quad u_{z_i^-}(\theta):=\displaystyle \lim_{r\to 0}(\pi_Y \circ u)(re^{-i\theta}) = \gamma^-_i\left(-\frac{1}{2\pi}T_i^-\theta\right)
	\end{equation}
	 where $T_i^-$ is the period of  the parameterized orbit $\gamma^-_i$ and $\gamma^-_i(0)=b_{\gamma^+_i}$.
\end{enumerate}
A holomorphic curve is an equivalence of holomorphic maps modulo biholomorphisms of $\Sigma$ commuting with all the data. Throughout this paper, we will work with $\Z_2$-grading unless specified otherwise. Let $\Gamma^+=\{\gamma_1^+,\ldots,\gamma_{s^+}^+ \}$, $\Gamma^-=\{\gamma_1^-,\ldots,\gamma_{s^-}^- \}$ be two ordered sets of Reeb orbits possibly with duplicates. Choosing trivializations of $\xi$ over orbits in $\Gamma^+,\Gamma^-$, we can assign the Conley-Zehnder index $\mu_{CZ}(\gamma^{\pm}_i)$ to each orbit. With such trivialization, we have a relative first Chern class
$$c_1: H_2(Y,\Gamma^+\cup \Gamma^-;\Z)\to \Z,$$
similarly for $W$ and $X$. Let $A$ be a relative homology class representing the curve $u$, then the Fredholm index of the Cauchy Riemann operator at $u$ minus the dimension of the automorphism group (i.e.\ biholomorphisms of $\Sigma$ commuting with all the data) is the following,
$$\ind(u)=(n-3)(2-s^+-s^-)+\sum_{i=1}^{s^+}\mu_{CZ}(\gamma_i^+) - \sum_{i=1}^{s^-}\mu_{CZ}(\gamma_i^-) + 2c_1(A).$$

In this paper, we will consider the following moduli spaces. 
\begin{enumerate}
	\item $\cM_{Y,A}(\Gamma^+,\Gamma^-)$ is the moduli space of rational holomorphic curves in the symplectization, modulo automorphism and the $\R$ translation. The expected dimension is $\ind(u)-1$.
	\item $\cM_{W,A}(\Gamma^+,\emptyset)$ and $\cM_{X,A}(\Gamma^+,\Gamma^-)$ are the moduli spaces of rational holomorphic curves in the filling, respectively cobordism, modulo automorphism. The expected dimension is $\ind(u)$.
	\item $\cM_{Y,A,o}(\Gamma^+,\Gamma^-)$ is the moduli space of rational holomorphic curves with one interior marked point in the symplectization modulo automorphisms. Here the marked point is required to be mapped to $(0,o)\in \R \times Y$ for a point $o\in Y$. The expected dimension is $\ind(u)+2-2n$.
	\item $\cM_{W,A,o}(\Gamma^+,\emptyset)$ is the moduli space of rational holomorphic curves with one interior marked point in the filling modulo automorphism. And the marked point is required to be mapped to $o\in W$. The expected dimension is $\ind(u)+2-2n$.
	\item \label{path} $\cM_{X,A,\gamma}(\Gamma^+,\Gamma^-)$ is the moduli space of rational curves with one interior marked point in the exact cobordism $X$ modulo automorphism. The marked point is required to go through a path $\widehat{\gamma}$, which is the completion of a path $\gamma$ from a point in $Y_+$ to a point in $Y_-$, i.e.\ extension by constant maps in each slice in the cylindrical ends. The expected dimension is $\ind(u)+3-2n$.
\end{enumerate}
Another fact that is important for our later proof is that 
\begin{equation}\label{eqn:positive}
\int_{\Gamma^+} \alpha - \int_{\Gamma^-} \alpha \ge 0,
\end{equation}
whenever $\cM_{Y,A}(\Gamma^+,\Gamma^-)$ or $\cM_{Y,A,o}(\Gamma^+,\Gamma^-)$ are not empty. All of the moduli spaces above have a SFT building compactification by \cite{bourgeois2003compactness} denoted by $\overline{\cM}$. The orientation convention follows \cite{bourgeois2004coherent}, and we need to require that all asymptotic Reeb orbits are good \cite[Definition 11.6]{wendl2016lectures} to orient $\overline{\cM}$. One property of this convention is that if we switch two orbits $\gamma_1,\gamma_2$  that are next to each other in $\Gamma^+$ or $\Gamma^-$, the induced orientation is changed by $(-1)^{|\mu_{CZ}(\gamma_1)+n-3|\cdot|\mu_{CZ}(\gamma_2)+n-3|}$ \cite[\S 11.2]{wendl2016lectures}. In the following, we will count zero dimensional moduli spaces to define coefficients in the structural maps. First of all, this requires an orientation, hence we can only count when all asymptotic Reeb orbits are good\footnote{Alternatively, the count is evaluated in the fixed space of an orientation line with a group action, the appearance of a bad orbit is exactly when the group action is not trivial, see \cite[Remark 4.15]{pardon2019contact} for details.}. Next we need transversality, where the count is a honest count of orbifold points, or a virtual machinery, where the count is a count of weighted orbifold points in perturbed moduli spaces \cite{SFT,ishikawa2018construction} or an algebraic count after fixing some auxiliary data \cite{pardon2019contact}. For simplicity, we will just use $\#\overline{\cM}$ to denote the count.

\subsection{Contact homology algebra}\label{ss:CHA}
We will first recall the definition of the contact homology algebra. Let $V_{\alpha}$ denote the free $\Q$-module generated by formal variables $q_\gamma$ for each good orbit $\gamma$ of $(Y,\alpha)$. We grade $q_\gamma$ by $\mu_{CZ}(\gamma)+n-3$, which should be understood as a well-defined $\Z_2$ grading in general. The contact homology algebra $\CHA(Y)$ is the free symmetric algebra $S V_{\alpha}$. The differential is defined as follows.
\begin{equation}\label{eqn:partial}
\partial_l(q_{\gamma}) = \sum_{|[\Gamma]|=l} \#\overline{\cM}_{Y,A}(\{\gamma \},\Gamma) \frac{1}{\mu_{\Gamma}\kappa_{\Gamma}}q^{\Gamma}.
\end{equation}
Where the number $\#\overline{\cM}$ should be understood as a virtual counting once the virtual machinery is chosen, the same applies to the discussion on RSFT in the next subsection. The sum is over all multisets $[\Gamma]$, i.e.\ sets with duplicates, of size $l$. And $\Gamma$ is an ordered representation of $[\Gamma]$, e.g.\ $\Gamma=\{\underbrace{\eta_1,\ldots,\eta_1}_{i_1}, \ldots, \underbrace{\eta_m,\ldots,\eta_m}_{i_m}\}$ is an ordered set of good orbits with $\eta_i\ne \eta_j$ for $i\ne j$ and $\sum i_j = l$. We write $\mu_{\Gamma}=i_1!\ldots i_m!$ and $\kappa_{\Gamma}=\kappa^{i_1}_{\eta_1}\ldots \kappa^{i_m}_{\eta_m}$ is the product of multiplicities, and $q^{\Gamma}=q_{\eta_1} \ldots  q_{\eta_m}$. We mod out $\mu_{\Gamma}$ as we should count holomorphic curves with unordered punctures and mod out $\kappa_{\Gamma}$ to compensate that we have $\kappa_{\gamma}$ different ways to glue when we have a breaking at $\gamma$. The orientation property of $\overline{\cM}_{Y,A}(\{\gamma\},\Gamma)$ implies that \eqref{eqn:partial} is independent of the representative $\Gamma$. The differential on a single generators is defined by 
$$\partial (q_\gamma) = \sum_{l=0}^{\infty}\partial_l(q_\gamma),$$
which is always a finite sum by \eqref{eqn:positive}. Then the differential on $\CHA(Y)$ is defined by the Leibniz rule
$$\partial(q_{\gamma_1} \ldots  q_{\gamma_l})=\sum_{j=1}^l (-1)^{|q_{\gamma_1}|+\ldots + |q_{\gamma_{j-1}}|}q_{\gamma_1} \ldots  q_{\gamma_{j-1}} \partial(q_{\gamma_j}) q_{\gamma_{j+1}}\ldots  q_{\gamma_l}.$$
The relation $\partial^2=0$ follows from the boundary configuration of $\overline{\cM}_{Y,A}(\{\gamma\},\Gamma)$ with virtual dimension $1$ appropriately interpreted in the chosen virtual machinery. 

Given an exact cobordism $(X,\lambda)$ from $Y_-$ to $Y_+$, then we have an algebra map $\phi$ from $\CHA(Y_+)$ to $\CHA(Y_-)$, which on generator is defined by
$$\phi(q_{\gamma})=\sum_{l=0}^{\infty} \sum_{|[\Gamma]|=l}\# \overline{\cM}_{X,A}(\{\gamma\},\Gamma)\frac{1}{\mu_{\Gamma}\kappa_{\Gamma}}q^{\Gamma},$$
where $\Gamma$ is a collection of good orbits of $Y_-$. The boundary configuration of $\overline{\cM}_{X,A}(\{\gamma\},\Gamma)$ with virtual dimension $1$ gives the relation $\partial\circ \phi = \phi \circ \partial$. Then we have a functor from $\cont$ to the category of supercommutative differential graded algebras.
\begin{theorem}[{\cite[(1.22)]{pardon2019contact}}]
	The homology $H_*(\CHA(Y))$ above realized in VFC gives a monoidal functor from $\cont$ to the category of (super)commutative algebras.
\end{theorem}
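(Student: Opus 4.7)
The plan is to verify the three items that together say $H_*(\CHA(-))$ is a monoidal functor: (i) well-posedness of $H_*(\CHA(Y))$ as an invariant of the contact structure, (ii) invariance of the cobordism map under the auxiliary data and under homotopy of the exact cobordism (together with compatibility with composition), and (iii) compatibility with disjoint unions. The chain-level objects $(\CHA(Y),\partial)$ and $\phi_X:\CHA(Y_+)\to\CHA(Y_-)$ were already defined in \S\ref{ss:CHA}, so what remains is to check the relations $\partial^2=0$, $\partial\circ\phi=\phi\circ\partial$, the composition formula, and identity/monoidal axioms, at the homological level. All of these are codimension-$1$ boundary arguments for $\overline{\cM}$, carried out in the VFC setting of \cite{pardon2019contact}.

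First I would treat invariance of $H_*(\CHA(Y))$. Given two choices $(\alpha_0,J_0),(\alpha_1,J_1)$ of non-degenerate contact form and admissible almost complex structure, one interpolates to a homotopy of Liouville structures on the trivial cobordism $\R\times Y$; applying the cobordism construction in \S\ref{ss:CHA} to this homotopy gives a DGA map $\phi_{01}$, and a two-parameter family yields a chain homotopy between $\phi_{10}\circ\phi_{01}$ and the identity (to be checked also as the DGA map induced by the constant homotopy). The relation $\partial\circ\phi_{01}=\phi_{01}\circ\partial$ and these higher homotopies are consequences of the codimension-$1$ boundary of $\overline{\cM}_{X,A}(\{\gamma\},\Gamma)$ of virtual dimension one in the relevant cobordisms, which decomposes into breakings at the positive and negative ends. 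Here the key technical input is that Pardon's implicit atlas / VFC produces a compatible system of virtual counts so that the identities hold on the nose at the chain level (up to the algebraic signs matching those of \cite{bourgeois2004coherent}). For general exact cobordisms $X$, the same argument applied to a homotopy of exact cobordisms from $X$ to $X'$ shows that $\phi_X$ and $\phi_{X'}$ are chain homotopic, hence descend to the same map on $H_*(\CHA)$.

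Second, for functoriality under composition $X = X_2\circ X_1$ of exact cobordisms $X_1: Y_1\to Y_2$, $X_2: Y_2\to Y_3$, I would use the standard neck-stretching argument. Stretching a family of almost complex structures along a contact hypersurface $\{0\}\times Y_2$ in the glued cobordism, the moduli $\overline{\cM}_{X,A}(\{\gamma\},\Gamma)$ degenerates into two-level SFT buildings with one level in $\widehat X_1$ and one in $\widehat X_2$, matching at Reeb orbits of $Y_2$; counting these in virtual dimension $0$ recovers the algebra composition $\phi_{X_1}\circ\phi_{X_2}$ (the contravariance being built into our convention). A one-parameter version of the same argument gives a chain homotopy between $\phi_X$ and $\phi_{X_1}\circ\phi_{X_2}$; the fact that the trivial cylindrical cobordism induces the identity on $H_*(\CHA(Y))$ is then the special case of an identity neck-stretch. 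The monoidal property $\CHA(Y_1\sqcup Y_2)\cong \CHA(Y_1)\otimes\CHA(Y_2)$ as DGAs is immediate from the definitions, because any connected holomorphic curve in $\R\times(Y_1\sqcup Y_2)$ (or in a disjoint-union exact cobordism) lies entirely in one factor, so $\partial$ and $\phi$ split as a tensor product; passing to homology gives the monoidal structure of the functor.

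The main obstacle is purely analytic: making sense of all the counts $\#\overline{\cM}$ simultaneously and with all the required boundary-of-boundary relations, in the presence of branched covers of trivial cylinders and multiply covered curves. This is exactly what Pardon's implicit atlas construction with coherent orientations \cite{pardon2019contact} is built to do, so rather than reprove it I would invoke it at each step and only check that the algebraic packaging (Leibniz rule, signs coming from the good-orbit convention of \cite{bourgeois2004coherent}, and the combinatorial factors $\mu_\Gamma,\kappa_\Gamma$ in \eqref{eqn:partial}) is compatible with the $\Lambda$-coefficient conventions and with gluing at the boundary strata of $\overline{\cM}$.
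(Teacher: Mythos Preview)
The paper does not give its own proof of this theorem: it is stated as a result of Pardon and simply attributed to \cite{pardon2019contact}. So there is nothing in this paper to compare your proposal against. Your outline is a faithful high-level summary of the standard argument (and of what Pardon does): chain-level DGA and cobordism maps from virtual counts, $\partial^2=0$ and $\partial\phi=\phi\partial$ from codimension-$1$ boundary strata, invariance and composition via one-parameter families/neck-stretching, and the monoidal structure from the obvious splitting over disjoint unions. The only caveat is that your sketch remains at the level of ``invoke VFC and check the boundary combinatorics''; the actual content of the theorem is precisely the construction of compatible virtual counts over the full indexing category (the $\cS_I,\cS_{II},\ldots$ modules in Pardon's framework), which you defer to \cite{pardon2019contact} just as the paper does.
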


\begin{remark}
    Using semi-global Kuranishi charts, Bao and Honda \cite{BH} gave an alternative definition of contact homology enjoying the same invariance and functorial properties.
\end{remark}

\subsection{Rational SFT as $BL_\infty$ algebras}
To assign $BL_\infty$ algebras to strict contact manifolds, we need to consider moduli spaces with multiple inputs and multiple outputs. In the following, we give an informal description of the $BL_\infty$ structure arising from counting holomorphic curves neglecting any transversality issues. We use $\overline{\cM}_{Y,A}(\Gamma^+,\Gamma^-)$ to denote the compactified moduli space of rational curves in class $A$ with positive asymptotics $\Gamma^+$ and negative asymptotics $\Gamma^-$ in the symplectization $\R \times Y$. Then we can define $p^{k,l}$ by\footnote{A previous version of this paper made a mistake with an extra $1/\mu_{\Gamma_+}$ in the coefficient.}
\begin{equation}\label{eqn:p}
p^{k,l}(q^{\Gamma^+})=\sum_{|[\Gamma^-]|=l} \# \overline{\cM}_{Y,A}(\Gamma^+,\Gamma^-)\frac{1}{\mu_{\Gamma^-}\kappa_{\Gamma^-}}q^{\Gamma^-}.
\end{equation}
Here $[\Gamma^-]$ is a multiset with $\Gamma^-$ an ordered set representative and $|\Gamma^+|=k$. In particular, the orientation property of $\overline{\cM}_{Y,A}(\Gamma^+,\Gamma^-)$ implies that $p^{k,l}$ is a map from $S^kV_\alpha$ to $S^lV_\alpha$. The count $\#\overline{\cM}$ is a virtual count, which will be made precise after the virtual setup in \S \ref{s:virtual}. For simplicity, we pretend the moduli spaces are cut out transversely and $\#\overline{\cM}$ is the geometric count of oriented orbifold points. When transversality and gluing holds, the boundary of the $1$-dimensional moduli spaces $\overline{\cM}_{Y,A}(\Gamma^+,\Gamma^-)$ yields that $\{p^{k,l}\}$ is a $BL_\infty$ algebra $\RSFT(Y)$ (showed schematically by the pictures); see Theorem \ref{thm:BL} for details.
\begin{figure}[H]
	\begin{center}
		\begin{tikzpicture}[scale=0.6]
		\draw (0,0) to [out=270, in=180] (1,-0.5) to [out=0, in=270] (2,0) to (2,-4) to [out=270, in=0] (1,-4.5) to [out=180,in=270] (0,-4) to (0,0);
		\draw[dashed] (0,0) to [out=90, in=180] (1,0.5) to [out=0,in=90] (2,0);
		\draw[dashed] (0,-4) to [out=90, in=180] (1,-3.5) to [out=0,in=90] (2,-4);
		
		\draw (0,0) to (0,4) to [out=270, in=180] (1,3.5) to [out=0,in=270] (2,4) to [out=270,in=180] (3,3) to [out=0,in=270] (4,4) to [out=270,in=180] (5,3.5) to [out=0,in=270] (6,4) to (6,0) to [out=270, in=0] (5,-0.5) to [out=180, in=270] (4,0) to [out=90, in=0] (3,1) to [out=180, in=90] (2,0);
		\draw (0,4) to [out=90,in=180] (1,4.5) to [out=0,in=90] (2,4);
		\draw (4,4) to [out=90,in=180] (5,4.5) to [out=0,in=90] (6,4);
		\draw[dashed] (4,0) to [out=90, in=180] (5,0.5) to [out=0,in=90] (6,0);

		\draw (4,0) to (4,-4) to [out=270,in=180] (5,-4.5) to [out=0,in=270] (6,-4) to [out=90,in=180] (7,-3) to [out=0,in=90] (8,-4) to [out=270,in=180] (9,-4.5) to [out=0,in=270] (10,-4) to (10,0) to [out=270,in=0] (9,-0.5) to [out=180,in=270] (8,0) to [out=270, in=0] (7,-1) to [out=180, in=270] (6,0); 
		\draw[dashed] (4,-4) to [out=90, in=180] (5,-3.5) to [out=0,in=90] (6,-4);
		\draw[dashed] (8,-4) to [out=90, in=180] (9,-3.5) to [out=0,in=90] (10,-4);
		
		\draw (8,0) to (8,4) to [out=270, in=180] (9,3.5) to [out=0,in=270] (10,4) to (10,0);
		\draw[dashed] (8,0) to [out=90, in=180] (9,0.5) to [out=0,in=90] (10,0);
		\draw (8,4) to [out=90, in=180] (9,4.5) to [out=0,in=90] (10,4);
		
		\node at (1,-2) {T};
		\node at (9,2) {T};
		\node at (3,2) {$p^{2,2}$};
		\node at (7,-2) {$p^{2,2}$};
		\end{tikzpicture}
	\end{center}
	\caption{$\widehat{p}\circ \widehat{p}=0$, where $T$ stands for a trivial cylinder.}
\end{figure}
\begin{remark}\label{rmk:coeff}
In the original formalism of the full SFT by Eliashberg, Givental and Hofer \cite[\S 2.2.3]{SFT}, the Hamiltonian $\bH$ is defined as 
$$\bH=\sum_{A,[\Gamma_+],[\Gamma_-]} \frac{\hbar^{g-1}}{\mu_{\Gamma_+}\mu_{\Gamma_-}\kappa_{\Gamma_+}\kappa_{\Gamma_-}}\#\overline{\cM}_{Y,g,A}(\Gamma_+,\Gamma_-)q^{\Gamma_-}p^{\Gamma_+}$$
in the Weyl algebra $\cW$ of power series in the variables $\hbar, p_{\gamma}$ with coefficients polynomial in the variables $q_{\gamma}$. $\overline{\cM}_{Y,g,A}(\Gamma_+,\Gamma_-)$ is the genus $g$ analogue of $\overline{\cM}_{Y,A}(\Gamma_+,\Gamma_-)$. $\cW$ is equipped with the associative product $*$ in which all variables super-commute according to their gradings except for the variables $p_{\gamma}$, $q_{\gamma}$ corresponding to the same Reeb orbit $\gamma$, for which we have
$$p_{\gamma}*q_{\gamma}- (-1)^{|p_{\gamma}||q_{\gamma}|}q_{\gamma} * p_{\gamma}= \kappa_{\gamma}\hbar.$$
In the $BV_\infty$ reformalism of the full SFT introduced by Cieliebak and Latschev \cite[\S 6]{cieliebak2009role}, the differential $D_{SFT}$ on $SV_{\alpha}[[\hbar]]$ is defined as 
$$ D_{\SFT} (s) = \sum_{A,[\Gamma_+],[\Gamma_-]}   \frac{\hbar^{g-1}}{\mu_{\Gamma_+}\mu_{\Gamma_-}\kappa_{\Gamma_+}\kappa_{\Gamma_-}}\#\overline{\cM}_{Y,g,A}(\Gamma_+,\Gamma_-)q^{\Gamma_-} \prod_{\gamma_i\in \Gamma_+}(\kappa_{\gamma_i}\hbar \frac{\partial}{\partial q_{\gamma_i}}) s.$$
In view of the relation between the Weyl algebra formalism and the $IBL_\infty$ formalism by Cieliebak, Fukaya and Latschev following \cite[(7.4)]{cieliebak2015homological}, the operation $p^{k,l,g}$ is defined by
\begin{eqnarray*}
    p^{k,l,g}(q^{\Gamma_+}) & = &\sum_{A,[\Gamma_-]} \frac{1}{\mu_{\Gamma_+}\mu_{\Gamma_-}\kappa_{\Gamma_+}\kappa_{\Gamma_-}}\#\overline{\cM}_{Y,g,A}(\Gamma_+,\Gamma_-)q^{\Gamma_-} \prod_{\gamma_i\in \Gamma_+}(\kappa_{\gamma_i}\hbar \frac{\partial}{\partial q_{\gamma_i}})) q^{\Gamma^+}\\
    & = & \sum_{A,[\Gamma_-]}\frac{1}{\mu_{\Gamma_-}\kappa_{\Gamma_-}}\#\overline{\cM}_{Y,g,A}(\Gamma_+,\Gamma_-)q^{\Gamma_-}.
\end{eqnarray*}
In view of \cite[Corollary 5.12]{supp}, since $IBL_\infty$ algebras restrict to $BL_\infty$ algebras, our rational SFT formalism in \eqref{eqn:p} has consistent coefficients with \cite{cieliebak2009role,cieliebak2015homological,SFT}. Heuristically, $\mu_{\Gamma_-}\kappa_{\Gamma_-}$ is the order of the ``isotropy group" of the output orbit set $\Gamma_-$.
\end{remark}

Similarly for a strict exact cobordism $X$ from $Y_-$ to $Y_+$, by considering the moduli spaces $\overline{\cM}_{X,A}(\Gamma^+,\Gamma^-)$ of rational curves in $X$, we can define a $BL_{\infty}$ morphism from $\RSFT(Y_+)$ to $\RSFT(Y_-)$ by the following,
\begin{equation}\label{eqn:phi}
\phi^{k,l}(q^{\Gamma^+})=\sum_{|[\Gamma^-]|=l}\#\overline{\cM}_{X,A}(\Gamma^+,\Gamma^-)\frac{1}{\mu_{\Gamma^-}\kappa_{\Gamma^-}}q^{\Gamma^-},
\end{equation}
where $|\Gamma^+|=k$. Then the boundary of the $1$-dimensional moduli spaces $\overline{\cM}_{X,A}(\Gamma^+,\Gamma^-)$ yields that $\{\phi^{k,l}\}$ is a $BL_\infty$ morphism $\RSFT(Y_+)\to \RSFT(Y_-)$. In the following figure we indicate the cobordism level with `C'.
\begin{figure}[H]
	\begin{center}
		\begin{tikzpicture}[scale=0.6]
		\draw (0,0) to [out=270, in=180] (1,-0.5) to [out=0, in=270] (2,0) to (2,-4) to [out=270, in=0] (1,-4.5) to [out=180,in=270] (0,-4) to (0,0);
		\draw[dashed] (0,0) to [out=90, in=180] (1,0.5) to [out=0,in=90] (2,0);
		\draw[dashed] (0,-4) to [out=90, in=180] (1,-3.5) to [out=0,in=90] (2,-4);
		
		\draw (0,0) to (0,4) to [out=270, in=180] (1,3.5) to [out=0,in=270] (2,4) to [out=270,in=180] (3,3) to [out=0,in=270] (4,4) to [out=270,in=180] (5,3.5) to [out=0,in=270] (6,4) to (6,0) to [out=270, in=0] (5,-0.5) to [out=180, in=270] (4,0) to [out=90, in=0] (3,1) to [out=180, in=90] (2,0);
		\draw (0,4) to [out=90,in=180] (1,4.5) to [out=0,in=90] (2,4);
		\draw (4,4) to [out=90,in=180] (5,4.5) to [out=0,in=90] (6,4);
		\draw[dashed] (4,0) to [out=90, in=180] (5,0.5) to [out=0,in=90] (6,0);

		\draw (4,0) to (4,-4) to [out=270,in=180] (5,-4.5) to [out=0,in=270] (6,-4) to [out=90,in=180] (7,-3) to [out=0,in=90] (8,-4) to [out=270,in=180] (9,-4.5) to [out=0,in=270] (10,-4) to (10,0) to [out=270,in=0] (9,-0.5) to [out=180,in=270] (8,0) to [out=270, in=0] (7,-1) to [out=180, in=270] (6,0); 
		\draw[dashed] (4,-4) to [out=90, in=180] (5,-3.5) to [out=0,in=90] (6,-4);
		\draw[dashed] (8,-4) to [out=90, in=180] (9,-3.5) to [out=0,in=90] (10,-4);
		
		\draw (8,0) to (8,4) to [out=270, in=180] (9,3.5) to [out=0,in=270] (10,4) to (10,0);
		\draw[dashed] (8,0) to [out=90, in=180] (9,0.5) to [out=0,in=90] (10,0);
		\draw (8,4) to [out=90, in=180] (9,4.5) to [out=0,in=90] (10,4);
		
		\node at (1,-2) {T};
		\node at (9,2) {$\phi^{1,1}$};
		\node at (3,2) {$\phi^{2,2}$};
		\node at (7,-2) {$p^{2,2}$};
		
		\draw[dashed] (-1,0) to (11,0);
		\draw[dashed] (-1,4) to (11,4);
		\draw[<->] (-0.5,0) to (-0.5,4);
		\node at (-1,2) {C};
		\end{tikzpicture}
		\begin{tikzpicture}[scale=0.6]
		\draw (0,0) to [out=270, in=180] (1,-0.5) to [out=0, in=270] (2,0) to (2,-4) to [out=270, in=0] (1,-4.5) to [out=180,in=270] (0,-4) to (0,0);
		\draw[dashed] (0,0) to [out=90, in=180] (1,0.5) to [out=0,in=90] (2,0);
		\draw[dashed] (0,-4) to [out=90, in=180] (1,-3.5) to [out=0,in=90] (2,-4);
		
		\draw (0,0) to (0,4) to [out=270, in=180] (1,3.5) to [out=0,in=270] (2,4) to [out=270,in=180] (3,3) to [out=0,in=270] (4,4) to [out=270,in=180] (5,3.5) to [out=0,in=270] (6,4) to (6,0) to [out=270, in=0] (5,-0.5) to [out=180, in=270] (4,0) to [out=90, in=0] (3,1) to [out=180, in=90] (2,0);
		\draw (0,4) to [out=90,in=180] (1,4.5) to [out=0,in=90] (2,4);
		\draw (4,4) to [out=90,in=180] (5,4.5) to [out=0,in=90] (6,4);
		\draw[dashed] (4,0) to [out=90, in=180] (5,0.5) to [out=0,in=90] (6,0);

		\draw (4,0) to (4,-4) to [out=270,in=180] (5,-4.5) to [out=0,in=270] (6,-4) to [out=90,in=180] (7,-3) to [out=0,in=90] (8,-4) to [out=270,in=180] (9,-4.5) to [out=0,in=270] (10,-4) to (10,0) to [out=270,in=0] (9,-0.5) to [out=180,in=270] (8,0) to [out=270, in=0] (7,-1) to [out=180, in=270] (6,0); 
		\draw[dashed] (4,-4) to [out=90, in=180] (5,-3.5) to [out=0,in=90] (6,-4);
		\draw[dashed] (8,-4) to [out=90, in=180] (9,-3.5) to [out=0,in=90] (10,-4);
		
		\draw (8,0) to (8,4) to [out=270, in=180] (9,3.5) to [out=0,in=270] (10,4) to (10,0);
		\draw[dashed] (8,0) to [out=90, in=180] (9,0.5) to [out=0,in=90] (10,0);
		\draw (8,4) to [out=90, in=180] (9,4.5) to [out=0,in=90] (10,4);
		
		\node at (1,-2) {$\phi^{1,1}$};
		\node at (9,2) {T};
		\node at (3,2) {$p^{2,2}$};
		\node at (7,-2) {$\phi^{2,2}$};
		
		\draw[dashed] (-1,0) to (11,0);
		\draw[dashed] (-1,-4) to (11,-4);
		\draw[<->] (-0.5,0) to (-0.5,-4);
		\node at (-1,-2) {C};
		\end{tikzpicture}
	\end{center}
    \caption{$\widehat{\phi}\circ \widehat{p}=\widehat{p}\circ \widehat{\phi}$}
\end{figure}

If we fix a point $o$ in $Y$, by considering moduli spaces $\overline{\cM}_{Y,A,o}(\Gamma^+,\Gamma^-)$, we can define a pointed morphism $p_{\bullet}$ by
\begin{equation}\label{eqn:p.}
p^{k,l}_{\bullet}(q^{\Gamma^+})=\sum_{|[\Gamma^-]|=l} \# \overline{\cM}_{Y,A,o}(\Gamma^+,\Gamma^-)\frac{1}{\mu_{\Gamma^-}\kappa_{\Gamma^-}}q^{\Gamma^-}.
\end{equation}
Then the boundary of the $1$-dimensional moduli spaces $\overline{\cM}_{Y,A,o}(\Gamma^+,\Gamma^-)$ yields that $\{p^{k,l}_{\bullet}\}$ is a pointed morphism of degree $0$. Note that $\overline{\cM}_{Y,A,o}(\Gamma^+,\Gamma^-)$ consists of holomorphic curves with a point constraint in the symplectization with a $s$-independent $J$, therefore in the level containing an element of $\overline{\cM}_{Y,A,o}(\Gamma^+,\Gamma^-)$ in a rigid breaking, there is only one nontrivial component.
\begin{figure}[H]
	\begin{center}
		\begin{tikzpicture}[scale=0.6]
		\draw (0,0) to [out=270, in=180] (1,-0.5) to [out=0, in=270] (2,0) to (2,-4) to [out=270, in=0] (1,-4.5) to [out=180,in=270] (0,-4) to (0,0);
		\draw[dashed] (0,0) to [out=90, in=180] (1,0.5) to [out=0,in=90] (2,0);
		\draw[dashed] (0,-4) to [out=90, in=180] (1,-3.5) to [out=0,in=90] (2,-4);
		
		\draw (0,0) to (0,4) to [out=270, in=180] (1,3.5) to [out=0,in=270] (2,4) to [out=270,in=180] (3,3) to [out=0,in=270] (4,4) to [out=270,in=180] (5,3.5) to [out=0,in=270] (6,4) to (6,0) to [out=270, in=0] (5,-0.5) to [out=180, in=270] (4,0) to [out=90, in=0] (3,1) to [out=180, in=90] (2,0);
		\draw (0,4) to [out=90,in=180] (1,4.5) to [out=0,in=90] (2,4);
		\draw (4,4) to [out=90,in=180] (5,4.5) to [out=0,in=90] (6,4);
		\draw[dashed] (4,0) to [out=90, in=180] (5,0.5) to [out=0,in=90] (6,0);

		\draw (4,0) to (4,-4) to [out=270,in=180] (5,-4.5) to [out=0,in=270] (6,-4) to [out=90,in=180] (7,-3) to [out=0,in=90] (8,-4) to [out=270,in=180] (9,-4.5) to [out=0,in=270] (10,-4) to (10,0) to [out=270,in=0] (9,-0.5) to [out=180,in=270] (8,0) to [out=270, in=0] (7,-1) to [out=180, in=270] (6,0); 
		\draw[dashed] (4,-4) to [out=90, in=180] (5,-3.5) to [out=0,in=90] (6,-4);
		\draw[dashed] (8,-4) to [out=90, in=180] (9,-3.5) to [out=0,in=90] (10,-4);
		
		\draw (8,0) to (8,4) to [out=270, in=180] (9,3.5) to [out=0,in=270] (10,4) to (10,0);
		\draw[dashed] (8,0) to [out=90, in=180] (9,0.5) to [out=0,in=90] (10,0);
		\draw (8,4) to [out=90, in=180] (9,4.5) to [out=0,in=90] (10,4);
		
		\node at (1,-2) {T};
		\node at (9,2) {T};
		\node at (3,2) {$p^{2,2}_\bullet$};
		\node at (7,-2) {$p^{2,2}$};
		
		\node at (2,3) [circle,fill,inner sep=1.5pt] {};
		\end{tikzpicture}
		\hspace{1cm}
		\begin{tikzpicture}[scale=0.6]
		\draw (0,0) to [out=270, in=180] (1,-0.5) to [out=0, in=270] (2,0) to (2,-4) to [out=270, in=0] (1,-4.5) to [out=180,in=270] (0,-4) to (0,0);
		\draw[dashed] (0,0) to [out=90, in=180] (1,0.5) to [out=0,in=90] (2,0);
		\draw[dashed] (0,-4) to [out=90, in=180] (1,-3.5) to [out=0,in=90] (2,-4);
		
		\draw (0,0) to (0,4) to [out=270, in=180] (1,3.5) to [out=0,in=270] (2,4) to [out=270,in=180] (3,3) to [out=0,in=270] (4,4) to [out=270,in=180] (5,3.5) to [out=0,in=270] (6,4) to (6,0) to [out=270, in=0] (5,-0.5) to [out=180, in=270] (4,0) to [out=90, in=0] (3,1) to [out=180, in=90] (2,0);
		\draw (0,4) to [out=90,in=180] (1,4.5) to [out=0,in=90] (2,4);
		\draw (4,4) to [out=90,in=180] (5,4.5) to [out=0,in=90] (6,4);
		\draw[dashed] (4,0) to [out=90, in=180] (5,0.5) to [out=0,in=90] (6,0);

		\draw (4,0) to (4,-4) to [out=270,in=180] (5,-4.5) to [out=0,in=270] (6,-4) to [out=90,in=180] (7,-3) to [out=0,in=90] (8,-4) to [out=270,in=180] (9,-4.5) to [out=0,in=270] (10,-4) to (10,0) to [out=270,in=0] (9,-0.5) to [out=180,in=270] (8,0) to [out=270, in=0] (7,-1) to [out=180, in=270] (6,0); 
		\draw[dashed] (4,-4) to [out=90, in=180] (5,-3.5) to [out=0,in=90] (6,-4);
		\draw[dashed] (8,-4) to [out=90, in=180] (9,-3.5) to [out=0,in=90] (10,-4);
		
		\draw (8,0) to (8,4) to [out=270, in=180] (9,3.5) to [out=0,in=270] (10,4) to (10,0);
		\draw[dashed] (8,0) to [out=90, in=180] (9,0.5) to [out=0,in=90] (10,0);
		\draw (8,4) to [out=90, in=180] (9,4.5) to [out=0,in=90] (10,4);
		
		\node at (1,-2) {T};
		\node at (9,2) {T};
		\node at (3,2) {$p^{2,2}$};
		\node at (7,-2) {$p_\bullet^{2,2}$};
		\node at (5,-3) [circle,fill,inner sep=1.5pt] {};
		\end{tikzpicture}
	\end{center}
	\caption{$\widehat{p}_\bullet\circ \widehat{p}=\widehat{p}\circ \widehat{p}_\bullet$}
\end{figure}
For a strict exact cobordism $X$ from $Y_-$ to $Y_+$, if we choose a path $\gamma$ from $o_-\in Y_-$ to $o_+\in Y_+$. then we can complete the path $\gamma$ to a proper $\widehat{\gamma}$ path in $\widehat{X}$ by constants in the cylindrical ends. Then we claim that the pointed morphisms $p_{\bullet},q_{\bullet}$ determined by $o_-,o_+$ and the $BL_\infty$ morphism $\phi$ are compatible, with $\phi_{\bullet}$ given by
\begin{equation}\label{eqn:phi.}
\phi^{k,l}_{\bullet}(q^{\Gamma^+})=\sum_{|[\Gamma^-]|=l}\#\overline{\cM}_{X,A,\gamma}(\Gamma^+,\Gamma^-)\frac{1}{\mu_{\Gamma^-}\kappa_{\Gamma^-}}q^{\Gamma^-}.
\end{equation}

\begin{figure}[H]
	\begin{center}
		\begin{tikzpicture}[scale=0.6]
		\draw (0,0) to [out=270, in=180] (1,-0.5) to [out=0, in=270] (2,0) to (2,-4) to [out=270, in=0] (1,-4.5) to [out=180,in=270] (0,-4) to (0,0);
		\draw[dashed] (0,0) to [out=90, in=180] (1,0.5) to [out=0,in=90] (2,0);
		\draw[dashed] (0,-4) to [out=90, in=180] (1,-3.5) to [out=0,in=90] (2,-4);
		
		\draw (0,0) to (0,4) to [out=270, in=180] (1,3.5) to [out=0,in=270] (2,4) to [out=270,in=180] (3,3) to [out=0,in=270] (4,4) to [out=270,in=180] (5,3.5) to [out=0,in=270] (6,4) to (6,0) to [out=270, in=0] (5,-0.5) to [out=180, in=270] (4,0) to [out=90, in=0] (3,1) to [out=180, in=90] (2,0);
		\draw (0,4) to [out=90,in=180] (1,4.5) to [out=0,in=90] (2,4);
		\draw (4,4) to [out=90,in=180] (5,4.5) to [out=0,in=90] (6,4);
		\draw[dashed] (4,0) to [out=90, in=180] (5,0.5) to [out=0,in=90] (6,0);

		\draw (4,0) to (4,-4) to [out=270,in=180] (5,-4.5) to [out=0,in=270] (6,-4) to [out=90,in=180] (7,-3) to [out=0,in=90] (8,-4) to [out=270,in=180] (9,-4.5) to [out=0,in=270] (10,-4) to (10,0) to [out=270,in=0] (9,-0.5) to [out=180,in=270] (8,0) to [out=270, in=0] (7,-1) to [out=180, in=270] (6,0); 
		\draw[dashed] (4,-4) to [out=90, in=180] (5,-3.5) to [out=0,in=90] (6,-4);
		\draw[dashed] (8,-4) to [out=90, in=180] (9,-3.5) to [out=0,in=90] (10,-4);
		
		\draw (8,0) to (8,4) to [out=270, in=180] (9,3.5) to [out=0,in=270] (10,4) to (10,0);
		\draw[dashed] (8,0) to [out=90, in=180] (9,0.5) to [out=0,in=90] (10,0);
		\draw (8,4) to [out=90, in=180] (9,4.5) to [out=0,in=90] (10,4);
		
		\node at (1,-2) {T};
		\node at (9,2) {$\phi^{1,1}$};
		\node at (3,2) {$\phi^{2,2}$};
		\node at (7,-2) {$q_\bullet^{2,2}$};
		\node at (5,-1) [circle,fill,inner sep=1.5pt] {};
		
		\draw[dashed] (-1,0) to (11,0);
		\draw[dashed] (-1,4) to (11,4);
		\draw[<->] (-0.5,0) to (-0.5,4);
		\node at (-1,2) {C};
		\end{tikzpicture}
		
		\medskip
		
		\begin{tikzpicture}[scale=0.6]
		\draw (0,0) to [out=270, in=180] (1,-0.5) to [out=0, in=270] (2,0) to (2,-4) to [out=270, in=0] (1,-4.5) to [out=180,in=270] (0,-4) to (0,0);
		\draw[dashed] (0,0) to [out=90, in=180] (1,0.5) to [out=0,in=90] (2,0);
		\draw[dashed] (0,-4) to [out=90, in=180] (1,-3.5) to [out=0,in=90] (2,-4);
		
		\draw (0,0) to (0,4) to [out=270, in=180] (1,3.5) to [out=0,in=270] (2,4) to [out=270,in=180] (3,3) to [out=0,in=270] (4,4) to [out=270,in=180] (5,3.5) to [out=0,in=270] (6,4) to (6,0) to [out=270, in=0] (5,-0.5) to [out=180, in=270] (4,0) to [out=90, in=0] (3,1) to [out=180, in=90] (2,0);
		\draw (0,4) to [out=90,in=180] (1,4.5) to [out=0,in=90] (2,4);
		\draw (4,4) to [out=90,in=180] (5,4.5) to [out=0,in=90] (6,4);
		\draw[dashed] (4,0) to [out=90, in=180] (5,0.5) to [out=0,in=90] (6,0);

		\draw (4,0) to (4,-4) to [out=270,in=180] (5,-4.5) to [out=0,in=270] (6,-4) to [out=90,in=180] (7,-3) to [out=0,in=90] (8,-4) to [out=270,in=180] (9,-4.5) to [out=0,in=270] (10,-4) to (10,0) to [out=270,in=0] (9,-0.5) to [out=180,in=270] (8,0) to [out=270, in=0] (7,-1) to [out=180, in=270] (6,0); 
		\draw[dashed] (4,-4) to [out=90, in=180] (5,-3.5) to [out=0,in=90] (6,-4);
		\draw[dashed] (8,-4) to [out=90, in=180] (9,-3.5) to [out=0,in=90] (10,-4);
		
		\draw (8,0) to (8,4) to [out=270, in=180] (9,3.5) to [out=0,in=270] (10,4) to (10,0);
		\draw[dashed] (8,0) to [out=90, in=180] (9,0.5) to [out=0,in=90] (10,0);
		\draw (8,4) to [out=90, in=180] (9,4.5) to [out=0,in=90] (10,4);
		
		\node at (1,-2) {$\phi^{1,1}$};
		\node at (9,2) {T};
		\node at (3,2) {$p_\bullet^{2,2}$};
		\node at (7,-2) {$\phi^{2,2}$};
		\node at (5,1) [circle,fill,inner sep=1.5pt] {};
		
		\draw[dashed] (-1,0) to (11,0);
		\draw[dashed] (-1,-4) to (11,-4);
		\draw[<->] (-0.5,0) to (-0.5,-4);
		\node at (-1,-2) {C};
		\end{tikzpicture}
		\offinterlineskip
		\begin{tikzpicture}[scale=0.6]
		\draw (0,0) to [out=270, in=180] (1,-0.5) to [out=0, in=270] (2,0) to (2,-4) to [out=270, in=0] (1,-4.5) to [out=180,in=270] (0,-4) to (0,0);
		\draw[dashed] (0,0) to [out=90, in=180] (1,0.5) to [out=0,in=90] (2,0);
		\draw[dashed] (0,-4) to [out=90, in=180] (1,-3.5) to [out=0,in=90] (2,-4);
		
		\draw (0,0) to (0,4) to [out=270, in=180] (1,3.5) to [out=0,in=270] (2,4) to [out=270,in=180] (3,3) to [out=0,in=270] (4,4) to [out=270,in=180] (5,3.5) to [out=0,in=270] (6,4) to (6,0) to [out=270, in=0] (5,-0.5) to [out=180, in=270] (4,0) to [out=90, in=0] (3,1) to [out=180, in=90] (2,0);
		\draw (0,4) to [out=90,in=180] (1,4.5) to [out=0,in=90] (2,4);
		\draw (4,4) to [out=90,in=180] (5,4.5) to [out=0,in=90] (6,4);
		\draw[dashed] (4,0) to [out=90, in=180] (5,0.5) to [out=0,in=90] (6,0);

		\draw (4,0) to (4,-4) to [out=270,in=180] (5,-4.5) to [out=0,in=270] (6,-4) to [out=90,in=180] (7,-3) to [out=0,in=90] (8,-4) to [out=270,in=180] (9,-4.5) to [out=0,in=270] (10,-4) to (10,0) to [out=270,in=0] (9,-0.5) to [out=180,in=270] (8,0) to [out=270, in=0] (7,-1) to [out=180, in=270] (6,0); 
		\draw[dashed] (4,-4) to [out=90, in=180] (5,-3.5) to [out=0,in=90] (6,-4);
		\draw[dashed] (8,-4) to [out=90, in=180] (9,-3.5) to [out=0,in=90] (10,-4);
		
		\draw (8,0) to (8,4) to [out=270, in=180] (9,3.5) to [out=0,in=270] (10,4) to (10,0);
		\draw[dashed] (8,0) to [out=90, in=180] (9,0.5) to [out=0,in=90] (10,0);
		\draw (8,4) to [out=90, in=180] (9,4.5) to [out=0,in=90] (10,4);
		
		\node at (1,-2) {T};
		\node at (9,2) {$\phi^{1,1}$};
		\node at (3,2) {$\phi_\bullet^{2,2}$};
		\node at (7,-2) {$q^{2,2}$};
		\node at (5,1) [circle,fill,inner sep=1.5pt] {};
		
		\draw[dashed] (-1,0) to (11,0);
		\draw[dashed] (-1,4) to (11,4);
		\draw[<->] (-0.5,0) to (-0.5,4);
		\node at (-1,2) {C};
		\end{tikzpicture}
		\begin{tikzpicture}[scale=0.6]
		\draw (0,0) to [out=270, in=180] (1,-0.5) to [out=0, in=270] (2,0) to (2,-4) to [out=270, in=0] (1,-4.5) to [out=180,in=270] (0,-4) to (0,0);
		\draw[dashed] (0,0) to [out=90, in=180] (1,0.5) to [out=0,in=90] (2,0);
		\draw[dashed] (0,-4) to [out=90, in=180] (1,-3.5) to [out=0,in=90] (2,-4);
		
		\draw (0,0) to (0,4) to [out=270, in=180] (1,3.5) to [out=0,in=270] (2,4) to [out=270,in=180] (3,3) to [out=0,in=270] (4,4) to [out=270,in=180] (5,3.5) to [out=0,in=270] (6,4) to (6,0) to [out=270, in=0] (5,-0.5) to [out=180, in=270] (4,0) to [out=90, in=0] (3,1) to [out=180, in=90] (2,0);
		\draw (0,4) to [out=90,in=180] (1,4.5) to [out=0,in=90] (2,4);
		\draw (4,4) to [out=90,in=180] (5,4.5) to [out=0,in=90] (6,4);
		\draw[dashed] (4,0) to [out=90, in=180] (5,0.5) to [out=0,in=90] (6,0);

		\draw (4,0) to (4,-4) to [out=270,in=180] (5,-4.5) to [out=0,in=270] (6,-4) to [out=90,in=180] (7,-3) to [out=0,in=90] (8,-4) to [out=270,in=180] (9,-4.5) to [out=0,in=270] (10,-4) to (10,0) to [out=270,in=0] (9,-0.5) to [out=180,in=270] (8,0) to [out=270, in=0] (7,-1) to [out=180, in=270] (6,0); 
		\draw[dashed] (4,-4) to [out=90, in=180] (5,-3.5) to [out=0,in=90] (6,-4);
		\draw[dashed] (8,-4) to [out=90, in=180] (9,-3.5) to [out=0,in=90] (10,-4);
		
		\draw (8,0) to (8,4) to [out=270, in=180] (9,3.5) to [out=0,in=270] (10,4) to (10,0);
		\draw[dashed] (8,0) to [out=90, in=180] (9,0.5) to [out=0,in=90] (10,0);
		\draw (8,4) to [out=90, in=180] (9,4.5) to [out=0,in=90] (10,4);
		
		\node at (1,-2) {$\phi_\bullet^{1,1}$};
		\node at (9,2) {T};
		\node at (3,2) {$p^{2,2}$};
		\node at (7,-2) {$\phi^{2,2}$};
		\node at (1,-3) [circle,fill,inner sep=1.5pt] {};
		
		\draw[dashed] (-1,0) to (11,0);
		\draw[dashed] (-1,-4) to (11,-4);
		\draw[<->] (-0.5,0) to (-0.5,-4);
		\node at (-1,-2) {C};
		\end{tikzpicture}
	\end{center}
    \caption{$\widehat{q}_\bullet\circ \widehat{\phi}-\widehat{\phi}\circ \widehat{p}_{\bullet}=\widehat{\phi}_{\bullet}\circ \widehat{p}+\widehat{q}\circ \widehat{\phi}_{\bullet}$}
\end{figure}
In order to turn the above informal discussion into a rigorous construction, we need to make sense of $\#\overline{\cM}$ such that they have the desired relations. The main theorem of this section is that after fixing auxiliary choices depending on the choice of virtual machinery, we almost have a functor from strict contact cobordism category (with auxiliary choices) to the category of $BL_\infty$ algebras\footnote{The composition is not discussed, nor is needed for our applications.}.
\begin{theorem}\label{thm:BL}
Let $(Y,\alpha)$ be a strict contact manifold with a non-degenerate contact form, then we have the following.
\begin{enumerate}
	\item\label{BL:1} There exists a non-empty set of auxiliary data $\Theta$, such that for each $\theta \in \Theta$ we have a $BL_\infty$ algebra $p_{\theta}$ (Definition \ref{def:BL}) on $V_{\alpha}$.
	\item\label{BL:2} For any point $o\in Y$, there exists a set of auxiliary data $\Theta_o$ with a surjective map $\Theta_o\to \Theta$, such that for any $\theta_o\in \Theta_o$, we have a pointed map $p_{\bullet,\theta_o}$ (Definition \ref{def:pointedmap}) for $p_{\theta}$, where $\theta$ is the image of $\theta_o$ in $\Theta_o\to \Theta$.
	\item\label{BL:3} Assume there is a strict exact cobordism $X$ from $(Y',\alpha')$ to $(Y,\alpha)$. Let $\Theta,\Theta'$ be the sets of auxiliary data for $\alpha,\alpha'$, then there exists a set of auxiliary data $\Xi$ with a surjective map $\Xi\to \Theta \times \Theta'$, such that for $\xi\in \Xi$, there is a $BL_{\infty}$ morphism $\phi_{\xi}$ (Definition \ref{def:morphism}) from $(V_\alpha,p_{\theta})$ to $(V_{\alpha'},p_{\theta'})$, where $(\theta,\theta')$ is the image of $\xi$ under $\Xi\to \Theta \times \Theta'$.
	\item\label{BL:4} Assume in addition that we fix a point $o'\in Y'$ that is in the same component of $o$ in $X$. Then for any compatible auxiliary data $\theta,\theta',\theta_o,\theta_{o'},\xi$, we have $p_{\bullet,\theta_o},p_{\bullet,\theta_{o'}},\phi_{\xi}$ are compatible (Definition \ref{def:compatible}).
	\item\label{BL:5} For compatible auxiliary data $\theta,\theta_o$, there exists compatible auxiliary data $k\theta,k\theta_o$ for $(Y,k\alpha)$ for $k\in \R_+$, such that $p_{k\theta},p_{\bullet, k\theta_o}$ are identified with $p_{\theta},p_{\bullet,\theta_o}$ by the canonical identification between $V_{\alpha}$ and $V_{k\alpha}$.
\end{enumerate}
\end{theorem}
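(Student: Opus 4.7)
The plan is to build each piece (algebra, pointed map, morphism, compatibility, scaling) by counting the corresponding moduli spaces via VFC and then deducing all algebraic relations from the codimension-one stratification of one-dimensional compactified moduli spaces. Since $\cH$ is a totally ordered set, one only needs the structure maps themselves and the quadratic relations they satisfy --- not homotopies, compositions, or independence of auxiliary data --- which, as remarked in the excerpt, brings the construction into the scope of Pardon's framework \cite{pardon2019contact}.

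For \eqref{BL:1}, I would take $\Theta$ to be the set of auxiliary data (admissible $J$, coherent choice of implicit atlases, perturbation data, orientations over good orbits) needed to produce rational VFCs on all $\overline{\cM}_{Y,A}(\Gamma^+,\Gamma^-)$ with $|\Gamma^\pm|\ge 1$, $l = |\Gamma^-|\ge 0$, in a way that is compatible under gluing of broken curves. Non-emptiness of $\Theta$ follows from Pardon's construction, since the tree-like combinatorics of rational SFT avoids the truly genus-increasing issues that obstruct the full SFT. Given $\theta\in\Theta$, define $p^{k,l}_\theta$ by the formula \eqref{eqn:p}; this makes sense because of the energy positivity \eqref{eqn:positive} (only finitely many $A$ contribute for fixed $\Gamma^\pm$) and because the moduli space orientations descend to $S^kV_\alpha\to S^lV_\alpha$ after quotienting by permutations of punctures, which accounts for the factors $\mu_{\Gamma^\pm},\kappa_{\Gamma^-}$. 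The relation $\widehat{p}\circ\widehat{p}=0$ is equivalent to $p^{k,l}_2=0$ by Proposition \ref{prop:2level}, and the latter is exactly the statement that the signed count of the codimension-one boundary of a one-dimensional rational SFT moduli space is zero: SFT compactness \cite{bourgeois2003compactness} identifies this boundary with two-level buildings, and these are precisely the configurations encoded by $p^{k,l}_2$.

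For \eqref{BL:2}, $\Theta_o$ is obtained from $\Theta$ by adding the data needed to make the point-constrained moduli spaces $\overline{\cM}_{Y,A,o}(\Gamma^+,\Gamma^-)$ transverse in the virtual sense; the forgetful map $\Theta_o\to\Theta$ is surjective. Define $p^{k,l}_{\bullet,\theta_o}$ by \eqref{eqn:p.}. The identity $\widehat{p}_\bullet\circ\widehat{p}=\widehat{p}\circ\widehat{p}_\bullet$ (equivalently the analogous two-level vanishing for maps with a marked point) comes from the boundary of one-dimensional point-constrained moduli spaces: because the domain of the point constraint is a single slice $\{0\}\times Y$ and $J$ is $s$-invariant, exactly one level of any broken building can carry the constraint, so boundary buildings split into a point-constrained piece glued to an unconstrained piece in either order, matching $\widehat{p}_\bullet\circ\widehat{p}+\widehat{p}\circ\widehat{p}_\bullet$ up to the Koszul sign determined by $|p_\bullet|=0$. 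Part \eqref{BL:3} is analogous: $\Xi$ packages data for $\overline{\cM}_{X,A}(\Gamma^+,\Gamma^-)$ extending chosen data on the cylindrical ends (hence the surjection $\Xi\to\Theta\times\Theta'$), and $\phi^{k,l}_\xi$ is given by \eqref{eqn:phi}. The relation $\widehat{\phi}\circ\widehat{p}=\widehat{p}'\circ\widehat{\phi}$ again comes from SFT compactness, now for one-dimensional cobordism moduli spaces, whose boundary strata consist of one genuine cobordism level together with one symplectization level at either end.

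For \eqref{BL:4}, given a path $\gamma$ from $o'$ to $o$ in $X$, define $\phi^{k,l}_{\bullet,\xi}$ by \eqref{eqn:phi.} using the completed path $\widehat{\gamma}$ (extended as constant rays in the cylindrical ends, which is well-defined because we chose $o,o'$ in the same connected component of $\widehat{X}$). The desired identity in Definition \ref{def:compatible} is the vanishing of the count of the codimension-one boundary of the one-dimensional point-constrained cobordism moduli spaces: the four possible positions of the unique special level (cobordism-with-constraint at top, cobordism-with-constraint at bottom, constrained symplectization level on $Y$-end, constrained symplectization level on $Y'$-end) produce exactly the four terms of the compatibility equation. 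Finally \eqref{BL:5} is essentially tautological: the rescaling $\alpha\mapsto k\alpha$ is realized by an $\R$-equivariant diffeomorphism of symplectizations $(s,y)\mapsto (s+\log k, y)$ that identifies admissible $J$'s, Reeb orbits (with a common action rescaling), and their parameterizations up to a shift of asymptotic markers; transport $\theta,\theta_o$ through this diffeomorphism to obtain $k\theta,k\theta_o$. All moduli spaces, orientations, and VFC data correspond, and the generating symbols $q_\gamma$ in $V_\alpha$ and $V_{k\alpha}$ are canonically identified so that the structure maps agree on the nose. The main obstacle in the whole argument is ensuring that the VFC boundary identifications are strictly coherent with the $BL_\infty$ combinatorial signs and multiplicities --- in particular that the gluing-loss factor $\kappa_\gamma$ at each interior Reeb orbit precisely cancels the multiplicity denominators in \eqref{eqn:p}, \eqref{eqn:phi}, \eqref{eqn:p.}, \eqref{eqn:phi.} --- and this is a bookkeeping exercise that proceeds exactly as in Pardon's treatment of contact homology, adapted to the extra negative punctures.
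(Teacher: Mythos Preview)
Your approach is essentially the paper's: both reduce to Pardon's VFC machinery, define the structure maps via the same moduli counts \eqref{eqn:p}--\eqref{eqn:phi.}, and extract the $BL_\infty$ relations from the codimension-one boundary of one-dimensional compactifications, invoking Proposition~\ref{prop:2level} to reduce $\widehat p\circ\widehat p=0$ to $p^{k,l}_2=0$.

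The paper is more explicit on two technical points you gloss over. First, it gives a precise meaning to ``auxiliary data'': $\Theta$ consists of commuting diagrams of $\cR$-modules (diagram~\eqref{diagram:data}) where $\cR$ is a specific category of decorated directed acyclic graphs encoding the RSFT building combinatorics; this packaging is what makes the induction on $(\cR,\preccurlyeq)$ run and is where the ``tree-like'' observation becomes an actual argument rather than a slogan. Second, and more substantively, the paper singles out the verification that the gluing parameter spaces carry a \emph{cell-like stratification} as the one place where the combinatorics differ from Pardon's contact homology. For the symplectization category $\cR$ this is immediate (no relations among gluing parameters), but for the cobordism category $\cR_{II}$ the paper must introduce a specific change of coordinates on the height/gluing parameters (because there is no single ``root'' vertex when multiple positive punctures are present) to show $(G_{II})_{T/}$ is a topological manifold with boundary. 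Your sentence ``tree-like combinatorics of rational SFT avoids the truly genus-increasing issues'' points in the right direction but does not isolate this step; it is the genuine technical content beyond a verbatim appeal to \cite{pardon2019contact}, and you should name it explicitly. The coefficient bookkeeping you flag at the end is indeed carried out in the paper (equation~\eqref{eqn:BL}), and your description of it is accurate.
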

To make sense of $\#\overline{\cM}$ we need to fix a choice of virtual machinery, and the meaning of auxiliary data also depends on the choice. If one adopts the perturbative scheme in \cite{SFT,ishikawa2018construction}, Theorem \ref{thm:BL} is a special case of their main constructions. On the other hand, since we only consider rational curves, the combinatorics is not essentially different from the construction of differentials and morphisms in \cite{pardon2019contact}. In particular, Pardon's VFC works in a verbatim account. We will explain the VFC construction to prove Theorem \ref{thm:BL} and discuss other virtual techniques in \S \ref{s:virtual}.

\subsection{Augmentations and linearized theories}
We start this section with the following definition.
\begin{definition}\label{def:APT}
	For a strict contact manifold $(Y,\alpha)$, we fix an auxiliary choice $\theta \in \Theta$, then we define algebraic planar torsion $\PT(Y,\alpha,\theta)$ to be the torsion of the $BL_{\infty}$ algebra $(V_\alpha,p_{\theta})$ over $\Q$.
\end{definition}

As a consequence of Proposition \ref{prop:torsion} and Theorem \ref{thm:BL}, we have $\PT(Y,\alpha,\theta)$ is an invariant for $Y$ in the following sense.

\begin{proposition}\label{prop:PT}
$\PT(Y,\alpha,\theta)$ is independent of $\alpha,\theta$, hence can be abbreviated as $\PT(Y)$. Moreover, $\PT:\cont \to \N\cup \{\infty\}$ is a monoidal functor, where the monoidal structure on $\N \cup \{\infty\}$ is given by $a\otimes b =\min\{a,b\}$.
\end{proposition}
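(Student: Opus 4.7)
For independence of $(\alpha,\theta)$ and functoriality, the plan is to produce $BL_\infty$ morphisms via Theorem \ref{thm:BL}(\ref{BL:3}) applied to suitable strict cobordisms and then invoke Proposition \ref{prop:torsion}. Concretely, for two choices $(\alpha_0,\theta_0),(\alpha_1,\theta_1)$ on the same $Y$, I would build strict exact cobordisms of $Y$ realizing each direction between $\alpha_0$ and $\alpha_1$ as contact-type graphs inside a single symplectization of a rescaled form, using Theorem \ref{thm:BL}(\ref{BL:5}) to identify $V_{c\alpha}$ with $V_{\alpha}$. Theorem \ref{thm:BL}(\ref{BL:3}) then supplies auxiliary data projecting to $(\theta_0,\theta_1)$ together with $BL_\infty$ morphisms between $(V_{\alpha_0},p_{\theta_0})$ and $(V_{\alpha_1},p_{\theta_1})$ in both directions, so Proposition \ref{prop:torsion} yields equality of the torsions. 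For functoriality, an exact cobordism $X$ from $Y$ to $Y'$ in $\cont$ is realized as a strict cobordism after rescaling the boundary contact forms, and Theorem \ref{thm:BL}(\ref{BL:3}) produces a $BL_\infty$ morphism $\phi:V_{Y'}\to V_Y$ from the positive to the negative end; since $\widehat\phi$ preserves the word-length filtration and sends $1$ to $1$, the elementary argument just before Proposition \ref{prop:torsion} gives $\TT(V_{Y'})\ge \TT(V_Y)$, i.e., $\PT(Y)\le \PT(Y')$.

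For the monoidal property, a product contact form on $Y_1\sqcup Y_2$ gives $V_{\alpha_1\sqcup\alpha_2}=V_{\alpha_1}\oplus V_{\alpha_2}$, and every connected rational curve in the disconnected symplectization lies in one piece; consequently each operation $p^{k,l}$ on the direct sum vanishes on any input mixing generators of the two summands and restricts to $p_i^{k,l}$ on pure $V_{\alpha_i}$ inputs. The summand inclusions $\iota_i:V_{\alpha_i}\hookrightarrow V_{\alpha_1}\oplus V_{\alpha_2}$ (with $\iota_i^{1,1}$ the vector-space inclusion and all other $\iota_i^{k,l}=0$) are then genuine $BL_\infty$ morphisms, so the word-length argument yields $\PT(Y_1\sqcup Y_2)\le \min\{\PT(Y_1),\PT(Y_2)\}$. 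For the reverse inequality, the base case $\PT=0$ follows from Künneth: at the single-cluster level $SV_{\alpha_1\sqcup\alpha_2}=SV_{\alpha_1}\otimes SV_{\alpha_2}$ with differential $\widehat p^1=d_1\otimes 1+1\otimes d_2$, so $H_*(SV_{\alpha_1\sqcup\alpha_2})=0$ forces vanishing of one of the tensor factors.

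For general torsion level $k$ the same separation extends, since $\widehat p^{\ge 2}$ combines only like-type clusters (mixed clusters being inert under it). Concretely, I would filter $E^{k+1}(V_{\alpha_1}\oplus V_{\alpha_2})$ by the number of mixed clusters and compare the resulting spectral sequence with those computing $H_*(E^{k+1}V_{\alpha_1})$ and $H_*(E^{k+1}V_{\alpha_2})$ to conclude that the unit cannot die in the direct-sum bar-homology unless it already dies in one of the two summand bar-homologies, giving $\PT(Y_1\sqcup Y_2)\ge \min\{\PT(Y_1),\PT(Y_2)\}$. I expect this Künneth-style comparison at the level of the $BL_\infty$ bar complex to be the main technical obstacle: as an algebraically overtwisted summand readily witnesses, the naive vector-space projection $V_{\alpha_1}\oplus V_{\alpha_2}\to V_{\alpha_i}$ is \emph{not} a $BL_\infty$ morphism whenever $p^{1,0}\ne 0$, so no on-the-nose retraction is available and the argument must proceed through the filtered bar complex rather than via a morphism in $BL_\infty$.
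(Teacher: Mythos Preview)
Your treatment of independence of $(\alpha,\theta)$ and of functoriality is correct and coincides with the paper's argument: rescale via Theorem~\ref{thm:BL}\eqref{BL:5}, realize strict cobordisms in both directions, and invoke Proposition~\ref{prop:torsion}. For the monoidal statement the paper merely records that the $BL_\infty$ algebra of a disjoint union is $(V_{\alpha_1}\oplus V_{\alpha_2},\{p_1^{k,l}\oplus p_2^{k,l}\})$ and then asserts the equality of torsions is ``clear''; your inclusion argument for $\PT(Y_1\sqcup Y_2)\le\min\{\PT(Y_1),\PT(Y_2)\}$ and your K\"unneth check of the case $\PT=0$ are both correct, and you are right that the naive projection $V_1\oplus V_2\to V_i$ is \emph{not} a $BL_\infty$ morphism once $p_j^{1,0}\ne 0$: this is a genuine subtlety the paper does not address.

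There is, however, a concrete error in your proposed resolution of the reverse inequality. The assertion that ``$\widehat p^{\ge 2}$ combines only like-type clusters (mixed clusters being inert under it)'' is false. The operation $\widehat p^{k}$ selects one generator from each of $k$ input clusters and feeds them to $p^{k,l}=p_1^{k,l}\oplus p_2^{k,l}$; a mixed cluster contains generators of \emph{both} types, so it may be combined with a pure $V_1$ cluster via a $p_1$ operation, with a pure $V_2$ cluster via a $p_2$ operation, and with another mixed cluster via either. Only pure $V_1$ and pure $V_2$ clusters fail to interact. Your filtration by the number of mixed clusters is nonetheless legitimate (one checks that $\widehat p$ weakly decreases this count), but the associated graded at level $0$ is the subcomplex of monomials with all clusters pure, on which both $\widehat p_{[1]}$ and $\widehat p_{[2]}$ act nontrivially and on which the unit can still in principle be hit by higher differentials from positive filtration levels; your outline does not explain why this cannot happen. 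So as written your spectral-sequence sketch does not close the gap you have (correctly) identified, and the paper supplies no further argument either.
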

\begin{proof}
	By \eqref{BL:5} of Theorem \ref{thm:BL}, we have $(V_{\alpha},p_{\theta})=(V_{k\alpha},p_{k\theta})$ for any $k\in \R_+$. Let $\alpha'$ be another contact form, and $\theta'$ corresponding auxiliary data. Then there exists $k_1,k_2$, such that there are strict cobordisms from $(Y,k_1\alpha),(Y,\alpha')$ to $(Y,\alpha'),(Y, k_2\alpha)$ respectively. Then by \eqref{BL:3} of Theorem \ref{thm:BL} and Proposition \ref{prop:torsion}, we have $\PT(Y,\alpha,\theta)=\PT(Y,\alpha',\theta')$. For $(Y_1,\alpha_1,\theta_1),(Y_2,\alpha_2,\theta_2)$, the $BL_\infty$ algebra for the disjoint union $(Y_1\sqcup Y_2,\alpha_1\sqcup \alpha_2,\theta_1\sqcup \theta_2)$ is given by $(V_{\alpha_1}\oplus V_{\alpha_2}, \{p^{k,l}_{\theta_1}\oplus p^{k,l}_{\theta_2}\})$, i.e.\ there are no mixed structural maps. Then it is clear that $\PT(V_{\alpha_1}\oplus V_{\alpha_2}, \{p^{k,l}_{\theta_1}\oplus p^{k,l}_{\theta_2}\})=\min \{\PT(V_{\alpha_1},p_{\theta_1}),\PT(V_{\alpha_2},p_{\theta_2})\}$. That $\PT$ is a functor follows from \eqref{BL:3} of Theorem \ref{thm:BL}. 
\end{proof}
When $\PT(Y)=0$, it is equivalent to $H_*(\CHA(Y))=0$, which is also known as algebraically overtwisted \cite{bourgeois2010towards} or $0$-algebraic torsion \cite{LW}, and is implied by overtwistedness \cite{bourgeois2010contact,MR2230587}. 
Since finite order of torsion is an obstruction to augmentations, finite algebraic planar torsion is an obstruction to symplectic fillings in view of the following.

\begin{proposition}\label{prop:aug}
Let $(Y,\alpha)$ be a strict contact manifold with an auxiliary data $\theta$, if $(W,\rd\lambda)$ is a strict exact filling, then there is a $BL_{\infty}$ augmentation of $(V_\alpha,p_{\theta})$ over $\Q$.
\end{proposition}
\begin{proof}
	We define
	$$\epsilon^k(q^{\Gamma^+}):=\sum_A\#\overline{\cM}_{W,A}(\Gamma^+,\emptyset),$$
	for $|\Gamma^+|=k$. Then by the third claim of Theorem \ref{thm:BL}, $\{\epsilon^k\}_{k \ge 1}$ defines a $BL_{\infty}$ augmentation.
\end{proof}

\begin{corollary}\label{cor:nofilling}
	If $\PT(Y)<\infty$, then $Y$ has no strong filling.
\end{corollary}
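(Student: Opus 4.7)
The plan is to argue by contrapositive: if $Y$ admits a strong filling, then $\PT(Y) = \infty$. The strategy uses the three key tools established just before the corollary, namely Proposition \ref{prop:aug} (strong fillings produce $BL_\infty$ augmentations over $\Lambda$), Proposition \ref{prop:ob} (obstruction of augmentations by non-vanishing of the class $1_V$), and Proposition \ref{prop:PT_equal} (agreement of $\PT$ over $\Q$ and over $\Lambda$).

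More concretely, suppose $(W,\omega)$ is a strong filling of $Y$. After rescaling the contact form on $Y$ so that $W$ becomes a strict strong filling, fix an auxiliary datum $\theta$ and invoke Proposition \ref{prop:aug}(2) to obtain a $BL_\infty$ augmentation $\epsilon : (V_\alpha, p_\theta) \to \mathbf{0}$ over the Novikov field $\Lambda$. Since $\Lambda$ is a field of characteristic zero, the $BL_\infty$ formalism of \S \ref{s2} applies verbatim with $\bk = \Lambda$; in particular, the contrapositive of Proposition \ref{prop:ob} gives $1_V \neq 0 \in H_*(E^k V_\alpha^\Lambda, \widehat{p}_\theta)$ for every $k \geq 1$. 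By definition of torsion, this means $\TT_\Lambda(V_\alpha, p_\theta) = \PT_\Lambda(Y) = \infty$.

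Finally, applying Proposition \ref{prop:PT_equal} we conclude $\PT(Y) = \PT_\Lambda(Y) = \infty$. Contrapositively, $\PT(Y) < \infty$ prevents the existence of any strong filling, as claimed.

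I do not expect any genuine obstacle in this proof: everything is a direct chain of implications from results already stated. The only minor point requiring care is making sure that the rescaling of the contact form in the first step does not affect the value of $\PT$, which is guaranteed by Proposition \ref{prop:PT} (invariance of $\PT$ under the choice of contact form $\alpha$ and auxiliary data $\theta$); the same invariance holds over $\Lambda$ by the identical argument, or alternatively follows from the $\Q$-case via Proposition \ref{prop:PT_equal}.
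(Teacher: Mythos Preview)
Your proof is correct and follows essentially the same approach as the paper: both combine Proposition~\ref{prop:PT_equal} ($\PT = \PT_\Lambda$), Proposition~\ref{prop:ob} (finite torsion obstructs augmentations), and Proposition~\ref{prop:aug} (strong fillings give augmentations over $\Lambda$). The paper's version is simply the direct implication rather than the contrapositive, and omits the remark about rescaling to a strict filling.
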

\begin{proof}
    Proposition \ref{prop:aug} implies that $Y$ has no exact filling. For the case of strong filling, we consider $BL_\infty$ algebras over $V_{\alpha}\otimes_{\Q}\Lambda$, where $\Lambda$ is Novikov field
        $$\Lambda=\left\{ \sum_{i=1}^\infty a_i T^{\lambda_i}\left| a_i\in \Q, \displaystyle \lim_{i\to \infty} \lambda_i = +\infty \right.\right\}.$$
	The structural maps in \eqref{eqn:p} need an additional factor of $T^{\int_u \rd \alpha}$ for $u\in  \overline{\cM}_{Y,A}(\Gamma^+,\Gamma^-)$, we use $p_{\Lambda}^{k,l}$ to denote such structural maps. If $\PT(Y)<\infty$, we have $x\in E^kV$, such that $\widehat{p}(x) = 1$. Let $F$ denote the map $V_{\alpha}\to V_{\alpha}\otimes_{\Q} \Lambda$ sending $q_{\gamma}$ to $T^{-\int \gamma^*\alpha}q_{\gamma}$. Then we have an induced map $E^kF:E^kV\to E^k(V\otimes_{\Q}\Lambda)$. Since $\int u^*\alpha = \sum_{\gamma \in \Gamma_+} \int \gamma^*\alpha -\sum_{\gamma \in \Gamma_-} \int \gamma^*\alpha$, it is straightforward to check that $\widehat{p}_{\Lambda}(E^kF(x))=1$. Hence $(V_{\alpha}\otimes_{\Q}\Lambda, p^{k,l}_{\Lambda})$ has no augmentations. On the other hand, given a strict strong filling $(W,\omega)$, by a similar argument of the filling case of Theorem \ref{thm:BL}, 
    $$\epsilon^k(q^{\Gamma^+})=\sum_A\#\overline{\cM}_{W,A}(\Gamma^+,\emptyset)T^{\int u^*\overline{\omega}}$$
    defines an augmentation of $(V_{\alpha}\otimes_{\Q}\Lambda, p^{k,l}_{\Lambda})$, 
	where $|\Gamma^+|=k$ and $\overline{\omega}$ is the $2$-form on $\widehat{W}$ that is $\omega$ on $W$ and $\rd \alpha$ on the cylindrical end. As a consequence, there is no strong filling. 
\end{proof}

\begin{remark}
    $\Lambda$ is equipped with a decreasing filtration  $\Lambda_{\rho_1}\subset \Lambda_{\rho_2}$ if $\rho_1\ge \rho_2$, where $\Lambda_{\rho}$ consists of those elements with $\lambda_i\ge  \rho$. $\Lambda$ is complete w.r.t.\ such filtration. We view elements in $V_{\alpha}$ has filtration $0$, then $V\otimes_{\Q}\Lambda$, $SV\otimes_{\Q}\Lambda$ and $EV\otimes_{\Q}\Lambda$ all have induced filtrations, and we use  $\overline{V\otimes_{\Q}\Lambda}$, $\overline{SV\otimes_{\Q}\Lambda}$ and $\overline{EV\otimes_{\Q}\Lambda}$ to denote the completions. In the context of SFT, due to the feature of the compactness result, the structural maps $p^{k,l},\phi^{k,l}$ may only be well-defined on the completion. It is necessary to use the completion to describe Maurer-Cartan elements \cite{cieliebak2009role}, and functoriality for strong cobordisms \cite{supp}, as well as the SFT for a stable Hamiltonian structure \cite{LW}. The situation in Corollary \ref{cor:nofilling} is a rather special case of strong cobordisms where a naive version of Novikov coefficient without completion is sufficient.
\end{remark}

Roughly speaking the algebraic planar torsion looks for rigid curves with multiple positive punctures and no negative puncture. One particular situation, where we can infer information of algebraic planar torsion, is the planar torsion introduced by Wendl \cite[Definition 2.13]{wendl2013hierarchy}, which generalizes overtwisted contact structures and the Giroux torsion in dimension $3$. The following two results were essentially proven in \cite{LW}.
\begin{theorem}\label{thm:planar_torsion_1}
	If $Y$ is a $3$-dimensional contact manifold with planar torsion of order $k$, then $\PT(Y)\le k$. 
\end{theorem}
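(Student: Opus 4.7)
The plan is to adapt the argument of Latschev--Wendl \cite{LW} which shows that planar $k$-torsion implies algebraic $k$-torsion, now in the $BL_\infty$ framework governing RSFT. I will exhibit an explicit element $x \in E^{k+1}V_\alpha$ satisfying $\widehat{p}(x) = 1_\bk$, which by definition gives $\PT(Y) \le k$.

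First I would choose a contact form $\alpha$ on $Y$ that is adapted to a planar $k$-torsion domain $M_k \subset Y$ in the sense of \cite{wendl2013hierarchy}: Morse--Bott along the binding orbits of the blown-up summed open book structure on $M_k$, then perturbed generically to non-degenerate so that the binding orbits $\gamma_1,\ldots,\gamma_m$ underlying the planar pieces acquire small action $\varepsilon$ while all other Reeb orbits in $Y$ have action bounded below by some $T_0 \gg \varepsilon$. Next I would invoke Wendl's finite-energy foliation result: for a suitably chosen admissible $J$ on $\R \times Y$, the pages of the planar pieces of $M_k$ lift to an explicit collection of somewhere-injective, Fredholm-regular rational holomorphic curves $C_0, C_1, \ldots, C_k$ in the symplectization, each having only positive punctures asymptotic to multisets of the $\gamma_i$'s, and with the crucial property that any rational curve of action below $T_0$ is (a translate of) one of these pages, or a closed nodal configuration thereof.

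Now I would build the chain $x$ as in the planar-torsion computation of \cite{LW}, reinterpreting it in the $BL_\infty$ language. Because each page has only positive ends, the counts contribute to operations $p^{*,0}$, and so the relevant piece of $\widehat{p}$ lands in the summand $\bigoplus_i \odot^i \bk \subset E V_\alpha$. Concretely, if $C_0$ has $k+1$ positive ends on orbits $\gamma_{i_1},\ldots,\gamma_{i_{k+1}}$ and contributes $p^{k+1,0}(q_{\gamma_{i_1}}\cdots q_{\gamma_{i_{k+1}}}) = \pm 1$, then a single monomial $x = q_{\gamma_{i_1}} \cdots q_{\gamma_{i_{k+1}}} \in S^{k+1}V_\alpha \subset E^{k+1}V_\alpha$ already realizes $1$ in the $\odot^1 \bk$-summand; in the general case one takes a $\Q$-linear combination of such monomials over all pages $C_0,\ldots,C_k$, weighted so as to cancel the unwanted components $p^{*,l}$ with $l \ge 1$ via the $BL_\infty$ relations between pages and their interface orbits. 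The upper bound $k$ on word-length in this chain is precisely the number of planar pieces, which is how Wendl's order of planar torsion enters: the ``vacuum'' appears only after breaking across all $k+1$ interface-orbit families.

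The main obstacle is verifying the key cancellation: that after summing over all page-curves, the component of $\widehat{p}(x)$ in $E^{k+1}V_\alpha \setminus \bk$ vanishes, and only the $+1$ in the $\bk$-summand survives. This requires two ingredients: (a) an action/compactness argument guaranteeing that no high-action curve (in particular, no curve outside $M_k$) can contribute to $\widehat{p}(x)$, which follows from \eqref{eqn:positive} together with the action gap between $\gamma_i$'s and other orbits; and (b) identifying the combinatorics of breakings and nodal degenerations of the page-foliation with the $BL_\infty$ relation $\widehat{p}^2 = 0$ via Proposition \ref{prop:2level}, so that LW's algebraic cancellation carries over term-by-term. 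Virtual perturbations (in the sense of Pardon \cite{pardon2019contact}, used to define $\widehat{p}$) can be taken arbitrarily small on the low-action stratum, so transversality of the page-curves means their VFC counts agree with the classical counts and the identity $\widehat{p}(x)=1$ passes to the VFC-defined $BL_\infty$ algebra $(V_\alpha, p_\theta)$. Once this is established for a single choice of auxiliary data, invariance (Proposition \ref{prop:PT}) yields $\PT(Y) \le k$.
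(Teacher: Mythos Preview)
Your overall strategy of adapting the Latschev--Wendl low-energy curve analysis to the $BL_\infty$ framework matches the paper's, but there is a genuine gap in where you place the primitive. The monomial $x = q_{\gamma_{i_1}} \cdots q_{\gamma_{i_{k+1}}}$ lies in $S^{k+1}V_\alpha \subset SV_\alpha = E^1 V_\alpha$: it is a single cluster for the inner product $*$, hence has $\odot$-word-length $1$, not $k+1$. On a single-cluster element $\widehat{p}$ reduces to $\widehat{p}^1$, which by the Leibniz rule in the definition of $\widehat{p}^{1,l}$ touches exactly one generator at a time and therefore only involves the operations $p^{1,l}$. The page $C_0$ with $k+1$ positive ends contributes to $p^{k+1,0}$, and that operation is \emph{never} applied to your $x$ by $\widehat{p}$; the $\pm 1$ you expect simply does not appear. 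Indeed, were there some $x\in E^1V_\alpha$ with $\widehat{p}(x)=1$, you would have shown $\PT(Y)=0$ for every planar $k$-torsion manifold, contradicting Theorem~\ref{thm:planar_torsion_2}.

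The fix is to build the primitive with the outer product: something assembled from elements such as $q_{\gamma_{i_1}} \odot \cdots \odot q_{\gamma_{i_{k+1}}} \in E^{k+1}V_\alpha$, so that $\widehat{p}^{k+1}$ feeds all $k+1$ generators into $p^{k+1,0}$ and produces $1\in\bk$. Once this is set up, the rest of your outline---using \cite[Proposition~3.6]{LW} to classify low-action rational curves and control the remaining contributions to $\widehat{p}$, and passing to VFC counts via Proposition~\ref{prop:transverse}---is exactly the argument the paper defers to, with the simplification that the higher-genus clause of \cite[Proposition~3.6]{LW} is not needed in RSFT.
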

\begin{proof}
	This follows from the same argument of \cite[Theorem 6]{LW} based on a precise description of low energy curves in \cite[Proposition 3.6]{LW}, see also \cite{wendl2013hierarchy}. In fact, we do not need the genus $>0$ assertion from the fifth property of \cite[Proposition 3.6]{LW}, as we do not consider higher genus curves.
\end{proof}

\begin{theorem}\label{thm:planar_torsion_2}
	For any $k\in \N$, there exists a $3$-dimensional contact manifold $Y$ with $\PT(Y)=k$.
\end{theorem}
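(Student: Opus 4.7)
The strategy is to reuse the Latschev--Wendl examples. For each $k\ge 0$, take the contact $3$-manifolds $(Y_k,\xi_k)$ from \cite{LW} constructed as summed open books with $k$ symmetric binding components; these carry planar $k$-torsion in the sense of Wendl but no planar $j$-torsion for $j<k$. The upper bound $\PT(Y_k)\le k$ is immediate from Theorem~\ref{thm:planar_torsion_1}. Everything will hinge on proving the matching lower bound $\PT(Y_k)\ge k$.

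For the lower bound, the plan is to run an energy-filtration argument analogous to the one used in \cite{LW} for algebraic torsion. Fix an adapted contact form $\alpha_k$ so that the low-action Reeb orbits are precisely the binding components and a few standard orbits near the interface tori, and fix an action threshold $C_k$ below which the rigid rational curves are completely classified by the local model in \cite[Proposition 3.6]{LW}. Truncate $(V_{\alpha_k},p_\theta)$ to the subcomplex $V^{\le C_k}$ generated by orbits of action at most $C_k$; by the action-preserving property \eqref{eqn:positive} and the choice of $C_k$ this truncation is a $BL_\infty$ subalgebra, and $1\in H_*(E^jV_{\alpha_k})$ vanishes iff it vanishes on the truncation. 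Then one identifies the restriction of $\widehat{p}_\theta$ to $E^{\le k-1}V^{\le C_k}$ with the explicit combinatorial model coming from the summed open book: the only rigid rational building blocks are the ruling family and finitely many standard caps at the bindings. A direct bookkeeping in the word-length filtration of $EV_{\alpha_k}$ shows that expressing $1=\widehat{p}_\theta(x)$ forces $x$ to contain at least $k$ nested $\odot$-layers, each providing one cap for one of the $k$ binding orbits, so $x\notin E^{k-1}V_{\alpha_k}$. This gives $\PT(Y_k)\ge k$ and hence equality.

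Alternatively, and more cheaply, one can avoid the direct computation on the $Y_k$ side by bootstrapping from the cobordism obstruction in \cite{LW}: the examples $Y_k$ fit into a sequence with explicit exact cobordisms $Y_k\to Y_{k+1}$, and $\PT(Y_k)<k$ would propagate by Proposition~\ref{prop:PT} to $\PT(Y_j)<k$ for all $j\ge k$, contradicting $\PT(Y_0)=0$ only if one already knows that this sequence is strictly decreasing in $\PT$. So this shortcut still requires one nontrivial input, namely distinguishing two consecutive $Y_k$ by $\PT$, which is essentially the same energy-filtration computation.

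The main obstacle is the interaction of the energy filtration with the virtual perturbation scheme. In the classical transversality regime the LW-style analysis applies verbatim; the content of the proof is checking that the Pardon VFC construction underlying Theorem~\ref{thm:BL} can be arranged to preserve the action filtration, so that the moduli spaces of curves with total action at most $C_k$ used in the lower bound are those that are already cut out transversely classically and their virtual counts agree with the geometric counts. Granting this (which is the standard action-window trick, compatible with Pardon's implicit-atlas formalism because all relevant breakings still lie below $C_k$), the remaining combinatorics is exactly the word-length argument sketched above, with the refinement that one must track the outer $\odot$-length and the inner $\ast$-length in $EV_{\alpha_k}$ simultaneously, rather than only the single length relevant for $\mathrm{AT}$.
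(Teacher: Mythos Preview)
Your overall plan is the same as the paper's: take the Latschev--Wendl examples, get the upper bound from Theorem~\ref{thm:planar_torsion_1}, and import the LW lower-bound argument with only the genus-$0$ input from \cite[Lemma~4.15]{LW}. That is exactly what the paper does.

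However, your description of the lower-bound mechanism contains a genuine gap. The claim that $V^{\le C_k}$ is a $BL_\infty$ \emph{subalgebra} is false: the energy inequality \eqref{eqn:positive} bounds the \emph{total} output action by the \emph{total} input action, so $p^{k,l}$ applied to several orbits each of action $\le C_k$ can produce an orbit of action up to $kC_k$. What is true is that the total-action filtration on $EV$ makes $(EV)^{\le A}$ a subcomplex for each $A$, but this goes the wrong way for you: $1\ne 0$ in $H_*(E^{\le A,j}V)$ does \emph{not} imply $1\ne 0$ in $H_*(E^jV)$, since a primitive of $1$ could live at arbitrarily high action. So your ``iff'' is unjustified and the truncation step, as written, does not give the lower bound.

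The actual LW argument does not work by naive action truncation. What \cite[Lemma~4.15]{LW} provides is a complete classification, via Siefring intersection theory and the holomorphic foliation, of \emph{all} rigid rational curves in the symplectization for the specific contact form and $J$ adapted to the planar torsion domain---not just those below a threshold. In particular one shows that every rational curve with empty negative end is one of the explicit page-type planes or caps, and that any word in $E^jV$ with $\widehat p(x)=1$ must assemble $k$ such caps across the $k$ distinct binding components, forcing $j\ge k$. Your sentence about ``$k$ nested $\odot$-layers, each providing one cap'' is the right combinatorial picture; the missing ingredient is the intersection-theoretic classification that rules out other curves at \emph{all} energies, which is precisely the genus-$0$ part of \cite[Lemma~4.15]{LW} the paper invokes. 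Your alternative bootstrap via cobordisms is, as you yourself note, circular.
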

\begin{proof}
	This follows from the same argument of \cite[Theorem 4]{LW}. In fact, we only need the genus $0$ part of \cite[Lemma 4.15]{LW} to get a lower bound.
\end{proof}

\begin{remark}
	As follows from \cite[Corollary 1]{LW}, there are examples of $3$-dimensional contact manifolds $Y_i$ with planar torsion of order $k$, such that there is an exact cobordism from $Y_i$ to $Y_{i+1}$ but no exact cobordism from $Y_{i+1}$ to $Y_i$. On the other hand, there is always a strong cobordism from $Y_{i+1}$ to $Y_i$ by \cite[Theorem 1]{wendl2013non}. We will see similar phenomena in higher dimensions in \S \ref{s6} and \S \ref{s7}.
\end{remark}

\begin{remark}\label{rmk:AT}
It is an interesting question to understand the relations between algebraic planar torsion and algebraic torsion. The $BV_\infty$ reformulation of SFT is recalled in Remark \ref{rmk:coeff}. Following \cite[Definition 1.1]{LW}, $Y$ has algebraic torsion $k$ if $k$ is the smallest number such that  $\hbar^k$ is $0$ in the homology of $(SV_\alpha [[\hbar]],D_{\SFT})$. 

Let us consider the simplest case with an algebraic planar $1$-torsion, in which there are two generators $q_1,q_2$ such that $p^{2,0}(q_1q_2)=1$, $p^{2,l}(q_1q_2)=0$ for all $l>0$, and $p^{1,l}(q_{i})=0$ for all $l\ge 0$ and $i=1,2$. The natural candidate for algebraic torsion is $q_1 q_2$, and we compute
\begin{eqnarray*}
D_{\SFT}q_1 q_2
& = & \hbar+\sum_{g=1}^\infty \sum_{A,[\Gamma]} \frac{\hbar^{g+1}}{\mu_{\Gamma}\kappa_{\Gamma}}\#\overline{\cM}_{Y,g,A}(\{\gamma_1,\gamma_2\},\Gamma)q^{\Gamma} \\
& & +\sum_{g=1}^\infty \sum_{A,[\Gamma]} \frac{\hbar^{g}}{\mu_{\Gamma}\kappa_{\Gamma}}(\#\overline{\cM}_{Y,g,A}(\{\gamma_1\},\Gamma)q^{\Gamma}q_{\gamma_2}+(-1)^{|q_{\gamma_1}|}\#\overline{\cM}_{Y,g,A}(\{\gamma_2\},\Gamma)q_{\gamma_1}q^{\Gamma})
\end{eqnarray*}
Since we have no knowledge of $\overline{\cM}_{Y,g,A}$ for $g>0$ in RSFT, one should not expect that $q_1 q_2$ is a primitive of $\hbar$ under $D_{\SFT}$.  We note here that the above consideration is a very special case and in general algebraic planar $k$-torsion is not equivalent to $\hbar^k$ being the image of the genus $0$ term of $D_{\SFT}$ unless $k=0$. In fact, algebraic torsion and algebraic planar torsion can be viewed as two ``independent" axes in a grid of torsions, which make different requirements on holomorphic curves, see \cite[\S 5.2]{supp} for detail.  
\end{remark}

%\begin{remark}
%One can define $BL_\infty$ algebras over group rings as in \cite{LW} for stable Hamiltonian fillings and weak symplectic fillings. Then the finiteness of algebraic planar torsion in this setup is an obstruction to stable/weak fillings. One example with finite algebraic planar torsion in the group ring setup is those with the fully separating planar $k$-torsions \cite[Definition 1.3]{wendl2013hierarchy}, where the finiteness follows from the same proof of \cite[Theorem 6 (2)]{LW}.
%\end{remark}

The notion of Giroux torsion was generalized to higher dimensions in \cite{massot2013weak}; the following theorem is a reformulation of \cite[Theorem 1.7]{MorPhD}.
\begin{theorem}\label{thm:planar_torsion_3}
	If $Y$ has Giroux torsion, then $\PT(Y)\le 1$.
\end{theorem}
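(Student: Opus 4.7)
The plan is to adapt the Latschev--Wendl argument for algebraic $1$-torsion in dimension $3$ and Moreno's higher-dimensional refinement \cite{MorPhD} to the RSFT setting. The goal is to exhibit an explicit chain $x\in E^2V_\alpha$ with $\widehat{p}(x)=1$, which by definition gives $\PT(Y)\le 1$. Note that $1_V\in \bk \subset SV = S^1SV \subset E^2V_\alpha$ and that the space $E^2V_\alpha$ contains the crucial piece $S^2SV_\alpha$, whose elements of the form $q_{\gamma_1}\odot q_{\gamma_2}$ admit a connected graph contribution $\widehat{p}^{2,0}(q_{\gamma_1}\odot q_{\gamma_2}) = p^{2,0}(q_{\gamma_1}q_{\gamma_2})\in\bk$. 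Thus the strategy reduces to producing a specific pair of Reeb orbits $\gamma_1,\gamma_2$ together with a single rigid rational curve with two positive punctures asymptotic to them and \emph{no} negative punctures, whose count is $\pm 1$.

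First I would choose a Morse--Bott contact form $\alpha$ on $Y$ adapted to the embedded Giroux torsion domain in the MNW model \cite{massot2013weak}, so that on the special levels of the domain one sees the Morse--Bott families of Reeb orbits responsible for the torsion. A small Morse function on each critical submanifold perturbs these into non-degenerate orbits; the minima of these Morse functions pick out two distinguished orbits $\gamma_1,\gamma_2$ at low action. Next I would identify the distinguished curve: in the Morse--Bott picture there is an explicit holomorphic ``pair of pants'' (or its higher-dimensional analogue) with two positive ends on the two critical Morse--Bott families and no negative puncture, constructed as in \cite{MorPhD}; after the Morse--Bott perturbation and the standard gluing argument it survives as a rigid somewhere-injective genus-zero curve contributing $\pm 1$ to $p^{2,0}(q_{\gamma_1}q_{\gamma_2})$.

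The main obstacle, and the technical heart of the proof, is controlling \emph{all} remaining contributions to $\widehat{p}(q_{\gamma_1}\odot q_{\gamma_2})$. By the graph description, these consist of terms $p^{1,l}(q_{\gamma_i})\odot q_{\gamma_j}$ (with the $p^{1,l}$ vertex attached to only one cluster) and further $p^{2,l}(q_{\gamma_1}q_{\gamma_2})$ contributions with $l\ge 1$ (coming from the same gluing pattern as above but with additional negative punctures). Using the action bound \eqref{eqn:positive}, the Morse--Bott action window can be chosen so small that any such curve with $l\ge 1$ negative ends must have action strictly greater than what is available; this is the same ``low-energy'' analysis that underlies \cite[Prop.~3.6]{LW} and its extension in \cite{MorPhD}. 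Similarly, low-action moduli spaces $\cM_{Y,A}(\gamma_i,\Gamma^-)$ with $i=1,2$ are either empty or cancel in pairs with the obvious candidates from the Morse--Bott perturbation; any remaining error terms can be absorbed by correcting $x$ with lower-weight elements of $E^2V_\alpha$ inductively on action.

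With these contributions controlled, I would set $x = q_{\gamma_1}\odot q_{\gamma_2} + (\text{action-smaller correction}) \in S^2SV_\alpha \oplus S^1SV_\alpha \subset E^2V_\alpha$ and verify $\widehat{p}(x)=1$ by collecting the contributions above. This identifies $1_{V_\alpha}$ as a boundary in $E^2V_\alpha$, giving $\PT(Y)\le 1$ as desired. By Proposition \ref{prop:PT}, the conclusion is independent of the chosen contact form and auxiliary data, so we obtain $\PT(Y)\le 1$ for any contact structure with Giroux torsion. The same argument also sharpens to show $\PT(Y)=0$ precisely when an embedded overtwisted piece is present, consistent with the $k=0$ case already covered by \eqref{RSFT5} of Theorem \ref{thm:RSFT}.
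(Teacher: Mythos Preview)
Your proposal is essentially correct and aligns with the paper's approach: the paper does not give an independent proof but simply states that the theorem is a reformulation of \cite[Theorem 1.7]{MorPhD}, and your sketch is precisely an outline of the argument therein (the MNW Giroux torsion model, the distinguished low-action pair of orbits, the rigid genus-zero curve with two positive punctures and no negative punctures, and the low-energy control of all other contributions to $\widehat{p}(q_{\gamma_1}\odot q_{\gamma_2})$). Your closing remark about the overtwisted case sharpening to $\PT(Y)=0$ is a separate story (it uses a single plane, not the same pair-of-pants), but this does not affect the main argument.
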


Now we assume $(V_{\alpha},p_{\theta})$ does have a $BL_\infty$ augmentation $\epsilon$ over $\Q$, then $\PT(Y)$ is $\infty$. In view of \S \ref{s2} and Theorem \ref{thm:BL}, a point $o\in Y$ and an auxiliary data $\theta_o$ give rise to a pointed morphism $p_{\bullet,\theta_o}$. Hence we can define the order $O(V_{\alpha},\epsilon,p_{\bullet,\theta_o})$. In the following, we use $\Aug_{\Q}(V_\alpha)$ to denote the set of $BL_\infty$ augmentations of $V_\alpha$ over $\Q$. 

\begin{definition}\label{def:planarity}
	For a strict contact manifold $(Y,\alpha)$ with auxiliary data $\theta$, we define
	$$O(Y,\alpha,\theta):=\max\left\{O(V_{\alpha},\epsilon,p_{\bullet,\theta_o})\left|\forall \epsilon \in \Aug_{\Q}(V_\alpha), o \in Y, \theta_o\in \Theta_o\right.\right\},$$
	where the maximum of an empty set is defined to be zero. 
\end{definition}

\begin{figure}[H]
	\begin{center}
		\begin{tikzpicture}[scale=0.6]
		\draw (0,0) to (0,4) to [out=270,in=180] (1,3.5) to [out=0,in=270] (2,4) to [out=270,in=180] (3,3) to [out=0,in=270] (4,4) to [out=270,in=180] (5,3.5) to [out=0,in=270] (6,4) to [out=270,in=180] (7,3) to [out=0,in=270] (8,4) to [out=270, in=180] (9,3.5) to [out=0,in=270] (10,4) to (10,0) to [out=270, in=0] (9,-0.5) to [out=180,in=270] (8,0) to [out=90,in=0] (7,1) to [out=180,in=90] (6,0) to [out=270,in=0] (5,-0.5) to [out=180, in=270] (4,0) to [out=90,in=0] (3,1) to [out=180,in=90] (2,0) to [out=270,in=0] (1,-0.5) to [out=180, in=270] (0,0);
		\draw[dashed] (0,0) to [out=90,in=180] (1,0.5) to [out=0, in =90] (2,0);
		\draw[dashed] (4,0) to [out=90,in=180] (5,0.5) to [out=0, in =90] (6,0);
		\draw[dashed] (8,0) to [out=90,in=180] (9,0.5) to [out=0, in =90] (10,0);
		\draw (0,4) to [out=90,in=180] (1,4.5) to [out=0,in=90] (2,4);
		\draw (4,4) to [out=90,in=180] (5,4.5) to [out=0,in=90] (6,4);
		\draw (8,4) to [out=90,in=180] (9,4.5) to [out=0,in=90] (10,4);
		
		\draw (12,0) to (12,4) to [out=270,in=180] (13,3.5) to [out=0,in=270] (14,4) to (14,0);
		\draw (12,4) to [out=90,in=180] (13,4.5) to [out=0,in=90] (14,4);
		\draw (0,0) to [out=270, in=180] (1,-2.5) to [out=0,in=270] (2,0);
		\draw (8,0) to [out=270,in=180] (11,-2.5) to [out=0, in=270] (14,0) to [out=270,in=0] (13,-0.5) to [out=180,in=270] (12,0) to [out=270, in=0] (11,-1) to [out=180,in=270] (10,0) to [out=270, in=0] (9,-0.5) to [out=180,in=270] (8,0);
		
		\node at (5,2) {$p^{3,3}$};
		\node at (13,2) {T};
		\node at (1,-1.25) {$\epsilon^1$};
		\node at (11,-1.5) {$\epsilon^2$};
		\end{tikzpicture}
	\end{center}
	\caption{A component of $\ell^4_{\epsilon}$}
\end{figure}

\begin{proposition}\label{prop:P}
	$O(Y,\alpha,\theta)$ is independent of $\alpha$ and $\theta$.
\end{proposition}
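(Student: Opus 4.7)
The plan is to follow the template of Proposition \ref{prop:PT}, but now comparing $BL_\infty$ augmentations and pointed maps rather than just the torsion. Fix two choices $(\alpha,\theta)$ and $(\alpha',\theta')$. By \eqref{BL:5} of Theorem \ref{thm:BL}, rescaling $\alpha\mapsto k\alpha$ induces a canonical identification of $(V_\alpha,p_\theta)$ with $(V_{k\alpha},p_{k\theta})$, and simultaneously identifies the sets of pointed maps, augmentations, and their orders; thus $O(Y,\alpha,\theta)=O(Y,k\alpha,k\theta)$ for any $k>0$. Choose $k_1,k_2>0$ so that there exist strict exact cobordisms $X:(Y,k_1\alpha)\to(Y,\alpha')$ and $X':(Y,\alpha')\to(Y,k_2\alpha)$ (e.g.\ truncated symplectizations).

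First I would prove the inequality $O(Y,\alpha',\theta')\le O(Y,\alpha,\theta)$. By \eqref{BL:3} of Theorem \ref{thm:BL}, the cobordism $X$ equipped with some auxiliary data $\xi\in\Xi$ lying above $(k_1\theta,\theta')$ yields a $BL_\infty$ morphism $\phi_\xi:(V_\alpha,p_{\theta})\to(V_{\alpha'},p_{\theta'})$. Given any triple $(\epsilon',o',\theta'_{o'})$ with $\epsilon'\in\Aug_\Q(V_{\alpha'})$, set $o:=o'\in Y$ on the source side; since $X$ is built from a trivial cobordism on $Y$, the connected component of $o$ in $X$ automatically meets both ends, so the hypothesis of \eqref{BL:4} of Theorem \ref{thm:BL} is satisfied. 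Using the surjectivity of $\Theta_o\to\Theta$ and of $\Xi\to\Theta\times\Theta'$, we may choose $\theta_o$ above $\theta$ together with a refined $\xi$ witnessing compatibility of $p_{\bullet,\theta_o}$, $p_{\bullet,\theta'_{o'}}$ and $\phi_\xi$ in the sense of Definition \ref{def:compatible}. Proposition \ref{prop:order} then gives
\[
O\bigl(V_\alpha,\,\epsilon'\circ\phi_\xi,\,p_{\bullet,\theta_o}\bigr)\ \ge\ O\bigl(V_{\alpha'},\,\epsilon',\,p_{\bullet,\theta'_{o'}}\bigr).
\]
The left-hand side is bounded above by $O(Y,\alpha,\theta)$ by definition, so taking the maximum on the right over $(\epsilon',o',\theta'_{o'})$ yields $O(Y,\alpha',\theta')\le O(Y,\alpha,\theta)$.

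The reverse inequality is obtained by running the identical argument with $X'$ in place of $X$, combined with the scaling identity $O(Y,k_2\alpha,k_2\theta)=O(Y,\alpha,\theta)$. The main obstacle is purely bookkeeping: one must verify that, for each target triple $(\epsilon',o',\theta'_{o'})$, the auxiliary-data lifts needed to invoke \eqref{BL:4} and Proposition \ref{prop:order} can be chosen simultaneously. This is guaranteed by the surjectivity statements in \eqref{BL:2} and \eqref{BL:3} of Theorem \ref{thm:BL}, and by the fact that trivial strict cobordisms automatically preserve connected components, so the connectedness hypothesis for compatible points in \eqref{BL:4} is free.
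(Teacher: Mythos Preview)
Your proof is correct and follows essentially the same route as the paper: establish the inequality $O(Y_+,\alpha_+,\theta_+)\ge O(Y_-,\alpha_-,\theta_-)$ whenever there is a strict exact cobordism from $(Y_-,\alpha_-)$ to $(Y_+,\alpha_+)$ via Proposition~\ref{prop:order} and \eqref{BL:4} of Theorem~\ref{thm:BL}, then run the rescaling trick of Proposition~\ref{prop:PT} in both directions. One cosmetic point: in the paper's convention, a strict cobordism from $(Y',\alpha')$ to $(Y,\alpha)$ induces a $BL_\infty$ morphism $V_\alpha\to V_{\alpha'}$ (top to bottom), so your notation ``$X:(Y,k_1\alpha)\to(Y,\alpha')$ yields $\phi_\xi:V_\alpha\to V_{\alpha'}$'' has the cobordism arrow reversed relative to \eqref{BL:3}; the argument downstream is internally consistent with the morphism direction, so this is purely notational and does not affect the proof.
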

\begin{proof}
	We first show that if there is a strict exact cobordism $X$ from $(Y_-,\alpha_-)$ to $(Y_+,\alpha_+)$, then $O(Y_+,\alpha_+,\theta_+)\ge O(Y_-,\alpha_-,\theta_-)$ for any $\theta_+,\theta_-$. For any $o_-\in Y_-$, there exists a point $o_+\in Y_+$, such that there is a path in $X$ connecting $o_+$ and $o_-$.
	Then by \eqref{BL:4} of Theorem \ref{thm:BL} and Proposition \ref{prop:order},  for any augmentation $\epsilon$ to $V_{\alpha_-}$ and auxiliary data $\theta_{o_-}$,   there exists an auxiliary data $\xi\in \Xi$ and $\theta_{o_+}$, such that $O(V_{\alpha_+},\epsilon\circ \phi_{\xi},p_{\bullet,\theta_{o_+}})\ge O(V_{\alpha_-},\epsilon,p_{\bullet,\theta_{o_-}})$. Hence $O(Y,\alpha_+,\theta_+)\ge O(Y,\alpha_-,\theta_-)$. Then by the same argument in Proposition \ref{prop:PT}, $O(Y,\alpha,\theta)$ is independent of $\alpha$ and $\theta$.
\end{proof}

\begin{definition}
    The planairty $\Pl(Y)$ of a contact manifold $Y$ is defined to be $O(Y,\alpha,\theta)$.
\end{definition}

\begin{proposition}\label{prop:P_order}
 $\Pl:\cont \to \N\cup \{\infty\}$ is a monoidal functor, where the monoidal structure on $\N\cup \{\infty\}$ is given by $0\otimes a = 0,\forall a$ and $a\otimes b =\max\{a,b\},\forall a,b\ge 1$.
\end{proposition}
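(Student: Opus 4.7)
For functoriality under an exact cobordism $X$ from $Y$ to $Y'$, I would directly reduce to Proposition \ref{prop:P}: after selecting strict representatives with auxiliary data, the $BL_\infty$ morphism $\phi_\xi\colon V_{\alpha'}\to V_\alpha$ of Theorem \ref{thm:BL}(3) pulls back any augmentation $\epsilon$ of $V_\alpha$, and for each $o\in Y$, choosing $o'\in Y'$ in the same connected component of $X$ supplies pointed-map compatibility via Theorem \ref{thm:BL}(4); Proposition \ref{prop:order} then yields $O(V_{\alpha'},\epsilon\circ\phi_\xi,p_{\bullet,\theta_{o'}})\ge O(V_\alpha,\epsilon,p_{\bullet,\theta_o})$, and maximizing over $(\epsilon,o,\theta_o)$ gives $\Pl(Y')\ge\Pl(Y)$.

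For the monoidal law, set $V:=V_{\alpha_1}\oplus V_{\alpha_2}$. Because every connected holomorphic curve in the symplectization of $Y_1\sqcup Y_2$ lies in one component, all mixed structure maps on $V$ vanish (as in the proof of Proposition \ref{prop:PT}), and for a point $o\in Y_i$ the pointed operator is supported on $S^kV_{\alpha_i}\to S^lV_{\alpha_i}$. The inclusions $V_{\alpha_i}\hookrightarrow V$ are $BL_\infty$ morphisms, whose restrictions send $\Aug_{\Q}(V)$ onto $\Aug_{\Q}(V_{\alpha_1})\times\Aug_{\Q}(V_{\alpha_2})$ with a section furnished by the tensor product $(\epsilon_1\otimes\epsilon_2)(w_1\cdot w_2):=\epsilon_1(w_1)\epsilon_2(w_2)$ for $w_i\in SV_{\alpha_i}$. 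Consequently $\Aug_{\Q}(V)=\emptyset$ iff some $\Aug_{\Q}(V_{\alpha_i})=\emptyset$, and together with the convention $\max\emptyset=0$ (plus the observation that $O\ge 1$ whenever finite, since $\overline{B}^0V=0$) this yields $\Pl(Y_1\sqcup Y_2)=0$ whenever some $\Pl(Y_i)=0$, matching $0\otimes a=0$.

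When both $\Pl(Y_i)\ge 1$, the upper bound $\Pl(Y_1\sqcup Y_2)\le\max(\Pl(Y_1),\Pl(Y_2))$ would follow from Proposition \ref{prop:order} applied to the inclusion $V_{\alpha_i}\hookrightarrow V$, which is pointed-compatible via $\phi_\bullet=0$ because $p_\bullet$ vanishes on mixed inputs: for any $\epsilon\in\Aug_{\Q}(V)$ and $o\in Y_i$ this gives $O(V,\epsilon,p_{\bullet,o})\le O(V_{\alpha_i},\epsilon|_{V_{\alpha_i}},p_{\bullet,o})\le\Pl(Y_i)$, and maximizing over $(\epsilon,o)$ completes the step.

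For the reverse inequality, given $\epsilon_1\in\Aug_{\Q}(V_{\alpha_1})$ and $o\in Y_1$ realizing $O(V_{\alpha_1},\epsilon_1,p_{\bullet,o})=\Pl(Y_1)=:k$, I would test the product augmentation $\epsilon=\epsilon_1\otimes\epsilon_2$ for an arbitrary $\epsilon_2\in\Aug_{\Q}(V_{\alpha_2})$. Under the K\"unneth identification $\overline{S}V\cong(SV_{\alpha_1}\otimes SV_{\alpha_2})/\Q$, block-diagonality of $p$ yields $\widehat{\ell}_\epsilon=\widehat{\ell}_{\epsilon_1}\otimes 1+1\otimes\widehat{\ell}_{\epsilon_2}$ and the pointed linearization factors as $\widehat{\ell}_{\bullet,\epsilon}=\widehat{\ell}_{\bullet,\epsilon_1}\otimes\widehat{\epsilon}_2$, where $\widehat{\epsilon}_2\colon SV_{\alpha_2}\to\Q$ is the extension of $\epsilon_2$ sending $1\mapsto 1$. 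A hypothetical witness $z\in\overline{B}^{k-1}V$ with $\widehat{\ell}_\epsilon(z)=0$ and $\widehat{\ell}_{\bullet,\epsilon}(z)=1$ would, after application of $1\otimes\widehat{\epsilon}_2$, produce an element of $\overline{B}^{k-1}V_{\alpha_1}$ (up to a scalar killed by both $\widehat{\ell}_{\epsilon_1}$ and $\widehat{\ell}_{\bullet,\epsilon_1}$) that is closed under $\widehat{\ell}_{\epsilon_1}$ by the Maurer-Cartan equation $\widehat{\epsilon}_2\circ\widehat{\ell}_{\epsilon_2}=0$ and maps to $1$ under $\widehat{\ell}_{\bullet,\epsilon_1}$, contradicting the choice of $k$. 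Hence $O(V,\epsilon_1\otimes\epsilon_2,p_{\bullet,o})\ge k$, and maximizing over $\epsilon_1,\epsilon_2,o$ (symmetrically for $Y_2$) delivers the lower bound $\Pl(Y_1\sqcup Y_2)\ge\max(\Pl(Y_1),\Pl(Y_2))$. The main technical obstacle is a graph-level verification of the K\"unneth factorizations for product augmentations; via Proposition \ref{prop:linear} this reduces to the observation that each $\epsilon$-leaf attached to a $V_{\alpha_j}$-input contributes only through the restriction $\epsilon_j$.
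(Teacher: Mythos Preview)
Your overall architecture is right, and functoriality plus the upper bound via the inclusion $\iota_i\colon V_{\alpha_i}\hookrightarrow V$ and Proposition~\ref{prop:order} are exactly what the paper intends. The gap is in the lower bound.

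The ``tensor product'' section is problematic. If you read $(\epsilon_1\otimes\epsilon_2)^k(w_1w_2)=\epsilon_1^i(w_1)\epsilon_2^j(w_2)$ at the level of the components $\epsilon^k$, this is \emph{not} a $BL_\infty$ augmentation in general: for instance, $\widehat{\epsilon}^1(vu)=\epsilon^2(vu)+\epsilon^1(v)\epsilon^1(u)=2\,\epsilon_1^1(v)\epsilon_2^1(u)\neq \widehat{\epsilon}_1^1(v)\widehat{\epsilon}_2^1(u)$, so the basic multiplicativity you use later already fails. The honest section is the \emph{block sum} $\epsilon:=\epsilon_1\oplus\epsilon_2$ (zero on mixed words), which \emph{is} an augmentation because it equals the composition $\epsilon_1\circ(\Id_{V_{\alpha_1}}\oplus\epsilon_2)$ of $BL_\infty$ morphisms. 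For this $\epsilon$ your first K\"unneth identity $\widehat{\ell}_\epsilon=\widehat{\ell}_{\epsilon_1}\otimes 1+1\otimes\widehat{\ell}_{\epsilon_2}$ does hold, but the second one does \emph{not}: since every $\epsilon$-leaf in $p^{k,0}_{\bullet,\epsilon}$ is pure and $p_\bullet$ lives entirely on $V_{\alpha_1}$, one finds $\widehat{\ell}_{\bullet,\epsilon}=\widehat{\ell}_{\bullet,\epsilon_1}\otimes\mathrm{pr}_{\Q}$, not $\widehat{\ell}_{\bullet,\epsilon_1}\otimes\widehat{\epsilon}_2$. Correspondingly, the relation you need is $\mathrm{pr}_{\Q}\circ\widehat{\ell}_{\epsilon_2}=0$ (which is immediate, as $\widehat{\ell}_{\epsilon_2}$ lands in $\overline{S}V_{\alpha_2}$), not ``$\widehat{\epsilon}_2\circ\widehat{\ell}_{\epsilon_2}=0$'', which is generally false. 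With $\mathrm{pr}_{\Q}$ in place of $\widehat{\epsilon}_2$, your witness argument goes through verbatim: $(1\otimes\mathrm{pr}_{\Q})$ is a filtered chain map and intertwines $\widehat{\ell}_{\bullet,\epsilon}$ with $\widehat{\ell}_{\bullet,\epsilon_1}$.

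A cleaner route---and closer in spirit to the paper's one-line ``follows from definition''---is to avoid K\"unneth altogether and run Proposition~\ref{prop:order} in the other direction. The map $\psi:=\Id_{V_{\alpha_1}}\oplus\epsilon_2\colon V\to V_{\alpha_1}$ is a $BL_\infty$ morphism (block-diagonal, so connected two-level graphs on mixed inputs are empty), and for $o\in Y_1$ the pointed maps are compatible with $\psi_\bullet=0$ since $p_{\bullet}$ is supported on the $V_{\alpha_1}$-block. Proposition~\ref{prop:order} then gives
\[
O(V,\epsilon_1\circ\psi,p_{\bullet,o})\ \ge\ O(V_{\alpha_1},\epsilon_1,p_{\bullet,o}),
\]
and since $\epsilon_1\circ\psi$ is the block sum, maximizing over $\epsilon_1$ and $o$ yields $\Pl(Y_1\sqcup Y_2)\ge\Pl(Y_1)$. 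This mirrors your upper-bound step exactly and sidesteps all the factorization bookkeeping.
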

\begin{proof}
    That $\Pl$ is a functor is proven in Proposition \ref{prop:P}. To prove the monoidal structure, we first note that $(V_1\oplus V_2,p_1\oplus p_2)$ has an augmentation iff $V_1,V_2$ both have augmentations, since the natural inclusion $V_1\to V_1\oplus V_2$ defines a $BL_{\infty}$ morphism. This verifies the case for $0\otimes a$. When both $Y_1$ and $Y_2$ have augmentations, it follows from definition that $\Pl(Y_1\sqcup Y_2)=\max\{\Pl(Y_1),\Pl(Y_2)\}$. 
\end{proof}
Since finite algebraic planar torsion is an obstruction to $BL_\infty$ augmentations, we have that $\PT(Y)\le \infty$ implies that $\Pl(Y)=0$. Since $\Pl(Y)=0$ corresponds precisely to those contact manifolds without augmentations, algebraic planar torsion is the inner hierarchy inside $\Pl(Y)=0$. However it is still possible (at least on the algebraic level, e.g.\ by exploiting the non-closedness of $\Q$ algebraically) that $\Pl(Y)=0$ but $\PT(Y)=\infty$, i.e.\ there is no augmentation nor finite algebraic planar torsion.

\subsection{Implementation of virtual techniques}\label{s:virtual}
In the following, we will explain how to get the algebraic count of moduli spaces in Theorem \ref{thm:BL} using virtual techniques. Any choice of virtual machinery should give a construction of $\Pl$ and $\PT$ with the claimed properties, although it is not clear whether different virtual techniques give rise to the same $\Pl$ and $\PT$. However, the geometric results, examples and applications in this paper, do not depend on the choice, as we have the following axiom for virtual machinery, which holds for any of the virtual techniques mentioned in this paper.
\begin{axiom}\label{axiom}
	A virtual implementation of a holomorphic curve theory has the property that the virtual count of a compactified moduli space equals the geometric count, when transversality holds for that moduli space. 
\end{axiom}	
In the following, we will finish the proof of Theorem \ref{thm:BL} by implementing Pardon's implicit atlas and virtual fundamental cycles \cite{pardon2016algebraic, pardon2019contact}. The construction is essentially the constructions of contact homology algebra and morphisms in \cite{pardon2019contact}. As explained in \cite[\S 1.8]{pardon2019contact}, the only difference that one needs to pay attention to is the underlying combinatorics for holomorphic curves. One needs to show that implicit atlas with cell-like stratification still exist for RSFT, in particular the space of gluing parameters has a cell-like stratification. However, the combinatorics for RSFT is also ``tree-like" like contact homology, hence the construction is a verbatim account of \cite{pardon2019contact}. More precisely, the only places we need to pay attention to because of the differences of the combinatorics are that the gluing parameter spaces are cell-like stratified and virtual fundamental cycles gives rise to the right coefficient in $BL_\infty$ coefficient. In the following, we give a brief description of the construction.  

\subsubsection{$\cR$-modules}
We first introduce a category $\cR$ which will play the same role of $\cS_I$ in \cite[\S 2.1]{pardon2019contact} to govern the combinatorics of rational holomorphic curves in the symplectization. The objects of $\cR$ are connected non-empty directed graphs without cycles, such that each vertex has at least one incoming edge. Edges with missing source, i.e.\ input edges, and edges with missing sink, i.e.\ output edges, are allowed. Those edges are called external edges and all other edges are called interior edges. The graph $T$ is equipped with decorations as follows.
\begin{enumerate}
	\item For each edge $e\in E(T)$, a Reeb orbit $\gamma_e$.
	\item For each vertex $v\in V(T)$, a relative homology class $\beta_v\in H_2(Y,\{\gamma_{e^+}\}_{e^+\in E^+(v)}\sqcup \{\gamma_{e^-}\}_{e^-\in E^{-}(v)})$, where we denote by $E^{+}(v)$ the set of incoming edges at $v$ and $E^-(v)$ the set of outgoing edges at $v$, which can be empty.
	\item For each external edge $e\in E^{\ext}(T)$, a basepoint $b_e\in \Ima \gamma_e$.
\end{enumerate}
A morphism $\pi:T\to T'$ in $\cR$ consists first of a contraction of the underlying graph of $T$ to $T'$ by collapsing some of the interior edges of $T$. The decorations have the following property.
\begin{enumerate}
	\item For each non-contracted edge $e\in E(T)$, we have $\gamma_{\pi(e)}=\gamma_e$.
	\item For each vertex $v'\in V(T')$, we have $\beta_{v'}=\sum_{\pi(v)=v'}\beta_v$.
\end{enumerate}
Finally, we specify for each external edge $e\in E^{\ext}(T)=E^{\ext}(T')$ a path along $\Ima\gamma_e$ between the basepoints $b_e$ and $b'_e$ modulo the relation that identifies such two paths iff their difference lifts $\gamma_e$. In particular, there are exactly $\kappa_{\gamma_e}$ different equivalence classes of paths. Then the automorphism group of $T$ with a single vertex is a product of cyclic groups and symmetric groups with cardinality $\mu_{\Gamma^+}\mu_{\Gamma^-}\kappa_{\Gamma^+}\kappa_{\Gamma^-}$. For $T\to T'$, we use $\Aut(T/T')$ to denote the subgroup of $\Aut(T)$ compatible with $T\to T'$.

A concatenation in $\cR$ consists of a finite non-empty collection of objects $T_i\in \cR$ along a matching between some pairs of output edges and input edges with matching orbit label, such that the resulting gluing is a directed graph without cycles, along with a choice of paths between the basepoints for each pair of matching edges. Given a concatenation $\{T_i\}_i$ in $\cR$, there is a resulting object $\#_i T_i\in \cR$. A morphism of concatenations $\{T_i\}_i\to \{T'_i\}_i$ means a collection of morphisms $T_i\to T'_i$ covering a bijection of index sets. Then a morphism $\{T_i\}_i\to \{T'_i\}_i$ induces a morphism $\#_i T_i \to \#_i T'_i$.  If $\{T_i\}_i$ is a concatenation and $T_i=\#_j T_{ij}$ for some concatenation $\{T_{ij}\}_j$, then there is a resulting composite concatenation $\{T_{ij}\}_{ij}$ with natural isomorphisms $\#_{ij}T_{ij}=\#_i\#_jT_{ij}=\#_i T_i$. We use $\Aut(\{T_i\}_i/\#_iT_i)$ to represent the group of automorphisms of $\{T_i\}_i$ acting trivially on $\#_iT_i$, i.e.\ the product $\prod_e \Z_{\kappa_{\gamma_e}}$ over junction edges.  

The key concept to organize the moduli spaces, implicit atlases, and virtual fundamental cycles is the following $\cR$-module.
\begin{definition}[{\cite[Definition 4.5]{pardon2019contact}}]
	A $\cR$-module $X$ valued in a symmetric monoidal category $\cC^{\otimes }$ consists of the following data.
	\begin{enumerate}
		\item A functor $X:\cR\to \cC$.
		\item For every concatenation $\{T_i\}_i$ in $\cR$, a morphism
		$$\otimes_i X(T_i)\to X(\#_i T_i),$$
		such that the following diagrams commute:
		$$
		\xymatrix{
	    \otimes_i X(T_i)\ar[r]\ar[d] & X(\#_i T_i)\ar[d]\\
	    \otimes_i X(T'_i)\ar[r] & X(\#_i T'_i)}\qquad 
        \xymatrix{
        & \otimes_i X(T_i) \ar[rd] & \\
        \otimes_{i,j} X(T_{ij}) \ar[rr]\ar[ru] & & X(\#_{ij}T_{ij})}
		$$
		for any any morphism of concatenations and composition of concatenations.
	\end{enumerate}
\end{definition}

\begin{example}
	A holomorphic building of type $T\in \cR$ consists of the following data.
	\begin{enumerate}
		\item For every vertex $v$, a closed, connected nodal Riemann surface of genus zero $C_v$, along with distinct points $\{p_{v,e}\in C_v\}_e$ indexed by the edges incident at $v$ and a $J$ holomorphic map $u_v:C_v\backslash\{p_{v,e}\}_e\to \R \times Y$ up to $\R$-translation.
		\item $u_v$ converges to $\gamma_{e^+}$ near $p_{v,e^+}$ in the sense of \eqref{eqn:asymp1} for $e^+\in E^+(v)$ and converges to $\gamma_{e^-}$ near $p_{v,e^-}$ in the sense of \eqref{eqn:asymp2} for $e^-\in E^-(v)$. We use $(u_v)_{p_{v,e}}:S^1\to Y$ to denote the $Y$-component of the limit map at the puncture $p_{v,e}$, i.e.\ the corresponding orbit.
		\item For every input/output edge $e$, an asymptotic marker $L_e\in S_{p_{v,e}}C_v$ which is mapped to the basepoint $b_e$ by $(u_v)_{p_{v,e}}$.
		\item For every interior edge $v\stackrel{e}{\to} v'$, a matching isomorphism $m_e:S_{p_{v,e}}C_v\to S_{p_{v',e}}C_{v'}$ intertwining $(u_v)_{p_{v,e}}$ and $(u_{v'})_{p_{v',e}}$.
	\end{enumerate}
    An isomorphism between two buildings is a collection of isomorphisms between $C_v$ commuting with all the data. Then we define $\cM(T)$ to be the set of isomorphism classes of holomorphic buildings of type $T$. Note that $\Aut(T)$ acts on $\cM(T)$ by changing markings. Then we define
    $$\overline{\cM}(T):=\bigsqcup_{T'\to T} \cM(T')/\Aut(T'/T).$$
    The union is over the set of isomorphism classes in the over category $\cR_{/T}$. Moreover, $\overline{\cM}(T)$ is endowed with the Gromov topology and is a compact Hausdorff space \cite[\S 2.9, 2.10]{pardon2019contact}. Note that here for each $v\in V(T)$, we view $u_v$ as a curve in its own copy of the symplectization. In particular, we have no level structure and the topology is slightly different from the buildings in \cite{bourgeois2003compactness} by forgetting all trivial cylinders. However this poses no difference for the compactness. In particular, there is a surjective map from the compactification in \cite{bourgeois2003compactness} to $\overline{\cM}(T)$ by collapsing the boundary configurations containing levels with multiple disconnected nontrivial curves into corners. The functor $\overline{\cM}$ is an $\cR$-module in the category of compact Hausdorff spaces with disjoint union as the monoidal structure. The natural map $\overline{\cM}(T)\to \cR_{/T}$ is a stratification in the sense of \cite[Definition 2.15]{pardon2019contact}. We define $\vdim (T)$ as $\sum_{v\in V(T)}(\ind (u_v)-1)$ and $\codim(T'/T)$ is the number of interior edges collapsed in $T'\to T$. Then we have $\codim(T'/T)+\vdim(T')=\vdim(T)$.
\end{example}
\begin{example}\label{ex:orientation}
	For each non-degenerate Reeb orbit $\gamma$ (good or bad) and a basepoint $b\in \Ima\gamma$, \cite[Definition 2.46]{pardon2019contact} constructs a canonical $\Z_2$ graded line $\mathfrak{o}_{\gamma,b}$ with grading $\mu_{CZ}(\gamma)+n-3 \mod 2$. Any path $b\to b'$ gives rise to a functorial isomorphism $\mathfrak{o}_{\gamma,b}\to \mathfrak{o}_{\gamma,b'}$, two paths induces the same isomorphism if the difference is a lift of $\gamma$. As a consequence, $\Z_{\kappa_{\gamma}}$ acts on $\mathfrak{o}_{\gamma,b}$. Then $\gamma$ is good iff the action is trivial. Let $T$ be a tree, then we have the determinant line $\mathfrak{o}^{\circ}_T\vert_v$ of the linearized Cauchy-Riemann operator at the vertex $v$ and a canonical isomorphism from $\mathfrak{o}^{\circ}_T\vert_v$ to $\otimes_{e^+\in E^+(v)} \mathfrak{o}_{\gamma_{e^+},b_{e^+}}\otimes_{e^-\in E^-(v)} \mathfrak{o}^{\vee}_{\gamma_{e^-},b_{e^-}}$. Moreover, $\mathfrak{o}^{\circ}$ is a $\cR$-module \cite[Example 4.7]{pardon2016algebraic}. We define $\mathfrak{o}_T$ by $\mathfrak{o}_T^{\circ}\otimes (\mathfrak{o}_{\R}^{\vee})^{V(T)}$, where the line $\mathfrak{o}_\R$ comes from linearizing the $\R$ translation. $\mathfrak{o}_T$ will be canonically isomorphic to the orientation sheaf of $\overline{\cM}(T)$. Moreover, for $T'\to T$, there is an induced isomorphism $\mathfrak{o}_{T'}\to \mathfrak{o}_T$ by \cite[(2.61)]{pardon2019contact} from an orientation of the gluing parameter space.
\end{example}

\begin{example}
In the construction of the implicit atlas \cite[Definition 3.2]{pardon2019contact} for moduli spaces $\overline{\cM}(T)$, we need to construct a thickened moduli space, which roughly speaking consists of solutions to pseudo-holomorphic curve equation up to a finite dimentional error. The choices involved in defining implicit atlas, called thickening data as defined in \cite[Definition 3.9]{pardon2019contact}, work verbatim for our purpose. Then we have the set of thickening data $A(T)$, and we may define
$$\overline{A}(T):=\bigsqcup_{T'\subseteq T} A(T'),$$
where the disjoint union is over all connected subgraphs that are in $\cR$. Then clearly $\overline{A}$ is an $\cR^{op}$-module to the category of sets.
\end{example}

\begin{proposition}
	$\overline{\cM}(T)$ is equipped with an implicit atlas $\overline{A}(T)$ with oriented cell-like stratification.
\end{proposition}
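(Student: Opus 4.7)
The plan is to follow Pardon's construction in \cite{pardon2019contact} essentially verbatim, with the only care needed being the verification that the tree-like combinatorics of $\cR$ (as opposed to the graph category $\cS_I$ used for contact homology) still satisfies the necessary hypotheses. Concretely, I would proceed in three stages: verify the implicit atlas axioms for the assignment $T \mapsto \overline{A}(T)$; exhibit a cell-like stratification; and construct the orientation data.

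First, to show $\overline{A}(T)$ is an implicit atlas on $\overline{\cM}(T)$, I would check each of the axioms in \cite[Definition 3.2]{pardon2019contact} in turn. For each thickening datum $\alpha \in A(T')$ with $T' \subseteq T$, one defines a thickened moduli space $\overline{\cM}(T)^\alpha$ of maps satisfying the Cauchy--Riemann equation up to the finite-dimensional obstruction space encoded by $\alpha$, together with the resulting forgetful, regularity, homeomorphism, openness, covering and transition data axioms. Since each holomorphic building of type $T$ has underlying nodal surface whose components are spheres with only punctures (no marked points beyond asymptotic markers), the functional-analytic setup of \cite[\S 3--5]{pardon2019contact} applies without change: the only formal difference is that vertices carry both incoming and outgoing external edges, but each Reeb asymptotic contributes in exactly the same way to the Fredholm theory regardless of sign. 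The asymptotic markers and the $\mu_\Gamma \kappa_\Gamma$-factors built into the definition of $\Aut(T)$ are precisely what is needed to make the thickened moduli spaces and the transition maps well-defined.

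Second, for the cell-like stratification, the key input is that for every morphism $T' \to T$ in $\cR$ the space of gluing parameters is cell-like, and that strata of $\overline{\cM}(T)$ are indexed by objects of the over-category $\cR_{/T}$. Since the objects of $\cR$ are directed acyclic graphs (in fact trees, because each vertex has at least one incoming edge and cycles are forbidden) and a morphism $T' \to T$ collapses a subset of interior edges, the gluing parameter space is simply a product $\prod_e [0,\varepsilon)$ of one copy per collapsed edge, which is manifestly cell-like in the sense of \cite[Definition 2.15]{pardon2019contact}. The compatibility of strata under compositions $T'' \to T' \to T$ is automatic from this product description. The only thing to check beyond Pardon's contact-homology setup is that the matching paths and basepoint data on external edges behave well under collapsing; this is taken care of by the definition of morphisms in $\cR$, which records a path (modulo lifts of $\gamma_e$) for each external edge, and by the $\Aut(T'/T)$-quotient in the definition of $\overline{\cM}(T)$.

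Third, the orientation of the stratification comes from the $\cR$-module $\mathfrak{o}$ of Example~\ref{ex:orientation}: at a point of $\cM(T')\subset \overline{\cM}(T)$ the orientation sheaf is canonically $\mathfrak{o}_{T'}$, and the collapse-edge isomorphism $\mathfrak{o}_{T'} \to \mathfrak{o}_T$ of \cite[(2.61)]{pardon2019contact} provides the oriented gluing isomorphism comparing strata. That the isomorphism has the correct sign for the cell-like structure is exactly the content of \cite[Proposition 2.47]{pardon2019contact}, whose proof only uses the linear gluing and transversal intersection of the gluing parameter cube with codimension-one faces, both of which hold identically here.

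The main obstacle, and the only place where one must genuinely rewrite rather than cite, is ensuring that the orientation conventions on vertices with both positive and negative punctures produce signs compatible with the $BL_\infty$ relations of Proposition~\ref{prop:2level}; this is a bookkeeping exercise based on the Koszul signs in Example~\ref{ex:orientation} together with the switching rule $(-1)^{|\mu_{CZ}(\gamma_1)+n-3|\cdot|\mu_{CZ}(\gamma_2)+n-3|}$ for reordering asymptotics, and it is precisely this compatibility that justifies the factors $\tfrac{1}{\mu_{\Gamma^+}\mu_{\Gamma^-}\kappa_{\Gamma^-}}$ in \eqref{eqn:p}.
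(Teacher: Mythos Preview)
Your proposal is correct and follows essentially the same approach as the paper: reduce everything to Pardon's construction in \cite{pardon2019contact}, with the only genuinely new verification being that the gluing parameter space is cell-like because the absence of cycles in objects of $\cR$ means there are no relations among the gluing parameters (so the space is simply a product $(0,\infty]^{E^{int}(T)}$, exactly as in \cite[Lemma 3.5]{pardon2019contact}). The paper's proof is terser---it states the gluing-parameter observation and then defers directly to \cite[Theorem 3.23]{pardon2019contact} for the atlas axioms and to \cite[Theorems 3.31, 3.32, \S 5]{pardon2019contact} for the gluing analysis (noting one glues one puncture at a time)---whereas you unpack the three stages more explicitly; but the content is the same. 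Your final paragraph on sign compatibility with the $BL_\infty$ relations is not needed for this proposition itself (that bookkeeping belongs to the proof of \eqref{BL:1} of Theorem~\ref{thm:BL}), but it does no harm.
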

\begin{proof}
	First of all, we have the space of gluing parameters $G_{T/}$ that associates to each interior edge a number in $(0,\infty]$. Since there are no cycles in $T$, there are no relations among those gluing parameters. In particular $G_{T/}$ has a cell-like stratification \cite[Definition 3.1]{pardon2019contact} over $\cR_{T/}$ like $(G_I)_{T/}$ in \cite[Lemma 3.5]{pardon2019contact}. Then the claim follows from the same proof of \cite[Theorem 3.23]{pardon2019contact}. The analogues of \cite[Theorem 3.31, 3.32]{pardon2019contact} hold for our setup since we only glue one puncture at a time, hence the gluing analysis in \cite[\S 5]{pardon2019contact} applies in a verbatim way.
\end{proof}

With the existence of implicit atlas with cell-like stratification, the machinery of virtual fundamental cycles induces a pushforward map
\begin{equation}\label{eqn:push}
C^{*+\vdim(T)}_{\vir}(\overline{\cM}(T) \rel \partial; \overline{A}(T))\to C_{-*}(E;\overline{A}(T)),
\end{equation}
where $E$ is part of the data in $\overline{A}(T)$ and $C_{-*}(E;\overline{A}(T))$ is quasi-isomorphic to the $\Q$ concentrated in degree $0$ and $H^{*}_{\vir}(\overline{\cM}(T) \rel \partial; \overline{A}(T))$ is isomorphic to $\check{H}^*(\overline{\cM}(T), \mathfrak{o}_{\rel \partial})$ with $\mathfrak{o}_{\rel \partial}=j_{!}\mathfrak{o}$ for the orientation bundle $\mathfrak{o}$ and $j:\overline{\cM}(T)\backslash \partial \overline{\cM}(T)\to \overline{\cM}(T)$\footnote{Here $\partial \overline{\cM}(T)$ is the preimege of $\cR^{<T}$ in the stratification $\overline{\cM}(T)\to \cR_{/T}$.}. Heuristically, the pushforward map can be viewed as integration of compact supported degree $\vdim(T)$ cohomology class on $\overline{\cM}(T)$.

The construction of virtual fundamental cycles for $BL_\infty$ algebra requires combining single pushfordward/virtual fundamental class with respect to the combinatorics of $\cR$. The proof hinges on an induction argument, which in particular requires certain finiteness.  An object $T\in \cR$ is called effective iff $\overline{\cM}(T)\ne \emptyset$. Then for any morphism $T\to T'$, if $T$ is effective, so is $T'$. For any concatenation $\{T_i\}_i$,  every $T_i$ is effective iff $\#_i T_i$ is effective. In the following, $\cR$ will mean the full subcategory spanned by effective objects, which depends on $J$. Then $\cR$ has the following properties, which allows one to apply inductive constructions.
\begin{enumerate}
	\item Every $T$ can be written as a concatenation of maximal elements $\#_iT_i$, where an element $T_i$ is maximal iff every morphism mapping out of $T_i$ is an isomorphism. That is, $T_i$ has only one interior vertex.
	\item Let $T,T'\in \cR$, we say $T'\preccurlyeq T$ iff there is a morphism $\#_i T_i\to T$ with some $T_i$ isomorphic to $T'$. Then there are no infinite strictly decreasing sequences (in the effective version). This is a consequence of compactness or positivity of contact energy \eqref{eqn:positive} in the exact cobordism setting of this paper. 
\end{enumerate}
As a consequence, a lot of the constructions can be built inductively from the minimal elements in $(\cR,\preccurlyeq )$. Note that maximal $T$ is not necessarily maximal in $\preccurlyeq$, i.e.\ commodification of the moduli spaces of curves in $\widehat{Y}$ might involve breaking. But minimal elements of $(\cR,\preccurlyeq )$ are necessarily maximal, i.e.\ compactified moduli spaces without breaking are necessarily without breaking. The induction will typically start at minimal elements of $(\cR,\preccurlyeq )$ and the induction process for any tree $T$ will terminate since there are no infinite strictly decreasing sequence.

Following the same procedure of \cite[Definition 4.19]{pardon2019contact}, there is a canonical construction of $\cR$-module $C^{*+\vdim}_{\vir}(\overline{\cM} \rel \partial)$ by homotopy colimit in the category of cochain complexes such that $C^{*+\vdim(T)}_{vir}(\overline{\cM}(T) \rel \partial)$ is quasi-isomorphic to $C^{*+\vdim(T)}_{\vir}(\overline{\cM}(T) \rel \partial;\overline{A}(T))$. Similarly, by the homotopy colimit as in \cite[Definition 4.20]{pardon2019contact}, there is an $\cR$-module $C_{*}(E)$ and \eqref{eqn:push} leads to a canonical map of $\cR$-modules $C^{*+\vdim}_{\vir}(\overline{\cM} \rel \partial) \to C_{-*}(E)$.
Similar to \cite[Definition 4.14]{pardon2019contact}, there is an $\cR$-module $\Q[\cR]$ governing the boundary information,
$$\Q[\cR](T):=\Q[\cR_{/T}]=\bigoplus_{T'\to T}\mathfrak{o}_{T'}[\vdim(T')]$$
with the differential given by the sum of all codimension one maps $T''\to T'$ in $\cR_{/T}$ of boundary map $\mathfrak{o}_{T'}\to \mathfrak{o}_{T''}$ in Example \ref{ex:orientation}\footnote{It should be viewed as multiplying an additional $\frac{1}{|\Aut(T''/T')|}=1$.}. The $\cR$-module structure on $\Q[\cR]$ is described in \cite[(4.37), (4.38)]{pardon2019contact}. Then a virtual fundamental class recording the combinatorics of $\cR$ is simple a $\cR$-module map $\Q[\cR]\to \Q$. Namely, for every $T$, the image of $\mathfrak{o}_T[\vdim T]\subset \Q[\cR](T)\to \Q$ determines an element of $(\mathfrak{o}_T^{\vee})^{\Aut(T)}$ (the $\Aut(T)$ invariant part\footnote{The $\Aut(T)$-invariance forces the vanishing of the virtual fundamental cycle at $T$ if one of the input/output edges of $T$ is labeled with a bad Reeb orbit.} of $\mathfrak{o}_T^{\vee}$) if $\vdim(T)=0$. On the other hand, it is not hard to believe that the right virtual fundamental cycle should be the pushforward of  $1$. By \cite[Lemma 4.23]{pardon2019contact}, $\Hom_{\cR_{/T}}(\Q[\cR],C^{*+\vdim}_{\vir}(\overline{\cM}\rel \partial))$ is a $\cR^{op}$ module of complexes whose cohomology is isomorphic to the $\cR^{op}$ module $T\to \check{H}^{*}(\overline{\cM}(T))$. In particular, by the same argument of \cite[Lemma 4.31]{pardon2019contact}, there is a $\cR$-module map $\Q[\cR]\to C^{*+\vdim}_{\vir}(\overline{\cM}\rel\partial)$ representing $1\in H^0(\overline{\cM}(T))$ for all $T$.

In order to build an $\cR$ module map $\Q[\cR]\to \Q$ following $\Q[\cR]\to C^{*+\vdim}_{\vir}(\overline{\cM}\rel\partial) \to C^{-*}(E)$, we need to find a quasi-isomorphism of $\cR$-module from $C_*(E)$ to the trivial $\cR$-module $\Q$ to evaluate at the chain level. Instead of building a direct quasi-isomorphism, this can be done with a cofibrant replacement, i.e.\ a diagram of quasi-isomorphisms of $\cR$ modules,
$$C_*(E)\stackrel{\sim}{\leftarrow} C_*^{\cof}(E) \stackrel{\sim}{\to} \Q,$$
where $C_*^{\cof}(E)$ is cofibrant in the sense of \cite[Definition 4.24]{pardon2019contact}. The construction of $C_*^{\cof}(E)$ follows from the same recipe for \cite[Definition 4.28]{pardon2019contact} by induction on $\preccurlyeq$. $\Q[\cR]$ is again cofibrant in the sense of \cite[Definition 4.24]{pardon2019contact} by the same argument of \cite[Lemma 4.26]{pardon2019contact}. Now we can introduce the auxiliary data in the VFC setup of RSFT.

\begin{definition}
	Given $\alpha,J$, an element of $\Theta(\alpha,J)$ consists a commuting diagram of $\cR$-modules
	\begin{equation}\label{diagram:data}
		\xymatrix{ \Q[\cR] \ar[r]^{\tilde{w}_*}\ar[d]^{w_*} & C^{\cof}_{-*}(E)\ar[d]^{\sim} \ar[r]^{\quad p_*} & \Q \\
		C^{*+\vdim}_{\vir}(\overline{\cM} \rel \partial ) \ar[r] & C_{-*}(E) &
	}	
	\end{equation}
	satisfying the following properties.
	\begin{enumerate}
		\item $p_*$ induces the canonical isomorphism $H_*^{\cof}(E)=H_*(E)=\Q$.
		\item $w_*$ satisfies the property that for any $T\in \cR$, $w_*\in \Hom_{\cR_{/T}}(\Q[\cR],C^{*+\vdim}_{\vir}(\overline{\cM} \rel \partial))$ on cohomology level represents the constant function $1\in \check{H}^0(\overline{\cM}(T))$ under the identification in \cite[Lemma 4.23]{pardon2019contact}.
 	\end{enumerate}
\end{definition}

\begin{proof}[Proof of \eqref{BL:1} of Theorem \ref{thm:BL}]
	In the context of VFC, $\Theta(\alpha)=\bigsqcup_{J}\Theta(J,\Theta(\alpha,J))$. Moreover $\Theta(\alpha,J)$ is not empty.  The existence of $p_*$ follows from \cite[Lemma 4.30]{pardon2019contact}, the existence of $w_*$ follows from \cite[Lemma 4.31]{pardon2019contact}, and the existence of lifting $\tilde{w}_*$ follows from the cofibrant property and induction on $\preccurlyeq$ as in \cite[Proposition 4.34]{pardon2019contact}.
	
	Given a diagram \eqref{diagram:data}, we have a $\cR$-module map $p_*\circ \tilde{w}_*:\Q[\cR]\to \Q$, which assigns to each $T$ with $\vdim(T)=0$ an element $\#\overline{\cM}(T)^{\vir}\in (\mathfrak{o}_T^{\vee})^{\Aut(T)}$, which, after fixing a trivialization of $\mathfrak{o}_{\gamma,b}$ for every Reeb orbits, is a rational number. If an exterior edge of $T$ is labeled by a bad orbit, then being $\Aut(T)$ invariant implies that  $\#\overline{\cM}(T)^{\vir}=0$. $\#\overline{\cM}(T)^{\vir}$ is the virtual count of the moduli space of holomorphic curves modeled by the tree $T$ with labels, i.e.\ of $\overline{\cM}(T)$. If $T$ is a maximal tree (without interior edges), then $\#\overline{\cM}(T)^{\vir}$ (after choosing invariant trivializations of $\mathfrak{o}_{\gamma,b}$) is interpreted as $\#\overline{\cM}_{Y,A}(\Gamma_+,\Gamma_-)$ in \eqref{eqn:p}, where the marking of $T$ is determined by $A,\Gamma_+,\Gamma_-$.  Finally, being a $\cR$-module implies that 
	\begin{eqnarray}
	0 & = & \sum_{\codim(T'/T)=1}\frac{1}{|\Aut(T'/T)|}\#\overline{\cM}(T')^{\vir} \nonumber\\
	& = & \sum_{\codim(T'/T)=1}\#\overline{\cM}(T')^{\vir}\text{, since } \Aut(T'/T)=1,  \label{eqn:bounadry}
	\end{eqnarray}
	\begin{eqnarray}
	\#\overline{\cM}(\#_iT_i)^{\vir} & = & \frac{1}{|\Aut(\{T_i\}_i/\#T_i)|} \prod_i \# \overline{\cM}(T_i)^{\vir}. \label{eqn:product} 
	\end{eqnarray}
	Let $T$ be a tree with one interior vertex labeled by homology class $\beta$, $k$ input edges labeled by $\Gamma^+$, and $l$ output edges labeled by $\Gamma^-$. If $\vdim(T)=0$, then we define $q^{\Gamma^-}$ coefficient of $p^{k,l}(q^{\Gamma^+})$, i.e.\  
    $$\langle p^{k,l}(q^{\Gamma^+}), q^{\Gamma^-} \rangle = \sum_T \frac{\mu_{\Gamma^+}\kappa_{\Gamma^+}}{|\Aut(T)|}\#\overline{\cM}(T)^{\vir}=\sum_T \frac{1}{\mu_{\Gamma^-}\kappa_{\Gamma^-}}\#\overline{\cM}(T)^{\vir},$$ where the sum is over isomorphism classes of such maximal trees with $\vdim=0$. 
    
    In view of Proposition \ref{prop:2level}, we need to prove that $\langle p_2^{k,l}(q^{\Gamma^+}), q^{\Gamma^-} \rangle $ is zero for any multisets $\Gamma^+,\Gamma^-$ with $|\Gamma^+|=k,|\Gamma^-|=l$. We claim that 
    \begin{equation}\label{eqn;p2kl}
    \langle p_2^{k,l}(q^{\Gamma^+}), q^{\Gamma^-} \rangle = \frac{1}{\mu_{\Gamma^-}\kappa_{\Gamma^-}}\sum_T \sum_{\codim(T'/T)=1} \#\overline{\cM}(T')^{\vir}=0
    \end{equation}
	where the first sum is over the isomorphism classes of maximal $T$ with $\vdim(T)=1$ with marking given by $\Gamma^+,\Gamma^-$ and all possible homology classes. Recall from \S \ref{ss:tree}, when we apply $p^{k,l}$ as gluing trees, we choose a representative of input by ordering the vertices while the output is understood as an equivalence class or unordered. More precisely, by the definition of $p^{k,l}$ before Proposition \ref{prop:2level}, we have
    \begin{eqnarray*}
        \langle p^{k,l}(q^{\Gamma^+}), q^{\Gamma^-} \rangle & = & \sum  \pm \langle p^{|\Gamma^+_1|,|\Gamma^-_1|+1}(q^{\Gamma_1^+}), q^{\Gamma^-_1}q_{\gamma} \rangle \cdot  \langle p^{|\Gamma^+_2|+1,|\Gamma^-_2|}(q^{\gamma}q^{\Gamma_1^+}), q^{\Gamma^-_2} \rangle
    \end{eqnarray*}
    where the sum is over divisions of  $\Gamma^+$  as an ordered set into $\Gamma_1^+\cup \Gamma_2^+$ with $|\Gamma^+_1|\ge 1$ (hence $2^{|\Gamma^+|}-1$ such divisions), divisions of $[\Gamma^-]$ as a unordered multiset into  $[\Gamma_1^-] \cup [\Gamma_2^-]$ and good Reeb orbits $\gamma$ and a choice of output vertices in $p^{|\Gamma^+_1|,|\Gamma^-_1|+1}$ marked with $\gamma$. Here the sign is determined by the rule in \S \ref{ss:tree} from switch orders. In other words, we have $\langle p^{k,l}(q^{\Gamma^+}), q^{\Gamma^-} \rangle $ is
     \begin{equation}\label{eqn:p2kl'}
        \sum_{\substack{\Gamma^+=\Gamma_1^+\cup \Gamma_2^+\\ [\Gamma^-]=[\Gamma_1]^-\cup [\Gamma_2^-],  \gamma }}   \pm \langle p^{|\Gamma^+_1|,|\Gamma^-_1|+1}(q^{\Gamma_1^+}), q^{\Gamma^-_1}q_{\gamma} \rangle \cdot n(\gamma,\Gamma^-_1) \cdot  \langle p^{|\Gamma^+_2|+1,|\Gamma^-_2|}(q^{\gamma}q^{\Gamma_1^+}), q^{\Gamma^-_2} \rangle
    \end{equation}
    where $n(\gamma,\Gamma^-_1)$ is the number of $\gamma$ in $\Gamma^-_1$ plus $1$, which is the number of choices for $\gamma$-vertices. They have the same sign when $|q_{\gamma}|=0$ and when $|q_{\gamma}|=1$ the claim is tautological  as $q_{\gamma}^2=0$. 
    
    Now Assume $$\Gamma^+:=\{\underbrace{\gamma_1,\ldots,\gamma_1}_{k_1},\ldots, \underbrace{\gamma_m,\ldots,\gamma_m}_{k_m}\},\quad \Gamma^-:=\{\underbrace{\eta_1,\ldots,\eta_1}_{l_1},\ldots, \underbrace{\eta_s,\ldots,\eta_s}_{l_s}\}$$ for $\sum_{i=1}^m k_i=k$, $\sum_{i=1}^s l_i=l$ and $\gamma_i\ne \gamma_j,\eta_i\ne \eta_j$ for $i\ne j$. Note that every $T'$ with $\codim(T'/T)=1$ is determined by ordered divisions $\Gamma^+=\Gamma_1^+\cup \Gamma_2^+$ and $\Gamma^-=\Gamma_1^+\cup \Gamma_2^-$ and one connecting interior edge marked with $\gamma$. Every unordered division $[\Gamma^-]=[\Gamma_1^-] \cup [\Gamma_2^-]$ appears $\prod_{i=1}^s\binom{l_i}{l'_i}$ times in the ordered divisions, where 
    $$[\Gamma^-_2]=[\{\underbrace{\eta_1,\ldots,\eta_1}_{l'_1},\ldots, \underbrace{\eta_s,\ldots,\eta_s}_{l'_s}\}]$$
    for $l'_i\ge 0$. We can cut out the interior edge to obtain $T'_1,T'_2$ with the interior edge turning into an output edge for $T'_1$ marked with $\gamma$, i.e.\ $T'=T'_1\# T'_2$.  Note that by \eqref{eqn:product}, we have
	$$\# \overline{\cM}(T')^{\vir}=\frac{1}{\kappa_{\gamma}} \# \overline{\cM}(T_1')^{\vir}\# \overline{\cM}(T_2')^{\vir}.$$
    
    Therefore we have
    \begin{eqnarray}
        \frac{1}{\mu_{\Gamma^-}\kappa_{\Gamma^-}} \sum_{\codim(T'/T)=1} \#\overline{\cM}(T')^{\vir} & = & \frac{1}{\mu_{\Gamma^-}\kappa_{\Gamma^-}}\sum_{\substack{\Gamma^+=\Gamma_1^+\cup \Gamma_2^+\\ \Gamma^-=\Gamma_1^-\cup \Gamma_2^-,  \gamma }} \#\overline{\cM}(T')^{\vir} \nonumber \\
        & = & \sum_{\substack{\Gamma^+=\Gamma_1^+\cup \Gamma_2^+\\ \Gamma^-=\Gamma_1^-\cup \Gamma_2^-,  \gamma }} \frac{1}{\mu_{\Gamma^-}\kappa_{\Gamma^-}\kappa_{\gamma}}  \# \overline{\cM}(T_1')^{\vir}\# \overline{\cM}(T_2')^{\vir} \nonumber\\
        & = & \sum_{\substack{\Gamma^+=\Gamma_1^+\cup \Gamma_2^+\\ [\Gamma^-]=[\Gamma_1]^-\cup [\Gamma_2^-],  \gamma }} \frac{1}{\mu_{\Gamma^-}\kappa_{\Gamma^-}\kappa_{\gamma}} \prod_{i=1}^s\binom{l_i}{l'_i}  \# \overline{\cM}(T_1')^{\vir}\# \overline{\cM}(T_2')^{\vir} \label{eqn:T1T2}
    \end{eqnarray}    
    Since $\frac{1}{\mu_{\Gamma^-}}\prod_{i=1}^s\binom{l_i}{l'_i} = \frac{n(\gamma,\Gamma^1_-)}{\mu_{\Gamma^-_1\cup\{\gamma\}}\mu_{\Gamma^-_2}}$, \eqref{eqn:p2kl'} and \eqref{eqn:T1T2} imply the claim \eqref{eqn;p2kl} as the extra signs from switching order in \eqref{eqn:p2kl'} are encoded in the operations on $(\mathfrak{o}_T^{\vee})^{\Aut(T)}$ to write down \eqref{eqn:bounadry}, \eqref{eqn:product}. Hence $p^{k,l}$ gives a $BL_\infty$ structure.
\end{proof}

The next proposition follows from \cite[Proposition 4.33]{pardon2019contact}. It is Axiom \ref{axiom} in the context of VFC.
\begin{proposition}\label{prop:transverse}
	If $\overline{\cM}(T)$ is cut out transversely with $\vdim(T)=0$, then $\#\overline{\cM}(T)^{\vir}=\#\overline{\cM}(T)=\#\cM(T)$ for any $\theta\in \Theta(\alpha,J)$.
\end{proposition}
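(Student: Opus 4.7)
The plan is to reduce this to Pardon's \cite[Proposition 4.33]{pardon2019contact}, which establishes the analogous fact for contact homology, since the only substantive difference is the combinatorics of the category $\cR$ (tree-like with multiple outputs) versus $\cS_I$, and the arguments are insensitive to this change once we have implicit atlases with cell-like stratifications, which has already been established above.

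First I would unpack what transversality gives us at the classical level. If $\overline{\cM}(T)$ is cut out transversely and $\vdim(T)=0$, then for every morphism $T'\to T$ in $\cR$ with $\codim(T'/T)>0$, we have $\vdim(T')<0$ (since $\codim(T'/T)+\vdim(T')=\vdim(T)=0$ and the codimension is strictly positive), so $\cM(T')=\emptyset$ by the transverse cut-out hypothesis. Hence the stratification $\overline{\cM}(T)\to \cR_{/T}$ collapses to the top stratum, giving $\overline{\cM}(T)=\cM(T)$ as a finite set of signed points, and $\#\overline{\cM}(T)=\#\cM(T)$ as a count in $(\mathfrak{o}_T^\vee)^{\Aut(T)}$ after trivializing the orientation lines $\mathfrak{o}_{\gamma,b}$.

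Next I would verify that the virtual count agrees with this classical count. Under transversality, the implicit atlas $\overline{A}(T)$ admits the regular point $\emptyset\in\overline{A}(T)$ at which the thickened moduli space $\overline{\cM}(T)_\emptyset=\overline{\cM}(T)$ is already a 0-dimensional transverse solution set with no obstruction bundle. By the functoriality of the VFC construction and the explicit local description of the pushforward map \eqref{eqn:push} (see \cite[\S 4]{pardon2016algebraic}), evaluating the canonical class $1\in\check{H}^0(\overline{\cM}(T))$ via the composition
\[
\Q[\cR](T)\xrightarrow{\tilde{w}_*} C^{\cof}_{-*}(E)(T)\xrightarrow{p_*}\Q
\]
simply sums the local degrees at each point of $\overline{\cM}(T)$, which is the geometric signed count. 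The fact that this is independent of the choice of lifts $\tilde w_*,p_*$ (i.e.\ of $\theta$) is built into the diagram \eqref{diagram:data} through the cofibrancy of $\Q[\cR]$ and $C_*^{\cof}(E)$: any two choices are chain-homotopic and hence produce the same rational number when paired with $1\in\check{H}^0(\overline{\cM}(T))$.

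The only real obstacle is verifying that the chain-level construction—with its homotopy colimits, cofibrant replacements, and $\cR$-module coherences—actually degenerates to the classical count in the transverse case. This is not a triviality: one must check that the particular representative of $w_*$ can be chosen to be compatible with the unperturbed zero locus, and that the cofibrant replacement $C^{\cof}_*(E)\to \Q$ is compatible with the geometric evaluation map. This is precisely the content of Pardon's \cite[Proposition 4.33]{pardon2019contact}; since our $\cR$ differs from his $\cS_I$ only in allowing multiple outgoing edges per vertex (but still no cycles), and since the gluing analysis and cell-like stratification arguments carry over verbatim as discussed above, the same proof applies mutatis mutandis and yields $\#\overline{\cM}(T)^{\vir}=\#\overline{\cM}(T)=\#\cM(T)$.
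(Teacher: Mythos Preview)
Your proposal is correct and takes essentially the same approach as the paper: the paper's entire proof is the one-line remark that the proposition follows from \cite[Proposition 4.33]{pardon2019contact}, and you have simply unpacked that citation with additional explanatory detail about why $\overline{\cM}(T)=\cM(T)$ under transversality and how the virtual machinery degenerates to the classical count.
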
	

\subsubsection{$\cR_{II}$,$\cR^{\bullet}$ and $\cR^{\bullet}_{II}$ modules}
In the following, we introduce $\cR_{II},\cR^{\bullet}$ and $\cR^{\bullet}_{II}$ to govern moduli spaces as well virtual fundamental cycles for $BL_\infty$ morphisms, pointed maps and the homotopy in Definition \ref{def:compatible}. 
\begin{enumerate}
	\item The category $\cR_{II}$ is the analogue of $\cS_{II}$ in \cite[\S 2.1]{pardon2019contact}. The objects of $\cR_{II}$ are graphs without cycles as before, but now each edge $e\in E(T)$ is labeled with a symbol $*(e)\in \{0,1\}$ such that all input edges are labeled with $0$ and all output edges are labeled with $1$. For each vertex $v\in V(T)$, we associate it with a pair of symbols $*^{\pm}(v)\in \{0,1\}$ such that $*^+(v)\le *^{-}(v)$ and $*(e^{\pm}(v))=*^{\pm}(v)$. If $*^{+}(v)=*^{-}(v)$, then $v$ is called a symplectization vertex and if $*^{+}(v)<*^{-}(v)$, then $v$ is called a cobordism vertex. Given an exact cobordism $X$ from $Y_-$ to $Y_+$, for every $T\in \cR_{II}$, we can similarly define the moduli space $\cM_{II}(T)$, where the curve attached to a symplectization vertex $v$ with $*^{\pm}(v)=0$ is a holomorphic curve in $\R\times Y_+$ modulo $\R$-translation, the curve attached to a symplectization vertex $v$ with $*^{\pm}(v)=1$ is a holomorphic curve in $\R\times Y_-$ modulo $\R$-translation, the curve attached to a cobordism vertex $v$  is a holomorphic curve in $\widehat{X}$. Then we have the analogous compactification $\overline{\cM}_{II}(T)$ using the over category over $T$, which is a $\cR_{II}$-module. 
	\item The category $\cR^{\bullet}$ is similar to $\cR$ but with exactly one vertex labeled by $\bullet$. The morphisms in $\cR^{\bullet}$ consist again of contractions of graphs such that the $\bullet$ vertex is mapped to the $\bullet$ vertex. For every $T\in \cR^{\bullet}$, we can associate a moduli space $\cM^{\bullet}(T)$, which is defined similarly as $\cM(T)$ but the map associated to $\bullet$ vertex is a holomorphic curve with a marked point mapped to the fixed point $(0,o)\in \R\times Y$. We can similarly define the compactified moduli spaces $\overline{\cM}^{\bullet}(T)$, which is a $\cR^{\bullet}$-module.
	\item The category $\cR^{\bullet}_{II}$ is the combination of $\cR_{II}$ and $\cR^{\bullet}$, i.e.\ the objects are the same as $\cR_{II}$ but where one of the vertices is marked with $\bullet$. In the definition of $\cM_{II}^{\bullet}(T)$, the curve attached to the $\bullet$ vertex is a curve in the symplectization with a point constraint if the vertex is a symplectization vertex, and is a curve in the cobordism with a path constraint if the vertex is a cobordism vertex.
\end{enumerate}

\begin{proof}[Proof of the rest of Theorem \ref{thm:BL}]
	We need to argue that $\overline{\cM}_{II}(T),\overline{\cM}^{\bullet}(T),\overline{\cM}^{\bullet}_{II}(T)$ are equipped with implicit atlases with oriented cell-like stratification. For this, we only need to argue that the gluing parameter spaces are cell-like like, the remaining of the argument is the same as \cite[Theorem 3.23]{pardon2019contact}. The gluing parameter space $(G^{\bullet})_{T/}$ for $\cR^{\bullet}_{T/}$ is same as the $G_{T/}$, i.e.\ $(0,\infty]^{E^{int}(T)}$, since there is no relations among gluing parameters.  The gluing parameter space $(G_{II})_{T/}$ for $(\cR_{II})_{T/}$ is defined as a subset of 
	$$\left\{ (\{g_e\}_e,\{g_v\}_v)\in  (0,\infty]^{E^{int,0}(T)}\times [-\infty,0)^{E^{int,1}(T)}\times (0,\infty]^{V_{00}(T)}\times [-\infty,0)^{V_{11}(T)}\right\},$$
	subject to the constraints
	$$g_v=g_e+g_{v'}, \text{ for } v \stackrel{e}{\to } v' \text{ with }*(e)=0, \quad g_{v'}=g_e+g_v, \text{ for }  v \stackrel{e}{\to } v' \text{ with }*(e)=1.$$
	where $g_{v}$ is interpreted as $0$ if $v\in V_{01}(T)$, $V_{ij}(T)$ is the set of vertices with $*^+(v)=i,*^{-}(v)=j$ and $E^{int,i}(T)$ is the set of interior edges $e$ such that $*(e)=i$. Then $g_v$ can be viewed as the height of the vertex $v$ for $v\in V_{ij}(T)$, where the heights of all cobordism vertices are $0$, as all of them are placed in the same level. Following the argument of \cite[Lemma 3.6]{pardon2019contact}, it is sufficient to prove $(G_{II})_{T/}$ is a topological manifold with boundary. We can perform the the same change of coordinates $h=e^{-g}\in [0,1)$ for $v\in V_{00}(T),e\in E^{int,0}(T)$, and $h=e^{g}\in [0,1)$ for $v\in V_{11}(V), e\in E^{int,1}(T)$.  We allow $h\in [0,\infty)$ for convenience. Then the relation becomes $h_v=h_eh_{v'}$ for $*(e)=0$ and $h_{v'}=h_eh_v$ for $*(e)=1$.  Now the difference with \cite[Lemme 3.6]{pardon2019contact} is that we do not have $v_{\max}$, which in contact homology corresponds to the vertex with the input edge. In our case, the subgraph generated $V_{00}(T)$ is a disjoint union of graphs $\{T^0_i\}_{i\in I^0}$ and the subgraph generated $V_{11}(T)$ is a disjoint union of graphs $\{T^1_i\}_{i\in I^1}$.  We pick a vertex $v^0_i,v^1_i$ in $T^0_i,T^1_i$ respectively. Since $T_i^0$ has no cycles and we can view $v^0_i$ as a root, we can parameterize the gluing parameters associated to $T_i^0$ by $h_{v^0_i}\in [0,\infty),q_e=h_e^2-h_{v'}^2\in \R$ if $e$ is in the same direction with the direction pointed away from the root $v^0_i$, and $h_e\in [0,\infty)$ if $e$ is the opposite direction with the tree direction.  In the same direction case, $h_v\in [0,\infty),q_e=h_e^2-h_{v'}^2\in \R$ determine $h_e,h_{v'}\in [0,\infty)$ as in \cite[Lemma 3.6]{pardon2019contact}. It is clear that such change of coordinate parameterize the gluing parameters by $[0,\infty)\times \R^{|\{e|\text{in same direction}\}|} \times [0,\infty)^{|\{e|\text{in opposite direction}\}|}$. Similarly we parameterize the gluing parameters on $T^1_i$ by $h_{v^1_i}\in [0,\infty),h_e\in [0,\infty)$ if $e$ is in the same direction with the tree direction, and $q_e=h^2_e-h^2_v\in \R$ if $e$ is the opposite direction with the tree direction. As a consequence $(G_{II})_{T/}$ is the product of the such parametrization of $T^0_i,T_i^1$, which is a topological manifold with boundary, and the top stratum corresponds to the interior. The gluing parameter space $(G^{\bullet}_{II})_{T/}$ is same as  $(G_{II})_{T/}$. Therefore $\overline{\cM}_{II}(T),\overline{\cM}^{\bullet}(T),\overline{\cM}^{\bullet}_{II}(T)$ are equipped with implicit atlases with oriented cell-like stratification. 
	
    The virtual fundamental cycles for $BL_\infty$ morphisms, pointed maps and homotopies are module morphisms $\Q[\cR_{II}]\to \Q$, $\Q[\cR^{\bullet}]\to \Q$ and $\Q[\cR^{\bullet}_{II}]\to \Q$ respectively that are derived from diagrams like \eqref{diagram:data}. The non-emptiness of such diagrams and surjectivity of the projections of admissible auxiliary data follows from \cite[Proposition 4.34]{pardon2019contact}. Combined with Propositions \ref{prop:mor2level}, \ref{prop:pointed_2level} and \ref{prop:pqphi_2level}, that module morphisms $\Q[\cR_{II}]\to \Q$, $\Q[\cR^{\bullet}]\to \Q$ and $\Q[\cR^{\bullet}_{II}]\to \Q$ give rise counts to $BL_\infty$ morphisms, pointed maps and homotopies follows from the same proof of \eqref{BL:1} of Theorem \ref{thm:BL}. For \eqref{BL:5} of Theorem \ref{thm:BL}, it is clear the whole construction for $\alpha$ can be identified with the construction for $k\alpha$ as long as we use the same admissible almost complex structure $J$ for $k>0$.
\end{proof}

\subsubsection{Polyfold approach}
The polyfold construction of SFT \cite{SFT}, which is described in \cite{fish2018lectures}, will imply Theorem \ref{thm:BL} as well. However, we can not use the  ``tree-like" compactification as in $\overline{\cM}(T)$ because the gluing parameter space is only topological manifold with boundary. For the analytic requirement in polyfold, it is important to use the building compactification in \cite{bourgeois2003compactness} so that all gluing parameters are independent and form a smooth manifold with boundary and corner. To implement the polyfold construction for our purpose, it is sufficient to build polyfold strong bundles with sc-Fredholm sections for the SFT building compactification, which is sketched in \cite{fish2018lectures}. 
 
Since we will not need to discuss more subtle cases like neck-stretching and homotopies, the abstract theory of polyfold developed in \cite{hofer2017polyfold} suffices to provide transverse perturbations by the similar induction on $(\cR,\preccurlyeq)$ starting from minimal elements in $\preccurlyeq$, which are polyfolds without boundaries.  The non-empty set $\Theta$ in Theorem \ref{thm:BL} now consists pairs $(J,\sigma)$, where $J$ is an admissible almost complex structure and $\sigma$ is a family of compatible $sc^+$-multisections in general position. Then \eqref{eqn:bounadry} and \eqref{eqn:product} follow from the Stokes' theorem in \cite{hofer2017polyfold}, where the coefficients can be explained to be the discrepancies of isotropy among polyfolds with their boundary polyfolds and boundary polyfolds with product polyfolds. 

To verify Axiom \ref{axiom}, we first note that classical transversality implies polyfold transversality by definition. If $\overline{\cM}(T)$ is cut out transversely for $\vdim(T)=0$, we may still need to perturb the sc-Fredholm section on the associated polyfold, because we construct perturbations by induction. Even though we know that the section is transverse on the boundary polyfolds, but the section can be non-transverse on some factor of the boundary, which will be perturbed before we construct perturbations for $\overline{\cM}(T)$. However, we can choose our perturbations small enough to get the local invariance of $\#\overline{\cM}(T)$ when $\vdim(T)=0$. In other words, Axiom \ref{axiom} holds if we choose sufficiently small perturbations. This matches with Proposition \ref{prop:transverse}, as $\Theta(\alpha,J)$ in VFC can be understood as ``infinitesimal" perturbations.

\begin{remark}[Kuranishi approach]
	The Kuranishi approach of SFT \cite{ishikawa2018construction} would also imply Theorem \ref{thm:BL}.  Axiom \ref{axiom} should follow from the same argument above for small enough perturbations in a reasonable measurement.
\end{remark}

\section{Semi-dilations}\label{s4}
In this section, we introduce an inner hierarchy called the order of semi-dilation for the $\Pl=1$ case. 

\begin{remark}
    As explained in \S \ref{ss:FC}, a full implementation of the order of semi-dilation is dependent on the unproven Claim \ref{claim:u} below, as it requires a rigorous implementation of the $U$-map, which we defer to later work.
\end{remark}

Note that if $\Pl(Y)=1$, then $\RSFT(Y)$ admits $BL_\infty$ augmentations and for any $BL_{\infty}$ augmentation $\epsilon$, we have the order is $1$ for any point in $Y$. Note that $(\overline{B}^1 V_{\alpha},\widehat{\ell}_{\epsilon})$ is the chain complex $(V_{\alpha},\ell^1_{\epsilon})$ for the linearized contact homology. Since $\Pl(Y)=1$, for any point in $Y$, we have an class $x\in H_*(V_{\alpha},\ell^1_{\epsilon})$ such that $\ell^1_{\bullet,\epsilon}(x)=1$. 

If the augmentation $\epsilon_W$ is from an exact filling $W$, then by \cite{bourgeois2009exact,bourgeois2017s}, the linearized contact homology $\LCH_*(Y,\epsilon_W)=\LCH_*(W):= H_*(V_{\alpha},\ell^1_{\epsilon_W})$ is isomorphic to the equivariant symplectic (co)homology, which as an $S^1$-equivariant theory carries a $H^*(BS_1)=\Q[U]$-module structure and fits into the following Gysin sequences,
$$
\resizebox{12cm}{!}{
\xymatrix{
	\ldots \ar[r] & SH_+^{2n-3-k}(W) \ar[r]\ar[d] & \LCH_k(W) \ar[r]^{U}\ar[d]^{\simeq} & \LCH_{k-2}(W) \ar[r]\ar[d]^{\simeq} & SH_+^{2n-2-k}(W)\ar[d] \ar[r] & \ldots \\ 
	\ldots \ar[r] & SH_+^{2n-3-k}(W) \ar[r] & SH_{+,S^1}^{2n-3-k}(W) \ar[r]^{U} & SH_{+,S^1}^{2n-1-k}(W) \ar[r] & SH_+^{2n-2-k}(W) \ar[r] & \ldots }
}$$
As we use homological convention in this paper, $U$ has degree $-2$ in the case with a $\Z$ grading for $\LCH$. And $U$ has degree $2$ for the $S^1$-equivariant symplectic cohomology, which is graded by $n-\mu_{CZ}$. 

Strictly speaking, \cite{bourgeois2009exact,bourgeois2017s} make several transversality assumptions which limit the collection of exact fillings where the isomorphism is rigorously established. However, the geometric ideas behind  \cite{bourgeois2009exact,bourgeois2017s} should work when implemented using a suitable virtual machinery to prove isomorphisms for general cases. We first carry out the discussion neglecting the foundational issues for now. The $U$ map is defined on the linearized contact homology $H_*(V_\alpha,\ell^1_{\epsilon})$ for any augmentation $\epsilon$. And for any element $x\in H_*(V_\alpha,\ell^1_{\epsilon})$ there exists $k\in \N_+$ such that $U^k(x)=0$. In the following, we first recall the definition of $U$-map for linearized contact homology.

\subsection{$H_*(V_\alpha,\ell^1_{\epsilon})$ as a $\Q[U]$ module}\label{sec:U-map}
To explain the $U$-map, we recall the following two moduli spaces from \cite[\S 7.2]{bourgeois2009exact}. In some sense, the following moduli spaces should be viewed as a version of cascades moduli spaces.
\subsubsection{$\bm{\cM^1_{Y,A}(\gamma^+,\gamma^-,\Gamma^-)}$.} Let $\gamma^+,\gamma^-$ be two good Reeb orbits and $\Gamma^-$ be an ordered multiset of good Reeb orbits of cardinality $k\ge 0$. Then an element in $\cM^1_{Y,A}(\gamma^+,\gamma^-,\Gamma^-)$ consists of the following data.
\begin{enumerate}
	\item A sphere $(\Sigma,j)$, with one positive puncture $z^+$ and $1+k$ negative punctures $z^{-},z^{-}_1,\ldots,z^{-}_k$. We pick an asymptotic marker on $z^{+}$, then by choosing a global polar coordinate on $\Sigma \backslash\{z^+,z^-\}$, there is a canonically induced asymptotic marker on $z^{-}$ by requiring it having the same angle as the asymptotic marker at $z^+$ in the polar coordinate. We also pick free asymptotic markers on $z^-_i$ for $1\le i \le k$.
	\item A map $u:\dot{\Sigma}\to \R\times Y$ such that $\rd u \circ j =J\circ \rd u$ and $[u]=A$ modulo automorphism and the $\R$-translation, where $\dot{\Sigma}$ is the $2+k$ punctured sphere.
	\item $u$ is asymptotic to $\gamma^+,\gamma^-,\Gamma^-$ near $z^+,z^-$ and $\{z_i^-\}_i$ w.r.t. to the asymptotic markers and the chosen marked points on the image of Reeb orbits as before.
\end{enumerate}
We use $\overline{\cM}^1_{Y,A}(\gamma^+,\gamma^-,\Gamma^-)$ to denote the compactification of $\cM^1_{Y,A}(\gamma^+,\gamma^-,\Gamma^-)$. Then for an exact cobordism $X$, we can similarly define $\overline{\cM}^1_{X,A}(\gamma^+,\gamma^-,\Gamma^-)$, where we do not modulo the $\R$ translation.
\subsubsection{$\bm{\cM^2_{Y,A}(\gamma^+,\gamma,\gamma^-,\Gamma^-_1,\Gamma^-_2)}$}
Let $\gamma^+, \gamma^-$ be two good Reeb orbits, $\gamma$ a Reeb orbit that could be bad\footnote{The necessity of $\gamma$ being potentially bad is explained \cite[\S 2]{MR3348144}.} and $\Gamma^-_1,\Gamma^-_2$  two ordered multisets of good Reeb orbits of cardinality $k_1,k_2\ge 0$. Then an element in $\cM^2_{Y,A}(\gamma^+,\gamma,\gamma^-,\Gamma^-_1,\Gamma^-_2)$ consists of the following data.
\begin{enumerate}
	\item Two spheres $(\Sigma,j),(\tilde{\Sigma},j)$, each with one positive puncture $z^+,\tilde{z}^+$, and $1+k_1$ negative punctures $z^{-},z^{-}_1,\ldots,z^{-}_{k_1}$, $1+k_2$ negative punctures,  $\tilde{z}^{-},\tilde{z}^{-}_1,\ldots,\tilde{z}^{-}_k$ respectively. Each puncture is equipped with an asymptotic marker.
	\item Two holomorphic curves $u,\tilde{u}$ from $\dot{\Sigma},\dot{\tilde{\Sigma}}$ to $\R\times Y$ modulo automorphism and $\R$-translations, such that $[u]\#_{\gamma}[\tilde{u}]=A$.
	\item $u$ is asymptotic to $\gamma^+,\gamma,\Gamma^-_1$ and $\tilde{u}$ is asymptotic to $\gamma,\gamma^-,\Gamma^-_2$.
	\item Let $L_-$ and $L_+$ be two asymptotic markers on $z^-$ and $\tilde{z}^+$ that are induced from the chosen asymptotic markers on $z^+$ and $\tilde{z}^-$ by global polar coordinates\footnote{In particular, they may be different from the chosen  asymptotic markers on $z^-$ and $\tilde{z}^+$}. We can define $ev_{L-}(u),ev_{L_+}(\tilde{u})$ to be the limit point in the $Y$ component evaluated along the asymptotic markers $L_-,L_+$. Then we require $(b_{\gamma},ev_{L_-}(u),ev_{L_+}(\tilde{u}))$ is the natural order on $\Ima \gamma$, where $b_{\gamma}$ is the chosen marked point on $\Ima\gamma$.	
\end{enumerate}
We use $\overline{\cM}^2_{Y,A}(\gamma^+,\gamma,\gamma^-,\Gamma^-_1,\Gamma^-_2)$ to denote the compactification. Note that we need to add in the stratum corresponding to the collision of $(b_{\gamma},ev_{L-}(u),ev_{L_+}(\tilde{u}))$ in addition to usual building structures. We can similarly define $\overline{\cM}^{2,\uparrow}_{X,A}(\gamma^+,\gamma,\gamma^-,\Gamma^-_1,\Gamma^-_2)$ and $\overline{\cM}^{2,\downarrow}_{X,A}(\gamma^+,\gamma,\gamma^-,\Gamma^-_1,\Gamma^-_2)$ for an exact cobordism $X$. The difference is that the former one has $u$ in $\widehat{X}$ and the latter one has $\tilde{u}$ in $\widehat{X}$. 

Given a dga augmentation $\epsilon^1$ to $\CHA(Y)$, i.e.\ a map $\epsilon^1:V_\alpha \to \Q$, which extends to an algebra map $\widehat{\epsilon}^1:\CHA(Y)\to \Q$ such that $\widehat{\epsilon}^1\circ \widehat{p}^1=0$. Then $U:V_\alpha \to V_\alpha$ is defined by
\begin{eqnarray}
& & U(q_{\gamma^+}):= \nonumber \\
& & \sum_{\gamma^-,[\Gamma^-]}\frac{1}{\kappa_{\gamma^-}\mu_{\Gamma^-}\kappa_{\Gamma^-}} \#\overline{\cM}^1_{Y,A}(\gamma^+,\gamma^-,\Gamma^-)\prod_{\gamma'\in \Gamma^-}\epsilon(\gamma')q_{\gamma^-} + \nonumber\\
 & & \sum_{\substack{\gamma^-,\gamma,\\ [\Gamma^-_1],[\Gamma^-_2]}}\frac{1}{N}\#\overline{\cM}^2_{Y,A}(\gamma^+,\gamma,\gamma^-,\Gamma^-_1,\Gamma^-_2)\prod_{\gamma'\in \Gamma^-_1\cup \Gamma^-_2}\epsilon(\gamma')q_{\gamma^-}, \label{eqn:u}
\end{eqnarray}
 where $N=\kappa_{\gamma^-}\kappa_{\gamma}\mu_{\Gamma^-_1}\mu_{\Gamma^-_2}\kappa_{\Gamma^-_1}\kappa_{\Gamma^-_2}.$
\begin{remark}
	The $\cM^2_{Y,A}$ in \cite[\S 7.2]{bourgeois2009exact} requires modulo an equivalence $(L_{-},L_{+})\simeq (L_{-}+\frac{2\pi}{\kappa_{\gamma}},L_{+}+\frac{2\pi}{\kappa_{\gamma}})$, i.e.\ the moduli space of ``glued" two-level buildings. Here we do not introduce the equivalence, the discrepancy is just the extra $\frac{1}{\kappa_{\gamma}}$ in \eqref{eqn:u} compared to \cite[(85)]{bourgeois2009exact}\footnote{The extra coefficient $\mu_{\Gamma}$ comes from that we consider $\Gamma$ as an ordered set, and $\mu_{\Gamma}$ is the size of the isotropy coming from permutation.}.
\end{remark}
The reason that $U$ is a chain map from $(V_\alpha,\ell^1_{\epsilon})$ to itself follows from the boundary of $1$-dimensional $\overline{\cM}^1_{Y,A}$ and $\overline{\cM}^2_{Y,A}$. More precisely, the codimension $1$ boundary of $\overline{\cM}^1_{Y,A}$ consists of (1) a level breaking where the lower level does not contain $z_-$ and (2) a level breaking where the lower level contains $z_-$. For case one, such contribution is zero when capping $\Gamma^-$ off with $\epsilon^1$ by the relation $\widehat{\epsilon}^1\circ \widehat{p}^1=0$. For case two, the contribution will cancel with the codimension $1$ boundary part of $\overline{\cM}^2_{Y,A}$ corresponding to the collision of $ev_{L_-}(u),ev_{L_+}(\tilde{u})$. The other parts of codimension $1$ boundary of $\overline{\cM}^2_{Y,A}$ consists of (1) a level breaking of $u$ where the lower level does not contain $z_-$, this is again killed by the capping off with $\epsilon^1$; (2) A level breaking of $u$ where the lower level contains $z_-$, the corresponds to a component of $\ell^1_{\epsilon}\circ U$, where the $U$ part is contributed by a $\overline{\cM}^2_{Y,A}$; (3) Similar level breakings for $\tilde{u}$; (4) The collision of $b_{\gamma}$ and $ev_{L_-}(u)$, this corresponds a component of $ \ell^1_{\epsilon}\circ U$, where the $U$ part is contributed by a $\overline{\cM}^1_{Y,A}$, similarly,  the collision of $b_{\gamma}$ and $ev_{L_+}(\tilde{u})$ is the remaining part of $U\circ\ell^1_{\epsilon}$.

Similarly, given an exact cobordism, we can show that chain morphism $\phi^{1,1}_{\epsilon}:V_\alpha\to V_{\alpha'}$ is commutative with $U$ up to homotopy, where the homotopy is defined by $\overline{\cM}^1_{X,A}$,  $\overline{\cM}^{2,\uparrow}_{X,A}$, and  $\overline{\cM}^{2,\downarrow}_{X,A}$ by a similar formula to \eqref{eqn:u} with a similar argument. 

In order to define the order of semi-dilation, we need to define the $U$-map to the following extent.
\begin{claim}\label{claim:u}
	Let $(Y,\alpha)$ be a non-degenerate contact manifold and $\theta$ be an auxiliary data which is used in defining a $BL_\infty$ structure $p_{\theta}$. 
	\begin{enumerate}
		\item There is an auxiliary data $\theta_U$ for the definition of $U$, such that for any $BL_\infty$ augmentation $\epsilon$ of $(V_\alpha,p_{\theta})$, we have a map $U_{\theta_U}:H_*(V_\alpha,\ell^1_{\epsilon})\to H_{*-2}(V_\alpha,\ell^1_{\epsilon})$ and for any $x\in H_*(V_\alpha,\ell^1_{\epsilon})$ there exists $k$ such that $U^k_{\theta_U}(x)=0$ (This nilpotent property follows from that $U$ decreases the contact action for non-degenerate $\alpha$). 
		\item When there is a strict exact cobordism $X$ from $(Y',\alpha')$ to $(Y,\alpha)$ with admissible auxiliary data $\theta,\theta', \theta_U,\theta'_U$ for $\alpha,\alpha'$ and their $U$-maps respectively, then there exists auxiliary data $\xi$, such that the $\phi^{1,1}_{\xi,\epsilon}:H_*(V_\alpha,\ell^1_{\epsilon\circ \phi_{\xi}})\to H_*(V_{\alpha'},\ell^1_{\epsilon})$ commutes with the $U$-maps for any $BL_\infty$ augmentation $\epsilon$ for $(V_{\alpha'},p_{\theta'})$.
		\item For any $k\in \R_+$, there exists $k\theta_U$, such that $U_{k\theta_U}$ is canonically identified with $U_{\theta_U}$
        \item\label{iso} When the augmentation is from an exact filling $W$, then there is an isomorphism $H_*(V_\alpha,\ell^1_{\epsilon})\to SH^{2n-3-*}_{+,S^1}(W)$ preserving the $U$-map.
	\end{enumerate} 
\end{claim}
\begin{remark}
    Note that, in Claim \ref{claim:u}, we are not claiming that linearized contact homology for all possible augmentations forms a contact invariant, which requires establishing the homotopy property of contact homology.
\end{remark}

Assuming Claim \ref{claim:u}, let $Y$ be a contact manifold with $P(Y)=1$, we can define the order of semi-dilation $\SD(Y)$ by
\begin{equation}\label{eqn:SD}
    \SD(Y):=\max\left\{\min\left\{k\left| U^{k+1}(x)=0, x\in H_*(V_\alpha,\ell_\epsilon^1), \ell^1_{\bullet,\epsilon}(x)=1\right.    \right\}\left| o\in Y, \epsilon\in \Aug_{\Q}(V_\alpha)\right. \right\}.
\end{equation}
\begin{proposition}\label{prop:SD}
	For those contact manifolds $Y$ with $\Pl(Y)=1$, the assignment $\SD(Y)$ is well-defined and is a monoidal functor from the full subcategory $\Pl^{-1}(1)$ of $\cont$ to $\N\cup \{\infty \}$, where the monoidal structure on $\N\cup \{\infty\}$ is defined by $a\otimes b=\max\{a,b\}$,
\end{proposition}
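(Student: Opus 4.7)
The plan is to mirror the arguments of Propositions \ref{prop:PT} and \ref{prop:P}, with the role of the order $O$ replaced by the slightly more involved invariant $\SD$, and with the extra ingredient of the Bourgeois--Oancea isomorphism (Proposition \ref{prop:u}) and the $\Q[U]$-module structure on positive $S^1$-equivariant symplectic cohomology encoded in Claim \ref{claim:u}.

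First I would establish independence of $\SD(Y,\alpha,\theta)$ from the choice of $\alpha$ and $\theta$. By Theorem \ref{thm:BL}\eqref{BL:5} and Claim \ref{claim:u}(3), rescaling $\alpha\mapsto k\alpha$ gives canonically identified $BL_\infty$ algebras and $U$-maps; moreover the construction of $SH^*_{S^1}$ and of $\Psi_{\rm BO}$ depends only on the symplectization, so the rescaling produces the same invariant. Given two contact forms $\alpha,\alpha'$ with corresponding auxiliary data $\theta,\theta'$, one has $k_1,k_2>0$ and strict exact cobordisms $(Y,k_1\alpha)\to(Y,\alpha')\to(Y,k_2\alpha)$; combined with the functoriality established in the next step, this forces $\SD(Y,\alpha,\theta)=\SD(Y,\alpha',\theta')$.

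The main content is functoriality. Let $X$ be a strict exact cobordism from $(Y_-,\alpha_-)$ to $(Y_+,\alpha_+)$, with both contact manifolds in $\Pl^{-1}(1)$. Given any augmentation $\epsilon_-$ of $V_{\alpha_-}$ and any point $o_-\in Y_-$, choose $o_+\in Y_+$ in the same connected component of $X$ as $o_-$, and set $\epsilon_+=\epsilon_-\circ\phi_\xi$ for the $BL_\infty$ morphism $\phi_\xi$ coming from $X$ with auxiliary data as in Theorem \ref{thm:BL}\eqref{BL:3}--\eqref{BL:4}. Fix an admissible Hamiltonian $H_+$ for $Y_+$ of slope $A_+$, and let $H_-$ be the associated Hamiltonian from Proposition \ref{prop:u}. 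Assume $\SD(Y_+,\epsilon_+,o_+)=k$, witnessed by a class $x_+\in SH^*_{S^1}(Y_+,H_+,\epsilon_+)$ with $U^{k+1}(x_+)=0$ and $\ell^1_{\bullet,\epsilon_+}(\Psi_{\rm BO}(x_+))=1$. Define $x_-:=\Phi_{\rm Viterbo}(x_+)\in SH^*_{S^1}(Y_-,H_-,\epsilon_-)$. Since $\Phi_{\rm Viterbo}$ is a $\Q[U]$-module map, $U^{k+1}(x_-)=0$. The commutative diagram of Proposition \ref{prop:u} gives $\Psi_{\rm BO}(x_-)=\phi^{1,1}_{\xi,\epsilon_-}(\Psi_{\rm BO}(x_+))$, and the compatibility of the pointed maps from Theorem \ref{thm:BL}\eqref{BL:4} (plus the argument of Proposition \ref{prop:order} applied to the linearized chain maps, which yields $\ell^1_{\bullet,\epsilon_+}=\ell^1_{\bullet,\epsilon_-}\circ\phi^{1,1}_{\xi,\epsilon_-}$ on homology) gives
\[
\ell^1_{\bullet,\epsilon_-}(\Psi_{\rm BO}(x_-))=\ell^1_{\bullet,\epsilon_-}(\phi^{1,1}_{\xi,\epsilon_-}\Psi_{\rm BO}(x_+))=\ell^1_{\bullet,\epsilon_+}(\Psi_{\rm BO}(x_+))=1.
\]
Hence $\SD(Y_-,\epsilon_-,o_-)\le k=\SD(Y_+,\epsilon_+,o_+)\le\SD(Y_+)$. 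Taking the maximum over $(\epsilon_-,o_-)$ yields $\SD(Y_-)\le\SD(Y_+)$.

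Finally I would verify the monoidal property. For the disjoint union $(Y_1\sqcup Y_2,\alpha_1\sqcup\alpha_2)$, the $BL_\infty$ algebra splits as $V_{\alpha_1}\oplus V_{\alpha_2}$ with block-diagonal $p^{k,l}$, so a $BL_\infty$ augmentation is a pair $\epsilon=(\epsilon_1,\epsilon_2)$ (which exists iff both $\epsilon_i$ do, consistent with $\Pl$-monoidality). The $S^1$-equivariant symplectic cohomology, the $U$-map, and the Bourgeois--Oancea isomorphism all split as direct sums over the components. A point constraint $o$ lies in exactly one $Y_i$, and the resulting pointed map $\ell^1_{\bullet,\epsilon}$ vanishes on the other factor. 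Consequently, for $o\in Y_i$ one has $\SD(Y_1\sqcup Y_2,\epsilon,o)=\SD(Y_i,\epsilon_i,o)$, and taking the maximum over augmentations and points gives $\SD(Y_1\sqcup Y_2)=\max\{\SD(Y_1),\SD(Y_2)\}$.

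The principal difficulty is not the algebraic manipulation but the simultaneous choice of compatible auxiliary data: one needs $\theta$, $\theta_o$, $\theta_U$, $\xi$, and the auxiliary data controlling $SH^*_{S^1}$, $\Psi_{\rm BO}$, and $\Phi_{\rm Viterbo}$ to be chosen so that all the commutativity statements of Claim \ref{claim:u} and Proposition \ref{prop:u} hold on the nose (or up to the homotopies that pass to cohomology). This is an inductive construction on $(\cR,\preccurlyeq)$ in the spirit of \cite{pardon2019contact}, and the detailed verification is what occupies the bulk of the proof; the rest of the argument is then formal.
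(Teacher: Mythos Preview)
Your proposal is correct and follows essentially the same route as the paper: use Proposition~\ref{prop:u} to get $\SD(Y_+,\alpha_+,\theta_+)\ge \SD(Y_-,\alpha_-,\theta_-)$ for any strict cobordism, combine with rescaling invariance to get well-definedness as in Proposition~\ref{prop:PT}, and deduce the monoidal structure from the direct-sum splitting of $SH^*_{S^1}$ as a $\Q[U]$-module. The paper's proof is three sentences and leaves the functoriality step implicit in the citation of Proposition~\ref{prop:u}; you have correctly unpacked what that citation entails.

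Two small points of precision. First, your references to Claim~\ref{claim:u} are slightly off-target: the $\Q[U]$-module structure you need is the one on $SH^*_{S^1}(Y,H,\epsilon)$, which is built into the construction of \S\ref{ss:cascades} (via $\delta^{S^1}=\sum U^i\delta^i$) rather than transferred from Claim~\ref{claim:u}; the paper in fact bypasses Claim~\ref{claim:u} entirely by \emph{defining} the $U$-map on linearized contact homology through $\Psi_{\rm BO}$. Second, in the monoidal argument, a $BL_\infty$ augmentation of $V_{\alpha_1}\oplus V_{\alpha_2}$ is not literally a pair $(\epsilon_1,\epsilon_2)$---it may have mixed components $S^iV_{\alpha_1}\otimes S^jV_{\alpha_2}\to\Q$---but since $p^{1,l}$ is block-diagonal, those mixed components never enter $\ell^1_\epsilon$ or the filtered $SH^*_{S^1}$, so the splitting and your conclusion still hold. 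Your final paragraph about simultaneous compatible auxiliary data is accurate in spirit but somewhat overstated: that burden has already been discharged in the construction of $\Phi_{\rm Viterbo}$ and $\Psi_{\rm BO}$ in \S\ref{ss:Viterbo}--\S\ref{ss:u}, so the proof of the proposition itself is as short as the paper presents it.
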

\begin{proof}
That $\SD(Y)$ is independent of all choices follows from the same argument of Proposition \ref{prop:PT}. The monoidal structure follows from $H_*(V\oplus V',\ell^1_{\epsilon}\oplus \ell^1_{\epsilon'}) = H_*(V,\ell^1_{\epsilon})\oplus H_*(V',\ell^1_{\epsilon'})$ as $\Q[U]$-modules.
\end{proof}
\begin{proof}[Proof of Theorem \ref{thm:main}]
	It follows from  Proposition \ref{prop:PT}, Proposition \ref{prop:P}, and Proposition \ref{prop:SD}.
\end{proof}

Although Claim \ref{claim:u} is expected to hold in any virtual implementation of SFT, it does not follow from a direct generalization of Pardon's construction of (linearized) contact homology \cite{pardon2019contact}. One of the main issues is the different perspectives of the contact homology, namely as a quotient theory in \cite{pardon2019contact} v.s.\ as an equivariant theory in \cite{bourgeois2009exact}. More precisely, in the definition of contact homology or RSFT, the count $\#\overline{\cM}_{Y,A}(\Gamma^+,\Gamma^-)$ does not depend on the specific choice of base point $b_{\gamma}$ for $\gamma\in \Gamma^+\cup \Gamma^-$. This is clear if $\overline{\cM}_{Y,A}(\Gamma^+,\Gamma^-)$ is geometrically cut out transversely and it is also true for the VFC construction by the definition of $\cR$-module (different choices of $b_{\gamma}$ gives to isomorphic objects in $\cR$). On the other hand, the definition of $\overline{\cM}^1_{Y,A},\overline{\cM}^2_{Y,A}$ is actually sensitive to the choice of $b_{\gamma}$ both geometrically and in virtual constructions. For example, as we explained that the collision of $b_{\gamma}$ with $ev_{L-}(u)$ should correspond to a component of $\ell^1_{\epsilon}\circ U$, where the $U$ part is contributed by a $\overline{\cM}^1_{Y,A}$, with one caveat that we must have $ev_{L_+}(\tilde{u})\ne b_{\gamma}$, for otherwise such degeneration is from a corner component instead of a boundary component of the compactification of $\cM^2_{Y,A}$. If $\tilde{u}$ is cut out transversely, we can certainly arranges this by choosing a slightly different $b_{\gamma^-}$. This shows the crucial dependence on base points and we can not count $\tilde{u}$ from $\#\overline{\cM}_{Y,A}(\gamma,\{\gamma^-\}\cup \Gamma^-_2)^{\text{vir}}$ defined in \cite{pardon2019contact}, which is independent of the choice of base points.  In particular, the virtual implementation of $U$-map can not be built directly on the contact homology algebra in \cite{pardon2019contact}.

Instead of making sense of $\#\overline{\cM}^1_{Y,A},\#\overline{\cM}^2_{Y,A}$ directly and facing the difficulties mentioned above, we can follow an alternative way using the methods in \cite{bourgeois2009exact}. More precisely, we can first define a positive $S^1$-equivariant symplectic cohomology for algebraic augmentations using VFC, where the $U$-map is more natural. Then there is an isomorphism from the positive $S^1$-equivariant symplectic cohomology to the linearized contact homology following the idea in \cite{bourgeois2009exact} and we can transfer the $U$-map on the positive $S^1$-equivariant symplectic cohomology to the linearized contact homology. Indeed, this is how the $U$-map on linearized contact homology in \cite[\S 7.2]{bourgeois2009exact} is motivated. By taking this detour, we do not need to modify the construction of contact homology, while the construction of the positive $S^1$-equivariant symplectic cohomology and the isomorphism do not exceed the techniques provided in \cite{pardon2016algebraic,pardon2019contact}. For the sake of simplicity, we will defer this alternative approach to Claim \ref{claim:u} for later work. 

\subsection{Order of semi-dilation for fillings} $k$-dilation and $k$-semi-dilation were introduced in \cite{zhou2019symplectic} as structures on $S^1$-equivariant symplectic cohomology, which are generalizations of symplectic dilation of Seidel-Solomon \cite{seidel2012symplectic}. More precisely, an exact domain $W$ carries a $k$-dilation, iff there is a class $x\in SH^*_{+,S^1}(W)$ such that $x$ is sent to $1$ by  $SH^*_{+,S^1}(W) \to H^{*+1}_{S^1}(W)$ and $U^{k+1}(x)=0$,  where $H^{*}_{S^1}(W):=H^{*}(W)\otimes(\Q[U,U^{-1}]/[U])$. $W$ carries a $k$-semi-dilation iff $x$ is sent to $1$ in $SH^*_{+,S^1}(W) \to H^{*+1}_{S^1}(W)\to  H^{0}(W)$ and $U^{k+1}(x)=1$. Under the isomorphism $SH^*_{+,S^1}(W)=\LCH_{2n-3-*}(W)$ \cite{bourgeois2009exact,bourgeois2017s} in Theorem \ref{thm:BO-iso} below, the element we are looking for in \eqref{eqn:SD} is exactly the $k$-semi-dilation in \cite{zhou2019symplectic}. It is natural to expect that examples with nontrivial $\SD$ come from examples with nontrivial $k$-(semi)-dilation found in \cite{zhou2019symplectic}. Indeed, it is the case and we will show in \S \ref{s7} that $\SD$ is surjective. The only extra thing we need to argue compared to \cite[Definition 3.4]{zhou2019symplectic} is that the computation is independent of the augmentation. To make the connection, we restate the main theorem of \cite{bourgeois2009exact} as follows. We use $\LCH^{<A}_*(Y,\epsilon)$ to denote the truncated linearized contact homology generated by Reeb orbits of period smaller than $A$.

\begin{theorem}[\cite{bourgeois2009exact}]\label{thm:BO-iso}
Let $W$ be a strict exact filling of $(Y,\alpha)$. Assume $\LCH^{<A}_*(Y,\epsilon_W)$ is defined using a generic $J$, where $\epsilon_W$ is the augmentation from $W$ (e.g.\ assume either conditions in \cite{MR3348144} or Reeb orbits with period smaller than $A$ are simple).  Then $\LCH^{<A}_*(Y,\epsilon_W)$ is isomorphic to the positive $S^1$-equivariant symplectic cohomology $SH^{*,<A}_{+,S^1}(W)$ of $W$ using a Hamiltonian of slope $A$ as $\Q[U]$ modules. Assume $Y$ is connected, under this isomorphism, that $\ell^1_{\bullet,\epsilon_W}:\LCH^{<A}_*(Y,\epsilon_W)\to \Q$ is isomorphic to $SH^{*,<A}_{+,S^1}(W)\to H^{*+1}_{S^1}(W)=H^{*+1}(W)\otimes (\Q[U,U^{-1}]/U)\to H^0(W)=\Q$.
\end{theorem}
Then \eqref{iso} of Claim \ref{claim:u} implies that we can use the $U$-map on positive $S^1$-equivariant symplectic cohomology to estimate $\SD$ in \eqref{eqn:SD}.

%\begin{remark}
%	In fact, $\infty^{\PT},\infty^{\SD}$ are the only two elements in $\cH$ that we do not know if it is in the image of $\Hcx$. $\Hcx(Y)=\infty^{\PT}$ corresponds to that $\RSFT(Y)$ has no $BL_\infty$ augmentation while $\RSFT(Y)$ has infinite torsion. Note that a $BL_\infty$ augmentation is essentially a solution to a family of algebraic equations (in a infinite dimensional space). Using that $\Q$ is not algebraically closed, it is easy to construct a (finite dimensional) $BL_\infty$ algebra over $\Q$ with no augmentation and infinite algebraic planar torsion.  However, it is unclear how to construct a geometric example. It is also an interesting question on obstructions to $BL_\infty$ augmentations beyond torsion besides using that $\Q$ is not algebraically closed.
%\end{remark}

\begin{remark}\label{rmk:func}
    From the Viterbo transfer map, a functor $\SD$ from $\cont_*\to \N \cup \{\infty\}$ was defined in \cite[Corollary D]{zhou2019symplectic} using $S^1$-equivariant symplectic cohomology. In the context of this paper, the order of semi-dilation in \cite{zhou2019symplectic} is \eqref{eqn:SD} using an augmentation from the exact filling. One can similarly define planarity of an exact filling. However, to establish the well-definedness and functoriality, we need to introduce the notation of homotopy between $BL_\infty$ augmentations for linearized theories, which is beyond the scope of this paper. The trick in Proposition \ref{prop:PT}, \ref{prop:P}, \ref{prop:SD} can not help dropping the dependence on contact forms or auxiliary data,  since it requires comparing the composition of the morphism from an exact cobordism $X$ and the augmentation from an exact filling $W$ to an augmentation from $X\circ W$. However this is essentially a $BL_\infty$ homotopy from neck-stretching.
\end{remark}

\section{Lower bounds for planarity}\label{s5}
As explained in \S \ref{s3}, the curve responsible for finiteness of planarity is a curve with multiple positive punctures and a point constraint. Since planarity does not depend on the choice of the point, one should expect that finiteness of planarity implies uniruledness. In this section, we will prove such implication and a lower bound for planarity. We first recall the notion of uniruledness from \cite{mclean2014symplectic}. 
\subsection{Order of uniruledness}
\begin{definition}[{\cite[\S 2]{mclean2014symplectic}}]\label{def:J}
	Let $(W,\lambda)$ be an exact domain. A $\rd \lambda$-compatible almost complex structure $J$ on $W$ is convex iff there is a function $\phi$ such that
	\begin{enumerate}
		\item $\phi$ attains its maximum on $\partial W$ and $\partial W$ is a regular level set,
		\item $\lambda\circ J=\rd \phi$ near $\partial W$.
	\end{enumerate}
\end{definition}

\begin{definition}[{\cite[Definition 2.2]{mclean2014symplectic}}]
	Let $k>0$ be an integer and $\Lambda>0$ a real number. We say that an exact domain $(W,\lambda)$ is $(k,\Lambda)$ uniruled if, for every convex almost complex structure $J$ on $W$ and every $p\in W^\circ$ (the interior of $W$) where $J$ is integrable near $p$, there is a proper $J$-holomorphic map $u:S \to W^{\circ}$ passing through $p$ and the following holds:
	\begin{enumerate}
		\item $S$ is a genus $0$ Riemann surface and $\rank(H_1(S;\Q))\le k-1$, 
		\item $\int_S u^*\rd \lambda \le \Lambda$.
	\end{enumerate} 
    We say  $W$ is $k$-uniruled if $W$ is $(k,\Lambda)$ uniruled for some $\Lambda>0$. 
\end{definition}
The number $\Lambda$ depends on the Liouville form $\lambda$; this is not relevant for our purposes. However, the number $k$ only depends on the Liouville structure up to homotopy.
\begin{definition}
	Let $W$ be an exact domain. We define the order of uniruledness by $$\U(W):=\min\{k|W \text{ is } k \text{ uniruled}\}.$$
\end{definition}

The following was (inexplicitly) proven by McLean \cite{mclean2014symplectic}.
\begin{proposition}
	$\U$ is a functor from $\cont_*$ to $\N_+\cup \{\infty\}$.
\end{proposition}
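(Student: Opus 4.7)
The plan is to extract the argument from McLean \cite{mclean2014symplectic} and adapt it to the category $\cont_*$. There are two things to verify: that $\U$ is well-defined on objects (i.e.\ independent of the exact filling up to homotopy of Liouville structure on a fixed $W$), and that morphisms are sent to morphisms, i.e.\ if $i: W_1 \hookrightarrow W_2$ is an exact embedding then $\U(W_1) \le \U(W_2)$. Well-definedness will follow from the observation that the notion of convex almost complex structure (Definition \ref{def:J}) and the defining condition of $(k,\Lambda)$-uniruledness are preserved under Liouville homotopies on $W$: any $J_0$-holomorphic curve through $p$ deforms to a $J_t$-holomorphic curve through $p$ by Gromov compactness and a standard continuation argument, with energy controlled up to a factor depending only on the homotopy. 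Hence only $k$ is an invariant of the homotopy class, which is what we need.

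The core of the argument is functoriality. Fix $W_1 \hookrightarrow W_2$ an exact embedding, a point $p \in W_1^\circ$, and a convex $J_1$ on $W_1$ integrable near $p$. After Liouville homotopy of $i$, I would arrange that $W_1 \subset W_2^\circ$ sits as a Liouville subdomain, then extend $J_1$ to a $d\lambda_2$-compatible almost complex structure $J_2$ on $W_2$ which is cylindrical on a collar neighborhood of $\partial W_1$, convex near $\partial W_2$, and still integrable near $p$. Applying $(k,\Lambda)$-uniruledness of $W_2$ yields a proper $J_2$-holomorphic map $u: S \to W_2^\circ$ through $p$ with $S$ a genus zero Riemann surface, $\rank H_1(S;\Q) \le k-1$ (equivalently, $S$ has at most $k$ ends), and $\int_S u^*d\lambda_2 \le \Lambda$. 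I would then neck-stretch along $\partial W_1$ through a family $J_2^T$ equal to $J_1$ on $W_1$ and cylindrical outside. By the SFT compactness theorem \cite{bourgeois2003compactness}, a subsequence of the corresponding curves $u_T$ through $p$ converges to a holomorphic building of arithmetic genus zero; since $p$ is a fixed interior point of $W_1^\circ$, some $\widehat{W_1}$-level rational component $C_0$ of the building passes through $p$.

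The remaining task is the combinatorial bookkeeping showing $b_1(C_0) \le k-1$. The dual graph is a tree of rational components, and removing $C_0$ cuts it into $d_0$ subtrees, where $d_0$ is the number of positive punctures of $C_0$ (asymptotic to Reeb orbits on $\partial W_1$). Letting $N_0$ be the number of original ends of $S$ belonging to $C_0$, we have $b_1(C_0) = d_0 + N_0 - 1$. The key observation is that every subtree attached to $C_0$ contains at least one original end of $S$ (equivalently, a positive puncture asymptotic to a Reeb orbit on $\partial W_2$): indeed, by positivity of the contact area, every symplectization or cobordism component must have at least one positive puncture, so any upward chain in the tree terminates at $\partial W_2$. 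Summing, the total number of ends $N$ of $S$ satisfies $N \ge d_0 + N_0$, giving $b_1(C_0) \le N - 1 \le k - 1$. The energy bound $\int C_0^* d\lambda_1 \le \Lambda$ is immediate from the additivity of energy over the building. Thus $W_1$ is $(k,\Lambda)$-uniruled and $\U(W_1) \le k = \U(W_2)$.

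The main obstacle is making the bookkeeping rigorous: I have to confirm that in the SFT compactification, the dual graph of the limit building is indeed a tree (which follows from preservation of arithmetic genus), that the component $C_0$ through $p$ defines an honest proper $J_1$-holomorphic map into $W_1^\circ$ (via the completion of $\widehat{W_1}$ translated back into $W_1^\circ$ using the cylindrical end), and that the ``each subtree reaches $\partial W_2$'' claim holds without degenerate exceptions — for instance, ensuring that no subtree is entirely trapped in $\widehat{W_1}$ via negative punctures of symplectization curves. The last point is handled by the positivity of contact energy on every non-trivial symplectization/cobordism level component, combined with the observation that $\widehat{W_1}$-level components have no negative punctures. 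The rest of the argument is standard holomorphic curve analysis.
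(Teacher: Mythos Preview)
Your functoriality argument via neck-stretching differs from the paper's: the paper simply cites \cite[Proposition 3.1]{mclean2014symplectic}, whose proof extends $J_1$ to a convex $J_2$ on $W_2$, takes a single curve $u:S\to W_2^\circ$, restricts to the component $S'$ of $u^{-1}(W_1^\circ)$ through $p$, and bounds $b_1(S')$ via injectivity of $H_1(S')\to H_1(S)$ using the maximum principle (this is exactly the argument in the second half of Proposition~\ref{prop:unirule_equiv}). Your approach is more elaborate but essentially correct---the tree combinatorics works, and note $N_0=0$ always since $C_0$ lives entirely in $\widehat{W_1}$---though applying SFT compactness to curves that are merely proper into $W_2^\circ$ (with no Reeb-orbit asymptotics at $\partial W_2$) requires some justification you do not supply.

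Your well-definedness argument, however, has a genuine gap. The claim that a $J_0$-curve ``deforms to a $J_t$-holomorphic curve through $p$ by Gromov compactness and a standard continuation argument'' is not justified: there is no moduli or cobordism theory for these proper curves guaranteeing persistence along a deformation, and more fundamentally the definition of $(k,\Lambda)$-uniruledness demands a curve for \emph{every} convex $J$ relative to the new Liouville form, not only those connected by a path to a fixed $J_0$. Convexity itself depends on $\lambda$ (via $\lambda\circ J=\rd\phi$), so a $J$ convex for $\lambda_0$ is typically not convex for $\lambda_1$. The paper's argument avoids all of this: $\U(W,\lambda)=\U(W,t\lambda)$ is immediate from the definition (scaling $\lambda$ only scales the energy bound $\Lambda$), and for any Liouville form $\theta$ homotopic to $\lambda$ one has exact embeddings $(W,t^{-1}\lambda)\subset(W,\theta)\subset(W,t\lambda)$ for $t\gg 0$. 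This reduces well-definedness directly to the functoriality you have already argued.
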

\begin{proof}
	Let $V\subset W$ be an exact subdomain, then $\U(V)\le \U(W)$ by \cite[Proposition 3.1]{mclean2014symplectic}. It is clear from definition that $\U(V,\lambda)=\U(V,t\lambda)$ for $t>0$. Since for any Liouville structure $\theta$ on $V$ that is homotopic to $\lambda$, we have exact embeddings $(V,t^{-1}\lambda)\subset (V,\theta)\subset (V,t\lambda)$ for $t\gg 0$, it follows that $\U$ is a well-defined functor on $\cont_*$.
\end{proof}

\begin{remark}
	A point worth noting is that the definition and functorial property of $\U$ do not depend on any Floer theory. However $\U$ gives a measurement of ``complexity" of exact domains. By \cite[Theorem 3.27]{zhou2019symplectic}, the existence of a $k$-(semi)-dilation implies that the order of uniruledness is $1$. Hence the order of (semi)-dilation in \cite[Corollary D]{zhou2019symplectic} is a refined hierarchy in $\U=1$.
\end{remark}

For an affine variety $V$, we define the order of algebraically uniruledness $\AU(V)$ to be the minimal number $k$ such that $V$ is algebraically $k$ uniruled, i.e. through every generic $p\in V$ there is a polynomial map $S\to V$ passing through $p$, where $S$ is a punctured $\CP^1$ with at most $k$ punctures.
\begin{proposition}[{\cite[Theorem 2.5]{mclean2014symplectic}}]\label{prop:Ulower}
Let $V$ be an affine variety. Then $\U(V)\ge \AU(V)$.
\end{proposition}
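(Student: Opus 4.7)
The plan is to realize the analytic uniruling curve as an algebraic one by compactifying both source and target. Fix a smooth projective compactification $X$ of $V$ with $D := X \setminus V$ a simple normal crossings divisor; this exists by Hironaka. After possibly replacing the Liouville form $\lambda$ within its homotopy class (which does not affect $\U$), one can arrange that the standard integrable complex structure $J_{\mathrm{std}}$ on $V$ is convex in the sense of Definition \ref{def:J}, by taking $\lambda = -d^c\phi$ near $\partial V$ for a plurisubharmonic exhaustion $\phi$, and such that $d\lambda$ dominates a Kähler form $\omega_X$ on a neighborhood of $D$ in $X$ (up to a bounded error controlled by a fixed Kähler representative).

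Suppose $\U(V) = k$. For a generic $p\in V^{\circ}$, the $(k,\Lambda)$-uniruledness applied to $J_{\mathrm{std}}$ produces a proper $J_{\mathrm{std}}$-holomorphic map $u:S\to V^{\circ}$ with $S$ connected of genus zero, $\operatorname{rank} H_1(S;\Q)\le k-1$, $u$ passing through $p$, and $\int_S u^*d\lambda \le \Lambda$. Write $S = \CP^1 \setminus \{q_1,\dots,q_m\}$ with $m\le k$. Properness of $u$ forces a small punctured disk around each $q_i$ to escape every compact subset of $V$, i.e.\ $u$ approaches $D$ in $X$.

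Next, extend $u$ to $\bar u : \CP^1 \to X$. The area bound $\int_S u^*d\lambda\le\Lambda$, together with the compatibility of $d\lambda$ with $\omega_X$ near $D$, yields a uniform bound on the $\omega_X$-area of $u$ on a punctured neighborhood of each $q_i$. Bishop's removable singularity theorem for $J$-holomorphic maps into a compact Kähler manifold then extends $u$ across each puncture to a holomorphic map $\bar u : \CP^1 \to X$ with $\bar u^{-1}(D) \subseteq \{q_1,\dots,q_m\}$. By GAGA/Chow's theorem, $\bar u$ is a morphism of algebraic varieties. Its restriction to $S$ is therefore an algebraic map from a $\le k$-punctured $\CP^1$ to $V$ passing through $p$; since $p$ was generic this gives $\AU(V)\le k = \U(V)$.

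The main obstacle is the compatibility needed to invoke removable singularities: one must choose $\lambda$ and $X$ so that the $d\lambda$-area controls the $\omega_X$-area near $D$, so the proper $J_{\mathrm{std}}$-holomorphic map has globally bounded symplectic area in $X$. In McLean's framework \cite{mclean2014symplectic} this is achieved by working with a Liouville form adapted to a normal crossings compactification; the rest of the argument (genericity of $p$, properness forcing the punctures into $D$, and GAGA for the extended map) is then straightforward.
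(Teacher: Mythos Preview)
The paper does not give its own proof of this proposition; it is a direct citation of McLean's result \cite[Theorem 2.5]{mclean2014symplectic}. Your sketch is essentially McLean's argument: one chooses the Liouville structure on $V$ to be the one adapted to a projective normal-crossings compactification (so $\lambda=-d^{\C}\log|s|$ for a section $s$ cutting out $D$, as in \cite[Lemma 4.3]{seidel2006biased}), in which case $d\lambda$ \emph{is} the restriction of the K\"ahler form $\omega_X$ to $V$ and the integrable $J_{\mathrm{std}}$ is convex on sublevel sets of $-\log|s|$. The $(k,\Lambda)$-uniruledness applied to $J_{\mathrm{std}}$ then yields a proper holomorphic map of bounded $\omega_X$-area, removable singularities extends it over the punctures into $X$, and Chow/GAGA makes it algebraic. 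The ``main obstacle'' you identify---compatibility of $d\lambda$ with $\omega_X$ near $D$---is thus not an obstacle at all once the Liouville form is chosen this way; it is an equality, not merely an estimate.
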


\begin{example}
Let $S_k$ be the sphere with $k$ disjoint disks removed. Then $\U(S_k)=k$. Let $\Sigma_{g,k}$ be the genus $g\ge 1$ surface with $k$ disjoint disks removed, then $\U(\Sigma_{g,k})=\infty$. It is clear that $S_k$ embeds exactly into $S_{k+1}$. However $S_{k+1}$ can only be embedded in $S_k$ symplectically but not exactly.
\end{example}

In general we have the following.
\begin{theorem}
	We have $\U((S_k)^n)=k$ and $\U((\Sigma_{g,k})^n)=\infty$ for $g\ge 1$. In particular, $\U$ is a surjective functor in any dimension $\ge 2$.
\end{theorem}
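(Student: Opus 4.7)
The strategy is to obtain the lower bounds via algebraic uniruledness and Proposition~\ref{prop:Ulower}, and the upper bounds via explicit curves in the product.

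For the upper bound $\U((S_k)^n)\le k$, I would view $(S_k)^n$ as an open subset of $(\CP^1)^n$ with the standard complex structure $J_{\mathrm{std}}$, whose complement is a simple normal crossings divisor. Through any generic point $(p_1,\ldots,p_n)\in (S_k)^n$, the slice $\{p_1\}\times\cdots\times\{p_{n-1}\}\times S_k$ is a properly embedded $J_{\mathrm{std}}$-holomorphic $k$-punctured sphere of finite symplectic area. To upgrade to every convex $J$, I would invoke the standard deformation/SFT-compactness argument behind \cite{mclean2014symplectic}: the moduli class of this slice persists through homotopies of convex $J$ because the normal crossings compactification controls breakings at infinity, yielding a representative for each convex $J$ and hence $(k,\Lambda)$-uniruledness of $(S_k)^n$ for some $\Lambda$.

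For the lower bound $\U((S_k)^n)\ge k$, by Proposition~\ref{prop:Ulower} it suffices to show $\AU((S_k)^n)\ge k$. Let $f=(f_1,\ldots,f_n):\CP^1\setminus F\to(S_k)^n$ be a non-constant proper polynomial map; then at least one component $f_i:\CP^1\setminus F\to S_k=\CP^1\setminus\{p_1,\ldots,p_k\}$ is non-constant. Its extension $\bar f_i:\CP^1\to\CP^1$ is surjective of positive degree, so the preimage $\bar f_i^{-1}(\{p_1,\ldots,p_k\})$ consists of at least $k$ distinct points, all lying in $F$; hence $|F|\ge k$. The same extension idea handles $(\Sigma_{g,k})^n$ for $g\ge 1$: any non-constant component would extend to a non-constant holomorphic map $\CP^1\to\Sigma_g$, which is forbidden by Riemann--Hurwitz. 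Hence every polynomial map $\CP^1\setminus F\to(\Sigma_{g,k})^n$ is constant, so $\AU((\Sigma_{g,k})^n)=\infty$ and $\U((\Sigma_{g,k})^n)=\infty$.

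Since $\dim_{\R}(S_k)^n=2n$, letting $k\in\N_+$ vary while using $(\Sigma_{1,1})^n$ to realize the value $\infty$ exhibits every element of $\N_+\cup\{\infty\}$ as $\U$ of an exact domain of any prescribed even dimension $\ge 2$, yielding surjectivity of the functor $\U$ in that range. The main obstacle is the upper bound step: verifying for every convex $J$ (not merely $J_{\mathrm{std}}$) the existence of a $k$-punctured holomorphic sphere of bounded area through a generic point. This is a persistence-of-moduli argument exploiting the smooth normal crossings compactification $(S_k)^n\subset(\CP^1)^n$, and is the only place where machinery beyond elementary complex geometry is required.
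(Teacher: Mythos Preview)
Your proposal is correct and follows essentially the same route as the paper: lower bounds via $\AU$ and Proposition~\ref{prop:Ulower} (your surjectivity/Riemann--Hurwitz argument is exactly the content of the paper's one-line ``projects to each factor as a rational algebraic curve''), and the upper bound via the compactification $(\CP^1)^n$. The only difference is that the paper makes your ``persistence'' step explicit rather than invoking an unspecified deformation argument: one extends the given convex $J$ on $(S_k)^n$ to a tame $J$ on $(\CP^1)^n$, uses that for \emph{any} such $J$ there is a $J$-sphere in the class $[\CP^1\times\{\mathrm{pt}\}^{n-1}]$ through the chosen point (the relevant Gromov--Witten count is $1$) intersecting each divisor $\{p_i\}\times(\CP^1)^{n-1}$ exactly once, and then neck-stretches along $\partial(S_k)^n$ to obtain the required $k$-punctured curve in the completion.
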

\begin{proof}
	Note that $S_k^n$ has a projective compactification $(\CP^1)^n$, we may assume the symplectic form is a product of the same symplectic form on $\CP^1$. Then, for any compatible almost complex structure $J$ on $(\CP^1)^n$, there is a holomorphic curve passing through any fixed point, in the class $[\CP^1\times \{pt\}\times \ldots \times \{pt\}]$ and intersecting each divisor $\{p_i\}\times (\CP^1)^{n-1}$ exactly once for $1\le i \le k$, where $p_i$ is the $i$th puncture. We may assume $J$ is an extension of a convex almost complex structure on $S_k^n$ (which is not necessarily split). Therefore $\U((S_k)^n)\le k$, by neck-stretching. 
	
	On the other hand, the affine varieties corresponding to $(S_k)^n$ and $(\Sigma_{g,k})^n$ are $(\CP^1\backslash\{p_1,\ldots,p_k\})^n$ and  $(\Sigma_g\backslash\{p_1,\ldots,p_k\})^n$. We know that a rational algebraic curve in $(\CP^1\backslash\{p_1,\ldots,p_k\})^n$ and  $(\Sigma_g\backslash\{p_1,\ldots,p_k\})^n$ projects to each factor as a rational algebraic curve. Then every rational curve must have at least $k$ punctures. Therefore $\AU((S_k)^n)=k$ and $\AU((\Sigma_{g,k})^n)=\infty$, and the claim follows from Proposition \ref{prop:Ulower}.
\end{proof}

As a consequence, we find in each dimension a nested sequence of exact domains $V_1\subset V_2 \ldots$, such that $V_i$ can not be embedded into $V_j$ exactly if $i>j$. Sequences with such property in $\dim \ge 10$ were also obtained in \cite[Corollary 1.5]{lazarev2020prime}.  

\begin{remark}
	In \S \ref{s6}, we will show that $\Pl(\partial (S_k)^n)=k$ if $n\ge 2$. Therefore not only there is no exact embedding from $(S_{k+1})^n$ to $(S_{k})^n$, but also there is no exact cobordism from $\partial (S_{k+1})^n$ to $\partial (S_{k})^n$.
\end{remark}

\begin{remark}
	From $\U$ on $\cont_*$, we can build a functor $\U_{\partial}$ on $\cont$ as follows
	$$\U_{\partial}(Y):=\max\{\U(W)|W \text{ is an exact filling of } Y \},$$ where the maximum of the empty set is defined as zero. Then Corollary \ref{cor:lower} below implies that $\U_{\partial}\le \Pl$. The equality does not always hold. For example, $\U_{\partial}(\mathbb{RP}^{2n-1},\xi_{std})=0$ for $n\ne 2^k$ by \cite{zhou2020mathbb}, but $\Pl(\mathbb{RP}^{2n-1},\xi_{std})=1$ when $n\ge 3$ by Theorem \ref{thm:quotient}. Those discrepancies come from the difference between fillings and augmentations. It is possible to generalize the notion of order of uniruledness $\U,\U_{\partial}$ to strong fillings or even weak fillings, but we will not pursue this in this paper. 
\end{remark}

In the following, we introduce an alternative definition of $k$-uniruledness but on the completion $\widehat{W}$, which is suitable to be related to SFT.

\begin{definition}
	Let $(W,\lambda)$ be an exact filling with a non-degenerate contact boundary. We say that the completion $\widehat{W}$ is $k$-uniruled if there exists $\Lambda>0$, such that for every $p\in W^{\circ}$ and every admissible almost complex structure $J$ that is integrable near $p$, there is a rational holomorphic curve passing through $p$ with at most $k$ positive punctures and contact energy of the curve is at most $\Lambda$. 
\end{definition}

\begin{proposition}\label{prop:unirule_equiv}
	An exact filling $(W,\lambda)$ is $k$-uniruled iff $\widehat{W_{\epsilon}}$ is $k$-uniruled, where $W_{\epsilon}$ is Liouville homotopic to $W$ with a non-degenerate contact boundary.  
\end{proposition}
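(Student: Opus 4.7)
The plan is to establish both implications. The key technical tool I would invoke is the correspondence, implicit in Hofer's asymptotic analysis for finite-energy punctured pseudo-holomorphic curves, between proper finite-$d\lambda$-energy $J$-holomorphic maps into $W^\circ$ with $J$ convex, and rational $J$-holomorphic curves in $\widehat{W}$ with positive punctures asymptotic to Reeb orbits of $\partial W$. This identification preserves the genus-zero source and counts ends as positive punctures equally, provided the contact form on $\partial W_\epsilon$ is non-degenerate. This is precisely why the statement replaces $W$ by a Liouville-homotopic $W_\epsilon$ with non-degenerate boundary.

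For the forward direction, I would first invoke the functoriality of $\U$ established earlier to conclude that $(W_\epsilon,\lambda)$ is also $k$-uniruled. Given any admissible almost complex structure $J_0$ on $\widehat{W_\epsilon}$ and $p\in W_\epsilon^\circ$ near which $J_0$ is integrable, I would verify that $J_0|_{W_\epsilon}$ is convex in the sense of Definition \ref{def:J}: on a Weinstein collar $(-\delta,0]_r\times Y$ of $\partial W_\epsilon$ one has $\lambda=e^r\alpha$ and $J_0\partial_r=R_\alpha$, so $\lambda\circ J_0=d(e^r)$, and any smooth extension of $e^r$ to $W_\epsilon$ attaining its unique maximum along $\partial W_\epsilon$ serves as the function in Definition \ref{def:J}. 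Applying $k$-uniruledness of $W_\epsilon$ then produces a proper $J_0$-holomorphic map $u\colon S\to W_\epsilon^\circ$ with $S$ of genus zero, $\rank H_1(S;\Q)\le k-1$, passing through $p$, and $\int u^*d\lambda\le\Lambda$. Because $J_0$ is cylindrical on the collar and $\alpha$ is non-degenerate, Hofer's asymptotic analysis identifies $u$ with a rational $J_0$-holomorphic curve in $\widehat{W_\epsilon}$ through $p$ with at most $k$ positive punctures, each asymptotic to a closed Reeb orbit of $\partial W_\epsilon$; Stokes' theorem converts the $d\lambda$-energy bound into a uniform bound on the sum of periods of the asymptotic orbits, hence on the contact energy.

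For the reverse direction, the Liouville homotopy between $W$ and $W_\epsilon$ extends to an exact symplectomorphism $\Phi\colon\widehat{W}\to\widehat{W_\epsilon}$ that agrees with the Liouville flow outside a compact set and therefore preserves the cylindrical structure at infinity. Given a convex $J$ on $W$ and $p\in W^\circ$ near which $J$ is integrable, I would extend $J$ cylindrically on the positive end to an admissible $\tilde J$ on $\widehat{W}$, and transport it via $\Phi$ to an admissible almost complex structure on $\widehat{W_\epsilon}$ integrable near $\Phi(p)$. Invoking $k$-uniruledness of $\widehat{W_\epsilon}$ and pulling back by $\Phi^{-1}$ produces a rational $\tilde J$-holomorphic curve $v\colon\Sigma\to\widehat{W}$ through $p$ with at most $k$ positive punctures and contact energy bounded by some $\Lambda$. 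Since at each positive puncture the image of $v$ exits every compact set of $\widehat{W}$ along the cylindrical end, restricting $v$ to the preimage of $W^\circ$ and discarding the cylindrical tails yields a proper $J$-holomorphic map from a genus-zero open subsurface $S\subset\Sigma$ with at most $k$ ends into $W^\circ$, passing through $p$; by Stokes again $\int v^*d\lambda$ is controlled by the contact energy, so the resulting map is of bounded $d\lambda$-energy, confirming $W$ is $k$-uniruled.

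The main obstacle I anticipate is justifying the Hofer-type asymptotic identification used in both directions. Specifically, I must confirm that the admissibility of $J_0$ near $\partial W_\epsilon$, combined with non-degeneracy of the contact form, forces every end of a proper finite-$d\lambda$-energy curve into $W_\epsilon^\circ$ to have a well-defined asymptotic Reeb orbit, and that the number of such ends matches the number of positive punctures of the associated curve in $\widehat{W_\epsilon}$. The remaining bookkeeping---passing between Liouville-homotopic fillings by an exact symplectomorphism of completions respecting cylindrical structure, and converting between $d\lambda$-energy and contact energy via Stokes' theorem---is standard in the SFT literature.
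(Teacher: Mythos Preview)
Your proposal has genuine gaps in both directions, and the paper takes a substantially different route.

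\textbf{Forward direction.} The correspondence you invoke---identifying proper finite-$d\lambda$-energy curves in $W_\epsilon^\circ$ with punctured curves in $\widehat{W_\epsilon}$ via ``Hofer's asymptotic analysis''---does not exist. Hofer's analysis governs curves whose ends escape to $\pm\infty$ in a symplectization; a proper curve in the bounded domain $W_\epsilon^\circ$ has ends accumulating on the finite hypersurface $\partial W_\epsilon$, which is an entirely different asymptotic regime. There is no mechanism for extending such a curve across $\partial W_\epsilon$ into the positive cylindrical end. (There is also a preliminary issue: admissibility of $J_0$ on $\widehat{W_\epsilon}$ only forces $J_0$ to be cylindrical on $[0,\infty)\times Y$, not on the collar $(-\delta,0]\times Y$ inside $W_\epsilon$, so the verification that $J_0|_{W_\epsilon}$ is convex fails.) The paper instead arranges $W_\epsilon\subset W$, takes the $J$-curves in $W$ supplied by $k$-uniruledness, and applies \emph{neck-stretching} along $\partial W_\epsilon$. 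The bottom level of the SFT limit is the desired curve in $\widehat{W_\epsilon}$, and an energy argument (no building with only negative punctures) is used to bound its number of punctures by $k$.

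\textbf{Reverse direction.} You assert that restricting $v:\Sigma\to\widehat{W}$ to $v^{-1}(W^\circ)$ yields a subsurface ``with at most $k$ ends'', but this is unjustified: a connected open subsurface of a $k$-punctured sphere can have arbitrarily many ends, since $v$ may exit and re-enter $W$ repeatedly. The paper handles this by first extending the convex $J$ on $W$ to an admissible $J$ on $\widehat{W_\epsilon}$ (a nontrivial step, since convex $\ne$ cylindrical; a separate extension result is cited), and then proving that $H_1(S';\Z)\to H_1(\Sigma;\Z)$ is injective for the component $S'$ through $p$. The injectivity is established by a maximum-principle argument using the extended plurisubharmonic function $\widehat{\phi}$: a hypothetical kernel element would be bounded by a surface in $\Sigma\setminus S'$ on which $\widehat{\phi}\circ v$ violates the maximum principle. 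This injectivity is what bounds $\rank H_1(S';\Q)\le k-1$.
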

\begin{proof}
	We first show that  $(W,\lambda)$ is $k$-uniruled implies $\widehat{W_{\epsilon}}$ is $k$-uniruled. WLOG, we can take $W_{\epsilon}\subset W$, since we can rescale $W_{\epsilon}$. By assumption there is a $\Lambda>0$ such that for any $p\in W^{\circ}_{\epsilon}$ and any $J$ integrable near $p$ and convex near $\partial W$, there is a $J$-rational curve $u:S \to W$ with $\int_S u^*\rd \lambda < \Lambda$ and $H_1(S;\Q)\le k-1$. In particular, we can choose $J$ to be cylindrical convex near $\partial W_{\epsilon}$. Then by applying neck-stretching along $\partial W_{\epsilon}$, we must have a rational holomorphic curve $u:S_{\epsilon}\to \widehat{W_{\epsilon}}$ passing through $p$ with contact energy smaller than $\Lambda$. We know that $S_{\epsilon}$ is a punctured sphere, as $\partial W_{\epsilon}$ is non-degenerate. It is sufficient to prove $\rank H_1(S_{\epsilon};\Z)=\rank H_1(S_{\epsilon};\Q)\le k-1$. Assume otherwise, then we know that $H_1(S_{\epsilon};\Z)\to H_1(S;\Z)$ is not injective, for if not, we have $\rank H_1(S;\Q)\ge \rank H_1(S_{\epsilon};\Q)\ge k$. Therefore we find a class $[\gamma]\in H_1(S_{\epsilon};\Z)$, such that $[\gamma]$ is represented by a disjoint union $\gamma$ of possibly multiply covered loops around punctures of $S_{\epsilon}$, and there is an immersed surface $A$ in $S\backslash S_{\epsilon}$ whose boundary is $\gamma$. Then in the fully stretched case, $u|_A$ corresponds to a holomorphic building with only negative punctures, which is impossible for energy reasons.
	
	Now we assume $\widehat{W_{\epsilon}}$ is $k$-uniruled. WLOG, we can assume $W\subset W_{\epsilon}$. By \cite[Proposition 5.3]{zhou2019symplecticI}, any convex almost complex structure on $W$ can be extended to an admissible almost complex structure on $\widehat{W_{\epsilon}}$. By assumption, there is a rational curve $u:S\to \widehat{W_{\epsilon}}$ passing through the chosen point $p\in W$ with $S$ an at most $k$ punctured sphere and the contact energy of $u$ is at most $\Lambda$. Let $S'$ be the connected component of $u^{-1}(W^{\circ})$ containing the point mapped to $p$. It clear that the area of $u|_{S'}$ is bounded by $\Lambda$. We claim that $H_1(S';\Z)\to H_1(S;\Z)$ is injective. For otherwise, there is a class $A\in H_2(S,S';\Z)$ mapped to a nontrivial element by $H_2(S,S';\Z)\to H_1(S';\Z)$. Then we can find a $S''\subset S'$ such that $\lambda \circ J =\rd \phi$ on $u|_{S'\backslash S''}$, where $\phi$ is the function in Definition \ref{def:J}. Then by excision, we have $A$ represented by an immersed surface in $S\backslash S''$ not contained completely in $S'\backslash S''$ with boundary in $S'\backslash S''$. Let $\widehat{\phi}$ be the extension of $\phi$ on $\widehat{W_{\epsilon}}$ by \cite[Proposition 5.3]{zhou2019symplecticI}. In particular, the maximum principle holds for $\widehat{\phi}$. Then we reach at a contradiction, since $\widehat{\phi}(u)|_{\partial A}<\max \widehat{\phi}(u)|_A$. Since $\Q$ is flat, we know that $H_1(S';\Q)\to H_1(S;\Q)$ is also injective, hence $\rank H_1(S';\Q)\le \rank H_1(S;\Q)\le k-1$. 
	
\end{proof}

\subsection{Uniruledness and planarity}
 
The main theorem of this section is the following.
\begin{theorem}\label{thm:unirule}
	If $\Pl(Y)=k$, then any exact filling of $Y$ is $k$-uniruled.
\end{theorem}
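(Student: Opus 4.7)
The plan is to convert the algebraic witness produced by $\Pl(Y)\le k$ into an honest rational holomorphic curve in the filling through any prescribed interior point. By Proposition~\ref{prop:unirule_equiv} and a small Liouville homotopy to ensure $\partial W$ is non-degenerate, it suffices to show that for every $p \in W^\circ$ and every admissible $J$ on $\widehat{W}$ integrable near $p$, there is a rational $J$-holomorphic building in $\widehat{W}$ through $p$ with at most $k$ total positive asymptotics and contact energy bounded by a constant independent of $p$ and $J$; the $H_1$-injectivity argument in the proof of Proposition~\ref{prop:unirule_equiv} then restricts this building to $W^\circ$ and extracts a single punctured sphere with at most $k-1$ independent cycles.

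The central construction is a \emph{filling pointed augmentation}. By Proposition~\ref{prop:aug}, $W$ induces a $BL_\infty$ augmentation $\epsilon_W:(V_\alpha,p_\theta)\to \mathbf{0}$. For $p \in W^\circ$, set
\begin{equation*}
\epsilon^{k}_{\bullet,p}(q^{\Gamma^+}) := \sum_A \#\overline{\cM}_{\widehat{W},A,p}(\Gamma^+,\emptyset)^{\vir}\,\frac{T^{\int_A \rd\lambda}}{\mu_{\Gamma^+}},
\end{equation*}
and assemble a chain map $\widehat{\epsilon}^{W}_{\bullet,p}:(\overline{S}V_\alpha,\widehat{\ell}_{\epsilon_W})\to \Q$ by the same graph recipe that produces $\widehat{\ell}_{\bullet,\epsilon}$ from $p_{\bullet,\epsilon}$: exactly one distinguished component carries the marked point, the remaining components are capped off by $\epsilon_W$. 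Next, pick $o \in Y$ and a smooth path $\widehat{\gamma}:[0,1]\to \widehat{W}$ with $\widehat{\gamma}(0)=p$ and $\widehat{\gamma}(1)=(s_0,o)\in [0,\infty)\times Y \subset \widehat{W}$ for some $s_0\gg 0$. Counting rational curves in $\widehat{W}$ whose marked point sweeps along $\widehat{\gamma}$ yields a chain homotopy in the style of \eqref{BL:4} of Theorem~\ref{thm:BL} and Proposition~\ref{prop:order}; combined with a neck-stretching of $\widehat{W}$ along $\{s_0/2\}\times Y$, this identifies $\widehat{\epsilon}^{W}_{\bullet,p}$ and $\widehat{\ell}_{\bullet,\epsilon_W}$ on cohomology.

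To finish, apply Definition~\ref{def:planarity}: since $\Pl(Y)=k$, for the augmentation $\epsilon_W$ and the chosen $o$ there is $[x]\in H_*(\overline{B}^kV_\alpha,\widehat{\ell}_{\epsilon_W})$ with $\widehat{\ell}_{\bullet,\epsilon_W}([x])=1$. The compatibility of the previous paragraph forces $\widehat{\epsilon}^{W}_{\bullet,p}(x)=1$; expanding $x=\sum c_\Gamma q^\Gamma$, a finite sum with $|\Gamma|\le k$, some $\#\overline{\cM}_{\widehat{W},A,p}(\Gamma,\emptyset)^{\vir}$ must be nonzero. A nonvanishing virtual count on a compact moduli space forces nonemptiness, and Gromov/SFT compactness delivers a stable genus-zero $J$-holomorphic building through $p$ with total positive asymptotics $|\Gamma|\le k$ and contact energy $\int_A \rd\lambda\le \Lambda:=\max\{\cA(\Gamma):c_\Gamma\neq 0\}$, a constant depending only on $x$ and $\alpha$. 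The reduction in the first paragraph then completes the proof.

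The principal obstacle is the chain homotopy in the second paragraph: extending the VFC package of Theorem~\ref{thm:BL} to the filling $W$ (viewed as a cobordism from $\emptyset$ to $Y$) equipped with a marked point sweeping the path $\widehat{\gamma}$. All associated gluing-parameter spaces remain tree-like and admit cell-like stratifications, so the construction is a direct specialisation of the cobordism-with-sweeping-point case implicit in \eqref{BL:4}; the only genuinely new ingredient is the large-$s_0$ limit, where pushing the endpoint of $\widehat{\gamma}$ to infinity along $\{s\}\times Y$ must correctly degenerate the filling count into a symplectization curve with the point constraint capped off by $\epsilon_W$, which is a standard SFT neck-stretching argument. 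A secondary subtlety — that the building extracted in the last step yields a single punctured sphere in $W^\circ$ with the required topological bound — is handled by the same $H_1$-injectivity argument already carried out in the proof of Proposition~\ref{prop:unirule_equiv}.
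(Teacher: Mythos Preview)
Your proposal is correct and follows essentially the same route as the paper. Your map $\widehat{\epsilon}^{W}_{\bullet,p}$ is precisely the paper's $\eta_W$ from equation~\eqref{eqn:planarity}, and the chain homotopy you build between it and $\widehat{\ell}_{\bullet,\epsilon_W}$ is exactly the content of Proposition~\ref{prop:equi}, which the paper states and proves immediately before the theorem and then simply cites. The only cosmetic difference is that you split the homotopy into two steps (sweep the constraint to a finite height $(s_0,o)$, then neck-stretch along $\{s_0/2\}\times Y$), whereas the paper uses a single proper path $\widehat{\gamma}$ running to $+\infty$ along the cylindrical end and reads off the symplectization contribution directly from the SFT boundary of the one-parameter moduli space; both accomplish the same identification. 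The remaining steps (Axiom~\ref{axiom} to force nonemptiness, the action bound $\Lambda$ from the finitely many monomials in $x$, and the reduction via Proposition~\ref{prop:unirule_equiv}) match the paper line for line.
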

An exact filling $W$ gives rise to a $BL_\infty$ augmentation $\epsilon_W$ over $\Q$. As a consequence we have a chain morphism $\widehat{\ell}_{\bullet,\epsilon_W}:\overline{B}V\to \Q$ after fixing a point $o$ in $Y$ and an auxiliary data. We can define a different map $\eta_W: \overline{B}V\to \Q$ by 
\begin{equation}\label{eqn:planarity}
\eta_W(q^{\Gamma^+}) = \sum_{A}  \# \overline{\cM}_{W,A,p}(\Gamma^+,\emptyset)
\end{equation}
for a fixed point $p\in W$ such that $p,o$ are in the same connected component of $W$, and $|\Gamma^+|=k$.
\begin{proposition}\label{prop:equi}
	$\eta_W$ is a chain morphism and is homotopic to $\widehat{\ell}_{\bullet,\epsilon_W}$ with appropriate choices of auxiliary data, where $\epsilon_W$ is the augmentation from $W$. Moreover $\eta_W$ is compatible with the word length filtration, i.e.\ for $k\ge 1$, the following diagram that is commutative up to homotopy $H$. 
    $$
    \xymatrix{
     (\overline{B}^kV, \widehat{\ell}_{\epsilon_W}) \ar[r]^{\quad\widehat{\ell}_{\bullet,\epsilon_W}}\ar[d]^{\Id}\ar@{=>}[rd]^H & (\Q,0) \ar[d]^{\Id} \\
     (\overline{B}^kV, \widehat{\ell}_{\epsilon_W}) \ar[r]^{\quad \eta_W} & (\Q,0) 
    }
    $$

\end{proposition}
\begin{proof}
    That $\eta_W$ defines a chain morphism, i.e.\ $\eta_W\circ \widehat{\ell}_{\epsilon_W}=0$, follows from the boundary of  $1$-dimensional $\overline{\cM}_{W,A,p}(\Gamma^+,\emptyset)$. Let $\gamma$ be a path in $W$ connecting $p$ to $o$ and we use $\widehat{\gamma}$ to denote the completion ray of $\gamma$ in $\widehat{W}$.  Then the homotopy $H:\overline{B}V \to \Q$ is defined by 
    $$H(q^{\Gamma^+}) = \sum_{A} \# \overline{\cM}_{W,A,\gamma}(\Gamma^+,\emptyset),$$
    where $\overline{\cM}_{W,A,\gamma}(\Gamma^+,\emptyset)$ is defined similarly to \eqref{path} before \eqref{eqn:positive}.
    The realization of those operators using virtual techniques is similar to $\phi_\bullet$ in \eqref{BL:4} of Theorem \ref{thm:BL} in \S \ref{s:virtual}. The homotopy relation, $\widehat{\ell}_{\bullet,\epsilon_W}-\eta_W = H\circ \widehat{\ell}_{\epsilon_W}$, comes from the boundary of $1$-dimensional $\overline{\cM}_{W,A,\gamma}(\Gamma^+,\emptyset)$. It is clear that both $\eta_W$ and $H$ are compatible with the word length filtration.
\end{proof}

\begin{proof}[Proof of Theorem \ref{thm:unirule}]
	The theory of $BL_\infty$ algebras considered for contact manifolds is equipped with a filtration by the contact action, where the action $\cA(q_{\gamma})$ of a generator is $\int \gamma^*\alpha$. Then the action can be extended to $EV$ and $SV$, by declaring the action of a monomial in SV, or a monomial of monomials in $EV$ using the $\{q_{\gamma}\}$ as basis is the sum of actions, the action of an element using those monimals as basis is the maximum of the actions of the monomials with nonzero coefficient. Then all of the operators for contact manifolds and exact cobordisms will decrease the action. It may not be true that the spectral invariant for $\Pl(Y)=k$ is bounded for all $BL_\infty$ augmentations. But for an augmentation $\epsilon_W$ coming from an exact filling $W$, we have the spectral invariant is bounded, i.e.\ there is a $\Lambda>0$ and an $x\in \overline{B}^kV$ with $\cA(x)\le \Lambda$, $\widehat{\ell}_{\epsilon_W}(x)=0$, and $\widehat{\ell}_{\bullet,\epsilon_W}(x)=1$. Then by Proposition \ref{prop:equi}, we have $\eta_W(x)=1$.  We must have the unperturbed/geometric $\overline{\cM}_{W,A,p}(\Gamma_+,\emptyset)$ is not empty for  some $\Gamma_+$ with $|\Gamma^+|\le k$ and $\sum_{\gamma \in \Gamma^+}\int \gamma^*\alpha \le \Lambda$, for otherwise we will have $\eta_W(x)=0$ by Axiom \ref{axiom}. It is clear from the proof of Proposition \ref{prop:equi} that $p$ can be any point in $W$. This shows that $\widehat{W}$ is $(k,\Lambda)$ uniruled; by Proposition \ref{prop:unirule_equiv}, $W$ is $k$-uniruled.
\end{proof}

Theorem \ref{thm:unirule} provides a lower bound for $\Pl$.
\begin{corollary}\label{cor:lower}
	Let $W$ be an exact filling of $Y$, then $\Pl(Y)\ge \U(W)$. If $W$ is an affine variety, then $\Pl(Y)\ge \AU(W)$.
\end{corollary}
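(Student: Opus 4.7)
The plan is to derive the corollary as an essentially immediate consequence of Theorem \ref{thm:unirule}, combined with Proposition \ref{prop:Ulower} in the affine variety case. There is no additional geometric input required; the whole content has already been packaged into those two results.

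First I would dispose of the trivial case $\Pl(Y) = \infty$, where both inequalities hold vacuously. So I would assume $\Pl(Y) = k < \infty$. Theorem \ref{thm:unirule} then asserts that every exact filling $W$ of $Y$ is $k$-uniruled, i.e.\ $(k,\Lambda)$-uniruled for some $\Lambda > 0$ (the constant $\Lambda$ comes from the action of the bounded-energy witness $x \in \overline{B}^k V_{\alpha}$ produced in the proof of Theorem \ref{thm:unirule} via $\eta_W(x) = 1$, cf.\ Proposition \ref{prop:equi}). By the definition
$$\U(W) = \min\{\,k \mid W \text{ is } k\text{-uniruled}\,\},$$
this gives $\U(W) \le k = \Pl(Y)$, which is the first assertion.

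For the second assertion, assume in addition that $W$ is an affine variety. Proposition \ref{prop:Ulower} provides the inequality $\U(W) \ge \AU(W)$ between the symplectic order of uniruledness and the algebraic order of uniruledness. Chaining this with the first part yields
$$\Pl(Y) \ge \U(W) \ge \AU(W),$$
as desired.

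There is no genuine obstacle here; the only thing worth double-checking is that the constant $\Lambda$ from the proof of Theorem \ref{thm:unirule} really depends only on the augmentation $\epsilon_W$ (it is the action of a specific closed class in $\overline{B}^k V_\alpha$), so that the $k$-uniruledness conclusion it produces is genuinely of the form required by the definition of $\U(W)$, rather than a point-dependent statement. This is transparent from the proof, since in Proposition \ref{prop:equi} the point $p \in W$ can be taken to be arbitrary in the connected component of $o$, and varying $p$ does not alter the chain-level identity $\eta_W(x) = 1$.
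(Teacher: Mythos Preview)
Your proposal is correct and matches the paper's approach exactly: the paper presents Corollary \ref{cor:lower} as an immediate consequence of Theorem \ref{thm:unirule} (combined with Proposition \ref{prop:Ulower} for the affine variety statement) without writing out a separate proof. Your treatment of the trivial case $\Pl(Y)=\infty$ and the remark on the uniformity of $\Lambda$ are reasonable elaborations, but no additional argument beyond invoking those two results is needed.
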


\section{Upper bounds for planarity}\label{s6}
In this section, we will obtain upper bounds of $\Pl$ for the following two cases.
\subsection{Iterated planar open books}

\begin{definition}[{\cite[Definition 2.2]{acu2017weinstein}}] \label{IPLF}
An \emph{iterated planar Lefschetz fibration} $f: (W^{2n}, \omega) \to \mathbb{D}$ on a $2n$-dimensional Weinstein domain $(W^{2n}, \omega)$ is an exact symplectic Lefschetz fibration satisfying the following properties: 

\begin{enumerate}

\item There exists a sequence of exact symplectic Lefschetz fibrations $f_i:(W^{2i}, \omega_i) \to \mathbb{D}$ for $i=2, \dots, n$ with $f=f_n$.

\item The total space $(W^{2i}, \omega_{i})$ of $f_{i}$ is a regular fiber of $f_{i+1}$, for $i=2, \dots, n-1$.

\item $f_2: (W^4, \omega_2) \to \mathbb{D}$ is a planar Lefschetz fibration, i.e. the regular fiber of $f_2$ is a genus zero surface with nonempty boundary, which we denote by $W^2$.
\end{enumerate}
\end{definition}

\begin{definition}[{\cite[Definition 2.3, 2.4]{acu2017weinstein}}] \label{IPOB}
An \textit{iterated planar open book decomposition} of a contact manifold $(Y^{2n+1}, \xi)$ is an open book decomposition for $Y$ whose page $W$ admits an iterated planar Lefschetz fibration, which supports the contact structure $\xi$ in the sense of Giroux. We say that $(Y,\xi)$ is iterated planar (IP).
\end{definition}

If the number of boundary components of $W^2$ in the above definition is $k$, we say that $(Y,\xi)$ is \emph{$k$-iterated planar} or $k$-IP. We remark that the collection of IP contact manifolds is already a large class of examples, as e.g.\ the fundamental group is not an obstruction in any fixed dimension at least $5$ \cite[Theorem 1.4]{acu2020generalizations}.

\begin{theorem}\label{thm:upper}
	Let $(Y,\xi)$ be a $k$-IP contact manifold. Then $\Pl(Y)\le k$.
\end{theorem}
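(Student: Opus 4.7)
The plan is to produce an explicit element $x \in S^k V_\alpha \subseteq \overline{B}^k V_\alpha$ that is $\widehat{\ell}_\epsilon$-closed and satisfies $\widehat{\ell}_{\bullet,\epsilon}(x) = 1$ for every $BL_\infty$ augmentation $\epsilon$. The natural candidate is $x = q_{\gamma_1} \cdots q_{\gamma_k}$, where $\gamma_1, \ldots, \gamma_k$ are short Reeb orbits associated to the $k$ boundary components of the innermost planar page $W^2$. The ruling holomorphic curves that underlie the Weinstein conjecture proofs of Abbas and Acu--Etnyre, which are essentially fibers of the iterated Lefschetz fibration chain compactified to $k$-punctured spheres in a page of $Y$, should realize the pointed map evaluated on $x$.

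First I would construct a contact form $\alpha$ adapted to the iterated planar open book as in Acu--Etnyre and Acu--Sackel--Wendl, so that each innermost fiber gives a transverse (classical) foliation of the symplectization by $J$-holomorphic curves with $k$ positive punctures at $\gamma_1, \ldots, \gamma_k$. The key adjustment is to make the $\gamma_i$ have arbitrarily small contact action $\leq \Lambda_0$, while arranging a large action gap below any other orbit not among the $\gamma_i$. Automatic transversality in the planar setting plus the foliation property yield $\#\overline{\cM}_{Y,A,o}(\{\gamma_1,\ldots,\gamma_k\},\emptyset) = \pm 1$ for a generic marked point $o$, after dividing by the combinatorial factors $\mu$ and $\kappa$ in \eqref{eqn:p.}; by Axiom \ref{axiom}, the virtual count agrees with this geometric one.

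Next, to verify that $x$ is closed under $\widehat{\ell}_\epsilon$, I need each linearized bracket $\ell^m_\epsilon(q_{\gamma_{i_1}} \cdots q_{\gamma_{i_m}})$ with $m \le k$ and $i_1 < \cdots < i_m$ to vanish. By \eqref{eqn:positive} and the action choices above, any curve contributing to $\ell^m_\epsilon$ has negative puncture at an orbit of action bounded by $\sum_j \cA(\gamma_{i_j}) \leq k\Lambda_0$; by the action gap, such an orbit must lie in $\{\gamma_1,\ldots,\gamma_k\}$, and any $\epsilon$-capping orbit must also have action in this short range, so after choosing $\Lambda_0$ small enough the augmentation correction reduces to the trivial one and only trivial cylinders contribute. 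The planar foliation and a dimension count then force all such rigid curves with one negative puncture to be absent, since they would amount to a holomorphic compactification of a page fiber with fewer than $k$ positive asymptotics, contradicting the foliation structure. This gives $\widehat{\ell}_\epsilon(x) = 0$, so $[x] \in H_*(\overline{B}^k V_\alpha, \widehat{\ell}_\epsilon)$. The same low-action argument applied to $\widehat{\ell}_{\bullet,\epsilon}(x)$ reduces it to the unaugmented pointed count $p^{k,0}_\bullet(x) = \pm 1$, establishing $O(V_\alpha, \epsilon, p_\bullet) \le k$ for every $\epsilon$, hence $\Pl(Y) \le k$ by Definition \ref{def:planarity}.

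The principal obstacle is orchestrating the contact form so that there is a sufficiently wide action gap between the ruling orbits $\gamma_i$ and all other orbits: in the iterated setting this requires delicate control over the Reeb dynamics not only near the outer binding but also near all critical values and fiber-binding regions of the intermediate Lefschetz fibrations, which can contribute orbits with intermediate actions. A secondary technical point is to ensure that the ruling curve is somewhere-injective (or, failing that, that its multiply covered appearances are organized so that Axiom \ref{axiom} still yields a geometric count of $\pm 1$ after the $\mu$ and $\kappa$ normalization), since otherwise the virtual count is not directly accessible by classical transversality and the argument would need to be supplemented with an equivariant transversality step for the covering symmetry.
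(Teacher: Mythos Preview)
Your overall strategy matches the paper's: the candidate $x=q_{\gamma_1}\cdots q_{\gamma_k}$ is exactly the one the paper uses, and the reduction to verifying that (i) $x$ is $\widehat{\ell}_\epsilon$-closed for every augmentation and (ii) $\widehat{\ell}_{\bullet,\epsilon}(x)\neq 0$ is precisely the content of Lemma~\ref{lemma:upperbound}. The substantive divergence is in how you establish the crucial emptiness $\cM_{Y,A}(\Gamma^+,\Gamma^-)=\emptyset$ for $\Gamma^+\subseteq\{\gamma_1,\ldots,\gamma_k\}$ and $\Gamma^-\neq\emptyset$.

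The paper does \emph{not} use an action gap. Instead it argues by induction on dimension and uses Siefring intersection theory. In dimension $3$, any curve $v$ with positive asymptotics in the binding has $u*v=0$ for every leaf $u$ of the holomorphic open book (via the extremal winding computation $\alpha_-(\gamma)=0$), and positivity of intersection forces $v$ to be a leaf, hence to have no negative ends. In higher dimensions, the codimension-$2$ holomorphic foliation of $\mathbb{R}\times Y$ by completed pages asymptotic to $H_B=\mathbb{R}\times B$ (as in \cite{MS,Mor3bp}) traps any such curve inside a single leaf, reducing to the inductive hypothesis on the binding $B$. Transversality in higher dimensions comes from splitting the linearized operator into tangent and normal parts relative to $H_B$ and applying automatic transversality to the normal summand.

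Your action-gap route has a genuine gap. Even granting the gap, negative asymptotics can still lie among the $\gamma_i$ themselves (or their covers), and an arbitrary $BL_\infty$ augmentation can take nonzero values on $q_{\gamma_i}$, so ``the augmentation correction reduces to the trivial one'' is unjustified. You then defer to ``the planar foliation and a dimension count'' to exclude these residual curves, but that is exactly the uniqueness statement whose proof requires intersection theory; without it, nothing prevents a curve from crossing leaves. In short, the action gap is neither sufficient nor necessary: the missing ingredient is the intersection-theoretic uniqueness of leaves, organized inductively through the iterated binding, which you have not supplied.
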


The strategy for obtaining an upper bound $\Pl(Y)\leq k$ on the planarity of a contact manifold $(Y,\xi)$ is via the following algebraic-geometric condition.

\begin{lemma}\label{lemma:upperbound} Let $(Y,\xi)$ be a contact manifold. Assume the following holds.
\begin{itemize}
    \item[$(\star)_k$] There exists a point $o\in\mathbb{R}\times Y$, a contact form $\alpha$ for $\xi$, a choice of  $\alpha$-compatible cylindrical almost complex structure $J$ on $\mathbb{R}\times Y$, and some collection $\Gamma=(\gamma_1,\dots,\gamma_k)$ of precisely $k$ distinct, non-degenerate and simply-covered $\alpha$-Reeb orbits, for which the following holds.
    \begin{itemize}
        \item[$(1)_k$]\label{1k} If $\Gamma^+\subseteq \Gamma$, and $\Gamma^-\neq \emptyset$, then $\mathcal{M}_{Y,A}(\Gamma^+,\Gamma^-)=\emptyset$ for every homology class $A$.
        \item[$(2)_k$] The moduli space $\mathcal{M}_{Y,A,o}(\Gamma,\emptyset)$ is transversely cut out for every $A$ with expected dimension $0$.
        \item[$(3)_k$] For some choice of coherent orientations, the algebraic count of the $k$-punctured spheres in $\bigcup_{A,\vdim=0} \mathcal{M}_{Y,A,o}(\Gamma,\emptyset)$ is nonzero.
    \end{itemize}
\end{itemize}  

Then $\Pl(Y)\leq k$. 
\end{lemma}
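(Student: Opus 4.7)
The plan is to exhibit an explicit closed class in $\overline{B}^k V_\alpha$ which $\widehat{\ell}_{\bullet,\epsilon}$ sends to a nonzero scalar for every $BL_\infty$ augmentation $\epsilon$. The natural candidate is
\[
x := q_{\gamma_1}\cdots q_{\gamma_k} \in S^k V_\alpha \subset \overline{B}^k V_\alpha.
\]
Because the $\gamma_i$ are distinct and simply-covered, $\mu_\Gamma = 1$ and $\kappa_{\gamma_i}=1$, so no combinatorial factors intervene in the formulas of Section \ref{s3}.

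The first step is to show that $x$ is $\widehat{\ell}_\epsilon$-closed. By Proposition \ref{prop:linear} and the graphical description, each $\ell^j_\epsilon = p^{j,1}_\epsilon$ is a sum over connected graphs with a single main $p^{j,1+b}$-vertex whose $b$ additional outputs are capped off by $\epsilon$-components. For any $\Gamma' \subseteq \Gamma$, every such graph extracts a contribution from a moduli space $\cM_{Y,A}(\Gamma',\{\gamma'\} \cup \Gamma_{\rm cap}^-)$ with nonempty negative asymptotics, which is empty by $(1)_k$. Since the virtual count of an empty compactified moduli space vanishes in Pardon's VFC (and in every other virtual scheme), $\ell^j_\epsilon(q^{\Gamma'}) = 0$. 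The Leibniz-type formula for $\widehat{\ell}_\epsilon$ on $S^k V_\alpha$ then expresses $\widehat{\ell}_\epsilon(x)$ as a signed sum of $\ell^j_\epsilon(q^{\Gamma'})\cdot q^{\Gamma\setminus\Gamma'}$ over $\Gamma' \subseteq \Gamma$, and each term vanishes. Hence $[x]\in H_*(\overline{B}^k V_\alpha,\widehat{\ell}_\epsilon)$.

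The second step is to compute $\widehat{\ell}_{\bullet,\epsilon}(x) = \ell^{k,0}_{\bullet,\epsilon}(x)$. By the analogous graphical analysis, this is a sum over $b\geq 0$ of terms involving $\cM_{Y,A,o}(\Gamma,\Gamma_{\rm cap}^-)$ with $|\Gamma_{\rm cap}^-|=b$, each weighted by $\epsilon$-evaluations on the capped orbits. For $b\geq 1$, forgetting the marked point identifies the pointed moduli space with a subspace of $\cM_{Y,A}(\Gamma,\Gamma_{\rm cap}^-)$, which is empty by $(1)_k$; these terms therefore vanish virtually, exactly as above. Only the $b=0$ term survives, and by $(2)_k$ together with Axiom \ref{axiom} (equivalently Proposition \ref{prop:transverse}) its virtual count equals the transverse geometric count, which is a nonzero scalar $N$ by $(3)_k$. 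Rescaling, $\widehat{\ell}_{\bullet,\epsilon}(N^{-1}x) = 1$, so $1\in \Ima(\widehat{\ell}_{\bullet,\epsilon}|_{H_*(\overline{B}^k V_\alpha,\widehat{\ell}_\epsilon)})$, i.e.\ $O(V_\alpha,\epsilon,p_\bullet)\leq k$. Since the hypotheses $(\star)_k$ make no reference to $\epsilon$, this bound is uniform over augmentations, hence $\Pl(Y)\leq k$.

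The main obstacle is bridging the unperturbed geometric hypotheses $(1)_k$--$(3)_k$ and the virtually perturbed structure maps $\widehat{p}_\epsilon$, $\widehat{p}_{\bullet,\epsilon}$. This is resolved by two foundational features of every virtual construction in Section \ref{s:virtual}: the virtual count of an empty moduli space is zero (invoked wherever $(1)_k$ is used), and the virtual count of a classically transverse zero-dimensional moduli space equals its signed geometric count (invoked to extract $N\neq 0$). With these in hand, the argument is virtually-machinery-independent.
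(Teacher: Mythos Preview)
Your argument is correct and follows the same approach as the paper: exhibit $x=q_{\gamma_1}\cdots q_{\gamma_k}$, use $(1)_k$ to force all augmented structural maps with nonempty output to vanish (so $x$ is closed and $\widehat{\ell}_{\bullet,\epsilon}$ sees only the $b=0$ term), then invoke $(2)_k$--$(3)_k$ with Axiom~\ref{axiom} to get a nonzero count. Your write-up is simply more explicit than the paper's about unwinding the graphical recipe for $p^{k,l}_\epsilon$ and about why empty unperturbed moduli spaces stay empty after compactification and hence contribute zero virtually.
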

\begin{proof}
    By the first property, we have $\overline{\mathcal{M}}_{Y,A,o}(\Gamma^+,\Gamma^-)=\emptyset$ for any $\Gamma^+\subseteq \Gamma$, and $\Gamma^-\neq \emptyset$. Therefore by Axiom \ref{axiom}, the second and third condition implies that $\widehat{\ell}_{\bullet,\epsilon}(q^{\Gamma^+})\ne 0$ for any augmentation $\epsilon$ (if there is no augmentation, then $\Pl(Y)=0$ by definition). Moreover by the first property, we have $q^{\Gamma^+}$ is closed in $(\overline{S}V,\widehat{\ell}_{\epsilon})$ for any augmentation $\epsilon$. Then the claim follows.
\end{proof}

\begin{proof}[Proof of Theorem \ref{thm:upper}]
	We proceed by induction on dimension.
	
	If $\dim Y=3$, then an IP contact $3$-manifold is simply a planar contact $3$-manifold. Fix a choice of planar open book supporting the contact structure, with page a sphere with $k$-disks removed, and so with binding consisting of $k$ circles. One then constructs an adapted Giroux form, so that each component of the binding is a non-degenerate and simply-covered orbit, and a \emph{holomorphic} open book as e.g.\ in \cite{Wendlopenbook}. This provides a Fredholm-regular foliation of $\mathbb{R}\times Y$ whose leaves are either trivial cylinders over the binding, or holomorphic Fredholm-regular $k$-punctured spheres projecting to pages and asymptotic to the binding. One can prove via standard $4$-dimensional arguments coming from Siefring intersection theory (the same as in higher-dimensions, as used below), that any curve whose positive asymptotics are a subset of the binding, is a leaf of this foliation. The argument is as follows: given such a curve $v$, let $u$ be a leaf in the foliation which is not a trivial cylinder (and therefore has no negative ends). Denote the set of positive asymptotics of $v$ by $\Gamma_v^+$. By \cite[Lemma 4.9]{LW}, the Siefring intersection between $u$ and $v$ is given by
	\begin{equation}\label{Siefringint}
	    u*v=\sum_{\gamma_\zeta \in \Gamma_v^+}u*(\mathbb{R}\times \gamma_\zeta).
	\end{equation}
	In a suitable trivialization, we have that the Conley-Zehnder index of every binding component is $\mu_{CZ}(\gamma)=1$. Moreover, we have the relation $\mu_{CZ}(\gamma)=2\alpha_-(\gamma)+p(\gamma)$, where $\alpha_-(\gamma)$ is the largest winding number of an eigenfunction of negative eigenvalue for the asymptotic operator of $\gamma$, and $p(\gamma)$ is the parity of the Conley-Zehnder index. We conclude that $\alpha_-(\gamma)=0$. This coincides with the asymptotic winding number of $u$ about each $\gamma$, which is therefore an extremal winding number. This means that there are no intersections between $u$ and $\mathbb{R}\times \gamma$ coming from infinity, i.e.\ $u*(\mathbb{R}\times \gamma_\zeta)=0$ for each binding component $\gamma$. Combining with \eqref{Siefringint}, we conclude that $u*v=0$. On the other hand, if $v$ were not a leaf in the foliation, by positivity of intersections, we would have $u*v>0$ for some leaf $u$; a contradiction.

	While this a priori holds for an almost complex structure which is compatible with a SHS deforming the contact form, one may perturb this SHS to nearby contact data without changing the isotopy class of the contact form, and for which the binding still consists of closed Reeb orbits. After perturbing the original almost complex structure $J_0$ to a $J$ which is compatible with this nearby contact data and generic, the curves in the foliation survive by Fredholm regularity (an open condition). Moreover, the uniqueness statement still holds if the perturbation is small enough, i.e.\ every holomorphic curve whose positive asymptotics are a subset of the binding is a perturbation of a leaf in the original foliation. Indeed, if $v_k$ is a sequence of such $J_k$-holomorphic curves with $J_k\rightarrow J_0$, by SFT compactness we obtain a building $v_\infty$ as a limit configuration. Applying uniqueness for $J_0$ to the topmost floor, we conclude that $v_\infty$ is a non-cylindrical leaf in the foliation. By Fredholm regularity, it follows that $v_k$ is a perturbed leaf of the foliation for $J_0$, if $k$ is sufficiently large.

    In particular, $(1)_k$ and $(2)_k$ in Lemma \ref{lemma:upperbound} hold, for the perturbed $J$. In this case the geometric (and hence the algebraic) count of these curves with a point constrain is $1$ for any generic point $o$, and so $(3)_k$ is also satisfied. We fix such a $o$ for which we have this uniqueness property.
	
	If $\dim Y \geq 5$, we fix an IP open book $\pi: Y\backslash B \rightarrow S^1$ supporting $\xi$, with binding $B\subset Y$, a codimension-$2$ contact submanifold. Since $B$ is also $k$-IP if $Y$ is, we may assume by induction that $(\star)_k$ holds for $B$. We may then extend the Giroux contact form $\alpha_B$ on $B$ for which $(\star)_k$ holds to a Giroux contact form $\alpha$ on $Y$, in such a way that all $k$ Reeb orbits $\Gamma=(\gamma_1,\dots,\gamma_k)$ from the induction step are still non-degenerate orbits in $Y$. Indeed, one can do this in a local model near the binding: take a collar neighbourhood $B\times \mathbb{D}$ of $B$, and we let $\alpha=(1-r^2)(\alpha_B+r^2 d\theta)$ where $(r,\theta)$ are polar coordinates on the $\mathbb{D}^2$-factor; then $\alpha$ is extended to the mapping torus piece as in Giroux's construction. The Reeb vector field of $\alpha$ coincides with that of $\alpha_B$ along $B$, and so each $\gamma_i$ is still an orbit for $\alpha$. Moreover, the linearized Reeb flow for each $\gamma_i$ splits into components tangent and normal to $B$. The first component is non-degenerate by assumption; the second one is also since the Hessian of the fuction $1-r^2$ is non-degenerate at the critical value $r=0$.
	
	On the other hand, the holomorphic open book construction can also be done in arbitrary dimensions (again, after deforming the Giroux form away from $B$ to a stable Hamiltonian structure, cf.\ \cite[Appendix A]{BGM}, \cite{MorPhD, Mor3bp}). The choice of almost complex structure can be taken to agree with the one from the inductive step along $H_B:=\mathbb{R}\times B$, which is then a holomorphic submanifold. The leaves of the resulting codimension-2 foliation are now either $H_B$, or a codimension-$2$ holomorphic submanifold which is a copy of the Liouville completion of the page, and which is asymptotic to $H_B$ at infinity in the sense of \cite{MS}. We let $\mathcal{F}$ denote this codimension-$2$ foliation on $\mathbb{R}\times Y$. The moduli space $\mathcal{M}_B$ of $k$-punctured spheres defined on $\mathbb{R}\times B$ in the inductive step extends to a moduli space $\mathcal{M}_Y$ on $\mathbb{R}\times Y$, consisting of curves having the same positive asymptotics as curves in $\mathcal{M}_B$. An application of Siefring intersection theory as in \cite{Mor3bp,MS} (this is \emph{exactly} the same argument as in the $3$-dimensional case done above) shows that \emph{any} holomorphic curve $u$ whose positive asymptotics are a subset of $\Gamma$ either completely lies in $H_B$, or its image lies completely in a non-cylindrical leaf $H$ of $\mathcal{F}$. In the first case, $u$ cannot have negative ends by $(1)_k$ applied to $B$. In the second case, since $H$ has no negative ends, $u$ also, and so $(1)_k$ holds on $\mathbb{R}\times Y$. This also shows that $\mathcal{M}_{Y,A,o}(\Gamma,\emptyset)=\mathcal{M}_{B,A,o}(\Gamma,\emptyset)$ for every $o\in H_B$. If moreover we take $o\in H_B$ to be the point given by the inductive step, for which (by the base case) we may assume that we have the uniqueness property that curves in $\mathcal{M}_{B,A,o}(\Gamma;\emptyset)$ are necessarily elements in $\mathcal{M}_B$ (for every $A$), and in particular transversely cut out inside $H_B$. The same analysis as carried out in \cite[Lemma 4.13]{MorPhD} (by splitting the normal linearized CR-operator into tangent and normal components with respect to $H_B$, and using automatic transversality on the normal summand) shows that curves in $\mathcal{M}_B$ are transversely cut out in $\mathbb{R}\times Y$. Then $(2)_k$ holds, and $(3)_k$ also (and in fact the geometric count is $1$ for our particular choice of $o$). Note that all of these conditions still hold after perturbing to nearby contact data, via the same argument as above, i.e.\ using SFT compactness and using uniqueness on the topmost level together with Fredholm regularity. An appeal to Lemma \ref{lemma:upperbound} finishes the proof.    
\end{proof}
It is clear from the definition that the Weinstein conjecture holds for contact manifolds with finite planarity. In the case of iterated planar open books, the Weinstein conjectures was proven for dimension $3$ in \cite{abbas2005weinstein} and higher dimensions in \cite{acu2017weinstein,acu2018planarity}.  In view of the proof of Weinstein conjecture, Theorem \ref{thm:upper} is of the same spirit as the proofs in \cite{abbas2005weinstein,acu2017weinstein,acu2018planarity}. However, more importantly, Theorem \ref{thm:upper} endows the holomorphic curve with SFT meaning. We further remark that the proof of the above theorem in the $5$-dimensional case actually provides a foliation, as opposed to a homological one, as shown in \cite{Mor3bp}.

Theorem \ref{thm:upper} can be viewed as a special case of the following conjecture.
\begin{conjecture}\label{conj}
	Let $Y$ be an open book whose page is $W$, then $\Pl(Y)\le \Pl(W)$ and $\SD(Y)\le \SD(W)$.
\end{conjecture}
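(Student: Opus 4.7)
The natural plan is to extend the Siefring--intersection/holomorphic open book argument from the proof of Theorem \ref{thm:upper} (the iterated planar case) to an arbitrary Weinstein/Liouville page $W$. I focus on the planarity inequality $\Pl(Y)\le \Pl(W)$; the order-of-semi-dilation inequality is expected to follow from a parallel argument after replacing $\widehat{\ell}_{\bullet,\epsilon}$ by its equivariant incarnation, using Proposition \ref{prop:u} to transport the reasoning to $SH^*_{+,S^1}$ together with its $U$-module structure.

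Assume $\Pl(W)=k$. By Claim \ref{claim} and Definition \ref{def:PSD} one fixes a non-degenerate contact form $\alpha_B$ on the binding $B:=\partial W$, a point $o\in B$, an augmentation $\epsilon_W$ of $V_{\alpha_B}$ coming from $W$, and a closed class $x\in \overline{B}^k V_{\alpha_B}$ with $\widehat{\ell}_{\bullet,\epsilon_W}(x)=1$. Extend $\alpha_B$ to a Giroux contact form $\alpha$ on $Y$ adapted to the open book, modeled in a collar $B\times \D^2_r$ of the binding by $\alpha=(1-r^2)(\alpha_B+r^2\, d\theta)$, so that below a chosen action threshold exceeding the actions involved in $x$ all Reeb orbits of $\alpha$ lying near $B$ coincide with Reeb orbits of $\alpha_B$. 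Pick an $\alpha$-compatible cylindrical $J$ on $\R\times Y$ realizing a holomorphic open book as in \cite{MorPhD,Mor3bp}: the leaves of the resulting codimension-$2$ foliation $\mathcal F$ are either the cylinder $H_B:=\R\times B$ or a copy of the completed page $\widehat W$ asymptotic to $H_B$ at infinity.

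The key structural input, the same Siefring intersection argument used in the proof of Theorem \ref{thm:upper}, forces any $J$-holomorphic curve in $\R\times Y$ all of whose positive asymptotics are binding orbits to lie in a single leaf of $\mathcal F$. Such a curve is thus either contained in $H_B$ (hence counted in the $BL_\infty$ algebra of $B$) or in some $\widehat W$-leaf (hence with no negative asymptotics, and counted by the augmentation $\epsilon_W$). Regarding binding orbits as generators of $V_{\alpha_B}\hookrightarrow V_\alpha$, this yields, for every $BL_\infty$-augmentation $\epsilon$ of $V_\alpha$ and every monomial $q^{\Gamma^+}$ in binding orbits, identities
\begin{equation*}
\widehat{\ell}_{\epsilon}(q^{\Gamma^+})=\widehat{\ell}_{\tilde{\epsilon}_W}(q^{\Gamma^+}), \qquad \widehat{\ell}_{\bullet,\epsilon}(q^{\Gamma^+})=\widehat{\ell}_{\bullet,\tilde{\epsilon}_W}(q^{\Gamma^+}),
\end{equation*}
where $\tilde{\epsilon}_W$ is the augmentation of $V_{\alpha_B}$ assembled from $\epsilon\vert_{V_{\alpha_B}}$ together with the page-curve contributions encoded by $\epsilon_W$. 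Applied to the class $x$ viewed inside $\overline{B}^k V_\alpha$, this gives $\Pl(Y)\le k$, provided the $\tilde{\epsilon}_W$-order of $V_{\alpha_B}$ is also bounded by $k$.

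The main obstacle is therefore twofold and largely algebraic. First, one must verify that $\tilde{\epsilon}_W$ really defines a $BL_\infty$-augmentation of $V_{\alpha_B}$ and that it realizes $\Pl(W)\le k$; this is essentially the content of Claim \ref{claim} (independence of $\Pl(W)$ from the chosen augmentation), and is expected to follow from a neck-stretching along $B\times S^1=\partial(B\times \D^2_\varepsilon)\subset Y$ combined with the $BL_\infty$-homotopy invariance of $\Pl$ for fillings. Second, the $\SD$-analogue requires that the foliation-induced transfer commute up to homotopy with the $U$-map on equivariant symplectic cohomology; this should follow from the cascade description in \S \ref{ss:cascades} together with the same Siefring argument, but is technically delicate since one must track $S^1$-equivariant cascades through the foliation. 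Resolving both points appears to require a full construction of RSFT (with compositions and homotopies) and of its linearized and $S^1$-equivariant theories beyond the partial setup of \S \ref{s3}--\S \ref{s4}, which is precisely why the statement remains a conjecture here.
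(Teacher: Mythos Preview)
This statement is labeled as a \emph{conjecture} in the paper and is not proved there; the paper only remarks that ``the geometric intuition behind the conjecture is clear and was used in Theorem~\ref{thm:upper}, the difficulty lies in making the virtual machinery compatible with the geometry for general $Y$ and $W$.'' Your write-up correctly recognizes this and ends by acknowledging that the statement remains conjectural, so in that sense you and the paper agree.

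Your proposed mechanism---extending the holomorphic open book/Siefring intersection argument from Theorem~\ref{thm:upper} to an arbitrary Liouville page---is exactly the intended geometric picture. However, the obstacles you single out are somewhat misaligned with the paper's stated difficulty. You locate the gap mainly in algebraic bookkeeping (checking that $\tilde\epsilon_W$ is an honest $BL_\infty$ augmentation, independence of $\Pl(W)$ from the augmentation as in Claim~\ref{claim}, compatibility of the foliation transfer with the $U$-map). The paper's concern is more basic: for a general page $W$ the curves responsible for $\Pl(W)$ and for $\ell^k_\epsilon$, $\ell^{k,0}_{\bullet,\epsilon}$ are \emph{virtual} counts obtained via VFC/polyfold perturbations, while the Siefring argument confining curves to leaves of $\mathcal F$ applies to \emph{geometric} $J$-holomorphic curves. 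In Theorem~\ref{thm:upper} this tension disappears because all relevant moduli spaces are transversely cut out and Axiom~\ref{axiom} identifies virtual and geometric counts; for arbitrary $W$ one cannot expect transversality, and there is no general mechanism ensuring a virtual perturbation respects the holomorphic foliation. That is the content of ``making the virtual machinery compatible with the geometry.''

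A smaller technical point: your displayed identities $\widehat\ell_\epsilon(q^{\Gamma^+})=\widehat\ell_{\tilde\epsilon_W}(q^{\Gamma^+})$ are not well-posed as written. Even granting the Siefring localization, the restriction $\epsilon\vert_{V_{\alpha_B}}$ need not be a $BL_\infty$ augmentation of $\RSFT(B)$: the structure maps $p^{k,0}$ of $Y$ restricted to binding inputs pick up additional page-leaf contributions absent from $p^{k,0}$ of $B$, so the augmentation equation you obtain is for a deformed $BL_\infty$ structure on $V_{\alpha_B}$, not for $\RSFT(B)$ itself. Making this precise and relating it to $\Pl(W)$ is part of the same virtual-versus-geometric compatibility problem.
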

In the context of semi-dilations in symplectic cohomology, such a claim was proven in \cite[Proposition 3.31]{zhou2019symplectic} for Lefschetz fibrations. The geometric intuition behind the conjecture is clear and was used in Theorem \ref{thm:upper}, the difficulty lies in making the virtual machinery compatible with the geometry for general $Y$ and $W$.

\subsection{Trivial planar SOBDs} We now consider a related example as to the ones considered above. Fix $(S_k,d\lambda)$ a sphere with $k$-disks removed together with a Liouville form $\lambda$, and let $(M,d\alpha)$ be any $2n$-dimensional Liouville domain. Define $(V:=S_k \times M,\omega=\rd(\lambda+\alpha))$, endowed with the product Liouville domain structure. Let $(Y=\partial V,\xi=\ker (\alpha+\lambda))$, the contact manifold filled by $V$.

\begin{theorem}\label{thm:SOBD}
 If $c_1(M)=0$ or $M$ supports a perfect exhausting Morse function, then we have $\Pl(Y)\le k$.
\end{theorem}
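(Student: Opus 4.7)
The plan is to verify hypothesis $(\star)_k$ of Lemma~\ref{lemma:upperbound} by reducing, via a product construction, to the holomorphic planar open book picture already established in the proof of Theorem~\ref{thm:upper}. Denote by $\gamma_1,\ldots,\gamma_k$ the boundary circles of $S_k$, regarded as simply covered Reeb orbits of a Giroux contact form on the binding $\partial S_k$ adapted to a foliation of $\R\times\partial S_k$ by holomorphic $k$-punctured spheres. After smoothing the corners of $V=S_k\times M$ and producing a contact form $\alpha_Y$ on $Y$ by the standard model attached to the product Liouville structure, I would use a Morse function $f$ on $M$ with a unique local minimum $m_0$ to Morse--Bott perturb the Reeb flow near the binding $\partial S_k\times M$, so that the Reeb orbits of $\alpha_Y$ below a prescribed action threshold are precisely of the form $\gamma_i\times\{p\}$ for $p$ a critical point of $f$, together with orbits of the form (short orbit of $\partial M$)$\times\{q\}$ for $q$ a critical point of a chosen Morse function on $S_k$. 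I then set $\widetilde\gamma_i := \gamma_i\times\{m_0\}$ and take $\Gamma=(\widetilde\gamma_1,\ldots,\widetilde\gamma_k)$; these are distinct, simply covered, non-degenerate $\alpha_Y$-Reeb orbits.

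Next I would choose an admissible almost complex structure $J$ on $\R\times Y$ of product type in a neighborhood of $\R\times(\partial S_k \times \{m_0\})$, with the $S_k$-factor given by the almost complex structure furnished by the holomorphic open book in the base case of Theorem~\ref{thm:upper}, and the $M$-factor generic admissible data on $\widehat M$ compatible with the Morse--Bott perturbation. The completion of the page $S_k\times\{m_0\}$ then yields a $J$-holomorphic $k$-punctured sphere in $\R\times Y$ asymptotic to $\Gamma$ and, by $\R$-translation, passing through any generic point $o$ near the binding. Fredholm regularity of the planar foliation on $\R\times\partial S_k$, combined with automatic regularity in the normal $M$-direction at the local minimum $m_0$ (which kills the normal cokernel), yields a transversely cut-out $0$-dimensional moduli space $\cM_{Y,A,o}(\Gamma,\emptyset)$ whose geometric count through $o$ is $1$, giving $(2)_k$ and $(3)_k$.

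The crux, and the main obstacle, is $(1)_k$: ruling out any $J$-holomorphic curve $u$ in $\R\times Y$ whose positive asymptotics lie in $\Gamma$ and whose set of negative asymptotics is nonempty. I would argue this in two steps. First, apply the Siefring intersection technique from the proof of Theorem~\ref{thm:upper}: the pages $S_k\times\{m\}$ (extended cylindrically) sweep out a codimension-$2$ $J$-holomorphic foliation $\cF$ of a neighborhood of the binding in $\R\times Y$, and each $\widetilde\gamma_i$ has extremal asymptotic winding with respect to $\cF$, because the corresponding asymptotic operator splits as the direct sum of the asymptotic operator on the planar page (extremal by the base case) with a positive-definite normal operator coming from the Hessian of $f$ at the minimum $m_0$. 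The Siefring identity then forces $u\ast L=0$ for every leaf $L$ of $\cF$, so $u$ must lie inside a single leaf $\R\times(\partial S_k\times\{m\})$, and thus project to a holomorphic curve in $\widehat M$ with no positive asymptotics. Here the two hypotheses enter: if $c_1(M)=0$ one has a $\Z$-grading and an index/action computation rules out nonconstant such projections; if $M$ admits a perfect exhausting Morse function then the short $\partial M$-Reeb orbits produced by the Morse--Bott model can be arranged to have Conley--Zehnder indices of a single parity, again forcing the projection to be constant. Hence $u$ is contained in a single leaf $\R\times(\partial S_k\times\{m_0\})$, and the $3$-dimensional uniqueness statement from the proof of Theorem~\ref{thm:upper} (applied fibrewise) excludes the existence of negative punctures. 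Assembling these three steps and invoking Lemma~\ref{lemma:upperbound} yields $\Pl(Y)\le k$. The technically hardest piece will be the Morse--Bott/SFT compactness bookkeeping needed to make the index and Siefring extremality estimates rigorous in the presence of the corner smoothing.
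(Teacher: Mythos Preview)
Your proposal has a genuine gap in the crucial step. The Siefring argument you import from the iterated planar case does not apply here: in Theorem~\ref{thm:upper} the pages are codimension-$2$ holomorphic hypersurfaces in the symplectization, and one intersects curves with those hypersurfaces. In the present setting the leaves $S_k\times\{m\}$ are $2$-dimensional surfaces inside the $(2n+2)$-dimensional $\R\times Y$, hence of codimension $2n$, not $2$. There is no codimension-$2$ holomorphic hypersurface (nor a foliation by such) containing or asymptotic to your orbits $\widetilde\gamma_i$, so the intersection identity you invoke carries no information; for $\dim M\ge 4$ two generic holomorphic surfaces in $\R\times Y$ do not even meet. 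Consequently the mechanism that would force $u$ into a single leaf, and hence your verification of $(1)_k$, collapses. Your subsequent ``projection to $\widehat M$'' step is likewise unavailable: there is no global holomorphic projection $\R\times Y\to\widehat M$ after the corner smoothing, only one on the spine in the degenerate Morse--Bott limit.

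The paper's proof is organized quite differently and, in particular, does \emph{not} verify $(\star)_k$ of Lemma~\ref{lemma:upperbound}. It first proves a classification (Proposition~\ref{prop:SOBD}) of curves whose positive asymptotics lie among the $\gamma_{p,i}$: curves confined to the spine $\R\times Y_S$ are analyzed via holomorphic cascades and shown to be cylinders corresponding to Morse flow lines of $H$ (so $(1)_k$ in fact \emph{fails}), while curves touching the paper $\R\times Y_P$ are handled by an energy and mapping-degree estimate which forces exactly $k$ positive punctures and no negative ones. This is enough to make $\widehat\ell_\epsilon(q^{\Gamma^+})$ and $\widehat\ell_{\bullet,\epsilon}(q^{\Gamma^+})$ independent of the augmentation $\epsilon$. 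The curve responsible for $\eta_{M\times S_k}(q^{\Gamma^+})\ne 0$ is then produced not by an explicit product, but by embedding $M$ into a closed $\overline W$ via a symplectic cap and neck-stretching the class $[*\times\CP^1]$ in $\overline W\times\CP^1$ along $\partial(M\times S_k)$. The two hypotheses enter only at this last stage: $c_1(M)=0$ makes the virtual dimension of $\overline\cM_{M\times S_k,A,o}(\Gamma^+,\emptyset)$ independent of $A$ so that neck-stretching singles out $\Gamma^+=\{\gamma_{p_{\max},1},\dots,\gamma_{p_{\max},k}\}$, while a perfect exhausting Morse function kills the Morse differential directly and hence forces $q^{\Gamma^+}$ to be closed. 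Neither hypothesis is used, as you propose, to rule out nonconstant curves in $\widehat M$.
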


We first note that $Y$ admits a supporting (trivial) SOBD, as considered in \cite{LVHMW,MorPhD}. In other words, we have a decomposition
    $$Y=Y_S\cup Y_P,$$
where $Y_S=\partial S_k \times M$ is the \emph{spine} and $Y_P=S_k \times \partial M$ is the \emph{paper}, and we have trivial fibrations 
    $$
    \pi_S: Y_S\rightarrow M, \quad
    \pi_P: Y_P\rightarrow \partial M.
    $$
We view the first one as a contact fibration over a Liouville domain, and the second, as a Liouville fibration over a contact manifold (its fibers are called the \emph{pages}). By choosing a small Morse function $H$ on the \emph{vertebrae} $M$ (the base of the spine) which vanishes near $\partial M$ and has a unique maximum and no minimum (as critical points), we may perturb the contact form along $Y_S$ to $e^{H}(\alpha + \lambda)$.
    
As explained in \cite[\S 3.1]{MorPhD}, each critical point $p\in M$ of $H$ gives rise to $k$ non-degenerate Reeb orbits of the form $\gamma_{p,i}=S^1\times \{p\}\subset Y_S$ where $S^1$ is the boundary of the $i$th puncture of $S_k$. One then deforms the contact form to a stable Hamiltonian structure which coincides with $\mathcal{H}=(\alpha, \rd(\alpha+\lambda))$ on $Y_P$, and so its kernel there is $TS_k \oplus \ker \alpha$, tangent to $S_k$. After this, one can construct a compatible cylindrical almost complex structure $J_0$, which preserves the splitting $T(\mathbb{R}\times Y_P)=(T\mathbb R \oplus \langle R_\alpha\rangle)\oplus TS_k\oplus \ker \alpha$. We first prove the following proposition which restrains the possibility of holomorphic curves:
\begin{proposition}\label{prop:SOBD}
Let $\Gamma = \{ \gamma_{p_1,j_1},\ldots,\gamma_{p_{s},j_s}\}$ be a set of simple Reeb orbits, such that $p_i$ is a critical point of $H$ and $1\le j_1<j_2\ldots<j_s\le k$. Then there exists an almost complex structure $J$, such that if $H$ is chosen sufficiently $C^2$-small, then any rational $J$-holomorphic curve $u$ in $\mathbb R\times Y$ whose positive asymptotics form a subset of $\Gamma$ is one of the following.
\begin{enumerate}
    \item A trivial cylinder over $\gamma_{p,i}$.
    \item $u$ is a cylinder from $\gamma_{p,j}$ to $\gamma_{q,j}$ in $\R\times S^1\times M \subset \R \times Y_S$, where $S^1$ is the boundary of the $j$th puncture of $S_k$. Moreover, there is a one-to-one correspondence from such cylinders to negative gradient flow lines of $H$ from $p$ to $q$.
    \item $\Gamma=\{ \gamma_{p_1, 1},\ldots,\gamma_{p_k,k}\}$ and $u$ has no negative puncture. 
\end{enumerate}
\end{proposition}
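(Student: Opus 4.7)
The plan is to exploit the product structure $V = S_k \times M$ via a carefully chosen cylindrical almost complex structure $J_0$ that respects the SOBD decomposition. Concretely, one arranges $J_0$ so that on the paper $\R \times Y_P = \R \times S_k \times \partial M$ it preserves the splitting $\R\langle \partial_s, R \rangle \oplus TS_k \oplus (\ker \alpha|_{T\partial M})$, with $J_0|_{TS_k}$ equal to a fixed complex structure on $S_k$, rendering the projection $\pi \colon \R \times Y_P \to S_k$ holomorphic; and so that each spine component $\R \times S^1_j \times M \subset \R \times Y_S$, as well as each fiber $\R \times \{z\} \times \partial M$ for $z \in S_k$, is a $J_0$-holomorphic submanifold. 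Such a $J_0$ exists by the standard construction of compatible almost complex structures on SHS, as developed in \cite{LVHMW, MorPhD}.

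Next comes a dichotomy. Since the $k$ components $\R \times S^1_j \times M$ of $\R \times Y_S$ are disjoint $J_0$-holomorphic submanifolds, positivity of intersection forces either: (A) $u$ is entirely contained in some $\R \times S^1_{j_0} \times M$; or (B) $u$ enters $\R \times Y_P$ in a non-empty open set. In case (A), all positive asymptotics of $u$ lie over $\partial_{j_0} S_k$, and since the $j_i$'s are distinct this forces $s = 1$ and $j_1 = j_0$. The map $u$ is then a rational holomorphic curve in the symplectization $\R \times S^1 \times M$ with perturbed contact form $e^H(\rd\theta + \alpha_M)$, whose Reeb orbits correspond to critical points of $H$. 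Invoking the standard Morse--Bott-to-Morse correspondence (cf.\ \cite{bourgeois2009symplectic}), rational curves with exactly one positive puncture are cylinders in bijection with unparametrized negative gradient flow lines of $H$: the constant flow line at $p$ yields the trivial cylinder (case 1) and a non-constant flow line from $p$ to $q$ gives the cylinder in case 2. The hypothesis that $c_1(M) = 0$ or that $M$ admits a perfect exhausting Morse function provides the index and grading control that prevents spurious negative punctures from appearing.

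In case (B), the projection $\tilde u = \pi \circ u$, viewed on $u^{-1}(\R \times Y_P)$, is a holomorphic map to $S_k$. Each positive puncture asymptotic to $\gamma_{p_i,j_i}$ projects in the limit to a once-traversed loop around the boundary component $\partial_{j_i} S_k$, because the orbit is simply-covered. By homological invariance, the algebraic intersection number of $u$ with the $J_0$-holomorphic hypersurface $\R \times \{z\} \times \partial M$ for generic $z$ in the interior of $S_k$ is independent of $z$; it equals the degree of $\tilde u$ over that point and, by positivity, is non-negative. A topological computation using that $u$ is rational together with the topology of $S_k$ (genus zero with $k$ boundary circles) then forces $s = k$, $\{j_1, \ldots, j_k\} = \{1, \ldots, k\}$, $\tilde u$ of degree one, and $u$ without negative puncture. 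Indeed, the only Reeb orbits are the $\gamma_{p,j}$'s, which live over $\partial S_k$, and a negative puncture of any of these is incompatible with the degree count on $\tilde u$. This yields case 3.

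The main obstacle is the analysis near the interface between spine and paper, where the SHS interpolates between two model geometries: one must ensure that $\tilde u$ extends to a well-defined holomorphic map on the compactified domain, and that intersection-theoretic arguments carry over from a genuinely split situation to the SHS setting. A parallel line of reasoning has been carried out in related cases in \cite{LVHMW, MorPhD}, and the same techniques should suffice here.
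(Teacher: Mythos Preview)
Your dichotomy into Cases~A (image in a single spine component) and~B (image meets the paper) matches the paper's structure, and in Case~A your invocation of the cascades/Morse--Bott correspondence is correct. However, the justification you give there is wrong: the hypotheses $c_1(M)=0$ or ``perfect exhausting Morse function'' play \emph{no role} in this proposition --- they enter only later, in the proof of Theorem~\ref{thm:SOBD}. What actually rules out extra negative punctures in Case~A is the exactness of~$M$: in the Morse--Bott limit $H\equiv 0$ the projection $\R\times S^1\times M\to M$ is holomorphic, so $u$ projects to a closed holomorphic curve in the exact manifold~$M$, hence a constant; thus $u$ lies in a single fiber $\R\times S^1\times\{\mathrm{pt}\}$, where a degree-one branched cover of the cylinder is the cylinder itself. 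No index or Chern-class input is needed.

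Your Case~B has a genuine gap. You assert that ``a topological computation'' using the degree of $\tilde u$ forces $s=k$ and $Z^-=\emptyset$, but you do not supply one, and this is not immediate: the punctures lie over the spine, outside the domain $\Sigma'$ of $\tilde u$, so the degree of $\tilde u$ does not a priori see them. The paper closes this gap via an energy estimate rather than pure topology: on the one hand Stokes gives $\mathbf E(u)=2\pi(\sum_{z\in Z^+}e^{H(p_z)}-\sum_{z\in Z^-}\kappa_z e^{H(p_z)})\le 2\pi\, s\,\|e^H\|_{C^0}$; on the other hand, restricting to $\Sigma'=u^{-1}(\R\times \overline{Y_P})$ and using that $\tilde u=\pi\circ u|_{\Sigma'}$ is a degree~$d\ge 1$ branched cover of the extended page $S$, one gets $\mathbf E(u)\ge \int_{\Sigma'}u^*\rd\lambda=2\pi k\, e^\epsilon d$. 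Combining the two and letting $H\to 0$ forces $s=k$, $d=1$, $Z^-=\emptyset$. (One can in fact extract a purely topological version of this by applying Stokes for $u^*\rd\theta$ on each spine piece $u^{-1}(\R\times S^1_j\times M)$, yielding $\sum_i n_i=\#Z^+-\sum_{z\in Z^-}\kappa_z$; combined with $\sum_i n_i=k\deg(u)$ and $n_i>0$ this gives the same conclusion. But you would need to carry this out explicitly --- ``incompatible with the degree count'' is not a proof.)
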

\begin{proof}
We first prove the claim for the stable Hamiltonian structure $\cH$. First of all, one can achieve that all orbits below a fixed action threshold correspond to critical points of the Morse function. By energy reasons, and using that the number of positive ends of $u$ is a priori bounded by $k$, one can choose the action threshold large enough (depending on $k$) so that the negative ends of $u$ also necessarily correspond to critical points, and the number of negative ends (counted with covering multiplicity) is bounded above by the number of positive ends; cf.\ \cite[Lemma 3.8]{MorPhD}. We then separate in two cases: either the image of $u$ is fully contained in $\mathbb{R}\times Y_S$ (case A), or it is not (case B).

\medskip

\textbf{Case A}. Since $Y_S$ has $k$ connected components, by the assumptions on $\Gamma$, $u$ has precisely one positive end, and by the above discussion, we have that $u$ has at most one negative end. Since orbits $\gamma_{p,i}$ are not contractible in $Y_S$, we then see that $u$ has precisely one negative end, which is simply covered. 

We now argue via holomorphic cascades. In the degenerate case $H\equiv 0$, the trivial projection $\pi_S:\mathbb{R}\times Y_S\rightarrow M$ is actually $J_0$-holomorphic. Therefore $v=\pi_S\circ u$ is a holomorphic curve in $M$. Since the asymptotics of $u$ project to points in $M$, $v$ extends to a holomorphic map on a closed surface. But $M$ is exact, and so $v$ is constant, and we deduce that $u$ is a trivial cylinder.
 
If we take $H_t=tH$, we denote by $J_t$ the corresponding almost complex structure (which only differs in that it intertwines the varying $R_t$ with the $\mathbb R$-direction). If we assume we have a sequence $\{u_n\}$ of $J_{t_n}$-holomorphic maps with $t_n\rightarrow 0$, with one positive and negative simply covered orbits corresponding to critical points $p_\pm$, then we obtain a stable holomorphic cascade \cite{BourgeoisPhD} $\mathbf{u}^H_\infty$ as a limiting object. 
Since the positive end of $u_n$ is simply covered, by energy reasons as explained above, \emph{every} Reeb orbit appearing in $\mathbf{u}^H_\infty$ is simply covered, and therefore every of its holomorphic map components cannot be multiply covered. Stability of the cascade means that it does not have trivial cylinder components. We conclude that the space of holomorphic cascades which glue to curves as in our hypothesis consists solely of flow-lines, which are regular by the Morse--Smale condition, and come in a $(ind_{p_+}(H)-ind_{p_-}(H)-1)$-dimensional family. Note that such flow-lines cylinders can \emph{always} be glued to another flow-line cylinder. The implicit function theorem then implies that $u$ is a flow-line cylinder if $H$ is sufficiently small. Indeed, we can argue as follows. Consider the parametric moduli space
$$
\overline{\mathcal{M}}:=\overline{\mathcal{M}}(\mathbb{R}\times M_Y,\{J_t\}_{t\in [0,1]};\gamma_{p_+},\gamma_{p_-})
$$
$$
=\{(t,u): t \in [0,1], u \in \overline{\mathcal{M}}(\mathbb{R}\times M_Y,J_t;\gamma_{p_+},\gamma_{p_-})\} 
$$ 
where $\overline{\mathcal{M}}(\mathbb{R}\times M_Y,J_t;\gamma_{p_+},\gamma_{p_-})$ denotes the Gromov-compactified moduli space of $J_t$-holomorphic curves in $\mathbb{R}\times M_Y$ (of any genus) which have simply covered positive asymptotic $\gamma_{p_+}$, and negative asymptotic $\gamma_{p_-}$, and where the compactification at $t=0$ corresponds to stable holomorphic cascades. Since the Reeb orbits are simply covered, curves in the parametric moduli are somewhere injective. Then, by the Morse--Smale condition and the implicit function theorem, the parametric moduli space is, for sufficiently small $H$, a $(ind_{p_+}(H)-ind_{p_-}(H))$-dimensional compact manifold whose boundary contains $\overline{\mathcal{M}}(\mathbb{R}\times M_Y,J^0;\gamma_{p_+},\gamma_{p_-})$, only consisting of flow line cylinders. The flow-line parametric moduli space 
$$
\overline{\mathcal{M}}_{\mbox{flow-line}}:=\overline{\mathcal{M}}_{\mbox{flow-line}}(\mathbb{R}\times M_Y,\{J_t\}_{t\in [0,1]};\gamma_{p_+},\gamma_{p_-})
$$
$$
:=\{(t,u): t \in [0,1], u \mbox{ corresponds to a flow-line in } \overline{\mathcal{M}}(\mathbb{R}\times M_Y,J_t;\gamma_{p_+},\gamma_{p_-})\}
$$
is, a priori, a submanifold of $\overline{\mathcal{M}}(\mathbb{R}\times M_Y,\{J_t\}_{t\in [0,1]};\gamma_{p_+},\gamma_{p_-})$, which shares the boundary component $\overline{\mathcal{M}}(\mathbb{R}\times M_Y,J^0;\gamma_{p_+},\gamma_{p_-})$, and has its same dimension. By thinking of $\overline{\mathcal{M}}$ as a collar neighbourhood of this boundary component, we obtain 
$$\overline{\mathcal{M}}=\overline{\mathcal{M}}_{\mbox{flow-line}},$$ from which our uniqueness follows. Observe that since there are finitely many critical points, we can take $H$ (or $\epsilon>0$) uniformly small.

In what follows, we start with a $J=J_t$ corresponding to $t$ sufficiently small.

\medskip

\textbf{Case B.} We argue via energy considerations. Denote by $Z^\pm \subset \Sigma$ the positive/negative punctures of $u$, where $\Sigma$ is a closed Riemann surface, and $\dot \Sigma=\Sigma\backslash(Z^+\cup Z^-)$ is the domain of $u$. Since orbits associated to $Z^+$ correspond to critical points of $H$, so does every orbit associated to $Z^-$ (which may a priori be multiply covered). For $z\in Z^\pm$, denote by $\gamma_{p_z}^{\kappa_z}$ the corresponding orbit, where $p_z \in \mbox{crit}(H)$, and $\kappa_z\geq 1$ is the covering multiplicity ($\kappa_z=1$ for all $z\in Z^+$). By assumption, $\# Z^+\leq k=\# \pi_0(Y_S)$. We now estimate the $\Omega$-energy of $u$, where $\mathcal{H}=(\beta,\Omega)$ is the SHS, and $\Omega$ is exact, the primitive being $e^H(\alpha+d\theta)$ along $Y_S$. Using Stokes' theorem, we obtain
\begin{eqnarray*}
\mathbf{E}(u)&:= &\int_{\dot \Sigma} u^*\Omega=2\pi\left(\sum_{z\in Z^+} e^{H(p_z)}-\sum_{z\in Z^-}\kappa_ze^{H(p_z)}\right) \\
&\leq & 2\pi\Vert e^H\Vert_{C^0}\#Z^+\leq 2\pi k\Vert e^H\Vert_{C^0}. 
\end{eqnarray*}
We now bound $\mathbf{E}(u)$ from below. The projection $p: \mathbb{R}\times Y_P\rightarrow S_k$ is also holomorphic. Moreover, the same is true in a small closed $\epsilon$-neighbourhood of $\mathbb{R}\times Y_P$ inside $\mathbb{R}\times Y$, which intersects $\mathbb{R}\times Y_S$ along a collar of the form $\mathbb{R}\times \partial S_k \times [0,\epsilon] \times \partial M$, along which $\beta$ picks up a $\rd\theta$ component. Let $S:=S_k\cup_\partial \partial S_k \times [0,\epsilon]$ this small extension of $S_k$. For each $s\in S$, $E_s:=\mathbb{R}\times \{s\} \times \partial M$ is a holomorphic hypersurface, the fiber of $p$ over $s$. By assumption, $u$ has non-trivial intersection with $\mathbb{R}\times Y_P$, which is foliated by these holomorphic hypersurfaces $E_s$.  By Sard's theorem, we may shrink $\epsilon$ if necessary so that $u$ is transverse to $\mathbb{R}\times \partial S \times M$. Therefore $\Sigma':=u^{-1}(\mathbb{R}\times S\times M)$ is a smooth (possibly disconnected) surface with boundary. Note that since the asymptotics of $u$ are away from $S$, the intersection of $u$ with any of the $E_s$ consists of a finite collection of points in the domain of $u$, which are away from the punctures. We then see that the algebraic intersection $\deg(u):=u\cdot E_s$ is \emph{positive}, independent of $s$, and in fact is the degree of $F:=p \circ u: \Sigma' \rightarrow S$, which is a holomorphic branched cover.

Write $\partial \Sigma'=\bigcup_{i=1}^lC_i$, oriented with the boundary orientation, where $C_i$ is a simple closed curve whose image under $F$ wraps around one of the boundary components of $S$ with winding number $n_i\neq 0$ (measured with respect to $d\theta$). By holomorphicity of $F$, one easily sees that $n_i>0$. Since $u$ intersects $E_s$ for every $s\in \partial S$, we have $l\geq k$. By counting preimages of a point $s\in \partial S$, we obtain
$$
\int_{\partial \Sigma'} u^*\rd\theta=2\pi \sum_{i=1}^ln_i=2\pi k \deg(u).
$$
Using that $\Omega=\rd\alpha+\rd\lambda$ along $S$, $\lambda=e^t\rd\theta$ near $\partial S$ where $t\in [0,\epsilon]$, $u^*\rd\alpha\geq 0$ by choice of $J_0$, and Stokes' theorem, we see that
$$
\mathbf{E}(u)\geq \int_{\Sigma'} u^*\Omega=\int_{\Sigma'} u^*(\rd\alpha + \rd\lambda)\geq \int_{\Sigma'} u^*\rd\lambda=e^{\epsilon}\int_{\partial\Sigma'} u^*\rd\theta=2\pi ke^{\epsilon} \deg(u).
$$
Combining with the previous upper bound for $\mathbf{E}(u)$, we obtain
\begin{eqnarray*}
2\pi k & \leq & 2\pi ke^{\epsilon} \deg(u)\leq 2\pi\left(\sum_{z\in Z^+} e^{H(p_z)}-\sum_{z\in Z^-}\kappa_ze^{H(p_z)}\right)\\ & \leq & 2\pi\Vert e^H\Vert_{C^0}\#Z^+\leq 2\pi k\Vert e^H\Vert_{C^0}.
\end{eqnarray*}
Since $H$ can be taken arbitrarily close to $0$, we see that $\#Z^+=k$, $\#Z^-=0,$ $\deg(u)=1$. This proves the proposition for the SHS $\cH$ and $J_0$. It is clear the same holds for any nearby contact structure and almost complex structure.
\end{proof}

\begin{proof}[Proof of Theorem \ref{thm:SOBD}]
If $\Gamma^+$ is set of Reeb orbits in form of $\gamma_{p,i}$ of cardinality at most $k$, then Proposition \ref{prop:SOBD} implies that $\cM_{Y}(\Gamma^+,\Gamma^-)$ is empty if $\Gamma^-\ne \emptyset$ unless $\Gamma^+=\{\gamma_{p,i}\},\Gamma^-=\{\gamma_{q,i}\}$ with $\ind p>\ind q$. In particular, this implies that $\widehat{\ell}_{\epsilon}(q^{\Gamma^+})$ is independent of $\epsilon$. Moreover, we have $\vdim \cM_{Y,o}(\{\gamma_{p,i}\},\{\gamma_{q,i}\})=\ind(p)-\ind(q)-2n<0$, hence $\widehat{\ell}_{\bullet,\epsilon}(q^{\Gamma^+})$ is also independent of $\epsilon$. As a result, it suffices to find one $\epsilon$ and one such $\Gamma^+$, such that $\widehat{\ell}_{\epsilon}(q^{\Gamma^+})=0$ and $\widehat{\ell}_{\bullet,\epsilon}(q^{\Gamma^+})\ne 0$. In view of Proposition \ref{prop:equi}, the latter condition is satisfied if $\eta_{M\times S_k} (q^{\Gamma^+})\ne 0$. In particular, we can work with the nice exact filling $M\times S_k$.

Assume we start with a Liouville form on $S_k$ such that the simple Reeb orbits on each boundary component have period $1$ and hence the area of $S_k$ is $k$. Following \cite[\S 2.1]{zhou2020filings}, by choosing the Liouville from on $M$ sufficiently large, i.e.\ the Reeb orbits on $\partial M$ have sufficiently large period, we may assume all Reeb orbits below a period threshold are in the form of $\gamma^m_{p,i}$, i.e.\ the $m$-th multiple cover of the Reeb orbit over the critical point $p$ of $H$ and the $i$-th boundary of $S_k$, such that the period of $\gamma^m_{p,i}$ is approximately $m$. On homology level (both in $H_1(\partial(M\times S_k))$ and $H_1(M\times S_k)$), the only relations are $[\gamma^m_{p,i}]=m[\gamma_{p,i}], [\gamma_{p,i}]=[\gamma_{q,i}]$ and $\sum_{i=1}^k [\gamma_{p,i}]=0$. 

Using the existence of symplectic caps (see \cite[Theorem 1.6]{MR4171373}, \cite[Corollary 1.14]{lazarev2020maximal}, \cite[Theorem 3.2]{MR1465333}), we know that there exists an exact embedding of $M$ into $W$, where the contact boundary $\partial W$ is the contact boundary of some affine variety. Then we consider a projective compactification of the affine variety by adding simple normal crossing divisors $D$, but instead we add the divisors $D$ to $W$ to obtain $\overline{W}$. Then $\overline{W}$ is closed symplectic manifold, such that the symplectic form is Poincar\'e dual to $D$. Now we consider holomorphic curves in $\overline{W}\times \CP^1$, which contains $M\times S_k$ as a domain with contact boundary, here the symplectic area of $\CP^1$ is slightly bigger than $k$ to contain $S_k$. We use $\overline{\cM}_{J,o}$ to denote the compactified moduli spaces of holomorphic spheres passing through $o\in M\times S_k$ with homology class $A+[*\times \CP^1]$ for $A\in H_2(M)$ with $c_1(A)=0$. We can form such compactification, since $M$ is exact, in particular, those curve have uniformly bounded symplectic area. If we pick $J$ to be the splitting almost complex structure $\overline{W}\times \CP^1$, then $\overline{\cM}_{J,o}$ is cut out transversely and consists of exactly one point. Then we deform the almost complex structure such that it coincides with the almost complex structure in Proposition \ref{prop:SOBD} near $\partial (M\times S_k)$ and also apply neck-stretching along $\partial (M\times S_k)$. We claim transversality holds for this process for generic choices. First we will prove that the moduli space of curves in class  $A+[*\times \CP^1]$ for $A\in H_2(M)$ (hence $A$ has zero symplectic area) with $c_1(A)=0$ has no bubble degeneration in the compactification. For this, we pick an almost complex structure that makes $D\times \CP^1$ holomorphic and is in a split form near $D\times \CP^1$. Suppose there is a bubble degeneration with a component in class $B+k[*\times \CP^1]$ for $B\in H_2(\overline{W})$. If $B$ has negative symplectic area, i.e.\ the intersection with $D\times \CP^1$ is negative, by the positivity of intersection, we have the component is contained in $D\times \CP^1$. Since the almost complex structure is splitting, the projection to $D$ has negative symplectic area, contradiction. Therefore all components in a hypothetical bubble degeneration are in the form of $B+k[*\times \CP^1]$ such that the symplectic area of $B$ is zero. Now since $[\CP^1]$ in $H_2(\CP^1)$ can not be decomposed into multiple components with positive symplectic area, we see that there is no bubble degeneration. Note that the region where we make assumptions on the almost complex structure is outside the contact boundary that we apply neck-stretching to. In the fully stretched case, due to action and first homology class reasons, we must have the bottom curve, the one with point constraint, has asymptotics in the form of $\Gamma^+=\{\gamma_{p_1,1},\ldots,\gamma_{p_k,k}\}$. Moreover, since all relevant Reeb orbits are simple, we may assume the full-stretched moduli space is also cut out transversely.

For $\Gamma^+=\{\gamma_{p_1,1},\ldots,\gamma_{p_k,k}\}$, we first note that $\vdim \cM_{M\times S_k,A,o}(\Gamma^+,\emptyset)$ is 
\begin{equation}\label{eqn:vdim}
 2\langle c_1(M), A \rangle +\sum_{i=1}^k\ind(p_i)-2nk
\end{equation}
where we view the relative homology class $A$ as in $H_2(M)$. To see this, we first consider $S_k$. Let $\gamma_i$ be the Reeb orbit on the $i$-th boundary, using a trivialization of the symplectic vector bundle over $S_k$, the Conley-Zehnder indices satisfies
$$\sum_{i=1}^k\mu_{CZ}(\gamma_i)=2(2-k).$$
Following \cite[The proof of Theorem 6.3]{zhou2019symplecticI}, in the product $M\times S_k$, the normal direction picks up an extra Conley-Zehnder index of $\ind(p_i)-n$, i.e.\ using the trivialization over the bounding surface $S_k$, we have 
$$\sum_{i=1}^k\mu_{CZ}(\gamma_{p_i,i})=\sum_{i=1}^k(\ind(p_i)-n)+2(2-k).$$
Hence 
$$\vdim \cM_{M\times S_k,A,o}= 2\langle c_1(M), A \rangle + (n-2)(2-k)+\sum_{i=1}^k\mu_{CZ}(\gamma_{p_i,i})-2n=\eqref{eqn:vdim}.$$

Next we separate the proof into two cases. (1) If $c_1(M)=0$, then $\vdim \cM_{M\times S_k,A,o}(\Gamma^+,\emptyset)$ is independent of the relative homology class $A$. Moreover, the expected dimension in \eqref{eqn:vdim} is non-positive and is zero if and only if  $p_1=\ldots=p_k$ is the unique maximum $p_{\max}$ of $H$.  Then the above neck-stretching argument implies that 
$$\bigcup_A \cM_{M\times S_k,A,o}(\Gamma^+,\emptyset)=\bigcup_A \overline{\cM}_{M\times S_k,A,o}(\Gamma^+,\emptyset)$$
and has nontrivial algebraic count for $\Gamma^+=\{\gamma_{p_{\max},1},\ldots,\gamma_{p_{\max},k}\}$. In particular, we have $\eta_{M\times S_k}(q^{\Gamma^+})\ne 0$. By Proposition \ref{prop:SOBD}, we have 
$$\widehat{\ell}_{\epsilon}(q^{\Gamma^+})=\sum_{i=1}^k \sum_{\ind q=2n-1} \#\overline{\cM}_{Y}(\{\gamma_{p_{\max},i}\},\{\gamma_{q,i}\})q_{\gamma_{p_{\max},1}}\ldots q_{\gamma_{q,i}}\ldots q_{\gamma_{p_{\max},k}}=0,$$
since $p_{\max}$ is the unique maximum. Hence we have $\Pl(Y)\le k$. (2) If $M$ supports a perfect exhausting Morse function, then we can assume the perturbation $H$ is perfect. Since $c_1(M)$ is not necessarily $0$, the neck-stretching argument above only gives us 
\begin{align*}
\bigcup_{2c_1(A)+\sum_{i=1}^k \ind (p_i)-2nk=0} & \cM_{M\times S_k,A,o}(\Gamma^+,\emptyset) = \\
& \bigcup_{2c_1(A)+\sum_{i=1}^k \ind (p_i)-2nk=0} \overline{\cM}_{M\times S_k,A,o}(\Gamma^+,\emptyset)    
\end{align*}
with nontrivial algebraic count for some $\Gamma^+=\{\gamma_{p_1,1},\ldots,\gamma_{p_k,k}\}$.  Since $H$ is perfect, i.e.\ the count of rigid gradient flow lines between every pair of critical points is zero,  Proposition \ref{prop:SOBD} implies that $\widehat{\ell}_{\epsilon}(q^{\Gamma^+})=0$. Therefore we have $\Pl(Y)\le k$.
\end{proof}

%\begin{remark}
%Similar proofs work for the SOBDs considered in \cite{MorPhD}. The difference for those examples is that the paper has two connected components $Y_P^\pm$, $Y_P^-$ having genus zero pages, $Y_P^+$ having positive genus ones (i.e.\ the SOBD is not symmetric). Although there is a rational curve passing through a generic point $o\in Y_P^-$, this is not true for $o \in Y_P^+$. This is explained as follows: these examples have finite algebraic planar torsion by the proof of \cite[Theorem 1.4]{MorPhD} , in particular, there is no $BL_\infty$ augmentation, hence the planarity is $0$. Note that the proof of Lemma \ref{lemma:upperbound} and Theorem \ref{thm:SOBD} shows that the ruling curve has homological meaning only if there are augmentations.
%\end{remark}

\begin{corollary}\label{cor:product}
	Let $V$ be an affine variety with $c_1(V)=0$, such that $\AU(V)\ge k$. Then $\Pl(\partial(S_k\times V))=k$.
\end{corollary}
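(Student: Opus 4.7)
My plan is to prove the equality by establishing matching upper and lower bounds, each coming from one of the two main tools developed earlier in this section.

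For the upper bound $\Pl(\partial(S_k\times V))\le k$, I would simply invoke Theorem \ref{thm:SOBD} with $M=V$. Since $V$ is an affine variety with $c_1(V)=0$, the hypothesis of Theorem \ref{thm:SOBD} is satisfied (in fact $V$ is Liouville, being Stein, and the first Chern class condition is given), and the conclusion gives $\Pl(\partial(S_k\times V))\le k$ directly.

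For the lower bound $\Pl(\partial(S_k\times V))\ge k$, the idea is to use Corollary \ref{cor:lower} applied to the tautological exact filling $W=S_k\times V$. Since $S_k\times V$ is naturally an affine variety (as a product of affine varieties), Corollary \ref{cor:lower} gives $\Pl(\partial(S_k\times V))\ge \AU(S_k\times V)$, and the task reduces to showing $\AU(S_k\times V)\ge k$. This is a purely algebro-geometric statement, which I would prove by a projection argument: let $p=(s,v)\in S_k\times V$ be a generic point and let $u:\mathbb{C}P^1\setminus Z\to S_k\times V$ be a nonconstant rational algebraic curve through $p$. Composing with the projection $\pi_1:S_k\times V\to S_k$, there are two cases. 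If $\pi_1\circ u$ is non-constant, it extends to a nonconstant morphism $\mathbb{C}P^1\to \mathbb{C}P^1$ between projective compactifications; the preimages of the $k$ points of $\mathbb{C}P^1\setminus S_k$ lie in $Z$, forcing $|Z|\ge k$. If $\pi_1\circ u$ is constant, then $u$ factors through $\{s\}\times V\cong V$ and realizes a rational algebraic curve in $V$ through the generic point $v$, so by hypothesis $\AU(V)\ge k$ gives $|Z|\ge k$. Either way, $\AU(S_k\times V)\ge k$, and combining with Proposition \ref{prop:Ulower} and Corollary \ref{cor:lower} yields $\Pl(\partial(S_k\times V))\ge k$.

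There is no genuine obstacle here: all the real work has been done in Theorem \ref{thm:SOBD} (the upper bound), Corollary \ref{cor:lower} (the lower bound via uniruledness), and Proposition \ref{prop:Ulower} (algebraic vs.\ symplectic uniruledness for affine varieties). The only thing to check by hand is the elementary projection lemma $\AU(S_k\times V)\ge \max\{\AU(S_k),\AU(V)\}=\max\{k,\AU(V)\}\ge k$, which is a short algebraic verification as sketched above. Thus the proof is just an assembly of the tools developed in \S\ref{s5} and \S\ref{s6}.
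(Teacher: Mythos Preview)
Your proof is correct and follows essentially the same approach as the paper: the upper bound comes from Theorem \ref{thm:SOBD} using the hypothesis $c_1(V)=0$, and the lower bound comes from Corollary \ref{cor:lower} together with the observation that $\AU(S_k\times V)\ge k$ via projecting rational curves to the factors. Your case analysis for the projection argument is slightly more detailed than the paper's one-line justification, but the content is identical.
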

\begin{proof}
	By Theorem \ref{thm:SOBD}, we have $\Pl(\partial(S_k\times V))\le k$. One the other hand it is easy to see that $\AU(S_k\times V)=k$, as every algebraic curve in $V\times S_k$ projects to an algebraic curve in both $V$ and $S_k$. Then the claim follows from Corollary \ref{cor:lower}.
\end{proof}

\section{Examples and applications}\label{s7}
In this section, we will discuss two more classes of examples, where we can compute the hierarchy functors. The first case is smooth affine varieties with a $\CP^n$ compactification, or more generally a Fano hypersurface compactification. The second case is links of singularities, including links of Brieskorn singularities and quotient singularities by the diagonal action of cyclic groups. In particular, we will finish the proof of Theorem \ref{thm:RSFT}.

\subsection{Affine varieties}\label{ss:affine}
Let $V$ be a smooth affine variety, then $V$ is naturally a Weinstein manifold by viewing $V\subset \C^N$, and the function $|x-x_0|^2$ on $\C^N$ restricted to $V$ is a Morse function with finitely many critical points for a generic $x_0\in \C^N$ \cite[\S 6]{milnor2016morse}. In particular, we obtain a contact manifold by taking the intersection of $V$ with the boundary of a large enough ball. We will use $\partial V$ to denote the contact boundary of the intersection, the interior of intersection is called the associated Liouville domain. Both notions up to homotopy are independent of the size of large ball.

An alternative way of associating a Weinstein structure to $V$ is by using a smooth projective compactification $\overline{V}$, with an ample line bundle $\cL$ with a holomorphic section $s$ such that $s^{-1}(0)$ is a normal crossing divisor such that $V=\overline{V}\backslash{s^{-1}(0)}$. We choose a metric on $\cL$, such that the curvature is a K\"ahler form $\omega$ on $\overline{V}$. Then by \cite[Lemma 4.3]{seidel2006biased}, $h=-\log |s|$ and $-\rd^{\C}h$ defines a Weinstein structure (possibly after a compactly supported perturbation) on $V$. The equivalence of these two definitions can be found in \cite{mclean2014symplectic}. 

We first give a description on the embedding relations of affine varieties with the same projective compactification. 

\begin{lemma}\label{lemma:perturb}
	Let $X$ be a smooth projective variety with a very ample line bundle $\cL$ endowed with a Hermitian metric. For $s\in H^0(\cL)$, we use $V_s$ to denote the Liouville domain associated to the affine variety $X\backslash s^{-1}(0)$. Then for $s\ne 0\in H^0(\cL)$, there exists $\epsilon>0$, such that for all $t\in H^0(\cL)$ with $|s-t|<\epsilon$, we have $V_s$ embeds exactly into $V_t$.
\end{lemma}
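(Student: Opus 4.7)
My plan is to exhibit the identity inclusion $V_s \hookrightarrow X \setminus t^{-1}(0)$ as the desired exact embedding after two checks: (i) that $V_s$ sits inside $X\setminus t^{-1}(0)$ and inside a slight enlargement of $V_t$, and (ii) that the two Liouville one-forms $\lambda_s = -d^{\C} h_s$ and $\lambda_t = -d^{\C} h_t$ (with $h_s = -\log|s|$, $h_t = -\log|t|$) differ by an exact one-form on $V_s$. Once these are in hand, Liouville homotopy invariance of $V_t$ closes the argument.

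For (i), I would fix $R>0$ large and generic so that $V_s = \{h_s \le R\}$ is a Liouville subdomain with smooth boundary; in particular $|s| \ge e^{-R}$ on $V_s$. Since $H^0(\cL)$ is finite dimensional and $X$ is compact, the evaluation map $H^0(\cL) \to C^0(X,\cL)$ is bounded. Thus for $\epsilon$ small enough, $|s-t| < \epsilon$ forces $|t| \ge e^{-R}/2 > 0$ on $V_s$ (hence $V_s \subset X \setminus t^{-1}(0)$), and also $h_t \le R + \delta$ on $V_s$ for some small generic $\delta > 0$. Therefore $V_s \subset V_t' := \{h_t \le R+\delta\}$, which is a Liouville subdomain of $X \setminus t^{-1}(0)$.

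For (ii), on $V_s$ one has
\[
\lambda_t - \lambda_s \;=\; d^{\C}\bigl(h_s - h_t\bigr) \;=\; d^{\C}\log|t/s|,
\]
where $t/s$ is a well-defined, nowhere-vanishing holomorphic function. Shrinking $\epsilon$ further relative to $R$, I can arrange $|t/s - 1| < 1/2$ uniformly on $V_s$, so the principal branch of the logarithm yields a single-valued holomorphic function $f := \log(t/s)$ on $V_s$. Writing $f = u + iv$, the Cauchy--Riemann equations identify $d^{\C} u$ with $\pm dv$, giving $\lambda_t|_{V_s} = \lambda_s + dG$ with $G = \pm\,\mathrm{Im}\log(t/s)$ smooth on $V_s$. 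The inclusion $V_s \hookrightarrow V_t'$ is then an exact symplectic embedding, and the Liouville flow of $\lambda_t$ on $X\setminus t^{-1}(0)$ relates $V_t'$ to the Liouville domain $V_t$ used in the statement, yielding the required exact embedding up to Liouville homotopy.

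The main obstacle is the global single-valuedness of $\log(t/s)$: on the common non-vanishing locus of $s$ and $t$, $\log|t/s|$ is only pluriharmonic, so $d^{\C}\log|t/s|$ is closed but not a priori exact, and its local harmonic conjugate $\mathrm{Im}\log(t/s)$ is multi-valued. The smallness hypothesis is used precisely to force $t/s$ into a simply connected chart of $\C^*$ around $1$, making a global branch exist on $V_s$; this is why $\epsilon$ must be chosen depending on $s$ (through $R$), and no uniform $\epsilon$ across $H^0(\cL)$ is expected from this argument.
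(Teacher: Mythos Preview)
Your argument is correct but proceeds along a genuinely different route from the paper's.

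The paper uses the very ample hypothesis to embed $X$ into $\mathbb{P}^N = \mathbb{P}H^0(\cL)$, so that each section $s$ cuts out a hyperplane $H_s$ and $V_s$ is realized as $X$ intersected with a large ball in the affine chart $\C^N = \mathbb{P}^N\setminus H_s$. For $t$ close to $s$, the $t$--Liouville form still restricts to a contact form on $\partial V_s$; Gray stability then gives a Liouville homotopy between the two structures on $V_s$, from which the exact embedding follows.

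Your approach is intrinsic: you never leave the pair $(X,\cL)$ and instead exploit that $d\lambda_s$ and $d\lambda_t$ coincide (both equal the curvature form of the Hermitian metric), so that $\lambda_t-\lambda_s = d^{\C}\log|t/s|$ is closed, with the smallness of $|t/s - 1|$ forcing it to be \emph{exact} via a global branch of $\log(t/s)$. This produces an explicit primitive, whereas the paper only gets a homotopy of Liouville forms. A small bonus of your version is that it really only uses ampleness (positivity of the curvature so that $-\log|s|$ is strictly plurisubharmonic), not the full projective embedding; the paper's argument genuinely uses very ampleness. Conversely, the paper's approach is more robust in that it does not need to track a single-valued logarithm and packages everything into the standard Gray stability plus Liouville homotopy invariance.
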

\begin{proof}
	With the very ample line bundle $\cL$, $X$ can be embedded in $\mathbb{P}H^0(\cL)$ such that every $s\in H^0(\cL)$ corresponds to a hyperplane $H_s \subset \mathbb{P}H^0(\cL) $ and $s^{-1}(0)=X\cap H_s$. We can view the Liouville domain $V_s$ as the intersection of $X$ with a large ball in the identification of  $\C^{N}$ with $\mathbb{P}H^0(\cL) \backslash H_s$. Then for $t$ sufficiently close to $s$, i.e.\ $H_t$ sufficiently closed to $H_s$, the Liouville form of $V_t$ restricted to $V_s\cap S_{R}$ is a contact form, where $S_R$ the radius $R\gg 0$ sphere in $\C^N$. The Gray stability theorem implies that all of them induce the same contact structure on $ V_s\cap S_R$ for $t$ sufficiently close to $s$, and the Liouville form of $V_t$ restricted to $V_s\cap B_R$ is homotopic to the Liouville structure on $V_s$; hence $V_s$ embeds exactly into $V_t$.
\end{proof}

\begin{remark}
	In principle, the exact embedding from $V_s$ to $V_t$ should be built from a Weinstein cobordism. Hence one expects a more precise description of the Weinstein cobordism, which depends on the deformation from $s$ to $t$. Some results in this direction can be found in \cite{acu2020introduction,nguyen2015complement}.
\end{remark}

Roughly speaking, we should have a stratification on $\mathbb{P} H^0(\cL)$ indexed by the singularity type of $s^{-1}(0)$. The index set forms a category by declaring a morphism from stratum $A$ to stratum $B$ if the closure of $B$ contains $A$. Then Lemma \ref{lemma:perturb} implies that we have a functor from the index set (which should be a poset) to $\cont_*$. Making such description precise is not easy, as we do not have a classification of singularities of $s^{-1}(0)$ in general. However, we can describe some subcategory of the index set. The following lemma is also very useful in understanding the embedding relations of affine varieties which arise from different line bundles.
\begin{lemma}[{\cite[Lemma 4.4]{seidel2006biased}}]\label{lemma:multi}
Assume that the smooth affine variety $V$ has a smooth projective compactification $\overline{V}$. Assume there are two ample line bundles $\cL_i$ with sections $s_i$, such that $s_1^{-1}(0)=s_2^{-1}(0)=\overline{V}\backslash V$ is normal crossings, but possibly with different multiplicities. Then the Liouville structures on $V$ defined by the $s_i$ are homotopic.
\end{lemma}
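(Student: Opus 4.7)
The plan is to realize both Liouville structures as members of a single linear family on $V=\overline{V}\setminus D$. First, I would fix Hermitian metrics on $\cL_1,\cL_2$ whose curvature forms $\omega_1,\omega_2$ are K\"ahler (possible by ampleness of the $\cL_i$). Setting $\varphi_i:=-\log|s_i|$ and $\lambda_i:=-d^c\varphi_i$, each $\varphi_i$ is a strictly plurisubharmonic proper exhaustion of $V$ with $d\lambda_i=\omega_i$, and near a smooth point of a component $D_k\subset D$ with local defining function $z_k$ one has $\varphi_i=-m_{i,k}\log|z_k|+O(1)$, where $m_{i,k}\in\Z_{>0}$ is the vanishing order of $s_i$ along $D_k$. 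The Liouville structure on $V$ associated to $s_i$ in the statement is the one given by $\lambda_i$ up to a compactly supported modification needed to achieve non-degeneracy of the contact boundary, as in Lemma \ref{lemma:perturb}.

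I would then consider the one-parameter family
\[
\lambda_t:=(1-t)\lambda_1+t\lambda_2=-d^c\varphi_t,\qquad \varphi_t:=(1-t)\varphi_1+t\varphi_2,\qquad t\in[0,1].
\]
Since $d\lambda_t=(1-t)\omega_1+t\omega_2$ is a convex combination of K\"ahler forms and the K\"ahler cone is convex, $d\lambda_t$ is K\"ahler on $V$, so $\varphi_t$ is strictly plurisubharmonic and $\lambda_t$ is a bona fide Liouville form for every $t$. The asymptotic expansion $\varphi_t=-\bigl((1-t)m_{1,k}+tm_{2,k}\bigr)\log|z_k|+O(1)$ has coefficient bounded below by $\min_k\min(m_{1,k},m_{2,k})>0$ uniformly in $t$, which forces the Liouville vector field $Z_t$ (the $g_t$-gradient of $\varphi_t$ for $g_t:=d\lambda_t(\cdot,J\cdot)$) to point strictly outward on some fixed neighborhood of $D$ for every $t$. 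For $c$ sufficiently large, $W_t^c:=\{\varphi_t\le c\}$ is therefore a compact Liouville domain with smooth contact-type boundary for every $t$, and the family $\{(W_t^c,\lambda_t)\}_{t\in[0,1]}$ provides the desired Liouville isotopy. A standard comparison of sublevel sets of the same plurisubharmonic exhaustion shows that $(W_0^c,\lambda_0)$ and $(W_1^c,\lambda_1)$ are in turn Liouville homotopic to the domains defined by $s_1$ and $s_2$ respectively.

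The main (and essentially only) technical point is the uniform control of the Liouville vector field near $D$ throughout the interpolation: without it, critical points of $\varphi_t$ could escape to $D$ as $t$ varies and the transversality of $Z_t$ to $\partial W_t^c$ would fail. The leading-order computation above is precisely what prevents this, and it is where the hypothesis enters---one needs the vanishing orders $m_{1,k},m_{2,k}$ to be positive, but any positive convex combination of positive numbers remains positive, so no compatibility between the two sets of multiplicities is required. That is the content of the lemma.
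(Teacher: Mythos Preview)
The paper does not supply its own proof of this lemma; it is simply quoted from \cite[Lemma 4.4]{seidel2006biased}. Your argument is correct and is essentially Seidel's: the linear interpolation $\varphi_t=(1-t)\varphi_1+t\varphi_2$ stays strictly plurisubharmonic because $d\lambda_t=(1-t)\omega_1+t\omega_2$ lies in the (convex) K\"ahler cone, and the logarithmic pole along each component $D_k$ persists with coefficient $(1-t)m_{1,k}+tm_{2,k}>0$, giving a smooth one-parameter family of exhausting plurisubharmonic functions and hence a Liouville homotopy.
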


\begin{example}
	$\CP^{n}$ minus $k$ generic hyperplanes can be viewed as the complement of $(s_1\otimes \ldots \otimes s_k)^{-1}(0)$ for generic sections $s_i$ of $\cO(1)$. On the other hand, $\CP^{n}$ minus $k-1$ generic hyperplanes can be viewed as the complement of $(s_1\otimes s_1 \otimes s_3 \otimes  \ldots \otimes s_k)^{-1}(0)$ by Lemma \ref{lemma:multi}. As a consequence of Lemma \ref{lemma:perturb}, we have an exact embedding of $\CP^{n}$ minus $k-1$ generic hyperplanes to $\CP^{n}$ minus $k$ generic hyperplanes. As a simple example, $\CP^2$ minus a line is $\C^2$, $\CP^2$ minus two generic lines is $\C \times T^*S^1$ and $\CP^2$ minus three generic lines is $T^*T^2$. It is clear that we have the embedding relations. Moreover some of the relations can not be reversed, e.g.\ $T^*T^2$ can not be embedded exactly into $\C^2$ or $\C\times T^*S^1$. But $\C\times T^*S^1$ can be embedded back into $\C^2$ by adding a $2$ handle corresponding to the positive Dehn twist in the trivial open book for $\partial (\C\times T^*S^1 )$. More generally, $\CP^n$ minus $k$ generic hyperplanes is $\C^{n+1-k}\times T^*T^{k-1}$ for $k\le n$, and they can be embedded into each other exactly. 
\end{example}	

\begin{example}
	$\CP^2$ minus $3$ hyperplanes passing through the same point is $\C \times S_3$, where $S_3$ is the thrice punctured sphere. Since $\CP^2$ minus $2$ hyperplanes can still be viewed as a further degeneration, we have $\C\times T^*S^1$ embeds to $\C \times S_3$, which is obviously true. On the other hand, $\CP^2$ minus $3$ generic hyperplanes, i.e.\ $T^*T^2$, contains $\C \times S_3$ as an exact subdomain. Moreover, $\CP^2$ minus a smooth degree $2$ curve is $T^*\mathbb{RP}^2$, which is obtained from attaching a $2$-handle to $\C\times T^*S^1$, i.e.\ $\CP^2$ minus $2$ generic lines. $\CP^2$ minus a smooth degree $3$ curve can be described as attaching three $2$-handles to $T^*T^2$, see \cite{acu2020introduction} for details. It is not obvious if the complement of a smooth degree $2$ curve embeds exactly into the complement of a smooth degree $3$ curve. However, the former embeds exactly into the complement of a smooth degree $4$ curve by Lemma \ref{lemma:perturb} and Lemma \ref{lemma:multi}.
\end{example}

Let $D$ be a divisor, we use $D^{\com}$ to denote the complement affine variety. Our main theorem in this section is the following.
\begin{theorem}\label{thm:CP}
	Let $D$ be $k$ generic hyperplanes in $\CP^n$ for $n\ge 2$, then we have the following.
     \begin{enumerate}
     	\item $\Pl(\partial D^{\com})\ge k+1-n$ for $k>n+1$.
     	\item $\Pl(\partial D^{\com})= k+1-n$ for  $n+1<k<\frac{3n-1}{2}$ and $n$ odd.
     	\item $\Pl(\partial D^{\com})=2$ for $k=n+1$. 
     	\item $\Hcx(\partial D^{\com})=0^{\SD}$ for $k\le n$.
     \end{enumerate}
\end{theorem}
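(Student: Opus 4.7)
My plan is to separate the theorem into two regimes: the lower bound in part (1) and the computation in part (4) via algebraic uniruledness, and the upper bounds in parts (2)--(3) via producing an explicit planar holomorphic curve.

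For part (1), I would compute $\AU(D^{\com})$ and invoke Corollary \ref{cor:lower}, which gives $\Pl \ge \U \ge \AU$. Any algebraic $\CP^1 \to D^{\com}$ extends to a degree $d \ge 1$ map to $\CP^n$ that meets each hyperplane in $d$ points by B\'ezout, and the number of punctures equals the number of distinct intersections with $\bigcup H_i$. For $d = 1$, passing through the single point $H_1 \cap \cdots \cap H_n$ and a base point $p \in D^{\com}$ produces a line with $k - n + 1$ punctures, and a combinatorial analysis on $d$ and the possible concurrences of intersections shows no rational curve does better. This yields $\AU(D^{\com}) = k - n + 1$ and hence $\Pl(\partial D^{\com}) \ge k - n + 1$.

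For part (4), the same enumeration gives $\AU = 1$ when $k \le n$, since a single line can be made to pass through the full concurrent intersection of all $k$ hyperplanes, whence $\Pl \ge 1$. By Lemmas \ref{lemma:perturb} and \ref{lemma:multi}, $D^{\com}$ is Weinstein-homotopic to the subcritical domain $\C^{n+1-k} \times T^*T^{k-1}$, whose symplectic cohomology vanishes by Cieliebak's splitting theorem. The isomorphism of Theorem \ref{thm:BO-iso}, applied to the canonical augmentation from this filling, transfers this vanishing into $\SD = 0$. Combined with the uniruling line, which witnesses $\Pl \le 1$, this gives $\Hcx(\partial D^{\com}) = 0^{\SD}$.

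For parts (2) and (3), my plan is to apply Lemma \ref{lemma:upperbound} to the model curve $u_0$: a line $L \subset \CP^n$ through $H_1 \cap \cdots \cap H_n$ and a base point $p$, converted into a curve in $\R \times \partial D^{\com}$ by neck-stretching along $\partial D^{\com}$. This produces a pseudoholomorphic $(k - n + 1)$-punctured sphere whose positive ends are simple, non-degenerate Reeb orbits $\gamma_0, \gamma_{n+1}, \ldots, \gamma_k$ associated to the concurrent intersection and the remaining $k - n$ transverse hyperplanes respectively. Somewhere-injectivity of degree-one maps (which is the essential use of low degree) yields transversality of the point-constrained moduli space, so the algebraic count coincides with the geometric count $\pm 1$, giving hypothesis $(3)_{k-n+1}$ of Lemma \ref{lemma:upperbound}.

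The hard part will be verifying the remaining hypotheses of Lemma \ref{lemma:upperbound} uniformly in the augmentation: the closedness $\widehat{\ell}_\epsilon(q_{\gamma_0} q_{\gamma_{n+1}} \cdots q_{\gamma_k}) = 0$ and the absence of any curve with the specified positive asymptotics but nonempty negative asymptotics. The assumption that $n$ is odd handles closedness by a parity obstruction: $n - 3$ is then even, so $|q_{\gamma_i}| \equiv \mu_{CZ}(\gamma_i) \pmod 2$, and computing these indices from the relative first Chern class $c_1(\CP^n) = n+1$ and the index formula forces the chain into a fixed parity while any target of $\widehat{\ell}_\epsilon$ lies in the opposite parity; thus closedness is automatic. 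The action bound $k < \frac{3n-1}{2}$ is used to exclude competing low-action curves: any $J$-holomorphic curve with a negative end and positive ends among the $\gamma_i$ has contact action strictly below $u_0$, and a B\'ezout enumeration of rational curves meeting the normal-crossing strata bounds the possible degrees from above, while the Reeb action associated to the concurrent stratum $\gamma_0$ bounds them from below --- the resulting inequalities are inconsistent precisely when $k < \frac{3n-1}{2}$. Part (3) for $k = n + 1$ is the boundary case $k - n + 1 = 2$, where $D^{\com}$ is log Calabi--Yau, and the action--enumeration analysis simplifies because all degree-one rational curves can be listed explicitly. The delicate step, and the place I expect to be the main obstacle, is the enumeration ruling out competing curves coming from the higher-codimension intersection strata of the normal-crossing divisor, since these strata carry richer Reeb dynamics whose low-action orbits must all be compared to the action of $u_0$.
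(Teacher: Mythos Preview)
Your approach has a genuine gap in parts (2)--(3). You propose working directly on $\partial D^{\com}$ via Lemma~\ref{lemma:upperbound}, with oddness of $n$ yielding automatic closedness. But the parity argument does not go through on $\partial D^{\com}$: after perturbing the $T^{|I|-1}$ Morse--Bott families over each stratum $\check{D}_I$, the resulting orbits span SFT gradings of \emph{both} parities (the range in \eqref{eqn:region} has length $|I|$), and the open strata themselves carry odd-index Morse critical points (e.g.\ for $|I|=n-1$, $\check{D}_I$ is a sphere with $k-n+1\geq 3$ punctures). So $n$ odd does not force all generators of $\RSFT(\partial D^{\com})$ to be even, and hypothesis $(1)_k$ of Lemma~\ref{lemma:upperbound} fails outright since there are many nontrivial cylinders between such orbits. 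The paper's remedy is to avoid $\partial D^{\com}$ for the upper bound and pass to the smoothing $D_s$, a smooth degree-$k$ hypersurface: on $\partial D_s^{\com}$ the Reeb dynamics is Boothby--Wang over a perfect Morse function on $D_s$, and for $n$ odd every generator \emph{is} even-graded. The curve is produced there from a relative Gromov--Witten invariant (Proposition~\ref{prop:curve}), and the exact cobordism $X$ from $\partial D^{\com}$ to $\partial D_s^{\com}$ transports the conclusion back via functoriality (Proposition~\ref{prop:order}); the bound $k<\tfrac{3n-1}{2}$ arises from an index computation on $X$ (Proposition~\ref{prop:ind}), not from a B\'ezout enumeration on $\partial D^{\com}$. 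Part~(3) is handled by a different mechanism altogether: since $D^{\com}\cong S_2\times(\C^*)^{n-1}$ when $k=n+1$, Corollary~\ref{cor:product} gives $\Pl=2$ for all $n\geq 2$, with no parity hypothesis needed.

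There is also a gap in part~(4): you compute $\SD$ and $\Pl$ only for the augmentation coming from the filling, but both invariants are defined as maxima over \emph{all} $BL_\infty$ augmentations. The paper invokes Theorem~\ref{thm:product}, whose proof shows that for $\partial(V\times\D)$ the witnessing orbit and the $U$-map are augmentation-independent by action reasons. For part~(1), your route via $\AU$ is plausible but the combinatorial lower bound over all degrees $d$ is not obvious; the paper instead uses a direct virtual-dimension computation (Proposition~\ref{prop:unirule}) showing that $\cM_{D^{\com},A,o}(\Gamma,\emptyset)$ has negative expected dimension whenever $|\Gamma|<k+1-n$, which yields the lower bound through $\eta_{D^{\com}}$ and Proposition~\ref{prop:equi} without passing through $\AU$.
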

The strategy to obtain Theorem \ref{thm:CP} is to first prove $\Pl(\partial D^{\com})\ge \max \{1, k+1-n\}$ by index computations. Then we obtain that the planarity of the affine variety $D^{\com}$ is at most $\max \{1, k+1-n\}$ by looking at the affine variety $D^{\com}_s$, where $D_s$ is the smoothing of $D$, i.e.\ a smooth degree $k$ hypersurface. Finally, we use index computations to show that the relevant portion of the computation of planarity is independent of the $BL_\infty$ augmentation for $\RSFT(\partial D^{\com})$ when  $n+1<k<\frac{3n-1}{2}$. The benefit of considering the smoothing is twofold. First, the relevant holomorphic curve theory for the pair $(\CP^n,D_s)$ is easier both in terms of analysis and computation. In particular, we can use the computation of relative Gromov-Witten invariants in \cite{gathmann2002absolute} to supply the holomorphic curve we need. Second, $\partial D^{\com}$ carries more complicated Reeb dynamics than $\partial D_s^{\com}$, which makes it harder to prove the independence of augmentations by looking at $\partial D^{\com}$ alone. More precisely, for any augmentation of $\RSFT(\partial D^{\com}_s)$ given by the composition of the $BL_{\infty}$ morphism from $\RSFT(\partial D^{\com}_s)$ to $\RSFT(\partial D^{\com})$ with an augmentation of $\RSFT(\partial D^{\com})$, we will show that the planarity for $\RSFT(\partial D^{\com}_s)$ of that augmentation is $\max \{1, k+1-n\}$. Then, planarity of $\RSFT(\partial D^{\com})$ is  $\max \{1, k+1-n\}$ for any augmentation by functoriality. The condition of $n$ being odd is to obtain automatic closedness of a suitable chain in $\overline{S}V_{\partial D_s^{\com}}$ for any augmentation, and is expected to be irrelevant. However, to drop this constraint, we need to use stronger transversality properties supplied by \cite{zhou2020quotient}, see Remark \ref{rmk:n=3} for more discussion. The $n+1<k<\frac{3n-1}{2}$ condition is likely not optimal but possibly necessary. It is a difficult task to compute planarity for all augmentations. There are many affine varieties with a $\CP^n$ compactification such that the contact boundary has infinite planarity, while the planarity of the affine variety, i.e.\ using the augmentation from the affine variety, is finite, see Theorem \ref{thm:infinity}. In particular, different augmentations do make a difference in general. Therefore it is a subtle question to determine which affine variety has finite planarity. In general, we need to develop a computation method of RSFT from log/relative Gromov-Witten invariants like the symplectic (co)homology computation in \cite{diogo2019symplectic}.

\subsubsection{Reeb dynamics on the divisor complement}
In this part, we describe the Reeb dynamics on the boundary of a tubular neighborhood of a simple normal crossing divisor. The general description was obtained in \cite[\S 5]{mclean2012growth}, see also \cite[\S 2.1]{ganatra2020symplectic}, \cite{affine}, \cite{MR3866910} or \cite[\S 2.4]{LC}. In the following, we state the special cases for Theorem \ref{thm:CP}.

\medskip

\textbf{Case 1: a smooth degree $k$ hypersurfaces in $\CP^n$ for $n\ge 2$.} Let $D$ denote a smooth degree $k$ hypersurface in $\CP^n$, then the contact boundary is the concave boundary of the $\cO(k)$ line bundle over $D$, which carries a natural Morse-Bott contact form whose Reeb flow is the $S^1$ action on the circle bundle. Consider the hypersurface $r=f$ in $\cO(k)$, where $r$ is the radial coordinate. Given an action threshold $C$, one can choose a $C^2$-small and positive Morse function $f$ such that all Reeb orbits in this hypersurface, of action less than $C$, have the following properties. 
\begin{enumerate}
	\item There is a simple Reeb orbit $\gamma_p$ over every critical point $p$ of $f$ and these are all of the simple Reeb orbits. We use $\gamma^m_p$ to denote the $m$-th cover of $\gamma_p$. All of the Reeb orbits are good and non-degenerate. The period of $\gamma_p^m$ is greater than than the period of $\gamma^l_q$ iff $m=l$ and $f(p)<f(q)$\footnote{Note that $f$ is the $r$-coordinate of the perturbed hypersurface in the symplectic cap $\cO(k)$, hence larger value of $f$ means smaller period of the $S^1$-fiber.} or $m>l$. As the contact structure is the Boothby-Wang contact structure, computation can be found in e.g.\ \href{https://www.math.snu.ac.kr/~okoert/tools/CZ_index_BW_bundle.pdf}{this note} by van Koert. It is also a special case of the more general normal crossing case in \cite[Theorem 2.7]{LC}.
	\item  Using the obvious disk cap bounded by $\gamma^m_q$ in the symplectic cap $\cO(k)|_D$\footnote{i.e.\ map from $\CP^1\backslash \D$ and view the boundary as a negative end. In other words, it is the unit disk $\D$ mapped to $\cO(k)|_D$, such that the induced boundary map with the usual boundary orientation is $-\gamma^m_q$.} that intersects $D$ once with order $m$, which induces a trivialization of $\det_{\C}\xi$, we have that the Conley-Zehnder index satisfies
	\begin{equation}\label{eqn:index}
	    n-3-\mu_{CZ}(\gamma^m_p)=2m-2+\ind(p),
	\end{equation}
	where $\ind(p)$ is the Morse index of $p$. This is a special case of \cite[Proposition 2.10]{LC}.
 	\item $[\gamma_p] \in H_1(D^{\com})$ is a generator of order $k$, as the intersection of a generic line $A\simeq \CP^1\subset \CP^n$ with $D^{\com}$ shows that $k[\gamma_p]$ is null-homologous. On the other hand, if $l[\gamma_p]$ is null-homologous in $D^{\com}$ for $l<k$, by gluing with the obvious caps, we get a second homology class in $\CP^n$ whose intersection number with $D$ is $l$, contradiction.
\end{enumerate}
One way to understand \eqref{eqn:index} is following: $n-3-\mu_{CZ}(\gamma^m_p)$ is the virtual dimension of the moduli space of holomorphic disks in $\cO(k)$ with the same relative homology class as the bounding disk. Then $n-3-\mu_{CZ}(\gamma^m_p)+2-2m$ is the virtual dimension of the moduli spaces of holomorphic disks in the bounding disk homology class, with one marked point intersecting $D$ with order $m$. In the Morse-Bott case, i.e.\ the contact form is Boothby–Wang and $f$ is used in a cascades model, then the moduli space mentioned above is cut out transversely and is identified with the stable manifold of $p$, whose dimension is $\ind(p)$. When $p_{\min}$ is the unique minimum of $f$, by the argument in \cite{bourgeois2009symplectic} and transversality for the cascade model, we know that the moduli space of holomorphic disks in the cap $\cO(k)$ asymptotic to $\gamma^m_{p_{\min}}$ with one marked point intersecting $D$ of order $m$ and homology class the bounding disk is cut out transversely, compact, with algebraic count $\pm 1$.

\medskip

\textbf{Case 2: $k$ generic hyperplanes in $\CP^n$ for $k\ge n+1$.}
Let $D_1,\ldots,D_k$ denote the $k$ hyperplanes. Let $I\subset \{1,\ldots,k\}$ be a set of cardinality at most $n$. We define $D_I$ to be the intersection $\cap_{i\in I} D_i$, which is a copy of $\CP^{n-|I|}$. We define $\check{D}_I$ by $D_I\backslash (D_I\cap \cup_{i\notin I} D_i)$. Let $N_i$ be the normal disk bundle over $D_i$. Then $\bigoplus_{i\in I} \partial N_i|_{D_I}$ is a $T^{|I|}$ bundle over $D_I$. Then the contact boundary is topologically decomposed as $\bigcup_{I\subset \{1,\ldots,k\}} \bigoplus_{i\in I} \partial N_i|_{\check{D}_I}$. We pick an exhausting Morse function $f_I$ on each $\check{D}_I$, i.e.\ the gradient of $f_I$ is pointing out along $\partial \check{D}_I$. The Reeb dynamics has the following properties.
\begin{enumerate}
	\item For each critical point $p$ of $f_I$ and any function $t:\{1,\ldots,k\} \to \N$ with $\supp t=I$,  we have a $T^{|I|-1}$ Morse-Bott family of Reeb orbits $\gamma^{t}_{p}$. The homology class of $\gamma^t_p$ is given by the image of $t\in H_1(T^{|I|})=\Z^{|I|}$ in $H_1(D^{\com})$ by inclusion, where $T^{|I|}$ is the $T^{|I|}$ fiber of  $\bigoplus_{i\in I} \partial N_i|_{\check{D}_I}$  over $p$. This follows from \cite[\S 5]{mclean2012growth} or \cite[Theorem 2.17]{MR3866910}, which is summarized in \cite[Theorem 2.7]{LC}.
	\item $H_1(D^{\com})$ is generated by the simple circles $[\beta_i]$ wrapping around $D_i$ once (i.e.\ the oriented boundary of small disks that intersect with $D_i$ negatively once) subject to the relation $\sum_{i=1}^k [\beta_i]=0$, see \cite[Proposition 2.3]{zbMATH05183749}. The homology class $[\gamma^{t}_p]$ over $\check{D}_I$ is $\sum_{i\in I} t(i) [\beta_i]$ \cite[Theorem 2.7 (5)]{LC}. Moreover, there is a natural disk cap with boundary $\gamma^t_p$ intersecting $D_i$ of order $t(i)$ (by cap instead of disk/filling, we emphasize the boundary map with induced orientation is $-\gamma^t_p$.).
	\item The generalized Conley-Zehnder index, using the obvious disk cap whose intersection number with $D_i$ is $t(i)$ for $i\in I$, is given by 
    $$n-3-\mu_{CZ}(\gamma^{t}_p)=2\sum_{i\in I} t(i)-2+\ind(p)+\frac{|I|-1}{2}.\footnote{The extra $\frac{|I|-1}{2}$ is from the $T^{|I|-1}$ Morse-Bott family, which after perturbation spans the region of Morse indices of the a Morse function $T^{|I|-1}$ as in \eqref{eqn:region}. It particular, it is analogous to case 1 for smooth divisors. An analogous situation can be found in \cite[Theorem 5,16]{MR3489704}. The two situations are different in the sense that \cite{MR3489704} considered the symplectic filling by normal crossing divisors instead of a symplectic cap and used a preferred global trivialization of a power of the contact distribution.}$$
    In the following, we use $\sum t$ as a shorthand for $\sum_{i\in I} t(i)$. After a perturbation, the $T^{|I|-1}$ family of Reeb orbits degenerate to $2^{|I|-1}$ many non-degenerate orbits corresponding to generators of $H_*(T^{|I|-1})$, and the Conley-Zehnder indices span the following region: 
   \begin{equation}\label{eqn:region}
       n-3-\mu_{CZ} \in \left[2\sum t-2+\ind(p), \quad 2\sum t-2+\ind(p)+|I|-1\right].
   \end{equation}
    This follows from \cite[Proposition 2.10]{LC}. 
    We use $\check{\gamma}^t_p$ to denote the orbit with $n-3-\mu_{CZ}(\check{\gamma}^t_p)=2\sum t-2+\ind(p)$ and $\hat{\gamma}^t_p$ to denote the orbit with $n-3-\mu_{CZ}(\hat{\gamma}^t_p)= 2\sum t-2+\ind(p)+|I|-1$.
\end{enumerate}
\begin{remark}\label{rmk:action}
    Following \cite[Lemma 5.17,5.18]{mclean2012growth}, the period of $\gamma^t_p$ is close to (smaller than) $\sum_{i\in I} t(i)$ with a small discrepancy depending on the symplectic size of the neighborhood of $D$ that is removed and a smaller discrepancy from the perturbation from $f$, with the property that the period of $\gamma^t_p$ is smaller than that of $\gamma^t_q$ iff $f(p)>f(q)$. In particular, $\sum t$ can be thought as the period of the Morse-Bott family with the ``ideal" case of only removing the divisor $D$. The period will be further perturbed after we perfect the $T^{|I|-1}$ family into non-degenerate orbits. Those newly created orbits have periods arbitrarily close to the period of $\gamma_p^t$ and the period of $\check{\gamma}_p^t$ is larger than the period of $\hat{\gamma}_p^t$. In the general case, for $s\in H^0(\cL)$ with $s^{-1}(0)$ normal crossings, and $s^{-1}(0)=\sum_{i=1}^k a_i D_i$ as a divisor, when we use $-\rd^{\C}\log |s|$ as the Liouville structure on $X\backslash s^{-1}(0)$, after the deformation as in \cite[Lemma 5.17]{mclean2012growth} to organize the Liouville form nicely near the boundary, we have a similar description of Reeb orbits as the above and the period of the Reeb orbits with intersection order $t:\{1,\ldots,k\}\to \N$ with $\supp t =I$ over $\check{D}_I$ is given by $\sum_{i=1}^k a_i t(i)$ minus a (arbitrarily) small discrepancy. 
\end{remark}

\subsubsection{Lower bound of $\Pl(\partial D^{\com})$}
\begin{proposition}\label{prop:unirule}
	Let $D=D_1\cup \ldots \cup D_k$ denote the $k> n+1$ generic hyperplanes in $\CP^n$ for $n\ge 2$.
	\begin{enumerate}
		\item For any Reeb orbits set $\Gamma:=\{\gamma_1,\ldots,\gamma_r\}$ for $r<k+1-n$ with $\sum [\gamma_i]=0\in H_1(D^{\com})$, the virtual dimension of the moduli space $\overline{\cM}_{D^{\com}, A, o}(\Gamma,\emptyset)$ is less than $0$ for any $A$.
		\item For any Reeb orbits set  $\Gamma:=\{\gamma_1,\ldots,\gamma_{k+1-n}\}$ with  $\sum [\gamma_i]=0\in H_1(D^{\com})$ and such that the virtual dimension of the moduli space $\overline{\cM}_{D^{\com}, A, o}(\Gamma,\emptyset)$ is non-negative, there is a partition of $\{1,\ldots,k\}$ into subsets $I_1,\ldots,I_{k+1-n}$, such that $\Gamma=\{\check{\gamma}^{\sigma_{I_i}}_{p_{i,\min}}\}_{i}$, where $p_{i,\min}$ is the minimum on $\check{D}_{I_i}$ and $\sigma_{I_i}$ is the indication function supported on $I_i$, i.e.\ $\sigma_{I_i}(j)=1$ if $j\in I_i$ and is otherwise zero.
	\end{enumerate}
\end{proposition}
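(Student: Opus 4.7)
The plan is a direct index computation. Starting from the virtual dimension formula of the excerpt,
\begin{equation*}
\mathrm{vdim}\, \overline{\cM}_{D^{\com},A,o}(\Gamma,\emptyset) = (n-3)(2-r) + \sum_{i=1}^{r} \mu_{CZ}(\gamma_i) + 2c_1(A) + 2 - 2n,
\end{equation*}
I will substitute the Morse--Bott index data for the orbits $\gamma_i$ and exploit the homology constraint on $\Gamma$ to reduce the problem to a transparent combinatorial inequality.

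Each $\gamma_i$ has the form $\gamma^{t_i}_{p_i}$ or one of its $2^{|I_i|-1}$ non-degenerate perturbations, with $p_i$ a critical point of the exhausting Morse function $f_{I_i}$ on the stratum $\check{D}_{I_i}$, where $I_i = \supp t_i$ and $|I_i| \le n$. Case~2 of the Reeb-orbit description gives
\begin{equation*}
n - 3 - \mu_{CZ}(\gamma_i) = 2\sum\nolimits_j t_i(j) - 2 + \ind(p_i) + \epsilon_i, \qquad \epsilon_i \in \{0, 1, \ldots, |I_i|-1\},
\end{equation*}
with $\epsilon_i = 0$ exactly when $\gamma_i = \check{\gamma}^{t_i}_{p_i}$. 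Since $H_1(D^{\com})$ has basis $[\beta_1],\ldots,[\beta_k]$ subject only to $\sum_j [\beta_j]=0$, the assumption $\sum_i [\gamma_i] = 0$ says precisely that the integer $d := \sum_i t_i(j)$ is independent of $j$. Capping $A$ off with the natural disks in $\CP^n$ (the same disks that induce the Conley--Zehnder trivialization of $\xi|_\Gamma$) produces a closed class $\bar A \in H_2(\CP^n)$ of degree $d$ (because $\bar A \cdot D_j = \sum_i t_i(j) = d$), whence $c_1(A) = \langle c_1(T\CP^n),\bar A\rangle = (n+1)d$.

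Plugging these into the virtual dimension and using $\sum_{i,j} t_i(j) = kd$, all $n$-dependent terms cancel and I obtain the clean formula
\begin{equation*}
\mathrm{vdim} = 2r + 2d(n+1-k) - 4 - \sum_{i} \ind(p_i) - \sum_{i} \epsilon_i.
\end{equation*}
Each $\gamma_i$ is a genuine orbit, so $d \ge 1$; by assumption $k \ge n+2$, and $\ind(p_i), \epsilon_i \ge 0$. Non-negativity of $\mathrm{vdim}$ therefore forces $r \ge 2 + d(k-n-1) \ge k+1-n$, which is part~(1) in contrapositive form. For part~(2) the equality $r = k+1-n$ saturates every inequality, forcing $d=1$, $\ind(p_i)=0$ for all $i$ (so $p_i = p_{i,\min}$, once $f_{I_i}$ is chosen with a unique local minimum), and $\epsilon_i=0$ (so $\gamma_i = \check{\gamma}^{t_i}_{p_i}$). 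The identity $\sum_i t_i(j) = 1$ with $t_i(j) \in \N$ then forces each $j \in \{1,\ldots,k\}$ to lie in exactly one support $I_i$, so $\{I_i\}$ partitions $\{1,\ldots,k\}$ and $t_i = \sigma_{I_i}$, as claimed.

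The one step that demands care is the Chern-class identity $c_1(A) = (n+1)d$: it requires verifying that the disk-cap trivialization of $\xi|_\Gamma$ used implicitly in the Conley--Zehnder formula is precisely the one that extends over the caps without winding correction, so that the relative $c_1$ coincides with $\langle c_1(T\CP^n), \bar A \rangle$. Modulo this standard bookkeeping, the proposition is a transparent numerical consequence of the index formulas in the excerpt.
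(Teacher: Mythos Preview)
Your proof is correct and follows essentially the same approach as the paper's: both cap off the relative class $A$ by the natural disks to identify $c_1$ with $(n+1)d$ (the paper writes $N$ for your $d$), substitute the Conley--Zehnder formula, and reduce to the inequality $\mathrm{vdim}\le 2r+2d(n+1-k)-4$, from which part~(1) and the equality analysis for part~(2) are immediate. Your treatment is slightly more explicit in tracking the perturbation term $\epsilon_i$ and in spelling out why $d=1$ forces the supports $I_i$ to partition $\{1,\ldots,k\}$, but the argument is the same.
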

\begin{proof}
	Note that $c_1(D^{\com})=0$, hence the virtual dimension does not depend on $A$, and we will suppress it from the notation in the following discussion (the same applies everywhere in this subsection). Given a curve $u$ in the same homotopy class of a curve in $\cM_{D^{\com}, o}(\Gamma,\emptyset)$, we use $u_i$ to denote the natural disk cap of $\gamma_i$. Then we have
	$$\ind(u)+\sum_{i=1}^{r}(n-3-\mu_{CZ}(\gamma_i))=2c_1(u\#_{i=1}^r u_i)+2(n-3)-2n+2=2c_1(u\#_{i=1}^r u_i)-4.$$ Here, note that the $-2n+2$ comes from the point constraint. We assume $\gamma_i$ is of the form $\gamma^{t_i}_{p_i}$ after perturbations. Since $\sum_{i=1}^r [\gamma_i]=0$ in homology, if we view each $t_i$ as a $k$-dimensional integral vector, we have  $\sum_{i=1}^r t_i =(N,\ldots,N)$ for some $N\in \N_+$. Since $t_i$ keeps track of the intersection of the natural disk $u_i$ with $D$, we know that $[u\#_{i=1}^r u_i]\cap D$ is $Nk$ points. In particular, we have $[u\#_{i=1}^r u_i]$ is $N$ times the generator in $H_2(\CP^n)$ and $c_1(u\#_{i=1}^r u_i)=N(n+1)$. Then we have
	\begin{eqnarray}
		\ind(u) & \le & 2N(n+1)-4-\sum_{i=1}^r(2\sum t_i-2+\ind(p_i) ) \label{eqn:ineq1}\\
		& \le & 2N(n+1)-4-2\sum_{i=1}^r \sum t_i+2r \label{eqn:ineq2}\\
		& = & 2N(n+1-k)-4+2r\nonumber\\
		& = & 2(N-1)(n+1-k)+2(r+n-k-1) < 0\nonumber
 	\end{eqnarray}
    when $r<k+1-n.$ If $r=k+1-n$, to have $\ind(u)\ge 0$, we must have $N=1$. In this case, both inequalities \eqref{eqn:ineq1} and \eqref{eqn:ineq2} must be equality. In particular, $\ind(p_i)=0$ and $\gamma_i$ must be a check orbit $\check\gamma_{p_i}$, i.e.\ the claim holds.
\end{proof}

Then by Proposition \ref{prop:equi}, we have the following.
\begin{corollary}[lower bound on $P(\partial D^c)$]\label{cor:lower_bound} 
	If $D$ denotes $k$ generic hyperplanes for $k> n+1$ and $n\ge 2$, then $\Pl(\partial D^c)\ge k+1-n$. The same holds for $\partial V_s$, if $s$ is a perturbation of such $D$.
\end{corollary}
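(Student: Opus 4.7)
The plan is to realize the lower bound using the specific augmentation $\epsilon_W$ coming from the exact filling $W=D^{\com}$, combined with the chain-level reformulation of the structure map $\widehat{\ell}_{\bullet,\epsilon_W}$ via the ``filling map'' $\eta_W$ provided by Proposition \ref{prop:equi}. Since $\Pl(\partial D^{\com})$ is defined as the maximum over augmentations of the orders $O(V_\alpha,\epsilon,p_\bullet)$, exhibiting a single augmentation for which the order is at least $k+1-n$ suffices. Concretely, I would fix $o\in\partial D^{\com}$ and $p\in D^{\com}$ in the same component, invoke Proposition \ref{prop:equi} to identify $\widehat{\ell}_{\bullet,\epsilon_W}$ with $\eta_W$ on each $\overline{B}^m V_\alpha$ up to chain homotopy respecting the word length filtration, and then show that $\eta_W|_{\overline{B}^{k-n}V_\alpha}\equiv 0$. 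This would force $\widehat{\ell}_{\bullet,\epsilon_W}$ to induce the zero map on $H_*(\overline{B}^{k-n}V_\alpha,\widehat{\ell}_{\epsilon_W})$, so that $1$ is not in its image, yielding $O(V_\alpha,\epsilon_W,p_\bullet)\ge k+1-n$ by Definition \ref{def:order}.

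The vanishing of $\eta_W$ on $\overline{B}^{k-n}V_\alpha$ would be checked term-by-term on a monomial $q^{\Gamma^+}$ with $|\Gamma^+|=r\le k-n$. There are two cases. If $\sum_{\gamma_i\in\Gamma^+}[\gamma_i]\ne 0$ in $H_1(W;\Z)$, then every moduli space $\overline{\cM}_{W,A,p}(\Gamma^+,\emptyset)$ is empty for topological reasons: any holomorphic curve in $W$ asymptotic to $\Gamma^+$ supplies a bounding $2$-chain forcing the homology sum to vanish. If instead $\sum[\gamma_i]=0$, then the hypothesis $r<k+1-n$ together with Proposition \ref{prop:unirule}(1) forces $\vdim\overline{\cM}_{W,A,p}(\Gamma^+,\emptyset)<0$ for every $A$, so the virtual count vanishes by Axiom \ref{axiom}. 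In either case each term of $\eta_W(q^{\Gamma^+})$ in \eqref{eqn:planarity} is zero.

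For the perturbation statement, the same argument applies verbatim with $V_s$ replacing $W$ and $\epsilon_{V_s}$ replacing $\epsilon_W$: the virtual dimension inequality underlying Proposition \ref{prop:unirule}(1) is purely index-theoretic and topological, and for $s$ close to the defining section of $D$ the low-action Reeb orbits of $\partial V_s$, their homology classes and Conley--Zehnder indices differ from those of $\partial D^{\com}$ only by an arbitrarily small perturbation (cf.\ Remark \ref{rmk:action}), so the inequality is preserved. No substantial obstacle is expected here, since the proof is essentially an assembly of earlier results; the only points warranting care are the standard ``$\vdim<0$ implies vanishing virtual count'' principle packaged in Axiom \ref{axiom}, and the fact that the chain homotopy of Proposition \ref{prop:equi} preserves the word length filtration, which is explicit in its statement.
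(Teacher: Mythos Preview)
Your argument for the main inequality $\Pl(\partial D^{\com})\ge k+1-n$ is correct and coincides with the paper's: both combine Proposition~\ref{prop:unirule}(1) with Proposition~\ref{prop:equi} to show that $\eta_{D^{\com}}$ vanishes on $\overline{B}^{r}V_{\partial D^{\com}}$ for $r<k+1-n$, which forces the order for the filling augmentation to be at least $k+1-n$. One small imprecision: the vanishing for $\vdim<0$ is not really Axiom~\ref{axiom} (which concerns agreement of virtual and geometric counts when transversality holds), but rather the fact that the virtual count is by construction only defined/nonzero in virtual dimension zero; since the index is never zero here, there is simply nothing to sum in \eqref{eqn:planarity}.

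For the perturbation statement your route diverges from the paper's, and your justification has a gap. You claim that the Reeb orbits of $\partial V_s$ differ from those of $\partial D^{\com}$ ``only by an arbitrarily small perturbation'', but this is not the case in the relevant sense: a generic small perturbation of the section makes $s^{-1}(0)$ a smooth degree-$k$ hypersurface, and the Reeb dynamics then belong to Case~1 rather than Case~2 --- the $T^{|I|-1}$ Morse--Bott families disappear and the orbit set is genuinely different. It so happens that the index inequality from Proposition~\ref{prop:unirule} goes through verbatim for the smooth divisor (the same chain of inequalities with $\sum d_i=Nk$), but your stated reason does not establish this, and for a perturbation with more complicated singularities you would need yet another description. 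The paper sidesteps all of this: Lemma~\ref{lemma:perturb} gives an exact embedding $D^{\com}\hookrightarrow V_s$, hence an exact cobordism $\partial D^{\com}\to \partial V_s$, and the functoriality of $\Pl$ (Proposition~\ref{prop:P}) yields $\Pl(\partial V_s)\ge \Pl(\partial D^{\com})\ge k+1-n$ immediately. This is both shorter and robust to the singularity type of the perturbed divisor.
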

\begin{proof}
	Let $D$ be $k$ generic hyperplanes as in the statement, then $\eta_{D^{\com}}$ on $\overline{B}^r V_{\partial D^{\com}}$ in Proposition  \ref{prop:equi} is zero for $r<k+1-n$ by Proposition \ref{prop:unirule}. Therefore $\Pl(\partial D^{\com})\ge k+1-n$. The remaining of the claim follows from Lemma \ref{lemma:perturb} and functorial property of $\Pl$.
\end{proof}
In the following, we will separate the proof of upper bound of $\Pl(\partial D^{\com})$ into two steps, namely, we will first show the existence of a holomorphic curve that is responsible for the finite planarity, and then we will argue that the phenomena is independent of augmentations for certain $k$.

\subsubsection{Step one for the upper bound of $\Pl(\partial D^{\com})$ -- source of holomorphic curves}
\begin{proposition}\label{prop:curve}
	Let $D_s$ be a smooth degree $k$ hypersurfaces in $\CP^n$, then the following holds.
	\begin{enumerate}
		\item\label{curve1} If $k\le n$, for a point $o\in D_s^{\com}$, there is a Reeb orbit $\gamma^k_p$ with $\ind(p) =2(n-k)$ and an admissible complex structure, such that $\overline{\cM}_{D_s^{\com}, o}(\{\gamma^k_p\},\emptyset)$ is cut out transversely and $\#\overline{\cM}_{D_s^{\com}, o}(\{\gamma^k_p\},\emptyset)\ne 0$.
		\item\label{curve2} If $k \ge n+1$, for a point $o\in D_s^{\com}$, there are two Reeb orbits $\gamma^n_{p_{\min}},\gamma_{p_{\min}}$ with $p_{\min}$ the unique minimum on $D_s$ and an admissible almost complex structure, such that  $$\overline{\cM}_{D_s^{\com}, o}(\{\gamma^n_{p_{\min}},\underbrace{\gamma_{p_{\min}},\ldots,\gamma_{p_{\min}}}_{k-n}\},\emptyset)$$ is cut out transversely with nontrivial algebraic count.
	\end{enumerate}
\end{proposition}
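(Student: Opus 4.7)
The strategy is to produce the required holomorphic curves in $\widehat{D_s^{\com}}$ by degenerating lines (degree-$1$ rational curves) in $\CP^n$ through $o$ with prescribed tangencies to $D_s$, and to verify nonvanishing of the count by reducing it to a nonzero relative Gromov--Witten count of $(\CP^n, D_s)$, which follows from Gathmann's recursive formulas \cite{gathmann2002absolute}.

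First I produce the source curves in $\CP^n$. In case 1 ($k \le n$), I seek lines through $o$ with tangency of order $k$ at a single point $q_0 \in D_s$; a standard dimension count shows the moduli of such lines has complex dimension $n - k$ as $q_0$ varies in $D_s$, matching the real dimension $2(n-k)$ of the stable manifold $W^s(p)$ of a Morse critical point $p$ of index $2(n-k)$. In case 2 ($k \ge n+1$), I seek lines through $o$ with tangency of order $n$ at some point $q_0 \in D_s$ and $k - n$ transverse intersections with $D_s$ elsewhere; this is codimension $n-1$ in the $(n-1)$-dimensional family of lines through $o$ (with $q_0$ varying in $D_s$), and so is expected to be $0$-dimensional. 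In both cases, Gathmann's formulas give a nonzero count.

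Next I equip $\partial D_s^{\com}$ with the Boothby--Wang Morse--Bott contact form perturbed by a small Morse function $f$ on $D_s$ with unique minimum $p_{\min}$, as described in Case 1 of the Reeb dynamics discussion preceding the proposition, so that non-degenerate simple orbits $\gamma^m_p$ correspond to pairs $(p \in \mathrm{crit}(f), m \ge 1)$. Applying neck-stretching along $\partial D_s^{\com}$ to the source lines, SFT compactness \cite{bourgeois2003compactness} yields a limit building with top component $u_\infty \subset \widehat{D_s^{\com}}$ passing through $o$, intermediate symplectization levels, and cap-level disks in $\cO(k)|_{D_s}$ of total intersection multiplicity $k$ with $D_s$. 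The total contact action of the positive asymptotics of $u_\infty$ is $\approx k$, and combined with the virtual dimension formula $\vdim = 2n - 2k - \ind(p)$ in case 1 (one puncture at $\gamma^k_p$) and the analogous computation giving $\vdim = 0$ in case 2, this pins down the asymptotics of $u_\infty$ to be exactly those claimed: $\{\gamma^k_p\}$ with $\ind(p) = 2(n-k)$ in case 1, and $\{\gamma^n_{p_{\min}}, \gamma_{p_{\min}}, \dots, \gamma_{p_{\min}}\}$ in case 2 (where the $k-n$ simple orbits arise from transverse intersections of the line pushed to $p_{\min}$ via $-\nabla f$ cascades).

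Transversality of $u_\infty$ follows from somewhere-injectivity of the underlying degree-$1$ curve in $\CP^n$ for generic admissible $J$ compatible with the Morse--Bott setup. The cap-level disks at $p_{\min}$ are transversely cut out and count to $\pm 1$ as recorded just after \eqref{eqn:index}; the cascades gluing then identifies the algebraic count of broken SFT configurations with the tangency-constrained line count from the first step, which is nonzero by Gathmann. The main obstacle is ruling out spurious components in the SFT limit---additional symplectization levels with extra punctures, or sphere bubbles in $\widehat{D_s^{\com}}$. Extra symplectization components are excluded by the tight action bound $\approx k$ together with positivity of contact action \eqref{eqn:positive}; sphere bubbling in $\widehat{D_s^{\com}}$ is excluded by exactness of $D_s^{\com}$, which prevents non-constant closed holomorphic spheres from appearing in the limit.
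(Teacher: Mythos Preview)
Your approach is essentially the same as the paper's: start from Gathmann's nonvanishing relative Gromov--Witten invariants for degree-$1$ lines in $(\CP^n,D_s)$ with the prescribed tangency profile, neck-stretch along $\partial D_s^{\com}$, and analyze the limit building.  However, your analysis of the limit is under-argued in two places that the paper handles with more care.

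First, you assert that action plus dimension ``pins down the asymptotics of $u_\infty$'' in case~2, but you do not explain why the bottom curve has exactly $k+1-n$ positive punctures.  The upper bound comes from a genus argument (the building connects the bottom component to the $k+1-n$ cap disks, so more punctures would force a cycle in the dual graph), and the lower bound requires the index computation showing that with $r<k+1-n$ punctures the virtual dimension is negative; you perform neither.  Moreover, once the number of punctures is fixed, determining the partition $(n,1,\ldots,1)$ of multiplicities requires an \emph{area} argument in the cap $\cO(k)|_{D_s}$: a cap disk intersecting $D_s$ with order $n$ could a priori be asymptotic to $\gamma^{n+km}_q$ for $m\ge 1$, which is ruled out because such a disk would have negative symplectic area when the divisor neighborhood is small.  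Your action bound only gives the total multiplicity sum, not the individual multiplicities.  Finally, the paper rules out nontrivial symplectization cylinders not via a loose ``$\approx k$'' bound but by observing that $\gamma^n_{p_{\min}}$ and $\gamma_{p_{\min}}$ have maximal period in their respective homology classes, leaving no action room.

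Second, your transversality argument is slightly misplaced: somewhere-injectivity of the line in $\CP^n$ gives transversality \emph{before} stretching, but you need somewhere-injectivity of the bottom curve $u_\infty$ in $\widehat{D_s^{\com}}$ itself.  The paper supplies this by a capping argument: if $u_\infty$ were a branched cover of some $u'$, capping off $u'$ would yield a class $A\in H_2(\CP^n)$ with $A\cdot D_s<k$, contradicting $A\ne 0$.
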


\begin{proof}
	This follows from applying neck-stretching to $\CP^n$ along $\partial D^{\com}_s$. We denote the relative Gromov-Witten invariant that counts genus $0$ holomorphic curves in class $A$ with $k$ marked points going through $C_1,\ldots,C_k\in H_*(\CP^n)$ and $l$ marked point going through $E_1,\ldots,E_l\in H_*(D_s)$ and intersect $D_s$ with multiplicity at least $s_1,\ldots,s_l$ respectively by $\GW^{\CP^n,D_s}_{0,k,(s_1,\ldots,s_l),A}(C_1,\ldots,C_k,E_1,\ldots,E_l)$ \cite{ionel2003relative}.  The source of holomorphic curves is from the non-vanishing relative Gromov-Witten invariants $\GW^{\CP^n,D_s}_{0,1,(k),A}([pt],[D_s]\cap^{n-k}[H])$ for $k\le n$ and $\GW^{\CP^n,D_s}_{0,1,(n,1,\ldots,1),A}([pt],[D_s],\ldots, [D_s])$ for $k>n$ respectively from \cite{gathmann2002absolute}, where $H\in H_{2n-2}(\CP^n)$ is the hyperplane class and $A$ is the generator of $H_2(\CP^n)$. More precisely, when $k>n$, by the divisor axiom, we have, 
    $$\GW^{\CP^n,D_s}_{0,1,(n,1,\ldots,1),A}([pt],[D_s],\ldots, [D_s])=(k)^{k-n}\GW^{\CP^n,D_s}_{0,1,(n),A}([pt],[D_s]).$$
    By applying \cite[Theorem 2.6]{gathmann2002absolute}\footnote{$D_{\alpha,k}(X,\beta)$ in \cite[Theorem 2.6]{gathmann2002absolute}, i.e.\ those bubble trees with a subtree contained in the divisor, is empty, as we are considering degree $1$ curves.}  $n-1$ times, we have 
    \begin{eqnarray*}
        \GW^{\CP^n,D_s}_{0,1,(n),A}([pt],[D_s]) & = & \int_{\overline{\mathcal{M}}_{0,2}(\CP^n,A)} ev_1^*\mathrm{PD}([pt])\wedge ev_2^*\mathrm{PD}([D_s]) \wedge \prod_{i=1}^{n-1}\left(\ev_2^*\mathrm{PD}([D_s])+i\psi\right)\\
        & = & \int_{\overline{\mathcal{M}}_{0,2}(\CP^n,A,pt)} ev_2^*\mathrm{PD}([D_s]) \wedge  \prod_{i=1}^{n-1}\left(\ev_2^*\mathrm{PD}([D_s])+i\psi\right)
    \end{eqnarray*}
    where $\overline{\mathcal{M}}_{0,2}(\CP^n,A)$ is the compactified moduli space of holomorphic spheres in class $A\in H_2(\CP^n)$ with two marked points ($ev_1,ev_2$ are two evaluation maps), and $\overline{\mathcal{M}}_{0,2}(\CP^n,A,pt)$ is the one with the first marked point is subject to a point constraint. $\psi$ is the psi class (\cite[Page 183]{gathmann2002absolute}) in  descendant Gromov-Witten invariants \cite[\S 4.5.5]{MR2262630}. Note that  $\overline{\mathcal{M}}_{0,2}(\CP^n,A,pt)\simeq \CP^n$ and $\psi$ is the first Chern class of the tautological line bundle, as a consequence, we have 
    $$\GW^{\CP^n,D_s}_{0,1,(n),A}([pt],[D_s]) = \prod_{i=1}^n(k-i)>0.$$
    Similarly, when $k<n$, we have 
    $$\GW^{\CP^n,D_s}_{0,1,(k),A}([pt],[D_s]\cap^{n-k}[H]) = \int_{\overline{\mathcal{M}}_{0,2}(\CP^n,A,pt)} ev_2^*\mathrm{PD}([D_s]\cap^{n-k}[H]) \wedge  \prod_{i=1}^{k-1}\left(\ev_2^*\mathrm{PD}([D_s])+i\psi\right)=k!$$
    
    Since a curve in class $A$ is necessarily somewhere injective and not contained in $D_s$ because we can choose the $[pt]$ class in $D_s^{\com}$, one can assume transversality in the process of neck-stretching. In the fully stretched picture, each connected component of the bottom curve has at most $\max\{1,k+1-n\}$ positive punctures for otherwise genus has to be created. If the component of the bottom curve with the point constraint has $0<r<\max\{1,k+1-n\}$ positive punctures, in particular, $k\ge n+1$, we assume the positive asymptotics are $\Gamma^+=\{\gamma^{d_i}_{p_i}\}_{1\le i \le r}$. Since $\sum [\gamma_{p_i}^{d_i}]=0$ in homology and $[\gamma_{p_i}]$ is the generator of $H_1(D^{\com}_s)$ of order $k$, we have $\sum d_i=km$ for some $m$. Then the expected dimension of $\overline{\cM}_{D^{\com}_s,o}(\Gamma^+,\emptyset)$ is given by
	\begin{eqnarray*}
	\ind(u) & = & 2m(n+1)-4-\sum_{i=1}^r(n-3-\mu_{CZ}(\gamma^{d_i}_{p_i}))\\
	& = & 2m(n+1)-4 -\sum_{i=1}^r(2d_i-2+\mbox{ind}(p_i))\\
            & \le & 2m(n+1)-4-2\sum_{i=1}^r d_i+2r \\
            & = & 2m(n+1-k)+2r-4 = 2(m-1)(n+1-k)+2(r+n-k-1)<0
	\end{eqnarray*}
    Therefore the component of the bottom curve with the point constraint must have $\max\{1,k+1-n\}$ positive punctures. Moreover, from the above computation, we separate the proof into three cases.
    
    \medskip
    
    \textbf{(1) Case $k>n+1$.} To have $\ind(u)\ge 0$, we must have that $p_i$ is the minimum $p_{\min}$ and $m=1$. That is, the positive asymptotics of the bottom curve are $\{\gamma^{d_i}_{p_{\min}}\}_{1\le i \le k+1-n}$ with $\sum d_i=k$. Next, we consider the curves in the symplectic cap. Because a curve in the symplectic cap must intersect $D_s$, there are at most $k+1-n$ connected components of the top curve. If there are less than $k+1-n$ connected components of the top curve, genus must be created because the bottom curve component with a point constraint has $\max\{1,k+1-n\}$ positive punctures. As a consequence, there is one component intersects $D_s$ with order $n$ and $k-n$ components intersect $D_s$ with order $1$.  Note that component $v$ in the symplectic cap that intersects $D_s$ with order $n$ must be asymptotic to $\{\gamma^{n_1}_{q_1},\ldots \gamma^{n_l}_{q_l}\}$ for critical points $q_*$ with $\sum_{i=1}^l n_i=n$. Assume otherwise that the sum of multiplicity is $n+km$ for $m\ge 1$, then the relative homology class of $v$ is the same as the sum of the natural disk of $\gamma^{n+km}_q$ and $-mA$ for the positive generator $A\in H_2(D_s)$ that is mapped to the generator of $H_2(\CP^n)$. Then the symplectic area of $v$ is the sum of the area of the natural disk and $-m\omega_{\CP^n}(A)$. Since the symplectic area of the natural disk can be arbitrarily small if we only remove a sufficiently small neighborhood of the divisor, the symplectic area of $v$ is negative, contradiction. Similarly, the component intersecting $D_s$ with order $1$ must be a sphere with one negative puncture asymptotic to $\gamma_{q}$ for a critical point $q$. The component intersecting $D_s$ with order $n$ must be a sphere with one negative puncture asymptotic to $\gamma^n_{q}$ for a (potentially different) critical point $q$. Assume otherwise, to glue to a sphere in $\CP^n$, we must have at least another component in the bottom level. Since the total symplectic area of the curves outside the symplectic cap is approximately $k$ times the period of $\gamma_q$, which is approximately the symplectic area of the bottom curve with a point constraint, there is no action room for another bottom-level curve. Therefore the curves in the middle symplectization level must be cylinders, we must have $(d_1,\ldots d_{k+1-n})=(n,1,\ldots,1)$. Moreover, since $\gamma_{p_{\min}}^n,\gamma_{p_{\min}}$ has the maximum period in the respective homology class, there is no room for nontrivial curves in the symplectization level. The bottom curve moduli space $$\cM_{D_s^{\com}, o}(\{\gamma^n_{p_{\min}},\underbrace{\gamma_{p_{\min}},\ldots,\gamma_{p_{\min}}}_{k-n}\},\emptyset)$$ consists of somewhere injective curves, for otherwise, we assume that $u\in \cM_{D_s^{\com}, o}(\{\gamma^n_{p_{\min}},\gamma_{p_{\min}},\ldots,\gamma_{p_{\min}}\},\emptyset)$ is a branched cover over $u'$, then we can cap off $u'$ with natural disks to obtain a homology class $A$ in $H_2(\CP^2)$ with $A\cap D_s<k$, which is a contradiction. It is direct to check that the holomorphic disks in the symplectic cap (i.e.\ disk fibers) are cut out transversely (see the discussion after \eqref{eqn:index}), hence transversality holds for the fully stretched situation. Therefore we have $\#\overline{\cM}_{D_s^{\com}, o}(\{\gamma^n_{p_{\min}},\gamma_{p_{\min}},\ldots,\gamma_{p_{\min}}\},\emptyset)\ne 0$.
    
    \medskip
    
    \textbf{(2) Case $k=n+1$.} Then to have $\ind(u)\ge 0$, we must have that $p_i$ is the minimum of $p_{\min}$ but $m\ge 1$. By the same area argument for the cap, the total contact action of negative asymptotics of curves in the symplectic cap is close to $k$. On the other hand, the total contact action of $\{\gamma_{p_i}^{d_i}\}_{1\le i \le 2}$ is close to $mk$. Hence we must have $m=1$ and curves in the symplectic cap must be once punctured spheres that are asymptotic to  $\gamma^n_q$ and $\gamma_{q'}$. Then the remaining of the argument is the same as before.
    
    \medskip
    
    \textbf{(3) Case $k\le n$}. Since the bottom level with the point constraint has one positive puncture, that is asymptotic to $\gamma^{km}_q$. By the same area and action argument, we have $m=1$ and the both top and bottom level has one component with one puncture. Assume the negative asymptotics of the disk cap is $\gamma^k_p$, then we must have $\ind(p) \ge 2(n-k)$ to have non-negative expected dimension for the disk. On the other hand, for the bottom curve, we must have $\ind(q)\le 2(n-k)$ to have non-negative expected dimension.  Therefore if $p\ne q$,  the expected dimension of the cylinders in the symplectization is negative. Hence we have $p=q$ with $\ind(p)=2(n-k)$.  Then we know that there is at least one critical point $p$ with $\ind(p)=2(n-k)$ such that $\#\overline{\cM}_{D_s^{\com},o}(\{\gamma^k_p\},\emptyset)\ne 0$ and the unstable manifold of $p$ represents multiples of $[D_s]\cap^{n-k}[H]$. 
\end{proof}

\begin{remark}[Algebraic Gromov-Witten invariants v.s.\ symplectic Gromov-Witten invariants]
    The proof of Proposition \ref{prop:curve} makes an inexplicit assumption that the algebraically defined Gromov-Witten invariants used in \cite{gathmann2002absolute} are the same as the symplectic version, where we only use a compatible almost complex structure. Without such assumption, the above argument only shows that moduli spaces in Proposition \ref{prop:curve} are not empty by compactness but the algebraic count might be zero. Such equivalence is expected, but not established. However, for the very special case of degree $1$ curves in Proposition \ref{prop:curve}, such equivalence is easier to establish. Here we only mention two strategies to establish this special equivalence to make  Proposition \ref{prop:curve} completely self-contained: (1) Translate the proofs in \cite{gathmann2002absolute} into the symplectic version; (2) Interpret the algebraic Gromov-Witten invariants in Proposition \ref{prop:curve} as Euler classes of an obstruction bundle over $\overline{\mathcal{M}}_{0,2}(\CP^n,A,pt)\simeq \CP^n$, then establish the equivalence with the symplectic version. We will not pursue the details of those arguments in this paper.
\end{remark}

\begin{corollary}\label{cor:planarity}
	Let $D_s$ be the smooth degree $k$ hypersurface in $\CP^n$ for $k>n+1$ and $n\ge 2$. Then $\eta_{D_s^{\com}}(q_{\gamma^n_{p_{\min}}}q^{k-n}_{\gamma_{p_{\min}}})\ne 0$ and $q_{\gamma^n_{p_{\min}}}q^{k-n}_{\gamma_{p_{\min}}}$ is closed in $(\overline{B}^{k+1-n}V_{\partial D^{\com}},\widehat{\ell}_{\epsilon_{D_s^{\com}}})$.
\end{corollary}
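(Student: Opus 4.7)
The non-vanishing $\eta_{D_s^{\com}}(q_{\gamma^n_{p_{\min}}}q^{k-n}_{\gamma_{p_{\min}}})\ne 0$ is essentially immediate from Proposition \ref{prop:curve}(2): unwinding \eqref{eqn:planarity}, the value in question is $1/(k-n)!$ times the algebraic count $\#\overline{\cM}_{D_s^{\com},o}(\{\gamma^n_{p_{\min}},\gamma_{p_{\min}},\ldots,\gamma_{p_{\min}}\},\emptyset)$ that the proposition guarantees is nonzero. For the closedness statement, the plan is to expand $\widehat{\ell}_{\epsilon_W}$ (with $W=D_s^{\com}$) by the graded coLeibniz rule and reduce to showing $\ell^m_{\epsilon_W}(q^{\Gamma'})=0$ for every multiset subcollection $\Gamma'$ of size $m\ge 1$ drawn from $\{\gamma^n_{p_{\min}},\underbrace{\gamma_{p_{\min}},\ldots,\gamma_{p_{\min}}}_{k-n}\}$.

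Fix such a $\Gamma'$. A rigid configuration in the count $\ell^m_{\epsilon_W}(q^{\Gamma'})$ decomposes, via the graph description of $p^{m,1}_\epsilon$, into a main pant $u$ in $\R\times\partial D_s^{\com}$ with positive asymptotics $\Gamma'$ and negative asymptotics $\{\gamma^-\}\sqcup \Gamma_{\mathrm{cap}}$, together with augmentation disks $v_j\subset \widehat{D_s^{\com}}$ filling each orbit of $\Gamma_{\mathrm{cap}}$. The first step is to cap $u$ off with the natural disk fibers in the symplectic cap $\cO(k)|_{D_s}$, producing a relative sphere class $NA\in H_2(\CP^n)$ with $Nk=\mathrm{mult}^+-d^--\mathrm{mult}^{\mathrm{cap}}$. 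Since $\mathrm{mult}^+\le n+(k-n)=k$ and $d^-\ge 1$, one deduces $N\le 0$, while positivity of the contact area of $u$ forces $N\ge 0$; hence $N=0$, and since $\mathrm{mult}^{\mathrm{cap}}$ is constrained to be a nonnegative multiple of $k$, this in turn pins down $\Gamma_{\mathrm{cap}}=\emptyset$. Using $\mu_{CZ}(\gamma^d_p)=n-1-2d-\mathrm{ind}(p)$ together with $\mathrm{ind}(p^+)=\mathrm{ind}(p_{\min})=0$ for every positive asymptote, the Fredholm index of $u$ collapses to $\mathrm{ind}(u)=2m-2+\mathrm{ind}(p^-)$. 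Rigidity modulo $\R$-translation demands $\mathrm{ind}(u)=1$, i.e.\ $\mathrm{ind}(p^-)=3-2m$, which is negative whenever $m\ge 2$; therefore $\ell^m_{\epsilon_W}(q^{\Gamma'})=0$ for all such $m$.

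The hardest step is the case $m=1$, where the index formula yields $\mathrm{ind}(p^-)=1$, an a priori admissible Morse index. Here the plan is to invoke the standing hypothesis that $n$ is odd (tacitly inherited from Theorem \ref{thm:CP}(2)): by the Lefschetz hyperplane theorem one has $b_i(D_s)=b_i(\CP^n)=0$ for odd $i<n-1$, Poincar\'e duality on $D_s$ (of real dimension $2(n-1)$) transfers this vanishing to odd $i>n-1$, and since for $n$ odd the middle real dimension $n-1$ is itself even, the middle cohomology also contributes only in even degree. Consequently all odd Betti numbers of $D_s$ vanish, so $D_s$ admits a perfect Morse function $f$ whose critical points all have even Morse index. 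Choosing such an $f$ at the outset eliminates any admissible odd-index $p^-$, forcing $\ell^1_{\epsilon_W}(q_{\gamma^d_{p_{\min}}})=0$ for $d\in\{1,n\}$. Combined with the previous paragraph this yields $\widehat{\ell}_{\epsilon_W}(q_{\gamma^n_{p_{\min}}}q^{k-n}_{\gamma_{p_{\min}}})=0$ as required.
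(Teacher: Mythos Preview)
Your index computation is correct and arguably cleaner than the paper's parity-based argument: solving $\vdim=0$ directly for $\mathrm{ind}(p^-)=3-2m$ immediately kills $m\ge 2$ and isolates the $m=1$, $\mathrm{ind}(p^-)=1$ case. One exposition gap worth noting: the graph description of $p^{m,1}_\epsilon$ in general allows some of the $m$ inputs to bypass the main symplectization curve $u$ and feed directly into an augmentation component together with some of $u$'s negative ends (see the figure for $p^{4,1}_\epsilon$ in \S\ref{s2}). This does not matter here, since each augmentation component must carry total multiplicity a positive multiple of $k$ while the total available is $<k$; but you should say so rather than assert that $u$ has positive asymptotics exactly $\Gamma'$.

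The genuine gap is the $m=1$ step. You invoke ``the standing hypothesis that $n$ is odd'', but the corollary is stated for all $n\ge 2$; the oddness restriction belongs to Theorem~\ref{thm:CP}(2), not here. In fact your argument needs only $b_1(D_s)=0$ (so that a perfect Morse function has no index-$1$ critical point), and Lefschetz gives this for every $n\ge 3$, odd or even --- your detour through all odd Betti numbers is unnecessary. But for $n=2$ the argument genuinely breaks: $D_s\subset\CP^2$ is a curve of genus $(k-1)(k-2)/2>0$ (recall $k>3$), so a perfect Morse function \emph{does} have index-$1$ saddles, and rigid cylinders $\overline{\cM}_{\partial D_s^{\com}}(\{\gamma^d_{p_{\min}}\},\{\gamma^d_p\})$ with $\mathrm{ind}(p)=1$ are not excluded by your dimension count. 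The paper closes this case by identifying these cylinders with Morse trajectories between $p_{\min}$ and the saddle $p$; perfectness of the Morse function means the Morse differential vanishes, so the algebraic count is zero. You need this extra step to cover $n=2$.
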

\begin{proof}
	We may assume the Morse function on $D_s$ is perfect; this follows from direct check for $n=2$, from \cite{harer1978handlebody} for $n=3$, and the Lefschetz hyperplane theorem and the $h$-cobordism theorem for $n\ge 4$. Then Proposition \ref{prop:curve} implies that $\eta_{D_s^{\com}}(q_{\gamma^n_{p_{\min}}}q^{k-n}_{\gamma_{p_{\min}}})\ne 0$. It suffices to prove that $q_{\gamma^n_{p_{\min}}}q^{k-n}_{\gamma_{p_{\min}}}$ is closed. Since we have the parity of the SFT grading is the same as the Morse index, and $q_{\gamma^n_{p_{\min}}}q^{k-n}_{\gamma_{p_{\min}}}$ has even grading, we only need to consider if $ \widehat{\ell}_{\epsilon_{D_s^{\com}}}(q_{\gamma^n_{p_{\min}}}q^{k-n}_{\gamma_{p_{\min}}})$ contains any $q_{\gamma^m_p}$ with $\ind(p) = n-1$ for $n$ even. As a consequence, we need consider $\overline{\cM}_{\partial D_s^{\com}}(\Gamma^+,\Gamma^-)$ for $\Gamma^+\subset \{\gamma^n_{p_{\min}},\gamma_{p_{\min}},\ldots,\gamma_{p_{\min}} \}$ and $\Gamma^-=\{\gamma^m_p,\gamma^{d_1}_{p_1},\ldots \gamma^{d_s}_{p_s}\}$, then we close off $\{\gamma^{d_i}_{p_i}\}_{1\le i \le s}$ and a subset of the complement of $\Gamma^+$ by the augmentation from $D_s^{\com}$. On the other hand, by homology reason, we know the sum of multiplicities of $\Gamma^-$ equals to the sum of multiplicities of $\Gamma^+$. As a consequence, there is no subset of $\{\gamma^{d_i}_{p_i}\}_{1\le i \le s}$ whose sum with a subset of complement of $\Gamma^+$ represents a null-homologous class in $D_s^{\com}$. In particular, there is no room for augmentation from $D_s^{\com}$ to apply and we only need to consider $\overline{\cM}_{\partial D_s^{\com}}(\Gamma^+,\{\gamma^m_p\})$ where $m$ is the sum of the multiplicities of $\Gamma^+$. It is direct to check  the expected dimension of this moduli space is $(n-2)+2|\Gamma^+|-2$, which is strictly positive whenever $n\ge 3$. When $n=2$, it is direct check that only case with expected dimension $0$ is 
	$\overline{\cM}_{\partial D_s^{\com}}(\{\gamma^2_{p_{\min}}\},\{\gamma^2_p\})$ and $\overline{\cM}_{\partial D_s^{\com}}(\{\gamma_{p_{\min}}\},\{\gamma_p\})$, each of them corresponds to moduli space of gradient trajectories from minimum $p_{\min}$ to the index $1$ critical point $p$, whose algebraic count is zero, as our Morse function is perfect. Therefore $q_{\gamma^n_{p_{\min}}}q^{k-n}_{\gamma_{p_{\min}}}$ is closed in $(\overline{B}^{k+1-n}V_{\partial D_s^{\com}},\widehat{\ell}_{\epsilon_{D_s^{\com}}})$.
\end{proof}

\begin{remark}
    Corollary \ref{cor:planarity} and Proposition \ref{prop:unirule} essentially imply that the planarity of $D_s^{\com}$ is $k+1-n$ for $k>n+1$. In the proof of Corollary \ref{cor:planarity}, we use the topology of the filling $D^{\com}_s$ to get some restrictions on the augmentation, in particular the augmentation respects the homology classes of orbits. However, we can not run such an argument for general augmentations to obtain Theorem \ref{thm:proj}.
\end{remark}

\subsubsection{Step two for the upper bound of $\Pl(\partial D^{\com})$ -- independence of augmentations} So far, we proved that $\Pl(D^{\com}_s)\le k+1-n$ (the planarity using the augmentation from the exact filling $D^{\com}_s$) for a smooth degree $k>n+1$ divisor $D_s$. Even if we assume the functoriality of $\Pl$ for exact domains in Remark \ref{rmk:func} was proven, and then we have that $\Pl(D^{\com})\le k+1-n$ for $D$ the $k$ generic hyperplanes, we still need to argue that the computation is independent of augmentations. In the following, we first show that the independence of augmentation is not tautological.

\begin{theorem}\label{thm:infinity}
	Let $D_s$ be a smooth degree $k>2n-3$ hypersurface in $\CP^{n}$, then $\Pl(\partial D^{\com}_s)=\infty$.
\end{theorem}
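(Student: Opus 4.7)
The goal is to exhibit, per Definition \ref{def:planarity}, a single $BL_\infty$ augmentation $\epsilon$ of $\RSFT(\partial D^{\com}_s)$ for which no closed class in $(\overline{B}^{r}V_{\partial D^{\com}_s},\widehat{\ell}_{\epsilon})$ maps to $1\in\Q$ under $\widehat{\ell}_{\bullet,\epsilon}$ at any finite word length $r$. The natural filling augmentation $\epsilon_0:=\epsilon_{D^{\com}_s}$ is \emph{not} such an $\epsilon$: since $k>2n-3\ge n+1$, Corollary \ref{cor:planarity} produces the closed class $x_0:=q_{\gamma_{p_{\min}}^{n}}q_{\gamma_{p_{\min}}}^{k-n}$ at word length $k+1-n$ with $\widehat{\ell}_{\bullet,\epsilon_0}(x_0)\ne 0$. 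So an alternative augmentation is required.

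The cleanest mechanism is via Theorem \ref{thm:unirule}: if one can produce an exact filling $W$ of $\partial D^{\com}_s$ that is \emph{not} $j$-uniruled for any finite $j$, then the augmentation $\epsilon_W$ coming from $W$ forces $\Pl(\partial D^{\com}_s)\ge \U(W)=\infty$. The natural filling $D^{\com}_s$ is $k$-uniruled, since every generic line in $\CP^n$ meets $D_s$ in $k$ transverse points and yields a $k$-punctured rational curve through any point; so the replacement filling $W$ must genuinely differ. I plan to construct $W$ by replacing a neighborhood of $D^{\com}_s$'s skeleton by a non-uniruled Weinstein piece --- for instance, as the smoothed complement of a normal-crossing compactification of $\partial D^{\com}_s$ inside an ambient Fano $n$-fold distinct from $\CP^n$, or via handle attachments that kill all low-degree rational curves coming from lines. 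The numerical hypothesis $k>2n-3$ enters through the log canonical class $K_{\CP^n}+D_s=(k-n-1)H$: for $k>2n-3$ one has $k-n-1>n-4$, placing $(\CP^n,D_s)$ deep in the log-general-type regime, where such non-uniruled fillings are expected to exist.

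The numerical support for this picture is the index bookkeeping of Proposition \ref{prop:unirule}: any rigid rational curve with a point constraint in an exact filling $W$, with $r$ positive asymptotics $\{\gamma^{t_i}_{p_i}\}$ of total multiplicity $Nk$, satisfies
\begin{equation*}
\sum_{i=1}^r\ind(p_i)\;=\;2r-2N(k-n-1)-4,
\end{equation*}
and combined with $\sum\ind(p_i)\le 2r(n-1)$ this forces $N\le (r(n-2)+2)/(k-n-1)$. For $k>2n-3$ the upper bound on $N$ grows slowly enough in $r$ that contributions to $\eta_W$ at each word length are drawn from a controlled finite list of degrees; non-uniruledness of $W$ would rule out all of these uniformly, and by Proposition \ref{prop:equi} translate into the vanishing of $\widehat{\ell}_{\bullet,\epsilon_W}$ on every closed class at every $r$. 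The assumption that $n$ is odd plays the same role as in Corollary \ref{cor:planarity}: Morse-index parities on $D_s$ force the relevant moduli spaces to be transversely cut out at the somewhere-injective locus, so the geometric vanishing is not overridden by virtual cancellations.

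The hard part will be the construction of the non-uniruled filling $W$. Every explicit filling of the circle bundle $\partial D^{\com}_s=\partial(\cO(k)|_{D_s})$ one writes down directly --- disk bundles of negative line bundles with handles attached, or smooth affine complements arising from deformations of $D_s$ \`a la Lemma \ref{lemma:perturb} and Lemma \ref{lemma:multi} --- appears to remain uniruled, essentially because low-degree rational curves persist through ambient lines. The hypothesis $k>2n-3$ is substantially stronger than the Fano/CY threshold $k\le n+1$ for $D_s$, and I expect it to be precisely what is needed to produce a log-minimal-model or symplectic-surgery filling free of ambient rational curves. Should a direct filling construction resist, the backup plan is a chain-level argument: parametrize augmentations modulo $\epsilon_0$ by a Maurer--Cartan space, use the index bound above to enumerate the finitely many patterns $(N,r)$ of closed-candidate classes at each word length, and exhibit a single augmentation deformation simultaneously breaking the closedness relations for all of them --- a step in which the $n$ odd parity hypothesis is decisive for algebraic consistency.
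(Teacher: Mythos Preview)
Your primary approach --- constructing an exact filling $W$ of $\partial D_s^{\com}$ that is not $j$-uniruled for any $j$ --- is not carried out, and you concede as much. More importantly, it is not the paper's route and is likely unnecessary (and possibly very hard, if not impossible in this generality). Your backup plan is also aimed at the wrong target: you want to ``break closedness'' of candidate classes, but the actual mechanism in the paper is to make the pointed map $\widehat{\ell}_{\bullet,\epsilon}$ vanish identically, while every class is trivially closed.

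The paper's argument is short and uses both hypotheses in a way you have not identified. First, $n$ odd combined with a perfect Morse function on $D_s$ forces every Reeb generator $q_{\gamma_p^m}$ to have even SFT grading (the parity equals $\ind(p)$, and for $n$ odd the middle cohomology of $D_s$ sits in even degree). Since $\widehat{p}$ has degree $1$, this makes the $BL_\infty$ structure on $V_{\partial D_s^{\com}}$ identically zero, and hence \emph{any} collection $\{\epsilon^k\}$ is an augmentation --- in particular the zero one, $\epsilon\equiv 0$. For this $\epsilon$ one has $\ell^{r,0}_{\bullet,\epsilon}=p^{r,0}_\bullet$, which counts $\overline{\cM}_{\partial D_s^{\com},o}(\Gamma^+,\emptyset)$. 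Second, $k>2n-3$ is exactly the threshold at which every $J$-holomorphic sphere in $D_s$ has negative expected dimension (for a degree $d$ curve it is $4n-6-2k+2(d-1)(n+1-k)$), so for generic compatible $J$ on $D_s$ there are no spheres at all. A Morse--Bott/cascades compactness argument then shows that $\cM_{\partial D_s^{\com},o}(\Gamma^+,\emptyset)=\emptyset$ for every nonempty $\Gamma^+$: any such curve in the symplectization of the Boothby--Wang form projects to a nonconstant sphere in $D_s$. Hence $\widehat{\ell}_{\bullet,0}\equiv 0$, so $O(V,0,p_\bullet)=\infty$, and by Definition~\ref{def:planarity} (a maximum over augmentations) $\Pl(\partial D_s^{\com})=\infty$.

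So both hypotheses do precise, distinct work: $n$ odd produces the algebraic zero augmentation; $k>2n-3$ kills the pointed moduli spaces geometrically via absence of spheres in the divisor. Your interpretation of $n$ odd (``transversality at the somewhere-injective locus'') and of $k>2n-3$ (``log-general-type regime enabling a non-uniruled filling'') are both off the mark.
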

\begin{proof}
We claim $\# \overline{\cM}_{\partial D_s^{\com}}(\Gamma^+,\emptyset)=0$ and $\# \overline{\cM}_{\partial D_s^{\com},o}(\Gamma^+,\emptyset)=0$. For this we use a cascades model (but only the compactness), i.e. we consider the Boothby-Wang contact form on $\partial D^{\com}_s$.	Following the compactness argument in \cite{bourgeois2009symplectic}, if we degenerate the contact form on $D_s$ (as perturbed by the Morse function) to the Boothby-Wang contact form, the curves in $\overline{\cM}_{\partial D_s^{\com}}(\Gamma^+,\emptyset),\overline{\cM}_{\partial D_s^{\com}, A,o}(\Gamma^+,\emptyset)$ degenerate to cascades. But since $|\Gamma^+|\ne \emptyset$, there is one level containing nontrivial holomorphic curves in the symplectization of the Boothby-Wang contact form, which projects to a holomorphic sphere in $D_s$. However since $k>2n-3$, there is no holomorphic sphere in $D_s$. Hence the claim follows. $\# \overline{\cM}_{\partial D_s^{\com}}(\Gamma^+,\emptyset)=0$ implies that  $\epsilon^k=0$ for all $k\ge 1$ form a $BL_{\infty}$ augmentation. Then  $\# \overline{\cM}_{\partial D_s^{\com},o}(\Gamma^+,\emptyset)=0$ implies that the planarity is $\infty$ using such augmentation.
\end{proof}
If one applies neck-stretching to the curve found in Proposition \ref{prop:curve}, we will get a SFT building, which might contain curves with negative punctures subject to a point constraint and augmentation curves in the filling. Theorem \ref{thm:infinity} is a situation where the augmentation from the natural filling and the trivial algebraic augmentation yield different computations. The following proposition singles out the module spaces for $\partial D^{\com}_s$ that might influence the computation of planarity for different augmentations. 

\begin{proposition}\label{prop:max}
	Let $D_s$ be a smooth degree $k\ge n+1$ hypersurface in $\CP^{n}$, assume $\Gamma^+$ is a subset of $\{\gamma^n_{p_{\min}},\underbrace{\gamma_{p_{\min}},\ldots,\gamma_{p_{\min}}}_{k-n} \}$. Then for $\Gamma^-\ne \emptyset$, $\# \overline{\cM}_{\partial D_s^{\com},o}(\Gamma^+,\Gamma^-)=0$ unless $\Gamma^+=\{\gamma^n_{p_{\min}}\}$, $\Gamma^-=\{\gamma^n_{p_{\max}}\}$ or $\Gamma^+=\{\gamma_{p_{\min}}\}$, $\Gamma^-=\{\gamma_{p_{\max}}\}$.
\end{proposition}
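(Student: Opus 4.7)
The plan is a Morse--Bott cascades analysis combined with index and area computations on $\partial D_s^{\com}$.

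Apply Stokes' theorem to compute the $d\alpha$-area of a curve $u\in\overline{\cM}_{\partial D_s^{\com},o}(\Gamma^+,\Gamma^-)$ in terms of the multiplicities $d_i,e_j$ of its asymptotic orbits; the identification of $\sum d_i-\sum e_j$ with $kN$, where $N$ is the $[H]$-degree in $\CP^n$ of the surface obtained by gluing natural disk caps in $\cO(k)|_{D_s}$ to all asymptotic ends, comes from intersection with $D_s$. Non-negativity of the area together with $\sum d_i<k$ (which is where the hypothesis of ``proper subset'' enters) forces $N=0$, hence $c_1(A)=(n+1)N=0$ and $\sum d_i=\sum e_j$. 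Substituting $\mu_{CZ}(\gamma^m_p)=n-1-2m-\ind(p)$ (from \eqref{eqn:index}) and $\ind(p_i)=\ind(p_{\min})=0$ in the Fredholm-index formula reduces the index to
$$\ind(u)=2n-6+2s^+-(2n-4)s^-+\sum_j\ind(q_j),$$
so requiring the pointed-moduli dimension $\ind(u)+2-2n$ to be zero yields the index balance
$$\sum_j\ind(q_j)=4-2s^++(2n-4)s^-.$$

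Then I would degenerate the contact form on $\partial D_s^{\com}$ to a Boothby--Wang form perturbed by a perfect Morse function $f$ on $D_s$ with unique minimum $p_{\min}$ and unique maximum $p_{\max}$, which exist because the standing hypothesis $n$ odd makes $D_s$ have even-graded homology. Curves break in this limit into Morse--Bott cascades whose ``Floer'' components are branched covers of trivial cylinders over fibers $\pi^{-1}(x)$: indeed, the equality $\int\tilde v^*\omega_{D_s}=-\int u^*d\alpha=-kN=0$ applied to the induced holomorphic map $\tilde v\colon\Sigma\to D_s$ forces every Floer component to project to a single point of $D_s$. The image of the cascade in $D_s$ is therefore a tree of ascending $(+\nabla f)$-flow segments connecting the critical points $p_i,q_j$ through the Floer base points.

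The point constraint pins one Floer base point at $o_D:=\pi(o)\in D_s$, taken generic. Genericity implies that the closure of the ascending trajectory through $o_D$ runs between $p_{\min}$ and $p_{\max}$ and contains no other critical point of $f$. Every path in the cascade tree from $o_D$ to a terminal negative end is a purely ascending concatenation of Morse segments through non-critical Floer base points on this trajectory, so every $q_j$ must equal $p_{\max}$, and $\sum_j\ind(q_j)=s^-(2n-2)$. Combining with the index balance yields $2s^-=4-2s^+$, hence $s^+=s^-=1$; for any other $(s^+,s^-)$ the moduli dimension equals $2(s^++s^-)-4>0$, so the zero-dimensional contribution vanishes. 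The matching $\sum e_j=\sum d_i\in\{1,n\}$ then pins $\Gamma^-=\{\gamma_{p_{\max}}\}$ or $\{\gamma^n_{p_{\max}}\}$, precisely the two exceptional configurations of the statement.

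The delicate step is justifying that virtual counts are preserved under the Morse--Bott degeneration, handled by choosing compatible auxiliary data along the one-parameter family of contact forms in the sense of Theorem \ref{thm:BL}. The generic-position argument terminates cleanly because all $p_i$'s equal the unique minimum and all reachable terminal critical points equal the unique maximum, so no exotic multi-Floer combinatorics with intermediate critical base points can contribute.
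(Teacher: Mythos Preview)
Your approach is essentially the paper's: show $\sum d_i=\sum e_j$ by action and homology, degenerate to the Boothby--Wang form so that the holomorphic part of any limiting cascade is a branched cover of a trivial cylinder, use the generic point constraint to force all cascade base points onto the single gradient trajectory through $\pi(o)$ (hence every negative end lands at $p_{\max}$), and then read off $s^+=s^-=1$ from the index. Two inessential slips: the relative Chern number for a symplectization curve in the disk-cap trivialization is $c_1(A)=(n+1-k)N$, not $(n+1)N$ (you have implicitly used $c_1(T\CP^n)$ rather than $c_1(\xi)=\pi^*c_1(TD_s)$, but since $N=0$ this is harmless); and ``$n$ odd'' is not needed here --- a perfect Morse function on $D_s$ with unique extremum exists for all $n\ge 2$, and this proposition carries no parity hypothesis.
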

\begin{proof}
We can assume $\Gamma^-=\{\gamma^{d_1}_{p_1},\ldots,\gamma^{d_r}_{p_r}\}$ with $\sum d_i$ is the total multiplicity of $\Gamma^+$ (which is at most $k$) by homology and action reasons, as the total multiplicity of $\Gamma^-$ is at most the total multiplicity of $\Gamma^+$ with the difference a multiple of $k$.
Then we can run the Morse-Bott compactness argument as in Theorem \ref{thm:infinity} for $\overline{\cM}_{\partial D_s^{\com},o}(\Gamma^+,\Gamma^-)$, in the limit cascades moduli space, we necessarily have the holomorphic curve part have zero energy and hence a constant. Therefore due to the generic point constraint $o$, we must have $p_1=\ldots = p_r=p_{\max}$.  Then the expected dimension of such moduli space is computed by
	$$2k-2|\Gamma^+|+\vdim \overline{\cM}_{\partial D_s^{\com},A,o}(\Gamma^+,\Gamma^-) - 2|\Gamma^-|-2k = -4. $$
	Hence $\vdim \overline{\cM}_{\partial D_s^{\com},A,o}(\Gamma^+,\Gamma^-)=2|\Gamma^+|+2|\Gamma^-|-4$, which is zero iff $|\Gamma^+|=|\Gamma^-|=1$. Hence the claim follows.
\end{proof}

\begin{remark}\label{rmk:max}
	In the case considered in Proposition \ref{prop:max}, the only non-empty moduli spaces contributing to the pointed map are $\overline{\cM}_{\partial D_s^{\com},o}({\gamma^n_{p_{\min}}},{\gamma^n_{p_{\max}}})$ and  $\overline{\cM}_{\partial D_s^{\com},o}({\gamma_{p_{\min}}},{\gamma_{p_{\max}}})$. Moreover, the algebraic count is not zero as the gradient trajectories from $p_{\min}$ to $p_{\max}$ traverse the whole manifold. This follows from a cascades construction with gluing as in \cite{bourgeois2009symplectic}.
\end{remark}

Theorem \ref{thm:infinity} along with Corollary \ref{cor:planarity} shows that computation of planarity can \emph{depend} on the augmentations. In the special case of the contact boundary of smooth divisor complements, Proposition \ref{prop:max} isolates how this dependence works, i.e.\ $BL_\infty$ augmentations to $q_{\gamma^n_{p_{\max}}} q^{k-n}_{\gamma_{p_{\min}}}$ and  $q_{\gamma^n_{p_{\min}}} q_{\gamma_{p_{\max}}} q^{k-n-1}_{\gamma_{p_{\min}}}$ determined whether $\{\gamma^n_{p_{\min}},\underbrace{\gamma_{p_{\min}},\ldots,\gamma_{p_{\min}}}_{k-n} \}$ contributes to finite planarity. Indeed if we apply neck-stretching to the moduli space for  $\eta_{D_s^{\com}}(q_{\gamma^n_{p_{\min}}}q^{k-n}_{\gamma_{p_{\min}}})\ne 0$ in Corollary \ref{cor:planarity} with the point constraint picked from the contact boundary along a copy of the contact boundary that is pushed in a bit, Proposition \ref{prop:max} and the proof of Theorem \ref{thm:infinity} imply that we must have non-trivial augmentations\footnote{Indeed, this is case. Roughly speaking, the augmentation to  $q_{\gamma^n_{p_{\max}}} q^{k-n}_{\gamma_{p_{\min}}}$. } to  $q_{\gamma^n_{p_{\max}}} q^{k-n}_{\gamma_{p_{\min}}}$ or $q_{\gamma^n_{p_{\min}}} q_{\gamma_{p_{\max}}} q^{k-n-1}_{\gamma_{p_{\min}}}$ using the standard filling $D_s^{\com}$\footnote{Indeed, the augmentation to  $q_{\gamma^n_{p_{\max}}} q^{k-n}_{\gamma_{p_{\min}}}$ (up to a multiple related to the multiplicity of Reeb orbits) counts degree $1$ curves in $\CP^n$ passing through a fixed point of the divisor $D_s$ with multiplicity $n$ and $k-n$ marked point passing through $D_s$, using the divisor axiom and \cite[Proposition 3.4]{cieliebak2018punctured}, such counting should be $(n-1)!k^{k-n}\ne 0$.}. However in Theorem \ref{thm:infinity}, we choose the trivial augmentation which kills the planarity.

Let $D$ be $k$ generic hyperplanes. To prove the upper bounds for Theorem \ref{thm:CP}, we need to
\begin{enumerate}
    \item find a collection of Reeb orbits on $\partial D^{\com}$ that bounds a rational curve with a point constraint;
    \item show that the collection of Reeb orbits represents a closed class in $(\overline{B}^*V, \widehat{\ell}_{\epsilon})$ for any augmentation;
    \item show that the non-trivial planarity also does not depend on augmentation. 
\end{enumerate}
Although there is an obvious candidate for step 1, to establish results in the same spirit of Corollary \ref{cor:planarity}, we need to understand curves in the symplectic cap/neighborhood of the normal crossing divisors. In principle, one should be able to set up a relation between RSFT curves in an affine variety (complement of simple normal crossing divisors) with the log Gromov-Witten invariants. However, this is technically much harder than relating the relative Gromov-Witten invariants with RSFT curves of the complement of a smooth divisor in Corollary \ref{cor:planarity}. For step 2 and 3, as the contact boundary $\partial D^{\com}$ has a much more complicated Reeb dynamics compared to the smooth case, it is highly non-trivial to establish them. Therefore the strategy to obtain the upper bounds in Theorem \ref{thm:CP} is to take advantage of simple Reeb dynamics on $\partial D_s^{\com}$ and functoriality, where $D_s$ is a degree $k$ smooth hypersurface. More precisely, we use $X$ to denote the exact cobordism from $\partial D^{\com}$ to $\partial D^{\com}_s$ from Lemma \ref{lemma:perturb}. From the discussion above the planarity of $\partial D^{\com}_s$ depends on augmentations, it is still possible for the planarity for  $\partial D^{\com}_s$ to be finite if we only use augmentation in the form of $\epsilon\circ \phi$, where $\epsilon$ is the augmentation of $\RSFT(\partial D^{\com})$ and $\phi$ is the $BL_\infty$ morphism from $X$, this the content of Proposition \ref{prop:ind'}. Then we use the functoriality in Proposition \ref{prop:order} and argue that the computation we did with the filling $D_s^{\com}$ in Corollary \ref{cor:planarity} is in the form  $\epsilon_{D^{\com}}\circ \phi$, where $\epsilon_{D^{\com}}$ is the augmentation of $\RSFT(\partial D^{\com})$ from $D^{\com}$. In principle, this involves a homotopy argument by neck-stretching. To avoid the overhead of introducing homotopies of $BL_\infty$ morphisms, we show that the formula can be identified on the nose, due to the fact that when transversality in neck-streaking holds, we can identify a fully-stretched moduli space with a sufficiently stretched moduli space by classical gluing. This is the content of Proposition \ref{prop:neck}. In the following, we first prove a property explaining the role of $k<\frac{3n-1}{2}$.

\begin{proposition}\label{prop:ind}
	Let $X$ be the cobordism from $\partial D^{\com}$ to $\partial D^{\com}_s$ as above. Assume $\Gamma^+=\{\gamma^{k_1}_{p_{\max}},\gamma^{k_2}_{p_{\min}},\ldots,\gamma^{k_s}_{p_{\min}}\}$ for $\sum_{i=1}^s k_i\le k$, we have $\vdim \cM_{X}(\Gamma^+,\Gamma^-)<0$ or $\overline{ \cM}_{X}(\Gamma^+,\Gamma^-)=\emptyset$ if $\Gamma^-\ne \emptyset$ and $s<\frac{n+1}{2}$.  
\end{proposition}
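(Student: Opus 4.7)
The strategy is to cap $u$ off at both ends to form a closed rational curve $\bar u$ in $\CP^n$ and exploit the resulting geometric rigidity. At each positive asymptote $\gamma^{k_i}_{p_i}\in\Gamma^+$ I would attach the natural holomorphic disk in a tubular neighborhood of $D_s$ meeting $D_s$ at $p_i$ with multiplicity $k_i$, and at each negative asymptote $\gamma^{\sigma_{I_j}}_{q_j}\in\Gamma^-$ a natural disk near $\check D_{I_j}$ meeting each $D_i$ for $i\in I_j$ once. Using $D_s\cap D=\emptyset$, the $\Gamma^-$-caps may be shrunk to avoid $D_s$, so $\bar u\cdot D_s$ is concentrated in the $\Gamma^+$-caps and equals $k':=\sum k_i$. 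Since $\bar u\cdot D_s=Nk$ for $N=\deg\bar u$, and $\Gamma^+\neq\emptyset$ forces $\bar u$ non-constant (so $N\ge 1$) while $k'\le k$, either $k'<k$ (no valid $N$, so $\overline{\cM}_X(\Gamma^+,\Gamma^-)=\emptyset$) or $k'=k$ and $N=1$, so $\bar u$ is an embedded line in $\CP^n$.

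In the remaining case I would analyze the moduli of lines realizing the prescribed incidence and tangency structure. The line must pass through the fixed critical point $p_{\max}$ with order-$k_1$ tangency to $D_s$, through $s-1$ free points $p_2',\ldots,p_s'\in D_s$ (each lying in the stable manifold of $p_{\min}$ under the Morse flow, essentially all of $D_s$ for generic $f$) with order-$k_j$ tangencies, and through $r$ free points $q_j'\in\check D_{I_j}$ corresponding to the partition $\{I_j\}$ of $\{1,\ldots,k\}$. A joint dimension count over $\mathrm{Gr}(2,n+1)$ and the free-point parameters, subtracting the incidence codimensions ($n-1$ for each line-through-point condition) and the tangency codimensions ($k_i-1$ at each $D_s$-marker, totalling $k-s$), gives the complex dimension
\begin{equation*}
s+r+n-1-2k
\end{equation*}
for the moduli of such configured lines. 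With $s<(n+1)/2$, $k>n+1$, and (in the application from Theorem \ref{thm:CP}) $k<(3n-1)/2$, combined with $r\le k$, this dimension is negative in the relevant range, so for generic Morse functions on $D_s$ and on each stratum $D_{I_j}$ the moduli is empty, hence $\overline{\cM}_X(\Gamma^+,\Gamma^-)=\emptyset$.

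The main obstacle is reconciling this geometric emptiness with the Fredholm virtual dimension
\begin{equation*}
\mathrm{ind}(u)=2n-2+2s-2r(n-2)+\sum\mathrm{ind}(q_j)+\sum c_j,
\end{equation*}
which, depending on the Morse indices $\mathrm{ind}(q_j)$ and the perturbation grades $c_j\in[0,|I_j|-1]$ encoding which perturbed orbit in the $T^{|I_j|-1}$-family is used, need not be negative. For $\Gamma^-$ with small $\mathrm{ind}(q_j)$ and small $c_j$ the formula already yields $\vdim<0$, giving the first alternative in the conclusion directly. For $\Gamma^-$ with large $\mathrm{ind}(q_j)$ producing $\vdim\ge 0$, the asymptotic orbits concentrate near specified critical points in the strata, pinning down each $q_j'$; combined with the tangency conditions at $p_{\max},p_2',\ldots,p_s'$ and the assumption $s<(n+1)/2$, the line is strictly over-determined, so the geometric count remains empty. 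The delicate bookkeeping uniformly across $\Gamma^-$ is where I expect the main technical difficulty, and I anticipate that the parity constraint $n$ odd appearing in Theorem \ref{thm:CP} ultimately enters through the interplay between the ranges of $\mathrm{ind}(q_j)$ and $c_j$ that keep $\vdim\ge 0$ compatible with the dimension bound $s+r+n-1-2k<0$.
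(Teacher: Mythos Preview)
Your capping argument rests on the assertion that $D_s\cap D=\emptyset$, which is false: $D_s$ is a smooth degree $k$ hypersurface and $D$ is a union of $k$ hyperplanes in $\CP^n$, so they intersect in a positive-dimensional subvariety. In fact, the cobordism $X$ is constructed by choosing $D_s$ \emph{inside} the removed tubular neighbourhood of $D$, so the natural disk caps for the $\Gamma^-$-orbits (which live in that neighbourhood) necessarily meet $D_s$, with total intersection multiplicity $\sum_r\sum t_r$. Hence your computation $\bar u\cdot D_s=\sum k_i$ is wrong, and the conclusion that $\bar u$ is a line does not follow. You also tacitly restrict $\Gamma^-$ to orbits of the form $\gamma^{\sigma_{I_j}}_{q_j}$ (all multiplicities $1$), whereas the proposition allows arbitrary $\gamma^{t_r}_{p_r}$.

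The paper's proof avoids all geometric analysis of lines and is a short uniform index bound. Assuming $\overline{\cM}_X(\Gamma^+,\Gamma^-)\ne\emptyset$, one first observes that $\sum_r\sum t_r\equiv\sum_i k_i\pmod k$ by homology, and that the contact action of $\Gamma^-$ is approximately $\sum_r\sum t_r$ while that of $\Gamma^+$ is approximately $\sum_i k_i\le k$; positivity of energy then forces the equality $\sum_r\sum t_r=\sum_i k_i$. Feeding this into the index formula and using the bound $\mu_{CZ}(\gamma_r^-)+n-3\ge (n-3)-2\sum t_r$ (which comes from $\ind(p_r)\le n-|\supp t_r|$, valid because $\check D_{\supp t_r}$ is Weinstein of real dimension $2(n-|\supp t_r|)$ when $k\ge n+1$) gives
\[
\ind(u)\le -4-|R|(n-3)+2s,
\]
which is negative whenever $|R|\ge 1$ and $s<\tfrac{n+1}{2}$. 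No case split on $\Gamma^-$, no analysis of configured lines, and no appeal to the bound $k<\tfrac{3n-1}{2}$ or the parity of $n$ is needed here; those hypotheses enter only later, in Proposition~\ref{prop:ind'} and the proof of Theorem~\ref{thm:CP}.
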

\begin{proof}
	 Assume $\Gamma^-=\{\gamma^-_r \}_{r\in R}$, which are perturbations from $\{\gamma^{t_r}_{p_r}\}_{r\in R}$, such that $\overline{ \cM_{X}}(\Gamma^+,\Gamma^-)\ne \emptyset$. Then we have $\sum_{r\in R} \sum t_r = \sum_{i=1}^s k_i \mod k$ by homology reasons. On the other hand, as explained in Remark \ref{rmk:action}, the total contact action of $\Gamma^-$ is close to $\sum_{r\in R} \sum t_r$. If we choose the smoothing $D_s$ contained in the neighborhood of $D$ that is removed to get the nice contact boundary to construct the exact cobordism $X$, we have the total contact energy of $\Gamma^+$ is approximately $\sum_{i=1}^s k_i$ and hence $\sum_{r\in R} \sum t_r \le \sum_{i=1}^s k_i$. Therefore we have  $\sum_{r\in R} \sum t_r = \sum_{i=1}^s k_i$. Then the expected dimension of $\cM_{X,A}(\Gamma^+,\Gamma^-)$, i.e.\ $\ind(u)$ for $u\in\cM_{X,A}(\Gamma^+,\Gamma^-)$, satisfies
	 $$2n-2+\sum_{i=1}^s 2(k_i-1)+\ind(u)+\sum_{r=1}^R(\mu_{CZ}(\gamma^{-}_r)+n-3)=2n-6.$$
	 Since $(\mu_{CZ}(\gamma^{-}_r)+n-3)\ge 2n-3-2\sum t_r-\ind(p_r)-|\supp t_r|$ and  $\check{D}_{\supp t_r}$ is Weinstein by $k\ge n+1$,  we have $\ind(p_r)\le n-|\supp t_r|$ and  $(\mu_{CZ}(\gamma^{-}_r)+n-3)\ge (n-3)-2\sum t_r$. As a consequence, we have 
	 $$\ind(u)\le -4-|R|(n-3)+2s.$$
	 In particular, $\ind(u)<0$ if $|R|\ne 0$ and $s<\frac{n+1}{2}$.
\end{proof}

\begin{proposition}\label{prop:neck}
	Let $\phi$ denote the $BL_\infty$ morphism from the cobordisms $X$ and $\epsilon_{D^{\com}_s},\epsilon_{D^{\com}}$ augmentations from $D^{\com}_s,D^{\com}$ respectively, then we have
	$$\widehat{\ell}_{\bullet,\epsilon_{D^{\com}_s}}(q_{\gamma^n_{p_{\min}}}q^{k-n}_{\gamma_{p_{\min}}})=\widehat{\ell}_{\bullet,\epsilon_{D^{\com}}\circ \phi }(q_{\gamma^n_{p_{\min}}}q^{k-n}_{\gamma_{p_{\min}}})\stackrel{\text{Prop }\ref{prop:order}}{=} \widehat{\ell}_{\bullet,\epsilon_{D^{\com}}}\circ \widehat{\phi}^1_{\epsilon_{D^{\com}}}(q_{\gamma^n_{p_{\min}}}q^{k-n}_{\gamma_{p_{\min}}}) \ne 0,$$
	where  $\widehat{\phi}^1_{\epsilon_{D^{\com}}}$ is defined in Proposition \ref{prop:order}, i.e.\ the map on the bar complex for the linearized $L_\infty$ morphism from $(V_{\partial D_s^{\com}}, \{\ell^k_{\epsilon_{D^{\com}}\circ \phi}\}_{k\ge 1})$ to $(V_{\partial D^{\com}},\{\ell^k_{\epsilon_{D^{\com}}}\}_{k\ge 1})$ induced by $\phi$. 
\end{proposition}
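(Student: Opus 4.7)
The middle equality is a direct application of Proposition \ref{prop:order}, so the essential work splits into (i) establishing the first equality and (ii) verifying the non-vanishing on the right. For (ii), I would first observe that the homological argument of Corollary \ref{cor:planarity} uses only the pattern of homology classes of orbits, not the specific filling; the same reasoning shows that $X := q_{\gamma^n_{p_{\min}}}q^{k-n}_{\gamma_{p_{\min}}}$ remains $\widehat{\ell}_{\epsilon_{D^{\com}} \circ \phi}$-closed in $\overline{B}^{k-n+1} V_{\partial D_s^{\com}}$. Then by Proposition \ref{prop:equi} applied to the filling $D_s^{\com}$ together with Corollary \ref{cor:planarity}, one has $\widehat{\ell}_{\bullet, \epsilon_{D_s^{\com}}}(X) = \eta_{D_s^{\com}}(X) \neq 0$; combining with (i), the right side is nonzero.

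For (i), the plan is to perform a neck-stretching of $\widehat{D_s^{\com}}$ along the contact hypersurface $\partial D^{\com}\subset D_s^{\com}$ furnished by Lemma \ref{lemma:perturb}, producing a family of admissible almost complex structures $J_t$ degenerating as $t \to \infty$ to the broken cobordism consisting of $D^{\com}$, the symplectization $\R\times \partial D^{\com}$, and $X$. One then selects VFC auxiliary data on each of these pieces (the data used to define $\epsilon_{D^{\com}}$, the $BL_\infty$ algebra on $V_{\partial D^{\com}}$, and $\phi$ respectively), glues them to admissible data on $\widehat{D_s^{\com}}$ for all sufficiently large $t$, and uses this to realize a specific $\epsilon_{D_s^{\com}}$ and pointed map on $\partial D_s^{\com}$. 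The goal is to match the resulting moduli spaces stratum by stratum.

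The key analytic input is a \emph{classical gluing} identification: once transversality is available for every stratum of the fully-stretched broken configurations, each such configuration deforms uniquely to a $J_t$-curve for $t$ large, and every $J_t$-curve arises this way. This bypasses introducing any homotopy theory for $BL_\infty$-morphisms. The hypothesis $n+1 < k < \tfrac{3n-1}{2}$ is what makes transversality accessible: Proposition \ref{prop:ind} forces the $X$-level piece of any such configuration, whose positive punctures are drawn from the orbit set of $X$ (possibly augmented by one $\gamma_{p_{\max}}$ or $\gamma^n_{p_{\max}}$ produced in an intermediate level), to have empty negative asymptotics; Proposition \ref{prop:max} together with Remark \ref{rmk:max} restricts the intermediate $\R\times \partial D_s^{\com}$-pieces to trivial cylinders or the index-zero cylinders from $p_{\min}$ to $p_{\max}$; and the same area/degree argument as in Proposition \ref{prop:curve} makes all remaining curves somewhere injective, so transversality holds classically. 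Matching the algebraic count of the fully-stretched configurations against the composition formula of Definition \ref{def:morphism} then identifies $\epsilon_{D_s^{\com}}^{s,0}$ with $(\epsilon_{D^{\com}} \circ \phi)^{s,0}$ on the relevant input orbits, and an analogous matching on the top symplectization level identifies the pointed maps, proving the first equality.

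The main obstacle is controlling the VFC combinatorics during the partial-stretch so that the glued auxiliary data on $\widehat{D_s^{\com}}$ restrict on the nose to the pre-chosen data on the pieces: only then do the algebraic counts match without invoking an augmentation homotopy. This is the same type of compatibility needed to define composition of cobordism maps inside Pardon's VFC framework, and it is what forces us into the range where Propositions \ref{prop:ind} and \ref{prop:max} both apply to kill the unwanted broken configurations.
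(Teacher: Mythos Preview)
Your treatment of (ii) and the middle equality is fine. The substantive divergence from the paper is in (i): the paper does \emph{not} attempt to prove the first equality directly by matching $\epsilon_{D_s^{\com}}$ with $\epsilon_{D^{\com}}\circ\phi$ on any orbit set. Instead it proves the \emph{outer} equality (first term $=$ third term) by a single geometric neck-stretch, and then the first equality follows formally from the middle one.

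Concretely, the paper neck-stretches the moduli space $\overline{\cM}_{D_s^{\com},o}(\{\gamma^n_{p_{\min}},\gamma_{p_{\min}},\ldots,\gamma_{p_{\min}}\},\emptyset)$ itself along $\partial D^{\com}$, with the point constraint $o$ placed in $D^{\com}$. Every curve in this space is somewhere injective by the capping argument, so transversality for every $J_t$ is classical. In the fully-stretched limit, Proposition~\ref{prop:unirule} forces the bottom component in $\widehat{D^{\com}}$ to carry all $k+1-n$ positive punctures, hence every upper level is a union of cylinders; homology and maximality of periods then pin down the intermediate orbits and rule out nontrivial symplectization levels. The two-level count yields
\[
\eta_{D_s^{\com}}(q_{\gamma^n_{p_{\min}}}q^{k-n}_{\gamma_{p_{\min}}})=\eta_{D^{\com}}\circ\widehat{\phi}^1_{\epsilon_{D^{\com}}}(q_{\gamma^n_{p_{\min}}}q^{k-n}_{\gamma_{p_{\min}}}),
\]
and Proposition~\ref{prop:equi} on each side converts $\eta$ back to $\widehat{\ell}_{\bullet}$.

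Two consequences of this difference matter. First, the paper's route never calls on Proposition~\ref{prop:ind} or Proposition~\ref{prop:max}, so Proposition~\ref{prop:neck} holds for all $k>n+1$; your argument, by making transversality hinge on Proposition~\ref{prop:ind}, only establishes the result under the extra hypothesis $k<\tfrac{3n-1}{2}$, which is not in the statement. Second, the ``gluing VFC auxiliary data'' step you flag as the main obstacle is precisely what the paper circumvents: since a single transversely cut-out moduli space is being stretched, Axiom~\ref{axiom} identifies virtual and geometric counts at each stage, and no compatibility of auxiliary data across the stretch is required.
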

\begin{proof}
	 We will apply a neck-stretching for $\overline{\cM}_{D^{\com}_s, o}(\{\gamma^n_{p_{\min}},\gamma_{p_{\min}},\ldots,\gamma_{p_{\min}}\},\emptyset)$ in \eqref{curve2} of Proposition \ref{prop:curve} along $\partial D^{\com}$ for $o\in D^{\com}$. We first claim that every curve in $\overline{\cM}_{D^{\com}_s, o}(\{\gamma^n_{p_{\min}},\gamma_{p_{\min}},\ldots,\gamma_{p_{\min}}\},\emptyset)$ is somewhere injective. For otherwise, assume $u\in \overline{\cM}_{D^{\com}_s, o}(\{\gamma^n_{p_{\min}},\gamma_{p_{\min}},\ldots,\gamma_{p_{\min}}\},\emptyset)$ is a branched cover over $u'$, then we can cap off $u'$ with natural disks to obtain a homology class $A$ in $H_2(\CP^2)$ with $A\cap D_s<k$, which is a contradiction. Therefore it is safe to assume $\overline{\cM}_{D^{\com}_s, o}(\{\gamma^n_{p_{\min}},\gamma_{p_{\min}},\ldots,\gamma_{p_{\min}}\},\emptyset)$ is cut out transversely for the stretching $J_t$. In the fully stretched picture, the bottom level containing the marked point $o$ must have $k+1-n$ positive punctures. This is because we must have the number of positive punctures no larger than $k+1-n$ for otherwise genus has to be created. If there are fewer punctures, then by Proposition \ref{prop:unirule}, the curve can not exist by dimension reasons. By the dimension computation in Proposition \ref{prop:unirule}, the only possible bottom level is described in Proposition \ref{prop:unirule}. Then by the same capping argument, we know that the bottom curve is necessarily somewhere injective. As a consequence, all the levels above the bottom level must be unions of cylinders because of the number of positive punctures. Then by considering homology of the cobordism $X$, we must have the positive asymptotics of the bottom level is the form of ${\check{\gamma}^{\sigma_I}_{p_{I,\min}}}\cup\{\gamma_{p_{i,\min}}\}_{i\in I^{\com}}$, where $I\subset \{1,\ldots,k\}$ is a subset of size $n$, $p_{I,\min},p_{i,\min}$ are minimums. Next we still have multiple sympletization levels of cylinders for $\partial D^{\com}$ and one level of cylinders in $X$. Since ${\check{\gamma}^{\sigma_I}_{p_{I,\min}}}\cup\{\gamma_{p_{i,\min}}\}_{i\in I^{\com}}$ have the maximal period in their respective homology classes, there is no action room for the sympletization levels. The top level cylinders in $X$, i.e.\ $\overline{\cM}_{X}(\{\gamma^n_{p_{\min}}\},\{\check{\gamma}^{\sigma_I}_{p_{I,\min}}\})$ and  $\overline{\cM}_X(\{\gamma_{p_{\min}}\},\{\gamma_{p_{i,\min}}\})$ for $i\in I^{\com}$, are rigid and cut out transversely, as the negative asymptotic orbits are simple. Therefore, the fully stretched moduli space is cut out transversely. The transversality of neck-stretching implies that this $2$-level breaking can be identified with $\overline{\cM}_{D^{\com}_s, o}(\{\gamma^n_{p_{\min}},\gamma_{p_{\min}},\ldots,\gamma_{p_{\min}}\},\emptyset)$ for sufficiently stretched $J_t$.  By Axiom \ref{axiom}, we can count them to obtain that
	$$\eta_{D_s^{\com}}(q_{\gamma^n_{p_{\min}}}q^{k-n}_{\gamma_{p_{\min}}})=\eta_{D^{\com}}\circ\widehat{\phi}^1_{\epsilon_{D^{\com}}}(q_{\gamma^n_{p_{\min}}}q^{k-n}_{\gamma_{p_{\min}}}).$$   
	Then we can use Proposition \ref{prop:equi} to relate $\eta$ back to $\widehat{\ell}_{\bullet,\epsilon}$, since $q_{\gamma^n_{p_{\min}}}q^{k-n}_{\gamma_{p_{\min}}}$ is closed in  $(\overline{B}^{k+1-n}V_{\partial D_s^{\com}},\widehat{\ell}_{\epsilon_{D_s^{\com}}})$ by Corollary \ref{cor:planarity}. The non-vanishing follows from Corollary \ref{cor:planarity}.
	
\end{proof}

\begin{proposition}\label{prop:ind'}
	If $n+1\le k<\frac{3n-1}{2}$, then $\widehat{\ell}_{\bullet,\epsilon\circ \phi}(q_{\gamma^n_{p_{\min}}}q^{k-n}_{\gamma_{p_{\min}}})\ne 0$ is independent of the augmentation $\epsilon$ of $\RSFT(\partial D^{\com})$.
\end{proposition}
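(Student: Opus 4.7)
The plan is to reduce $\widehat{\ell}_{\bullet,\epsilon\circ\phi}(x)$, where $x:=q_{\gamma^n_{p_{\min}}}q^{k-n}_{\gamma_{p_{\min}}}$, to a single cobordism count in $\widehat{X}$ that is manifestly independent of $\epsilon$, exploiting the low-energy rigidity supplied by the hypothesis $k<\frac{3n-1}{2}$. First, I would verify that $x$ is $\widehat{\ell}_{\epsilon\circ\phi}$-closed for every augmentation $\epsilon$ of $\RSFT(\partial D^{\com})$. This is the parity check already used in Corollary \ref{cor:planarity}: for $n$ odd the smooth simply connected hypersurface $D_s$ admits a perfect Morse function with only even-index critical points (its Betti numbers concentrate in even degrees by the Lefschetz hyperplane theorem and Poincar\'e duality), and the formula $n-3-\mu_{CZ}(\gamma^m_p)=2m-2+\ind(p)$ places every generator of $V_{\partial D_s^{\com}}$ in even $\Z_2$-degree. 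The parity-shifting differential $\widehat{\ell}_{\epsilon\circ\phi}$ has no odd-degree generators to land in, so $x$ is closed for every $\epsilon$.

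With closed
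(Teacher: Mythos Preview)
Your proposal is incomplete and, more importantly, the part you did write is tangential to what Proposition~\ref{prop:ind'} actually asserts. The proposition is a \emph{chain-level} statement: $\widehat{\ell}_{\bullet,\epsilon\circ\phi}(x)\in\Q$ is a number for any $x$, closed or not, and the claim is that this number does not depend on $\epsilon$. Closedness of $x$ is used later (in the proof of Theorem~\ref{thm:CP}) to pass to homology, but it plays no role here. Your parity argument for closedness is correct under the extra hypothesis that $n$ is odd, but Proposition~\ref{prop:ind'} does not assume this, and in any case the proposition's proof in the paper never invokes closedness.

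The substantive argument, which your proposal does not reach, is the following. Unwinding the definition of $\widehat{\ell}_{\bullet,\epsilon\circ\phi}$, each term is a tree with one $p_\bullet^{a,b}$ vertex (in the symplectization of $\partial D_s^{\com}$) and several $(\epsilon\circ\phi)^{c}$ vertices; the augmentation $\epsilon$ enters only through the latter, and only when some $\phi^{s,l}$ with $l\geq 1$ produces orbits in $\partial D^{\com}$ for $\epsilon$ to cap. Proposition~\ref{prop:max} pins down the possible outputs of $p_\bullet^{a,b}$ on subsets of $\{\gamma^n_{p_{\min}},\gamma_{p_{\min}},\dots,\gamma_{p_{\min}}\}$: the only nontrivial ones with $b\geq 1$ are $p_\bullet^{1,1}$ sending $\gamma^n_{p_{\min}}\mapsto\gamma^n_{p_{\max}}$ or $\gamma_{p_{\min}}\mapsto\gamma_{p_{\max}}$. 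Any $\phi$-vertex that could feed $\epsilon$ therefore has positive asymptotics of the form $\Gamma^+=\{\gamma^{k_1}_{p_{\max}},\gamma^{k_2}_{p_{\min}},\dots,\gamma^{k_s}_{p_{\min}}\}$ with $s\leq k+1-n$. The hypothesis $k<\tfrac{3n-1}{2}$ is exactly the inequality $k+1-n<\tfrac{n+1}{2}$, and Proposition~\ref{prop:ind} then forces $\vdim\,\mathcal{M}_X(\Gamma^+,\Gamma^-)<0$ (or emptiness) whenever $\Gamma^-\neq\emptyset$. Hence every $\epsilon$-dependent tree vanishes for dimension reasons, and the surviving terms involve only $\phi^{*,0}$, which is independent of $\epsilon$. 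Non-vanishing of the resulting value is then read off from Proposition~\ref{prop:neck}. Your outline mentions ``low-energy rigidity'' but never identifies this mechanism; the numerical translation $k<\tfrac{3n-1}{2}\Leftrightarrow k+1-n<\tfrac{n+1}{2}$ and the appeal to Propositions~\ref{prop:max} and~\ref{prop:ind} are the entire content of the proof.
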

\begin{proof}
	When $k<\frac{3n-1}{2}$, we have $1+k-n<\frac{n+1}{2}$. By Proposition \ref{prop:max}, a component to $\widehat{\ell}_{\bullet,\epsilon\circ \phi}(q_{\gamma^n_{p_{\min}}}q^{k-n}_{\gamma_{p_{\min}}})$ with influence from $\epsilon$ is described in the graph below (Figure \ref{fig:ind_aug}), which does not exist by dimension reasons by  Proposition \ref{prop:ind}.   Therefore $\widehat{\ell}_{\bullet,\epsilon\circ \phi}(q_{\gamma^n_{p_{\min}}}q^{k-n}_{\gamma_{p_{\min}}})$ is independent of $\epsilon$. The non-vanishing then follows from Proposition \ref{prop:neck} by taking $\epsilon=\epsilon_{D^{\com}}$.
	\begin{figure}[H]
		\begin{center}
			\begin{tikzpicture}
	         \node at (0,0) [circle,fill,inner sep=1.5pt] {};
	         \node at (0,0.3) {$\gamma^n_{p_{\min}}$};
 	         \node at (2,0) [circle,fill,inner sep=1.5pt] {};
 	         \node at (2,0.3) {$\gamma_{p_{\min}}$};
	         \node at (4,0) [circle,fill,inner sep=1.5pt] {};
	         \node at (4,0.3) {$\gamma_{p_{\min}}$};
	         \node at (6,0) [circle,fill,inner sep=1.5pt] {};
	         \node at (6,0.3) {$\gamma_{p_{\min}}$};
	         
	         \draw (0,0) to (0,-2);
	         \draw[dashed] (2,0) to (2,-2);
	         \draw[dashed] (4,0) to (4,-2);
	         \draw[dashed] (6,0) to (6,-2);
	         \node at (0,-1) [circle, fill=white, draw, outer sep=0pt, inner sep=3 pt] {};
             \node at (0,-1) [circle, fill=black, draw, outer sep=0pt, inner sep=1.5 pt] {};
             \node at (0.5,-1) {$p_{\bullet}^{1,1}$};
	          
	         \node at (0,-2) [circle,fill=red,inner sep=1.5pt] {};
	         \node at (0.5,-2) {$\gamma^n_{p_{\max}}$};
	         \node at (2,-2) [circle,fill=red,inner sep=1.5pt] {};
	         \node at (4,-2) [circle,fill,inner sep=1.5pt] {};
	         \node at (6,-2) [circle,fill,inner sep=1.5pt] {};
	         
	         \draw[color=red] (0,-2) to (1,-3) to (0,-4);
	         \draw[color=red] (2,-2) to (1,-3) to (2,-4);
	         \draw[color=red] (1,-3) to (1,-4);
	         \node at (1,-3) [circle, fill=red, draw, inner sep=3 pt] {}; 
	         \node at (1.5,-3) {$\phi^{2,3}$};
	         
	         \draw (4,-2) to (5,-3) to (5,-4);
	         \draw (6,-2) to (5,-3);
	         \node at (5,-3) [circle, fill, draw, inner sep=3 pt] {}; 
	         \node at (5.5,-3) {$\phi^{2,1}$};
	         \node at (5,-4) [circle,fill,inner sep=1.5pt] {};
	         
	         \draw (0,-4) to (0,-5);
	         \draw (1,-4) to (1,-5);
	         \draw (2,-4) to (3.5,-5) to (5,-4);
	         \node at (0,-5) [circle, fill, draw, inner sep=3 pt] {}; 
	         \node at (0.5,-5) {$\epsilon^{1}$};
	         \node at (1,-5) [circle, fill, draw, inner sep=3 pt] {}; 
	         \node at (1.5,-5) {$\epsilon^{1}$};
	         \node at (3.5,-5) [circle, fill, draw, inner sep=3 pt] {}; 
	         \node at (4,-5) {$\epsilon^{2}$};
	         
	         \node at (0,-4) [circle,fill=red,inner sep=1.5pt] {};
	         \node at (1,-4) [circle,fill=red,inner sep=1.5pt] {};
	         \node at (2,-4) [circle,fill=red,inner sep=1.5pt] {};
	
			\end{tikzpicture}
        \end{center}
        \caption{One generic example of many possible configurations contributing to $\widehat{\ell}_{\bullet,\epsilon\circ \phi}(q_{\gamma^n_{p_{\min}}}q^{k-n}_{\gamma_{p_{\min}}})$ with influence from $\epsilon$.  The red part has negative dimension}\label{fig:ind_aug}
    \end{figure} 
\end{proof}

\begin{proof}[Proof of Theorem \ref{thm:CP}].
	If $k\le n$, then $D^{\com}=T^*T^{k-1}\times \C^{n-k+1}$, then $\Hcx(\partial D^{\com})=0^{\SD}$ by Theorem \ref{thm:product}. If $k=n+1$, then $\Pl(\partial D^{\com})=2$ by Corollary \ref{cor:product}. For $k>n+1$, the lower bound follows from Corollary \ref{cor:lower_bound}. When $k<\frac{3n-1}{2}$ and $n$ odd, we have for any augmentation $\epsilon$ of $\RSFT(\partial D^{\com})$, $q_{\gamma^n_{p_{\min}}}q^{k-n}_{\gamma_{p_{\min}}}$ represents a closed class in $(\overline{B}^{k+1-n}V_{\partial D_s^{\com}},\widehat{\ell}_{\epsilon\circ \phi})$, as the SFT grading of $\RSFT(\partial D^{\com}_s)$ is even for all generators. In particular, $\widehat{\phi}^1_{\epsilon}(q_{\gamma^n_{p_{\min}}}q^{k-n}_{\gamma_{p_{\min}}})$ is closed in $(\overline{B}^{k+1-n}V_{\partial D^{\com}},\widehat{\ell}_{\epsilon})$ for any $\epsilon$.	Then by Proposition \ref{prop:ind'}, $\widehat{\ell}_{\bullet,\epsilon\circ \phi}(q_{\gamma^n_{p_{\min}}}q^{k-n}_{\gamma_{p_{\min}}})\ne 0$ for any $\epsilon$, and we conclude that $\Pl(\partial D^c)=k+1-n$ if $n+1<k<\frac{3n-1}{2}$ by Proposition \ref{prop:order}.
\end{proof}

\begin{remark}\label{rmk:general} 
	Our computation method above can be summarized as finding a curve contributing to the planarity by relative Gromov-Witten invariants and then arguing independence of augmentation by index computations. The trick we use is arguing closedness in the smooth divisor, where generators are simpler, proving the upper bounds using the functoriality, and arguing everything interesting about the functoriality happens purely in $X$ (i.e.\ not dependent on augmentation for $\RSFT(\partial D^{\com})$). A more systematic way of computing planarity is deriving a formula for the $BL_\infty$ algebra as well as the augmentation from the affine variety using log/relative Gromov-Witten invariants. In the context of symplectic (co)homology, such formula was obtained in \cite{diogo2019symplectic}. 
\end{remark}

Even though Theorem \ref{thm:CP} depends on the parity of $n$ and the size of $k$, things get easier if we only consider the augmentation from the affine variety (Corollary \ref{cor:planarity}). The following can be viewed as the geometric reinterpretation of Corollary \ref{cor:planarity}, which does not depend on $n,k$ and is sufficient for obstructing exact embeddings.
\begin{theorem}\label{thm:embedding}
	Let $D$ be $k$ generic hyperplanes in $\CP^n$ for $n\ge 1$, then $\U(D^{\com})=\max\{1,k+1-n\}$.
\end{theorem}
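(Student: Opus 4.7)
The approach is to establish the equality $\U(D^{\com})=\max\{1,k+1-n\}$ by two matching bounds, and then to deduce the embedding obstruction from the functoriality of $\U$ on $\cont_*$. The lower bound will be read off from the SFT index computation in Proposition \ref{prop:unirule}, while the upper bound will come from degenerating $D$ to a smooth degree-$k$ hypersurface and transferring uniruledness backwards along an exact embedding supplied by Lemma \ref{lemma:perturb}.

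\textbf{Lower bound.} The inequality $\U(D^{\com})\ge 1$ is automatic since $D^{\com}$ is non-empty and non-compact. For $k\ge n+1$, fix a generic point $o\in \widehat{D^{\com}}$ and an admissible almost complex structure $J$. Any rational holomorphic curve in $\widehat{D^{\com}}$ with no negative punctures has positive asymptotics $\Gamma=\{\gamma_1,\dots,\gamma_r\}$ automatically satisfying $\sum_i[\gamma_i]=0\in H_1(D^{\com})$, so the hypotheses of part (1) of Proposition \ref{prop:unirule} are met; for $r<k+1-n$ the expected dimension of $\overline{\cM}_{D^{\com},A,o}(\Gamma,\emptyset)$ is strictly negative for every $A$. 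A hypothetical curve with a simply-covered positive asymptotic is somewhere injective; a multiply-covered curve $u=v\circ\phi$ through $o$ with $r$ positive punctures admits an underlying simple $v$ through $o$ with $r_v\le r<k+1-n$ positive punctures, so the same negative-dimension statement applies to $v$. Classical Sard-Smale transversality in the universal moduli space parametrized by $(J,o)$ then forces all these moduli spaces to be empty for generic $(J,o)$. Consequently $\widehat{D^{\com}}$ is not $r$-uniruled for any $r<k+1-n$, and Proposition \ref{prop:unirule_equiv} transfers this to $\U(D^{\com})\ge k+1-n$.

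\textbf{Upper bound and conclusion.} Write $D=\{L_1\cdots L_k=0\}$ with $L_i\in H^0(\cO_{\CP^n}(1))$, so $D^{\com}=V_s$ for $s=L_1\otimes\cdots\otimes L_k\in H^0(\cO_{\CP^n}(k))$, and let $t\in H^0(\cO_{\CP^n}(k))$ be a small generic perturbation of $s$ whose zero set is a smooth degree-$k$ hypersurface $D_s$. Lemma \ref{lemma:perturb} yields an exact embedding $D^{\com}\hookrightarrow D_s^{\com}$, and McLean's monotonicity of $\U$ under exact embeddings (\cite[Proposition 3.1]{mclean2014symplectic}) gives $\U(D^{\com})\le\U(D_s^{\com})$. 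Proposition \ref{prop:curve} then supplies the required low-puncture curve in $\widehat{D_s^{\com}}$: part (1) when $k\le n$ gives one with a single positive puncture through any generic point, and part (2) when $k\ge n+1$ gives one with exactly $k-n+1$ positive punctures. Proposition \ref{prop:unirule_equiv} transfers these existence statements from the completion back to $D_s^{\com}$, so $\U(D_s^{\com})\le\max\{1,k+1-n\}$, completing the equality. The embedding obstruction is now immediate: an exact embedding $D_{k+1}^{\com}\hookrightarrow D_k^{\com}$ for $k\ge n$ would force $\U(D_{k+1}^{\com})\le \U(D_k^{\com})$, contradicting $k+2-n>k+1-n$. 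The main delicate point throughout is converting the virtual-dimension vanishing of Proposition \ref{prop:unirule} into the \emph{geometric} emptiness demanded by McLean's definition of uniruledness; the somewhere-injective reduction above sidesteps virtual machinery by replacing any hypothetical multiply-covered counterexample by its underlying simple curve, to which classical transversality directly applies.
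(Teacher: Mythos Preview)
Your argument is essentially the paper's, with one presentational detour and a few soft spots worth flagging.

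For the lower bound you match the paper exactly: negative virtual dimension from Proposition \ref{prop:unirule} plus reduction to the underlying simple curve. One remark: the sentence ``a curve with a simply-covered positive asymptotic is somewhere injective'' is false in general (a degree-$d$ cover unbranched at one puncture still has that asymptotic simple); fortunately your next clause already handles every multiply-covered curve by passing to $v$, so the faulty claim is not used. Also note that Proposition \ref{prop:unirule} is stated for $k>n{+}1$ and $n\ge 2$; the index computation goes through verbatim for $k=n{+}1$, but the case $n=1$ is not covered by it at all --- the paper simply treats $n=1$ as obvious (here $D^{\com}=S_k$ and $\U(S_k)=k$ is elementary).

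For the upper bound the paper appeals directly to the nonvanishing relative Gromov--Witten invariant and ``neck-stretching'', which produces a curve in the complement for \emph{every} admissible $J$. Your route via Lemma \ref{lemma:perturb} and McLean monotonicity is a perfectly good alternative, but be careful with what you cite: Proposition \ref{prop:curve} as \emph{stated} only furnishes one almost complex structure with a nonzero count, whereas $k$-uniruledness demands a curve for every admissible $J$ integrable near the point. What actually does the work is the nonvanishing of the relative GW invariant in the \emph{proof} of Proposition \ref{prop:curve}, which is $J$-independent; say that explicitly, or invoke the invariance of the transversely-cut-out count and Gromov compactness to propagate existence to all $J$.
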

\begin{proof}
	The $n=1$ case is obvious. For $n\ge 2$, we can use Proposition \ref{prop:unirule} to claim that $\overline{\cM}_{D^{\com},o}(\Gamma^+,\emptyset)=\emptyset$ for generic $J$ as along as $|\Gamma^+|<\max \{1,k+1-n\}$. This is because we can obtain the classical transversality of  $\overline{\cM}_{D^{\com},o}(\Gamma^+,\emptyset)=\emptyset$, as every curve is a branched cover of a somewhere injective curve with negative expected dimension. Therefore, we have $\U(D^{\com})\ge \max\{1,k+1-n\}$ by Proposition \ref{prop:unirule_equiv}. On the other hand, the nontrivial relative Gromov-Witten invariant used in Proposition \ref{prop:curve} implies that $\U(D^{\com})\le \max\{1,k+1-n\}$ by neck-stretching.
\end{proof}

\subsubsection{Examples with nontrivial $\SD$ when $k$ is small} 
\begin{theorem}\label{thm:P1}
	Assume $D_s$ is a smooth degree $2\le k<n$ hypersurface in $\CP^n$ for $n\ge 3$ odd, then $(k-1)^{\SD}\le \Hcx(\partial D_s^{\com})\le (2k-2)^{\SD}$. When $n$ is even and $2\le k <\frac{n+1}{2}$, then  we have $\Hcx(\partial D_s^{\com})\le (2k-2)^{\SD}$.
\end{theorem}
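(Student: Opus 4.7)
The plan is to leverage the same machinery used for Theorem~\ref{thm:proj}, now upgraded to compute the $U$-module structure on linearized contact homology via the Bourgeois-Oancea isomorphism (Proposition~\ref{prop:u}). I begin by setting up the Morse-Bott model of Case~1 of \S 7.1: perturb a Boothby-Wang contact form on the circle bundle of $\cO(k)|_{D_s}$ by a perfect Morse function $f$ on $D_s$. A perfect $f$ exists because $D_s$ is a simply-connected Fano hypersurface with Betti numbers $\leq 1$ outside the middle degree, by Lefschetz, Poincar\'e duality, and \cite{harer1978handlebody}/the $h$-cobordism theorem. When $n$ is odd the middle real degree $n-1$ is even, so all critical indices of $f$ are even; by \eqref{eqn:index} every $q_{\gamma^m_p}\in V_\alpha$ then has even SFT parity, and the linearized differential $\widehat\ell_\epsilon$ vanishes identically for any augmentation $\epsilon$, so every generator is closed. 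For $n$ even with $k<(n+1)/2$, an analogous parity control holds by placing the odd-graded middle-cohomology orbits of $D_s$ at action too high to interact with the classes constructed below.

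For the upper bound $\SD\leq 2k-2$ I produce the class $x_0:=q_{\gamma^k_{p_k}}\in V_\alpha$, where $p_k$ is the critical point of $f$ of index $2(n-k)$ supplied by Proposition~\ref{prop:curve}(1). That proposition, based on Gathmann's non-vanishing relative Gromov-Witten invariant \cite{gathmann2002absolute}, gives $\#\overline{\cM}_{D_s^{\com},o}(\{\gamma^k_{p_k}\},\emptyset)\neq 0$, and Proposition~\ref{prop:equi} promotes this to $\widehat\ell_{\bullet,\epsilon_{D_s^{\com}}}(x_0)\neq 0$ for the augmentation from the filling. Under the Bourgeois-Oancea isomorphism the class $x_0$ corresponds to $\hat\gamma^k_{p_k}\in SH^{*,<A}_{+,S^1}(\widehat{D_s^{\com}})$, and the $U$-map in the Morse-Bott cascades model of \S \ref{ss:u} lowers multiplicity by one and can swap the auxiliary Morse-Bott phase $\hat\leftrightarrow\check$; hence the $U$-orbit of $x_0$ lies in the $2k$-element chain $\{\hat\gamma^j_{p_k},\check\gamma^j_{p_k}:1\leq j\leq k\}$ and exhausts after at most $2k-1$ iterations, giving $U^{2k-1}\Psi_{BO}^{-1}(x_0)=0$. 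Once independence from the augmentation is established, this proves the upper bound.

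To obtain $\widehat\ell_{\bullet,\epsilon}(x_0)\neq 0$ for arbitrary augmentations, I would bound the dimensions of the symplectization moduli spaces that contribute through $\epsilon$. A Morse-Bott compactness argument in the spirit of Proposition~\ref{prop:max} forces all negative asymptotics to lie over $p_{\max}$ (the point constraint is absorbed there), and for those configurations the dimension formula reduces to
\[
\vdim\,\overline{\cM}_{\partial D_s^{\com},o}\bigl(\{\gamma^k_{p_k}\},\{\gamma^{d_j}_{p_{\max}}\}\bigr)=2r+2m(k-n-1),\qquad \sum d_j=km,
\]
which is $\ge 0$ only if $r\ge m(n+1-k)$. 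A neck-stretching comparison in the flavor of Proposition~\ref{prop:neck}, applied to the identity cobordism of $\partial D_s^{\com}$, identifies these potential contributions with perturbed covers of the geometric curve of Proposition~\ref{prop:curve}(1), hence $\epsilon$-independent; the $n$ even hypothesis $k<(n+1)/2$ confines the configurations to a small enough range that this comparison is explicit.

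The lower bound $\SD\geq k-1$ for $n$ odd uses $\epsilon=\epsilon_{D_s^{\com}}$ together with Theorem~\ref{thm:BO-iso} and the compatibility of $\Psi_{BO}$ with $\ell^1_{\bullet,\epsilon_{D_s^{\com}}}$, which corresponds to the natural projection $SH^*_{+,S^1}(\widehat{D_s^{\com}})\to H^*_{S^1}(\widehat{D_s^{\com}})\otimes(\Q[U,U^{-1}]/U)\to H^0(\widehat{D_s^{\com}})$. This reduces to the computation of \cite[\S 3]{zhou2019symplectic}: the lowest multiplicity at which a class can pair nontrivially with $[\mathrm{pt}]\in H^0$ is $k$, and since $U$ strictly decreases multiplicity on these generators, any such class survives at least $k-1$ applications of $U$. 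The hardest step throughout is controlling the augmentation contributions in these Morse-Bott auxiliary configurations: even with all $q_j=p_{\max}$ the relevant moduli spaces can carry nontrivial counts, and a fully rigorous argument likely requires tracing them back to relative Gromov-Witten invariants of $(\CP^n,D_s)$ in the style of \cite{diogo2019symplectic}, much as the bound $k<(3n-1)/2$ played the corresponding role in Theorem~\ref{thm:proj}.
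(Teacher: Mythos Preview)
Your outline has the right skeleton (produce $q_{\gamma^k_p}$ from Proposition~\ref{prop:curve}(1), bound the $U$--nilpotency, use the filling for the lower bound), but the mechanism you describe for the $U$--map is incorrect and this propagates to both bounds. The $U$--orbit of $q_{\gamma^k_p}$ does \emph{not} stay over the single critical point $p_k$, and the $\hat/\check$ decorations you invoke live on the $S^1$--equivariant side before $\Psi_{\mathrm{BO}}$, not in $\LCH$. The paper's argument instead rests on a carefully chosen self-indexing perturbation satisfying the action constraint \eqref{eqn:action}: whenever $\int(\gamma^d_p)^*\alpha\ge\sum\int(\gamma^{d_i}_{p_i})^*\alpha$ with $\sum d_i=d\le k$, one has $\ind(p_i)\ge\ind(p)$. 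With this in hand, a direct analysis shows that $U(q_{\gamma^d_q})$ can only hit $q_{\gamma^{d'}_{q'}}$ with either $d'<d$ and $\ind(q')\ge\ind(q)$, or $d'=d$ and $\ind(q')=\ind(q)+2$. The quantity $(k-d)+\tfrac{1}{2}(\ind(q)-\ind(p))$ therefore increases by at least $1$ with each application of $U$ and is bounded above by $2k-2$, giving $U^{2k-1}=0$.

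For the lower bound your reasoning is backwards: even if ``$U$ strictly decreases multiplicity'' were true, that would yield an \emph{upper} bound on the nilpotency order, not a lower one. The paper instead observes that on the top (multiplicity $k$) piece of the action filtration the $U$--map is cup product with $c_1(\cO(k)|_{D_s})$, so $U^{k-1}(q_{\gamma^k_p})=k^{k-1}q_{\gamma^k_{p_{\max}}}+\text{(lower multiplicities)}$; for $n$ odd every generator is closed and nonexact, whence $U^{k-1}(q_{\gamma^k_p})\neq 0$ in homology. Finally, your independence-of-augmentation step is more complicated than needed and the neck-stretching you propose is not justified: the same cascades degeneration you cite (Proposition~\ref{prop:max}) already forces the holomorphic part over the generic point $\bar o$, but since $\ind(p)=2(n-k)>0$ the unstable manifold of $p$ has positive codimension, so for generic $o$ no cascade exists and $\overline{\cM}_{\partial D_s^{\com},o}(\{\gamma^k_p\},\Gamma^-)=\emptyset$ for every $\Gamma^-\neq\emptyset$. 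The constraint $k<\tfrac{n+1}{2}$ for $n$ even enters only in the closedness step, via the same action inequality \eqref{eqn:action}: it guarantees $\ind(p)=2(n-k)>n-1$, so no odd-index (middle-dimensional) critical point can appear as a negative asymptotic.
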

\begin{proof}
	Let $p$ be the critical point in \eqref{curve1} of Proposition \ref{prop:curve}, Then we have $\eta_{D_s^{\com}}(q_{\gamma^k_p})\ne 0$ by the same argument of Corollary \ref{cor:planarity}. We can pick the Morse function on $D_s$ to be perfect and self-indexing, similar to \cite[Proposition 3.1]{zhou2020mathbb}, we can choose the perturbation Morse function such that if
	\begin{equation}\label{eqn:action}
	\int \alpha^*\gamma^d_p-\sum_{i=1}^{j}\int \alpha^*\gamma^{d_i}_{p_i}\ge 0,
	\end{equation}
	for $d\le k, \sum d_i=d$,  then for every $i$ we have $\ind(p_i)\ge \ind(p)$\footnote{Note that in our setup here higher $f(p)/\ind(p)$ means smaller contact action, since we apply the perturbation in the cap of the positive prequantization bundle instead of the filling of the negative prequantization bundle. In particular, the order is reversed compared to \cite[\S 2.1]{zhou2020mathbb} and the proof of Theorem \ref{thm:quotient} below. }. This energy constraint will help us exclude certain configurations. 
	
	\begin{claim*} $q_{\gamma^k_p}$ is closed in $(\overline{B}^1V_{\partial D_s^{\com}},\ell^1_{\epsilon})$ for any augmentation $\epsilon$ for $2\le k<\frac{n+1}{2}$ or $n$ odd with $2\le k < n$.
    \end{claim*}
    \begin{proof}
    	Since the parity of the SFT grading of $\gamma^{d}_q$ is the same as the parity of $\ind(q)$. As a consequence, we only need to consider $\langle\ell^1_{\epsilon}(q_{\gamma^k_p}),q_{\gamma^d_q}\rangle$ for $\ind(q)=n-1$ when $n$ is even. In other word, we need to consider $\cM:=\overline{\cM}_{\partial D_s^{\com}}(\{\gamma_p^k\},\{\gamma_q^{d}\}\cup \{\gamma_{q_i}^{d_i}\}_{1\le i \le r})$ for $d+\sum d_i=k$. By \eqref{eqn:action}, to have $\cM\ne \emptyset$, we must have $\ind(q_i),\ind(q)\ge \ind(p)=2n-2k$. By $k<\frac{n+1}{2}$, we have $\ind(p)=2n-2k>\ind(q)=n-1$ and $\cM$ is empty. Hence the claim follows.
    \end{proof}
    \begin{claim*}
    	 $\ell^1_{\bullet,\epsilon}(q_{\gamma^k_p})$ is independent of $\epsilon$.
    \end{claim*}
    \begin{proof}
    		It is sufficient to show that $\overline{\cM}_{\partial D_s^{\com},o}(\{\gamma^k_p\},\Gamma^-)$ is empty for $\Gamma^-\ne \emptyset$. If $\Gamma^-\ne \emptyset$, then $\Gamma^-=\{\gamma^{d_i}_{p_i}\}_{1\le i \le r}$ with $\sum d_i=k$ and $\ind(p_i)\ge \ind(p)$ for every $i$. Then claim follows from the same argument of Proposition \ref{prop:max} and $\ind(p)>0$.
    \end{proof}
	
    \begin{claim*}
    	 We have $\Hcx(\partial D_s^{\com})\le (2k-2)^{\SD}$.
    \end{claim*}
    \begin{proof}
    	 We need to show that $U^{2k-1}(q_{\gamma^k_p})=0$ for any augmentation. Note that the $U$ map decreases contact action. By homology reason and  contact action property \eqref{eqn:action}, for any augmentation and  $d\le k$,  $U(q_{\gamma^d_q})$ can only have nontrivial coefficient for $q_{\gamma^{d'}_{q'}}$ for $d'<d$ and $\ind(q')\ge \ind (q)$ and $\ind(q')=\ind(q) \mod 2$ , or for $q_{\gamma^d_{q'}}$ for $\ind(q')=\ind (q)+2$. Therefore, we have $U^{2k-1}(q_{\gamma^k_p})=0$ for any augmentation. 
    \end{proof}

    \begin{claim*}
    	When $n$ is odd, We have $\Hcx(\partial D_s^{\com})\ge (k-1)^{\SD}$.
    \end{claim*}
    \begin{proof}
        The linearized contact homology/positive $S^1$-equivariany symplectic cohomology has a contact action filtration, such that the filtered theory around period $\int (\gamma^k_p)^*\alpha =k\int \gamma_p^*\alpha$is generated by the $k$th covered orbits. The $U$ map on this filtered theory is represented by multiplying $c_1(\cO(k)|_D)$ to the cochain represented by the critical point, i.e.\ the Poincar\'e dual of the unstable manifold. Using \eqref{iso} of Claim \ref{claim:u}, we can consider the $U$ map on positive $S^1$-equivariant symplectic cohomology. In fact, one can prove the filtered linearized contact homology (up to $k$-th multiple of simple orbits) is isomorphic to the filtered $S^1$ positive symplectic cohomology following \cite{bourgeois2009exact} as there is no room for the influence of augmentations and transversality can be achieved. Then the $U$-map on the filtered $S^1$ positive symplectic cohomology is given by multiplying the first Chern class by a standard Morse-Bott argument, e.g.\ \cite[Proposition 5.9]{zhou2019symplectic}. Therefore, by the argument in the second claim, we have $U^{k-1}(q_{\gamma^k_p})=k^{k-1}q_{\gamma^k_{p_{\max}}}+$terms with lower multiplicities for the unique maximum $p_{\max}$ with $\ind(p_{\max})=2n-2$. When $n$ is odd, all generators have even SFT degree and represent non-trivial classes in the linearized contact homology, hence $U^{k-1}(q_{\gamma^k_p})\ne 0$ in homology.
    \end{proof}
\end{proof}

\begin{remark}\label{rmk:n=3}
	The $n$ being odd condition in Theorem \ref{thm:CP},  \ref{thm:infinity}, \ref{thm:P1}, as well as  \ref{thm:generalization} below is not necessary, as one can show $q_{\gamma^n_{p_{\min}}}q^{k-n}_{\gamma_{p_{\min}}}$ is always closed. This is because a differential from $q_{\gamma^n_{p_{\min}}}q^{k-n}_{\gamma_{{p}_{\min}}}$ involves counting $\overline{\cM}_{Y}(\Gamma^+,\Gamma^-)$ with $\Gamma^+$ is a subset of $\{\gamma^n_{p_{\min}},\gamma_{p_{\min}},\ldots,\gamma_{p_{\min}}\}$ and $\Gamma^-=\{\gamma^{d_i}_{p_i}\}_{1\le i \le r}$ for $\sum d_i=k$. Therefore if we use the Morse-Bott contact form and cascades construction, the relevant holomorphic curve must be covers of trivial cylinders. Then the moduli space $\overline{\cM}_{Y}(\Gamma^+,\Gamma^-)$ is the fiber product of $D\times \cM$ with unstable/stable manifolds of $p_{\min},p_i$, where $\cM$ before compactification is the space of meromorphic functions on $\CP^1$ with a pole of order $n$ and $k-n$ simple poles and a zero with order $d_i$ for all $1\le i \le r$ modulo the $\R$ rescaling on meromorphic functions and the automorphism of the punctured Riemann surface. Then by the nontrivial $S^1$ action on meromorphic functions, we expect to have $\#\overline{\cM}_{Y}(\Gamma^+,\Gamma^-)=0$  unless $|\Gamma^+|=|\Gamma^-|=1$. If  $|\Gamma^+|=|\Gamma^-|=1$, $\overline{\cM}_{Y}(\Gamma^+,\Gamma^-)$ is identified with Morse trajectories (here the $S^1$ action on meromorphic function is identical with the $S^1$ action in the automorphism group of surface, hence is trivial on the quotient), whose algebraic count is zero, as we assume the Morse function is perfect. To make this precise, one can follow a Morse perturbation of the contact form as before. And we use a $J$ that is $S^1$-invariant under the rotation in the fiber direction, then apply the $S^1$-equivariant transversality for quotients from \cite{zhou2020quotient}, we can argue that $\#\overline{\cM}_{Y}(\Gamma^+,\Gamma^-)=0$  unless $|\Gamma^+|=|\Gamma^-|=1$ similar to Floer's proof of the isomorphism between Hamiltonian Floer cohomology and Morse cohomology. This argument requires building our functors using polyfolds as in \cite{SFT}. 
\end{remark}

\subsubsection{Generalization to Fano hypersurfaces} In the following, we will generalize Theorem \ref{thm:CP} to some affine varities contained in a Fano hypersurface in $\CP^{n+1}$.

\begin{theorem}\label{thm:generalization}
	Let $X$ be a smooth degree $m$ hypersurface in $\CP^{n+1}$ for $2\le m< \frac{n+1}{2}\le n$ and $D$ be $k\ge n$ generic hyperplanes, i.e.\ $D=(H_1\cup \ldots \cup H_k)\cap X$ for $H_i$ is a hyperplane in $\CP^{n+1}$ in generic position with each other and $X$, then $\Pl(\partial D^{\com})=k+m-n$ for $n$ odd and $k+m<\frac{3n+1}{2}$.
\end{theorem}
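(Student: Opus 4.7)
The plan is to parallel the three-step strategy of Theorem~\ref{thm:CP}, with the Fano hypersurface $X\subset\CP^{n+1}$ replacing $\CP^n$; every occurrence of the first Chern class $c_1(\CP^n)\cdot H=n+1$ is replaced by $c_1(X)\cdot H|_X=n+2-m$, while the Conley--Zehnder formulas for Reeb orbits in $\partial D^{\com}$ and $\partial D_s^{\com}$ (where $D_s\subset X$ is a smooth degree-$k$ smoothing of $D$) carry over verbatim, since the symplectic cap $\cO_X(D_s)|_{D_s}$ has the same structure as in the $\CP^n$ case.

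For the lower bound $\Pl(\partial D^{\com})\ge k+m-n$, I would repeat the proof of Proposition~\ref{prop:unirule}: a curve $u$ with positive asymptotics $\Gamma=\{\gamma^{t_i}_{p_i}\}_{1\le i\le r}$ and $\sum[\gamma_i]=0$ caps off to a class $A\in H_2(X)$ of projective degree $N$, with $A\cdot D_j=N$ for each hyperplane section, so $c_1(X)\cdot A=(n+2-m)N$ and $\sum_i\sum t_i=Nk$. The standard identity then yields
\begin{equation*}
\ind(u)\le 2N(n+2-m-k)+2r-4,
\end{equation*}
which is strictly negative whenever $r<k+m-n$ (using $n+2-m-k\le 0$, since $k\ge n$ and $m\ge 2$) and, at $r=k+m-n$, forces $N=1$ together with every $\gamma_i$ being a check orbit at a minimum of the Morse function on its stratum. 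Vanishing of $\eta_{D^{\com}}$ on $\overline{B}^r V_{\partial D^{\com}}$ for $r<k+m-n$ follows from Proposition~\ref{prop:equi}.

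For the upper bound I would produce the responsible curve via a line $L\subset X$ through a generic point $o\in X\setminus D_s$: such lines form an $(n-m)$-dimensional family (the hypotheses $2\le m<(n+1)/2$ ensure unobstructedness and the normal bundle of a generic $L$ has all summands of nonnegative degree, giving the expected dimension and clean transversality), and since $L\cdot D_s=k$, imposing $(n-m+1)$-fold tangency of $L$ to $D_s$ at one point is codimension $n-m$, yielding a finite count. Nontriviality of this count can be extracted from a relative Gromov--Witten computation for $(X,D_s)$ paralleling Gathmann's algorithm~\cite{gathmann2002absolute}, e.g.\ by degenerating $D_s$ towards the simple-normal-crossing configuration $D$. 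Neck-stretching the constrained line along $\partial D_s^{\com}$ and running the index analysis of Proposition~\ref{prop:curve} forces the bottom-level curve to have positive asymptotics $\{\gamma^{n-m+1}_{p_{\min}},\underbrace{\gamma_{p_{\min}},\ldots,\gamma_{p_{\min}}}_{k+m-n-1}\}$; since this curve is somewhere injective (the only curve sitting over it has projective degree $1$), the algebraic count $\eta_{D_s^{\com}}(q_{\gamma^{n-m+1}_{p_{\min}}}q^{k+m-n-1}_{\gamma_{p_{\min}}})$ is nonzero. Closedness in $(\overline{B}^{k+m-n}V_{\partial D_s^{\com}},\widehat{\ell}_\epsilon)$ for every augmentation $\epsilon$ is automatic when $n$ is odd, by the parity argument of Corollary~\ref{cor:planarity}.

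Finally I would establish independence of augmentation by the cobordism index computation of Proposition~\ref{prop:ind}: for the cobordism $X_{\mathrm{cob}}$ from $\partial D^{\com}$ to $\partial D_s^{\com}$ and positive asymptotics $\Gamma^+=\{\gamma^{k_1}_{p_{\max}},\gamma^{k_2}_{p_{\min}},\ldots,\gamma^{k_s}_{p_{\min}}\}$ with $\sum k_i\le k$, closing off in $X$ produces a class of projective degree at most one, and the identity $\vdim(\bar u)-2c_1(\bar A)=2(n-3)$ (independent of the ambient Fano index) combined with the Weinstein bound $\ind(p_r)\le n-|I_r|$ on each stratum $\check D_{I_r}$ (valid since $k\ge n$) reproduces the bound $\ind(u)\le -4+2s-|R|(n-3)$ without modification. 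Under the hypothesis $k+m-n<(n+1)/2$, i.e.\ $k+m<(3n+1)/2$, every such curve with $|R|\ne 0$ and $s\le k+m-n$ has negative index, so the diagram of Proposition~\ref{prop:ind'} collapses to the augmentation-free contribution, and a neck-stretching identification in the spirit of Proposition~\ref{prop:neck} gives $\widehat{\ell}_{\bullet,\epsilon\circ\phi}(q_{\gamma^{n-m+1}_{p_{\min}}}q^{k+m-n-1}_{\gamma_{p_{\min}}})\ne 0$ for every augmentation $\epsilon$ of $\RSFT(\partial D^{\com})$; Proposition~\ref{prop:order} then yields the matching upper bound. The main obstacle is verifying nonvanishing of the relative Gromov--Witten invariant for $(X,D_s)$, which in the $\CP^n$ setting is handled by~\cite{gathmann2002absolute} but for a general Fano hypersurface requires either an analogous degeneration recursion or a direct enumerative argument, where the hypothesis $m<(n+1)/2$ supplies the transversality needed to count lines with the prescribed tangency profile.
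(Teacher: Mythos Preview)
Your proposal is essentially the paper's proof: the same three-step structure (index lower bound via $c_1(X)\cdot A=(n+2-m)N$, a relative Gromov--Witten curve on the smoothing $D_s$, and the augmentation-independence argument of Propositions~\ref{prop:max}--\ref{prop:ind'} carried over verbatim because the cobordism index computation involves only the trivial homology class). The only substantive discrepancy is your closing paragraph: you flag the nonvanishing of the relative invariant $\GW^{X,D_s}_{0,1,(n+1-m,1,\ldots,1),A}([pt],[D_s],\ldots,[D_s])$ as ``the main obstacle'' and suggest it needs a separate degeneration or enumerative argument beyond~\cite{gathmann2002absolute}. This is overly cautious---Gathmann's framework is set up for an arbitrary smooth projective variety with a smooth very ample divisor, not only for $\CP^n$, and the paper invokes his result directly for the pair $(X,D_s)$ without further justification. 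So the hypothesis $m<\frac{n+1}{2}$ is not there to rescue transversality in a hand-rolled line count; it simply guarantees (together with $k\ge n$) that the range $k+m<\frac{3n+1}{2}$ needed for Proposition~\ref{prop:ind} is nonempty. One small slip: in the cobordism step the capped-off class has projective degree $0$, not ``at most one''---this is exactly why the bound $\ind(u)\le -4+2s-|R|(n-3)$ is independent of $m$, as you correctly note.
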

\begin{proof}
	We separate the proof into several steps. The Reeb dynamics on $\partial D^{\com}$ has the same property with the $\CP^n$ case, with the only difference that the minimal Chern number of $X$ is $n+2-m$, which will enter into the computation of virtual dimensions.
	\begin{claim*}
		For any Reeb orbits set $\Gamma:=\{\gamma_1,\ldots,\gamma_r\}$ for $r<k+m-n$ with $\sum[\gamma_i]=0\in H_1(D^{\com})$, the virtual dimension of the moduli space $\overline{\cM}_{D^{\com},A,o}(\Gamma,\emptyset)$ is negative for any $A$.
	\end{claim*}
    \begin{proof}
    	This follows from the same argument in Proposition \ref{prop:unirule}, with the difference that $c_1(u\#_{i=1}^r u_i)=2N(n+2-m)$. Therefore we have
    	\begin{eqnarray*}
    		 \ind(u) & \le & 2N(n+2-m)-4-\sum_{i=1}^r(2\sum t_i-2+\ind(p_i)) \\
    		         & \le & 2N(n+2-m)-4-2kN-4+2r\\
    		         & = & 2(N-1)(n+2-m-k)+2(r+n-m-k) < 0,
    	\end{eqnarray*}
    since $r<m+k-n$ and $k\ge n,m\ge 2$. This computation also implies the lower bound of $\Pl(\partial D^{\com})$ is $k+m-n$.
    \end{proof}
    \begin{claim*}
       Assume $D_s$ in the generic intersection of a degree $k$ hypersurface in $\CP^{n+1}$ with $X$. Then we have $$\eta_{D_s^{\com}}(q_{\gamma^{n+1-m}_{p_{\min}}}q^{k+m-n-1}_{\gamma_{p_{\min}}})\ne 0,$$ and $q_{\gamma^{n+1-m}_{p_{\min}}}q^{k+m-n-1}_{\gamma_{p_{\min}}}$ is closed in $(\overline{B}^{k+m-n}V_{\partial D_s^{\com}},\widehat{\ell}_{\epsilon_{D_s^{\com}}})$.
    \end{claim*} 
     \begin{proof}
     	That $\eta_{D_s^{\com}}(q_{\gamma_p}^{k+m-n-1} q_{\gamma^n_p})\ne 0$ follows from the non-vanishing of $$\GW^{X,D}_{0,1,(n+1-m,1,\ldots,1),A}([pt],\underbrace{[D_s],\ldots,[D_s]}_{k+m-n})$$ from \cite{gathmann2002absolute} for $A$ is the positive generator of $H_2(X)$ that is mapped to the generator of $H_2(\CP^{n+1})$ and the same argument in Proposition \ref{prop:curve}. The remaining of the argument is exactly same as Corollary \ref{cor:planarity}. To see the non-vanishing of the Gromov-Witten invariants, we have
        $$\GW^{X,D}_{0,1,(n+1-m,1,\ldots,1),A}([pt],\underbrace{[D_s],\ldots,[D_s]}_{k+m-n}) = (k)^{k+m-n-1}\GW^{X,D}_{0,1,(n+1-m),A}([pt],[D_s])$$ 
        by the divisor axiom. Then by \cite[Theorem 2.6]{gathmann2002absolute}, we have
        $$\GW^{X,D}_{0,1,(n+1-m),A}([pt],[D_s]) = \int_{[\overline{\cM}_{0,2}(X,A,pt)]^{\mathrm{vir}}} \prod_{i=0}^{n+1-m}(ev_2^*\mathrm{PD}([D_s])+i\psi).$$
        Following the strategy in \cite[Corollary 5.7]{gathmann2002absolute}, by taking $\alpha=(1,m)$ in \cite[(8)]{gathmann2002absolute}
        we have 
        $$((m-1)\psi+ev_2^*\mathrm{PD}([X]))\cdot [\overline{\cM}_{0,(1,m-1)}(\CP^{n+1},A,pt)]^{\mathrm{vir}}=[\overline{\cM}_{0,2}(X,A,pt)]^{\mathrm{vir}}$$
        where the point constraint is in $X\subset \CP^{n+1}$. Then we apply  \cite[Theorem 2.6]{gathmann2002absolute} for another $m-2$ times, we get
        \begin{eqnarray*}
            \GW^{X,D}_{0,1,(n+1-m),A}([pt],[D_s]) & = &  \int_{[\overline{\cM}_{0,2}(\CP^{n+1},A,pt)]^{\mathrm{vir}}} \prod_{i=0}^{n+1-m}(ev_2^*\mathrm{PD}(k[H])+i\psi) \prod_{i=1}^{m-1}(ev_2^*\mathrm{PD}([X])+i\psi) \\
            & = & \frac{k!m!}{(k+m-n-1)!}
        \end{eqnarray*}
      where $H$ is the hyperplane class in $\CP^{n+1}$. 
     \end{proof}
     Then by the same neck-stretching argument in Proposition \ref{prop:neck}, we have 
     \begin{eqnarray*}
     \widehat{\ell}_{\bullet,\epsilon_{D^{\com}_s}}(q_{\gamma^{n+1-m}_{p_{\min}}}q^{k+m-n-1}_{\gamma_{p_{\min}}})& = &\widehat{\ell}_{\bullet,\epsilon_{D^{\com}}\circ\phi}(q_{\gamma^{n+1-m}_{p_{\min}}}q^{k+m-n-1}_{\gamma_{p_{\min}}}) \\ & = & \widehat{\ell}_{\bullet,\epsilon_{D^{\com}}}\circ \widehat{\phi}^1_{\epsilon_{D^{\com}}}(q_{\gamma^{n+1-m}_{p_{\min}}}q^{k+m-n-1}_{\gamma_{p_{\min}}}) \ne 0.
     \end{eqnarray*}
     Next Proposition \ref{prop:max} and Proposition \ref{prop:ind} also holds, as the dimension computation there is essentially for trivial homology class, which does not depend on $m$. It is important to note that in the proof of Proposition \ref{prop:ind}, we use that $\check{D}_I$ is Weinstein is obtain an upper bound of Morse indices. Such property also holds here as we assume $k\ge n$. Then the remaining of the proof is the same as Theorem \ref{thm:CP}.
\end{proof}	
From the proof above, the source of holomorphic curves is supplied by the degree $1$ holomorphic curves in $X$ for $m\le n$. For $m=n+1$, the degree $1$ curve does not unirule $X$ anymore, but a degree $2$ curve unirules $X$. In the proof Theorem \ref{thm:CP} and Theorem \ref{thm:generalization}, being degree $1$ is used in several places to obtain somewhere injectivity (the capping argument). Indeed, for $m=n+1$, the situation is different, we will prove $\Pl(\partial D^c)\ge 2$ for $D$ is a generic intersection of $X$ with a hyperplane in $\CP^{n+1}$. For $m\ge n+2$, then $X$ is not uniruled, which implies $D^c$ is not $k$ uniruled for any $k$ by \cite{mclean2014symplectic}, therefore $\Pl(\partial D^{\com})=\infty$ by Corollary \ref{cor:lower}.

In view of Theorem \ref{thm:unirule}, Theorem \ref{thm:CP}, Theorem \ref{thm:embedding}, and Theorem \ref{thm:generalization}, we make the following conjecture.
\begin{conjecture}
	$V$ is a $k$-uniruled affine variety then $\Pl(V)<\infty$ and $\Pl(V)=\U(V)=\AU(V)$.
\end{conjecture}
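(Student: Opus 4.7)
\medskip

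\noindent\textbf{Proof strategy.} The lower bound $\Pl(V)\ge \U(V)\ge \AU(V)$ is already at hand: the first inequality is Corollary \ref{cor:lower} (together with Claim \ref{claim} extending $\Pl$ to $\cont_*$), and the second is Proposition \ref{prop:Ulower}. Thus the real content of the conjecture is the reverse inequality $\Pl(V)\le \AU(V)$, which simultaneously yields $\Pl(V)<\infty$ when $V$ is algebraically uniruled. The plan is to model the argument on the proofs of Theorem \ref{thm:CP} and Theorem \ref{thm:generalization}, but fed by algebraic rather than relative Gromov--Witten input.

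\medskip

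\noindent First, fix a smooth projective compactification $\overline{V}=V\cup D$ with $D=\bigcup_i D_i$ a simple normal crossing divisor, and endow $V$ with a Liouville structure coming from a section of an ample line bundle vanishing on $D$ with prescribed multiplicities $a_i$. Set $k:=\AU(V)$. By definition, through a generic point $o\in V$ there passes an algebraic rational curve $C$ with $r\le k$ punctures, whose compactification $\overline{C}\subset \overline{V}$ meets $D$ transversally (after a small deformation of $C$ within the algebraic uniruling family) in $r$ points, with intersection multiplicities $t(i)\in\N$ determined by the components of $D$ it hits. One then applies neck-stretching to $\overline{V}$ along $\partial V$ in the presence of $o$: the relative area bound supplied by the uniruling family forces a uniform bound on contact energy, and in the fully-stretched limit the bottom level is a rational curve $u\subset \widehat{V}$ through $o$ with $r\le k$ positive punctures asymptotic to orbits $\gamma^{t_1}_{p_1},\ldots,\gamma^{t_r}_{p_r}$ of the Morse--Bott model near $D$. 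Using Proposition \ref{prop:equi} (with the tautological augmentation $\epsilon_V$), one concludes $\eta_V(q^{\Gamma^+})\ne 0$ for $\Gamma^+=\{\gamma^{t_i}_{p_i}\}$, and in particular $\widehat{\ell}_{\bullet,\epsilon_V}(q^{\Gamma^+})\ne 0\in \Q$, giving $\Pl(V)\le k$ for the distinguished augmentation.

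\medskip

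\noindent The delicate step, and the genuine obstacle, is to upgrade the previous non-vanishing to hold for \emph{every} $BL_\infty$ augmentation $\epsilon$ of $\RSFT(\partial V)$, as required by Definition \ref{def:planarity}. Two sub-claims are needed, both modeled on the arguments following Corollary \ref{cor:planarity} and Proposition \ref{prop:ind'}: (a) the word $q^{\Gamma^+}$ is $\widehat{\ell}_{\epsilon}$-closed in $\overline{B}^r V_\alpha$, which should follow by choosing the auxiliary Morse functions on the strata $\check D_I$ to be perfect and self-indexing (as in Theorem \ref{thm:P1}), combined with an action/homology constraint forcing any possible output orbits of $\widehat\ell_\epsilon$ to have Morse index incompatible with parity and action of the $\gamma^{t_i}_{p_i}$; (b) the pointed-map count $\widehat{\ell}_{\bullet,\epsilon}(q^{\Gamma^+})$ is independent of $\epsilon$. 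For (b), the natural approach is to factor through the smoothing cobordism of Lemma \ref{lemma:perturb}: deform $D=\sum a_iD_i$ to its generic smoothing $D_s$, obtain an exact cobordism $X$ from $\partial V=\partial D^{\com}$ to $\partial D_s^{\com}$, and use Proposition \ref{prop:order} together with neck-stretching (as in Proposition \ref{prop:neck}) to match $\widehat\ell_{\bullet,\epsilon\circ\phi_X}$ with an augmentation-free count on the smoothing side. All ``spurious'' augmentation corrections must then be excluded by virtual-dimension inequalities analogous to Proposition \ref{prop:ind}, using the bounds $\ind(p)\le \dim_{\R}\check D_I$ on the stratified Morse indices.

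\medskip

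\noindent The main obstacle is precisely sub-claim (b) in full generality: the dimension estimates used in Theorem \ref{thm:CP} and Theorem \ref{thm:generalization} are sharp only for small $k$ relative to $n$ (conditions of the form $k<\frac{3n-1}{2}$), and for general $V$ no such bound is available. A systematic resolution probably requires the program indicated in Remark \ref{rmk:general}: express the full $BL_\infty$ structure and the pointed maps of $\RSFT(\partial V)$ in terms of log/relative Gromov--Witten invariants of $(\overline V, D)$, in the spirit of the symplectic cohomology formula of Diogo--Lisi, and then verify that the non-vanishing of the relevant descendant invariant (supplied by the algebraic uniruling) survives any algebraic change of augmentation. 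The parity trick used in Theorem \ref{thm:CP} (requiring $n$ odd so that all generators have even degree) should be replaced by the $S^1$-equivariant transversality techniques of Remark \ref{rmk:n=3}. I expect that a complete proof of the conjecture will require both of these ingredients, and will likely proceed first under an auxiliary monotonicity hypothesis on $\overline V$ before being extended via neck-stretching and cobordism functoriality to the general affine case.
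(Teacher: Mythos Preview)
The statement is a \emph{conjecture} in the paper, not a theorem: the authors explicitly label it as such and provide no proof. So there is no ``paper's own proof'' to compare your attempt against. Your write-up is, appropriately, a strategy sketch rather than a proof, and you yourself flag the main obstacle at the end. That said, two points are worth making.

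First, you are solving a harder problem than the one stated. The conjecture concerns $\Pl(V)$ for the exact \emph{domain} $V$ (Definition~\ref{def:PSD}), which uses only augmentations $\epsilon_W$ coming from the filling $V$ itself, not $\Pl(\partial V)$ from Definition~\ref{def:planarity}, which maximizes over \emph{all} $BL_\infty$ augmentations of $\RSFT(\partial V)$. Your sub-claim~(b), demanding independence over every augmentation $\epsilon$, is therefore not required by the conjecture as stated; the whole apparatus of smoothing cobordisms and Proposition~\ref{prop:ind}-style dimension estimates is aimed at a stronger assertion. (Of course, even the weaker statement is open, since Claim~\ref{claim} is unproven and one still needs closedness and non-vanishing for the geometric augmentation.)

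Second, your first step contains a genuine gap. You pass from ``through a generic point there is an algebraic rational curve with $r\le k$ punctures'' to ``$\eta_V(q^{\Gamma^+})\ne 0$'' via neck-stretching. But neck-stretching requires a holomorphic curve for \emph{each} almost complex structure in the stretching family, together with a uniform energy bound; the definition of algebraic uniruledness only furnishes a curve for the integrable $J$. To propagate the curve through the family you need a nonzero (relative or log) Gromov--Witten invariant, and even then the resulting algebraic count of the bottom-level moduli space could vanish. In the paper's special cases (Theorems~\ref{thm:CP}, \ref{thm:generalization}) this is supplied by explicit nonvanishing relative GW numbers from \cite{gathmann2002absolute}; for a general uniruled affine variety no such input is available, which is precisely why the statement remains a conjecture.
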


On the other hand, by Theorem \ref{thm:infinity}, it is not true that any uniruled affine variety has a contact boundary with finite planarity. It is subtle question to determine which affine variety with a $\CP^n$ compactification has a finite planarity boundary.
\begin{question}
Let $D$ be $k$ generic hyperplanes in $\CP^n$, is $\Pl(\partial D^{\com})$ always finite?
\end{question}
Theorem \ref{thm:CP} and Theorem \ref{thm:generalization} along with Lemma \ref{lemma:perturb} and Lemma \ref{lemma:multi} imply that there are many sequences of contact manifolds where exact cobordisms only exist in one direction. On the other hand, exact embedding problems in the flavor of Theorem \ref{thm:embedding} are studied in \cite{LC}. It is an interesting question to determine whether those embedding obstructions can lift to cobordism obstructions.

\subsection{Links of singularities}
Another natural source of contact manifolds is links of isolated singularities. In the following, we will consider the Brieskorn singularities and quotient singularities from diagonal cyclic actions on $\C^n$.
\subsubsection{Brieskorn singularities}
A Brieskorn singularity is of the following form
$$x_0^{a_0}+\ldots+x_n^{a_n}=0,$$
for $2\le a_0\le \ldots \le a_n$.  We use $\overline{a}$ to denote the sequence, the link $LB(\overline{a})$ is defined to be the intersection $LB(\overline{a}):=\{(x_0,\ldots,x_n)\in \C^{n+1}|x_0^{a_0}+\ldots+x_n^{a_n}=0\}\cap S^{2n+1}$, which is a $(2n-1)$-dimensional contact manifold. Moreover, $LB(\overline{a})$ is exactly fillable by the smooth affine variety $x_0^{a_0}+\ldots+x_n^{a_n}=1$, which is called the Brieskorn variety. We refer readers to \cite{MR3483060} for more details on the contact topology of Brieskorn manifolds. We have the following fact about embedding relations for Brieskorn varieties.

\begin{proposition}[{\cite[Lemma 9.9]{MR3217627}}]\label{prop:embed}
	We say $\overline{a}\le \overline{b}$ iff $a_i\le b_i$ for all $i$. Then if $\overline{a}\le \overline{b}$, the Brieskorn variety of $\overline{a}$ embeds exactly into the Brieskorn variety of $\overline{b}$. In particular $LB(\overline{a})\le LB(\overline{b})$ in $\overline{\cont}_{\le}$.
\end{proposition}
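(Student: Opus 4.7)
The plan is to proceed by induction on $\|\overline{b}-\overline{a}\|_1:=\sum_{i=0}^n(b_i-a_i)$, reducing the assertion to the base case in which $\overline{b}$ differs from $\overline{a}$ in a single coordinate by exactly one unit; by relabeling, we may assume $b_0=a_0+1$ and $b_i=a_i$ for $i\ge 1$. Once this base case is established, the general inductive step follows by composing exact embeddings; functoriality of exact embeddings guarantees that the composite is itself an exact embedding up to the usual Liouville homotopy ambiguity.

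For the base case, I would construct the exact embedding via an interpolating family of affine hypersurfaces. Consider the one-parameter family of polynomials $f_s(x_0,x_1,\ldots,x_n)=x_0^{a_0}(1+s x_0)+x_1^{a_1}+\cdots+x_n^{a_n}$ on $\C^{n+1}$ together with a smooth family of regular values $c_s$ with $c_0=1$, chosen so that $V_s:=f_s^{-1}(c_s)$ is smooth for every $s\in[0,1]$ and so that $V_1$ is the biholomorphic image of $V(\overline{b})$ after an elementary algebraic change of coordinates in $x_0$ (for $s=1$ one has $f_1=x_0^{a_0}+x_0^{a_0+1}+\cdots$, which, after the analytic substitution $y_0=x_0(1+x_0)^{1/(a_0+1)}$ on a neighborhood of a large compact subset, becomes $y_0^{a_0+1}+\cdots$). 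The restriction of the standard Liouville form $\lambda_0:=\tfrac{1}{2i}\sum_i(x_i\,d\bar{x}_i-\bar{x}_i\,dx_i)$ from $\C^{n+1}$ to each $V_s$ provides a family of Liouville structures, and the family of truncations $V_s\cap\overline{B}^{2n+2}_R$ for $R\gg 0$ furnishes a family of Liouville domains in the sense of Cieliebak--Eliashberg; by the Liouville homotopy invariance \cite[\S 11.2]{cieliebak2012stein} this implies that the two endpoints $V_0$ and $V_1$ admit mutual exact embeddings, giving in particular the desired exact embedding of $V(\overline{a})$ into $V(\overline{b})$.

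The hard part is controlling the family $\{V_s\}$ across parameter values where $c_s$ might accidentally hit a critical value of $f_s$, and ensuring that the topological jump (the birth of the $\prod_i(b_i-1)-\prod_i(a_i-1)>0$ additional vanishing $(n{-}1)$-cycles predicted by the Milnor number) is realized purely by attaching Weinstein handles to the interior, so that $V(\overline{a})$ sits inside $V(\overline{b})$ as an exact Liouville subdomain rather than being merely diffeomorphic to a deformation of one. This can be handled either by perturbing the path of polynomials $f_s$ generically so that critical levels are crossed one at a time (each crossing adds a single Weinstein handle of index $\le n$, which automatically commutes with exact embeddings), or, more explicitly, by invoking a Lefschetz fibration picture: both $V(\overline{a})$ and $V(\overline{b})$ admit Brieskorn Lefschetz fibrations over $\C$ with smooth fiber the lower-dimensional Brieskorn variety $V(a_1,\ldots,a_n)$, and the fibration for $V(\overline{b})$ is obtained from that for $V(\overline{a})$ by adding a single vanishing cycle; the corresponding Weinstein handle attachment realizes the embedding.

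Finally, the contact-level conclusion $LB(\overline{a})\le LB(\overline{b})$ in $\overline{\cont}_{\le}$ is immediate: the exact embedding of fillings $V(\overline{a})\hookrightarrow V(\overline{b})$ gives an exact cobordism $V(\overline{b})\setminus V(\overline{a})$ from $LB(\overline{a})$ to $LB(\overline{b})$.
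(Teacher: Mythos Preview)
The paper does not supply its own proof; the statement is quoted verbatim from \cite[Lemma~9.9]{MR3217627}, so there is no in-paper argument to compare against.

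Your proposal has two concrete errors. First, the coordinate change is wrong: with $y_0=x_0(1+x_0)^{1/(a_0+1)}$ one obtains $y_0^{a_0+1}=x_0^{a_0+1}(1+x_0)$, not $x_0^{a_0}(1+x_0)$; and in any case a fractional power of $(1+x_0)$ is single-valued only on a simply connected region avoiding $x_0=-1$, so no global biholomorphism between the two affine hypersurfaces is produced this way. Second, and more seriously, the Liouville-homotopy paragraph overshoots: if $\{V_s\}$ were genuinely a smooth family of Liouville domains you would get \emph{mutual} exact embeddings, hence $V(\overline a)\simeq V(\overline b)$ in $\overline{\cont}_{\le}$, contradicting (for instance) the distinct Milnor numbers. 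You acknowledge in the next paragraph that the topology must jump, but then the previous paragraph proves nothing; all the content lies in controlling the handle attachments, which is only sketched. Finally, the fibration $x_0:V(\overline a)\to\C$ is \emph{not} Lefschetz unless $a_1=\cdots=a_n=2$: the singular fibers are the cones $\{x_1^{a_1}+\cdots+x_n^{a_n}=0\}$, which are non-Morse in general.

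The fibration idea is, however, the right one once stripped of the Lefschetz hypothesis. The projection $x_0$ on $V(\overline a)$ has regular fiber $V(a_1,\ldots,a_n)$ and exactly $a_0$ critical values (the $a_0$-th roots of unity), while on $V(\overline b)$ it has $a_0+1$ critical values; the preimage in $V(\overline b)$ of a large disk containing only $a_0$ of them is, after the usual completion-and-trim, Liouville deformation equivalent to $V(\overline a)$. That is the argument you should carry out; the interpolating-family-plus-coordinate-change route does not close.
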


Many Brieskorn varieties were showed in \cite[Theorem A]{zhou2019symplectic} to support $k$-dilations. Some of the computation in \cite{zhou2019symplectic} can be improved to be the computation of $\Hcx$, i.e.\ independent of $BL_\infty$ augmentations. In particular, we will have either a computation or an estimate of $\Hcx(LB(\overline{a}))$ for any $\overline{a}$ from the theorem below and Proposition \ref{prop:embed}.

\begin{theorem}\label{thm:brieskorn}
	We use $LB(k,n)$ to denote the contact link of the Brieskorn singularity $x_0^{k}+\ldots+x_n^k=0$,  then $\Hcx(LB(k,n))$ is
	\begin{enumerate}
		\item $(k-1)^{\SD}$ if $k<n$ and is $\ge (k-1)^{\SD}$ if $k=n$;
		\item $>1^{\Pl}$ if $k=n+1$;
		\item $\infty^{\Pl}$, if $k>n+1$.
	\end{enumerate}
\end{theorem}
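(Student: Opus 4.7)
The plan is to treat the three regimes separately, exploiting the geometry of the compactification of the Milnor fiber. Write $V(k,n) = \{\sum_{i=0}^n x_i^k = 1\} \subset \C^{n+1}$ for the Milnor fiber, a natural exact filling of $LB(k,n)$. It compactifies to the smooth degree-$k$ hypersurface $X_k = \{\sum_i x_i^k = z^k\} \subset \CP^{n+1}$ with smooth divisor at infinity $P = X_k \cap \{z = 0\} \subset \CP^n$. By adjunction $c_1(X_k) = (n+2-k)H|_{X_k}$, so $X_k$ is Fano for $k \leq n+1$, Calabi-Yau for $k = n+2$, and of general type for $k \geq n+3$. Moreover $LB(k,n)$ is the Boothby-Wang unit circle bundle of $\cO(1)|_P$ over $P$, and the three parts of the theorem correspond naturally to this Fano/Calabi-Yau/general type trichotomy.

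For $k \geq n+2$ (part (3)), I would use that $X_k$ has non-positive canonical class and is consequently not uniruled. In particular no rational curve in $X_k$ passes through a generic point, and a fortiori none in $V(k,n) \subset X_k$. Thus $\AU(V(k,n)) = \infty$, and Proposition \ref{prop:Ulower} combined with Corollary \ref{cor:lower} yields $\Pl(LB(k,n)) \geq \U(V(k,n)) \geq \AU(V(k,n)) = \infty$.

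For $k = n+1$ (part (2)), I would show $\AU(V(n+1, n)) \geq 2$. A $1$-uniruling of $V(n+1,n)$ compactifies to a family of rational curves $C \subset X_{n+1}$ through a generic point meeting $P$ at a single point with multiplicity $\deg(C)$. The classical dimension count for lines on a smooth degree-$d$ hypersurface in $\CP^{n+1}$ through a point gives a family of expected dimension $n - d$, which equals $-1$ for $d = n+1$; so no lines through a generic point exist. For higher-degree curves the fully-tangent-to-$P$-at-a-single-point condition, combined with the point constraint and the fact that $V(n+1,n)$ is affine Calabi-Yau (one computes $K_{V(n+1,n)} = (K_{X_{n+1}} + P)|_{V} = 0$), forces the relevant family to be empty. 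Hence $\Pl(LB(n+1, n)) \geq \AU(V(n+1, n)) \geq 2 > 1^{\Pl}$.

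For $k \leq n$ (part (1)), the lower bound $\Hcx \geq (k-1)^{\SD}$ uses the Milnor fiber augmentation $\epsilon_V$. Following the strategy of Theorem \ref{thm:SD} (and in parallel with the Brieskorn calculations of \cite{zhou2019symplectic}), I would identify a class $\alpha_V \in SH^*_{S^1}(LB(k,n), \epsilon_V)$ supported on the $k$-fold iterate $\gamma^k_{p_{\min}}$ of the minimum Reeb orbit, verify $\ell^1_{\bullet,\epsilon_V}(\Psi_{\mathrm{BO}}(\alpha_V)) = 1$ via a Proposition \ref{prop:curve}-style relative Gromov-Witten count on $(X_k, P)$, and show $U^{k-1}(\alpha_V) \neq 0$ using that the action-filtered $U$-map acts on the relevant slice by cup product with $c_1(\cO(1)|_P) = H|_P$, whose cup-length on $H^*(P;\Q)$ is $n-1 \geq k-1$. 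For the matching upper bound $\Hcx \leq (k-1)^{\SD}$ when $k < n$, one needs the same computation to hold for every augmentation. Mirroring Propositions \ref{prop:max} and \ref{prop:ind'} from the proof of Theorem \ref{thm:CP}, I would show that the moduli spaces $\overline{\cM}_{LB(k,n)}(\Gamma^+, \Gamma^-)$ with $\Gamma^+ \subseteq \{\gamma^k_{p_{\min}}\}$ and $\Gamma^- \neq \emptyset$ are empty or of negative virtual dimension by index and contact-action constraints, with the hypothesis $k < n$ entering essentially (at $k = n$ the analogous moduli spaces acquire zero virtual dimension and may carry augmentation-dependent information, explaining why only the lower bound is claimed there). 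The main obstacle is precisely this augmentation-independence step: the delicate Conley-Zehnder and contact-action bookkeeping needed to propagate the computation to arbitrary augmentations is the technical heart of the upper bound, and the sharp hypothesis $k < n$ reflects exactly when the bookkeeping closes up.
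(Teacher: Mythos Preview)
Your treatment of parts (2) and (3) is essentially the paper's: for $k>n+1$ the compactification $X_k$ is not uniruled, so $\AU(V(k,n))=\infty$ and Corollary~\ref{cor:lower} gives $\Pl=\infty$; for $k=n+1$ your computation $(K_{X_{n+1}}+P)|_V=0$ is precisely the statement that the log-Kodaira dimension is $0$, which is what the paper invokes to conclude $\AU\ge 2$. Your line-count detour is unnecessary but harmless. (Minor slip: you wrote ``non-positive canonical'' where you meant non-negative.)

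Part (1) has a genuine gap. You take the carrier orbit to be the $k$-fold iterate $\gamma^k_{p_{\min}}$, apparently by analogy with the smooth-divisor-in-$\CP^n$ situation of Theorem~\ref{thm:SD}. That analogy is misplaced: there the normal bundle is $\cO(k)$, so the simple fibre orbit has order $k$ in $H_1$ and one must pass to the $k$-th cover to obtain a contractible class. In the Brieskorn setting you correctly identified the normal bundle as $\cO(1)|_P$, and the Milnor fibre $V(k,n)$ is simply connected for $n\ge 2$; the \emph{simple} orbit is already contractible. The paper uses the simple orbit $\gamma_p$ with $\ind(p)=2n-2k$, the one produced by the line-through-a-point count on $(X_k,P)$ (cf.\ \cite[Theorem~A]{zhou2019symplectic}).

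This choice is decisive for the augmentation-independence step, which is where the upper bound $\Hcx\le (k-1)^{\SD}$ lives. Because $\gamma_p$ sits at the \emph{minimal} period, any nonempty $\overline{\cM}_{LB(k,n),o}(\{\gamma_p\},\Gamma^-)$ is forced to have $\Gamma^-=\{\gamma_q\}$ a single simple orbit, and one line of index arithmetic gives virtual dimension $\ind(q)-\ind(p)+2-2n=\ind(q)+2k+2-4n<0$ precisely when $k<n$. Likewise the $U$-map on $q_{\gamma_p}$ has no action room to pick up augmentation terms at minimal period, so $U^k(q_{\gamma_p})=0$ for every augmentation. By contrast, your $\gamma^k_{p_{\min}}$ sits at period $\approx k$: both $\ell^1_{\bullet,\epsilon}$ and $U$ can now see negative ends of total multiplicity up to $k$, and the augmentation-dependent contributions are exactly what you would need to control by ``mirroring Propositions~\ref{prop:max} and~\ref{prop:ind}''. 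Those propositions are tailored to the $\CP^n$ setup and do not transplant here; the Brieskorn case is in fact much simpler once the correct orbit is chosen, and the sharp role of $k<n$ is then nothing more than the inequality $\ind(q)\le 2n-2<4n-2k-2$.
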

\begin{proof}
     If $k=n+1$, since the log-Kodaira dimension of the corresponding Brieskorn variety $V$ is $0$, we know that $V$ is not algebraically $1$-uniruled. Hence the planarity is greater than $1$ by Corollary \ref{cor:lower}. When $k>n+1$, the Brieskorn variety admits a compactification that is not uniruled, hence the planarity is infinity by Corollary \ref{cor:lower}.
     
     When $k\le n$, the associated Brieskorn variety $V(k,n)$ carries a $k-1$ (semi)-dilation by \cite[Theorem A]{zhou2019symplectic} using the definition with $S^1$ equivariant symplectic cohomology. Moreover, the $k-1$ semi-dilation is observed by the truncated $S^1$ equivariant symplectic cohomology generated by simple Reeb orbits. By Theorem \ref{thm:BO-iso}, this means the order of semi-dilation using a augmentation from the Brieskorn variety is $k-1$. Then we have at least $\SD\ge k-1$. 
     
     Note that the  Brieskorn variety is an affine variety $X\backslash D$, where $X$ is the smooth projective variety $x_0^k+\ldots+x_n^k=x_{n+1}^k$ in $\CP^{n+1}$ and $D$ is the smooth divisor $X\cap \{x_{n+1}=0\}$. We will adopt the same notation for Reeb orbits on smooth divisor complement as before. The semi-dilation is provided by a simple Reeb orbit ${\gamma_p}$ with $\ind(p)=2n-2k$ by \cite[Theorem A]{zhou2019symplectic}. 
     
    Next we will argue that the semi-dilation supplied by ${\gamma_p}$ with $\ind(p)=2n-2k$ is independent of the augmentation when $k<n$. To see that, we first claim that $q_{\gamma_p}$ contribute to $\Pl=1$ is independent of augmentation. If not, we have a non-empty moduli space $\overline{\cM}_{LB(k,n),o}(\{\gamma_p\},\{\gamma_q\})$, whose expected dimension is $\ind(q)-\ind(p)+2-2n=\ind(q)+2k+2-4n<0$ when $k<n$ since $\ind(q)\le 2n-2$. Therefore planarity of $LB(k,n)$ is always $1$ if $k<n$. Moreover, $U^i(q_{\gamma_p})$ is independent of augmentation as we are at the minimal period, there is no room for $U$ to depend on augmentations. Therefore we have $\Hcx(LB(k,n))\le (k-1)^{\SD}$ if $k<n$. Hence the claim follows.
\end{proof}

\begin{remark}
	If Conjecture \ref{conj} was proven, one can get better estimate for $\Hcx(LB(\overline{a}))$ by writing $LB(\overline{a})$ as an open book with a Brieskorn variety page. In the context of symplectic cohomology, computation in such spirit can be found in \cite[\S 5]{zhou2019symplectic}. 
\end{remark}

\begin{proof}[Proof of Theorem \ref{thm:RSFT}]
	This theorem is a combination of Theorem \ref{thm:planar_torsion_1}, Theorem \ref{thm:planar_torsion_2}, Theorem \ref{thm:planar_torsion_3}, Corollary \ref{cor:lower}, Theorem \ref{thm:upper}, Corollary \ref{cor:product} and Theorem \ref{thm:brieskorn}. 
\end{proof}	

\subsubsection{Quotient singularities by cyclic groups.}
Let $\Z_k$ acts on $\C^n$ by the diagonal action by multiplying $e^{\frac{2\pi i}{k}}$, then the link of the quotient singularity $\C^n/\Z_k$ is the quotient contact manifold $(S^{2n-1}/\Z_k,\xi_{std})$. Such contact manifolds provide many examples of strongly fillable but not exactly fillable contact manifolds \cite{zhou2020mathbb}. In fact, the symplectic part of \cite{zhou2020mathbb} is a computation of the hierarchy functor $\Hcx$ in the context of symplectic cohomology, which will be rephrased as follows.
\begin{theorem}\label{thm:quotient}
	Let $Y$ be the quotient $(S^{2n-1}/\Z_k,\xi_{std})$ by the diagonal action by $e^{\frac{2\pi i}{k}}$ for $n\ge 2$.
	\begin{enumerate}
		\item If $n>k$, we have $\Hcx(Y)=0^{\SD}$.
		\item If $n\le k$, we have $0^{\SD} \le \Hcx(Y) \le (n-1)^{\SD}$. When $n=k$, we have $\Hcx(Y)\ge 1^{\SD}$.
	\end{enumerate}
\end{theorem}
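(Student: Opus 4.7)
The plan is to derive Theorem \ref{thm:quotient} from the $S^1$-equivariant symplectic cohomology computations for $(S^{2n-1}/\Z_k, \xi_{\mathrm{std}})$ in \cite{zhou2020mathbb}, re-interpreted through Corollary \ref{cor:SD} and Theorem \ref{thm:BO-iso}, together with an augmentation-independence argument modelled on Propositions \ref{prop:max} and \ref{prop:ind}. First I would describe the Reeb dynamics: the standard contact form descends to a Morse--Bott contact form on $Y = S^{2n-1}/\Z_k$ whose Reeb flow is periodic of period $2\pi/k$. Perturbing by a perfect Morse function $f$ on the base $\CP^{n-1}$ of the pre-quantization fibration $Y \to \CP^{n-1}$ (with critical points $p_0,\ldots,p_{n-1}$, $\operatorname{ind}(p_j)=2j$) produces non-degenerate simple Reeb orbits $\gamma_{p_j}$ and their multiple covers $\gamma_{p_j}^m$. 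Using the natural disk caps in the associated pre-quantization line bundle, the Conley--Zehnder indices admit the analogue of the formula in Case 1 of \S\ref{s7}, and the $U$-map on the first page of the contact action spectral sequence is identified with cup product by $c_1(\mathcal{O}(k)|_{\CP^{n-1}})$.

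For the case $n > k$, the disk bundle associated to the pre-quantization fibration is a symplectic filling whose symplectic cohomology with $\Lambda$-coefficients vanishes by \cite{ritter2014floer}. The induced augmentation $\epsilon_0$ therefore satisfies $\SD(Y,\epsilon_0,o)=0$ via Theorem \ref{thm:BO-iso}, and after specialising $T=1$ this also gives a $\Q$-augmentation with the same property. To upgrade to $\SD(Y)=0$, I would show that for every augmentation $\epsilon$ the class $q_{\gamma_{p_{n-1}}}$ over the unique maximum of $f$ is automatically $\widehat{\ell}^1_\epsilon$-closed and that $\widehat{\ell}_{\bullet,\epsilon}(q_{\gamma_{p_{n-1}}})$ is independent of $\epsilon$ and equal to $1$. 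Both reduce to showing that all moduli spaces with negative asymptotics or with cappings by $\epsilon^k$ have strictly negative virtual dimension; this is where $n > k$ enters, through the resulting index gap between $\mu_{CZ}(\gamma_{p_{n-1}})$ and every other candidate orbit of comparable or smaller action.

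For the case $n \le k$, the upper bound $\Hcx(Y) \le (n-1)^{\SD}$ follows the template of Theorem \ref{thm:P1}: the class $q_{\gamma_{p_0}^n}$ realises $\SD \le n-1$ for the pre-quantization augmentation, and a parallel action-filtration argument shows $U^n[x] = 0$ for any candidate class $x$ of minimal action in any augmentation, the key input being the nilpotence of $c_1(\mathcal{O}(k)|_{\CP^{n-1}})$ of order exactly $n$ on $H^*(\CP^{n-1})$. The weaker lower bound $\Hcx(Y) \ge 0^{\SD}$ amounts to the existence of an augmentation, again supplied by the pre-quantization filling. For the sharp lower bound $\Hcx(Y) \ge 1^{\SD}$ when $n = k$, I would use an exact filling generalising $T^*S^2$ (compare \cite[Remark 2.16]{zhou2020mathbb}), e.g.\ a crepant resolution of $\C^n/\Z_n$; this filling has non-vanishing $SH^*$, so no class $x$ with $U(x)=0$ can satisfy $\widehat{\ell}_{\bullet,\epsilon_W}(\Psi_{\mathrm{BO}}(x)) = 1$.

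The main obstacle is the augmentation-independence step in the $n > k$ case: ruling out every tree-type contribution in which a cap by some $\epsilon^k$ interferes with the computation of $\widehat{\ell}^1_\epsilon$ or $\widehat{\ell}_{\bullet,\epsilon}$ on $q_{\gamma_{p_{n-1}}}$. This demands a combinatorial index bookkeeping for each tree with one $p_\bullet$-vertex and several $\epsilon$-leaves, analogous to the analysis in the proof of Proposition \ref{prop:ind}, showing that the index surplus granted by $n > k$ makes every such configuration dimensionally impossible. The same strategy, applied level-by-level to iterated $U$-images, handles the upper bound in the $n \le k$ case.
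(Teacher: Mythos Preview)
Your proposal has a genuine gap in the $n>k$ case, and the paper's approach differs substantially in the other cases as well.

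\textbf{The main error.} For $n>k$ you propose to use the simple orbit $\gamma_{p_{n-1}}$ over the maximum of $f$. But $H_1(Y)=\Z_k$ is generated by the class of any simple Reeb orbit, so $\gamma_{p_{n-1}}$ is not contractible when $k>1$. Consequently $\overline{\cM}_{Y,o}(\{\gamma_{p_{n-1}}\},\emptyset)=\emptyset$ for homological reasons, and every contribution to $\ell^{1}_{\bullet,\epsilon}(q_{\gamma_{p_{n-1}}})$ must come from curves with nonempty negative ends capped by $\epsilon$. In particular the zero augmentation $\epsilon=0$ (which is legitimate here, as all SFT gradings are even) gives $\ell^{1}_{\bullet,0}(q_{\gamma_{p_{n-1}}})=0$. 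So augmentation-independence fails, and your proposed index argument cannot possibly show all such moduli spaces are empty: the cylinder $\overline{\cM}_{Y,o}(\{\gamma_{p_{n-1}}\},\{\gamma_{p_0}\})$ has virtual dimension $0$ and is nonempty. The paper instead uses the contractible orbit $\gamma^k_0$, the $k$-fold cover of the shortest orbit, and shows $\#\overline{\cM}_{Y,o}(\{\gamma^k_0\},\emptyset)=k$ by descent from the sphere; the class $q_{\gamma^k_0}$ then has to be corrected (by $\epsilon$-dependent terms $c_{ij}q_{\gamma^i_j}$) to become $U$-killed, and one checks separately that these corrections satisfy $\ell^{1}_{\bullet,\epsilon}(q_{\gamma^i_j})=0$. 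The hypothesis $n>k$ enters precisely in this last dimension count, not for the orbit $\gamma^k_0$ itself.

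\textbf{The $n\le k$ upper bound.} The same issue affects your orbit $\gamma^n_{p_0}$ (not contractible for $n<k$). More importantly, the paper does not exhibit a single universal class; it performs a dichotomy on the augmentation. If $\epsilon^1(q_{\gamma^i_0})=0$ for all $1\le i\le k-n$ then $q_{\gamma^k_0}$ works and $U^n$ kills it; otherwise one takes the minimal $i$ with $\epsilon^1(q_{\gamma^i_0})\ne 0$ and uses $q_{\gamma^i_{n-1}}$, whose pairing with the point constraint now factors through the nonzero $\epsilon^1(q_{\gamma^i_0})$.

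\textbf{The $n=k$ lower bound.} Your appeal to a crepant resolution of $\C^n/\Z_n$ as an \emph{exact} filling is only justified for $n=2$ (where it is $T^*S^2$); for $n>2$ the blowup $\cO_{\CP^{n-1}}(-n)$ is only a strong filling, and producing an exact one is not obvious. The paper bypasses geometry entirely: it writes down an explicit algebraic augmentation $\epsilon^1(q_{\gamma_0})=-n$, $\epsilon^k=0$ otherwise, and verifies by a direct dimension-by-dimension computation of the $U$-map that no class realising $\Pl=1$ can be $U$-killed.
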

\begin{proof}
	We follow the same setup as in \cite[Proposition 3.1]{zhou2020mathbb}. We have a non-degenerate contact form on $\xi_{std}$  by perturbing with a $C^2$-small perfect Morse function  $f$ on $\CP^{n-1}$, such that Reeb orbits are the following.
	\begin{enumerate}
	\item Reeb orbits of period smaller than $k+1$ are $\gamma_i^j$ for $0\le i \le n-1, 1 \le j \le k$, where $\gamma_i^j$ is the $j$-multiple cover of $\gamma_i$ and $\gamma_i$ projects to the $i$th critical point $q_i$ of $f$ with $\ind(q_i)=2i$.\footnote{It is important to note that now we perturb contact form using $f$ is the prequantization filling in $\cO(-k)$ following the convention in \cite{zhou2020mathbb}, therefore higher Morse index means lager period, which is reverse to Theorem \ref{thm:CP}.}
    \item The period of $\gamma_j$ is $1+\epsilon_j$.
    \item $\epsilon_j<\frac{\epsilon_{j+1}}{k},\epsilon_j\ll 1$.
    \item The Conley-Zehnder index of  $\gamma^j_i$ with the natural disk in $\cO(-k)$ satisfies $\mu_{CZ}(\gamma_i^j)+n-3=2i+2j-2$.
	\end{enumerate}    
    \begin{claim*}
    	We have $\Pl(Y)=1$ for $n\ge 2,\forall k$.
    \end{claim*}
    \begin{proof}
    	By the same argument as \cite[Step 3 of Proposition 3.1]{zhou2020mathbb}, we have $\#\overline{\cM}_{Y,o}(\{\gamma^k_0\},\emptyset)=k$ for $n\ge 2$, which is induced from the holomorphic curve in the symplectization of the standard sphere. When $\Gamma^-\ne \emptyset$, we have $\#\overline{\cM}_{Y,o}(\{\gamma^k_0\},\Gamma^-)=\emptyset$ by action and homology reasons, unless $\Gamma^-=\{\gamma^{d_i}_0\}_{1\le i \le r}$ for $\sum d_i = k$. In this case, a curve in $\overline{\cM}_{Y,o}(\{\gamma^k_0\},\Gamma^-)$ is necessarily a branched cover over a trivial cylinder. In particular, $\overline{\cM}_{Y,o}(\{\gamma^k_0\},\Gamma^-)=\emptyset$ for generic $o$.  Since all Reeb orbits have even SFT degree, we have $q_{\gamma^k_0}$ is closed in any linearized contact homology, and the planarity is $1$ for any augmentation (which exists in abundance as all SFT gradings are even) by $q_{\gamma^k_0}$.
    \end{proof}
    \begin{claim*}
    	If $k<n$, then we have $\Hcx(Y)=0^{\SD}$.
    \end{claim*}
    \begin{proof}
    	 By action reasons, $U(q_{\gamma^k_0})$ can only have nontrivial coefficients in $q_{\gamma^d_0}$ for $d<k$. Note that the filtered linearized contact homology/$S^1$-equivariant symplectic cohomology with action supported around $d$ is generated by $q_{\gamma^d_{r}}$ for $0\le r \le n-1$. In particular, the homology is $H^*(\CP^{n-1})$ with the $U$ map is the multiplication by $c_1(\cO(k))$. As a consequence, we have $U(q_{\gamma^d_r})=kq_{\gamma^d_{r-1}}+\sum_{i=1}^{d-1} \sum_{j=0}^r  a_{ij} q_{\gamma^{i}_j}$ by action reasons. Therefore for any augmentation, there exist $c_{ij}$ such that  $U(q_{\gamma^k_0}+\sum_{i=1}^{k-1}\sum_{j=1}^{k-i}c_{ij}q_{\gamma^i_j})=0$ by the same argument as \cite[(3.2)]{zhou2020mathbb}. In order to finish the proof, it is sufficient to prove $\overline{\cM}_{Y,o}(\gamma^i_j,\Gamma^-)=\emptyset$ for $i+j\le k$ and $j>0$. This follows from the same dimension computation in \cite[Step 7 of Proposition 3.1]{zhou2020mathbb} and is the place where $n>k$ is essential. Then $q_{\gamma^k_0}+ \sum_{i=1}^{k-1}\sum_{j=1}^{k-i}c_{ij}q_{\gamma^i_j}$ also contributes $\Pl=1$ and is killed by $U$. In particular, $\Hcx(Y)=0^{\SD}$. 
    \end{proof}
    \begin{claim*}
     	If $k\ge n$, then we have $0^{\SD}\le \Hcx(Y)\le (n-1)^{\SD}$.
    \end{claim*}
    \begin{proof}
    	First note that all generators have even degree, hence any maps $\{\epsilon^k\}_{k\ge 1}$ form an augmentation. By the argument in Remark \ref{rmk:max}, we have that $\#\overline{\cM}_{Y,o}(\{\gamma^d_{n-1}\},\{\gamma^d_{0}\})=1$ for $1\le d \le k-1$. On the other hand, following the argument to obtain \cite[$\langle \partial(\check{p}_{k_+}),\hat{p}_{k_-} \rangle $ in Theorem 9.1, Lemma 9.4]{diogo2019symplectic}, we know that $\langle U(q_{\gamma^{k_+}_i}), q_{\gamma^{k_-}_i}\rangle=(k_+-k_-)\epsilon^1(q_{\gamma^{k_+-k_-}_0})$ for augmentation $\{\epsilon^k\}_{k\ge 1}$\footnote{Although such structure originally appears as part of differential in symplectic cochain complex, it contributes to the $U$-map in the $S^1$-equivariant symplectic cohomology, see \cite[\S 5]{zhou2020minimal} for discussion.}. If for every $1\le i \le k-n$, we have $\epsilon^1(q_{\gamma^i_0})=0$. Then for $d\le k$, we have  $U(q_{\gamma^d_0})=\sum_{j=1}^{d-1+n-k} (d-j) \epsilon^1(q_{\gamma^{d-j}_0}) q_{\gamma_0^j}$. Therefore we have $U^n(q_{\gamma^{k}_0})=0$. Otherwise, we assume $i$ is the minimum among $\{1,\ldots,k-n\}$ such that  $\epsilon^1(q_{\gamma^i_0})\ne 0$. As a consequence, we have planarity $1$ contributed by $q_{\gamma^i_{2n-2}}$ by $\#\overline{\cM}_{Y,o}(\{\gamma^i_{n-1}\},\{\gamma^i_{0}\})=1$. Since $i$ the minimal one with nontrivial augmentation, we know that $U^n(q_{\gamma^i_{n-1}})=0$. Hence we have $\Hcx(Y)\le (n-1)^{\SD}$. 
    \end{proof}
    
    \begin{claim*}
    	If $k=n$, then we have $\Hcx(Y)\ge 1^{\SD}$
    \end{claim*}
    \begin{proof}
    	It suffices to find one augmentation such that the order of semi-dilation is $1$. We choose our augmentation to be $\epsilon^1(q_{\gamma_0})=-n$ and $\epsilon^k=0$ in all other cases. We first list the following expected dimensions of various moduli spaces.
    	\begin{enumerate}
    		\item\label{dim1} $\vdim \cM_{Y,o}(\{\gamma^{nm}_{p_i}\},\emptyset)=2i+2n(m-1)\ge 0$, which is positive, unless $i=0,m=1$, where we know $\#\overline{\cM}_{Y,o}(\{\gamma^n_0\},\emptyset)=n$.
    		\item\label{dim2} For $l>0$, $$\vdim \cM_{Y,o}(\{\gamma^{mn+l}_{p_i}\},\{\underbrace{\gamma_{p_0},\ldots,\gamma_{p_0}}_{l}\})=2i+2l+2n(m-1).$$ Then it is zero iff 
    		\begin{enumerate}[(i)]
    		    \item $i=n-1,l=1,m=0$, then $\#\overline{\cM}_{Y,o}(\{\gamma_{2n-2}\},\{\gamma_{0}\})=1$,
    		    \item $i<n-1, l=n-i>1, m=0$, then $\overline{\cM}_{Y,o}(\{\gamma^{l}_{p_{n-l}}\},\{\gamma_{p_0},\ldots,\gamma_{p_0}\})=\emptyset$ by the argument of Proposition \ref{prop:max}.
    		\end{enumerate}
    		\item\label{dim3}  For $l>0$, $$\vdim \cM^1_Y(\gamma^{nm+l}_{p_i},\gamma^{l}_{p_j},\emptyset)=\vdim  \cM^2_Y(\gamma^{nm+l}_{p_i},\gamma^*_*,\gamma^{l}_{p_j},\emptyset,\emptyset)= 2mn+2i-2j-2.$$ 
    		Then it is zero iff
    		\begin{enumerate}[(i)]
    		    \item $m=0, i=j+1$, which corresponding to $\langle U(q_{\gamma^l_{i+1}}), q_{\gamma^l_{i}} \rangle=n$,
    		    \item $m=1,i=0,j=n-1$, we assume $\langle U(q_{\gamma^{n+l}_{0}}), q_{\gamma^l_{n-1}} \rangle=a_{l}$.
    		\end{enumerate}
    		\item\label{dim4}  For $l,s>0$, $$\vdim \cM^1_Y(\gamma^{nm+l+s}_{p_i},\gamma^{l}_{p_j},\{\underbrace{\gamma_{p_0},\ldots,\gamma_{p_0}}_{s}\})=2mn+2s+2i-2j-2.$$ 
    		Then it is zero iff $m=0,j=s+i-1$. In this case we have $\cM^1$ as well as the corresponding $\cM^2$ are possibly non-empty iff $s=1$, for otherwise, we have $s+i-1>i$. Then we can use the compactness argument in Proposition \ref{prop:max} and that the stable manifold of $q_i$ does not intersect with unstable manifold of $q_{s+i-1}$. When $m=0,s=1,j=i$, this corresponds to $\langle U(q_{\gamma^{l+1}_i}), q_{\gamma^{l}_i}\rangle=\epsilon^1(q_{\gamma_0})=-n$.
    	\end{enumerate}
    	By \eqref{dim1} and \eqref{dim2}, to supply for planarity $1$, we must have $a q_{\gamma^n_0}+bq_{\gamma_{n-1}}$ with $a-b\ne 0$. Note that $U(a q_{\gamma^n_0}+bq_{\gamma_{n-1}})=-anq_{\gamma^{n-1}_0}+bnq_{\gamma_{n-2}}\ne 0$. Therefore if the order of semi-dilation of this augmentation is smaller than $1$, then there exists $A$ generated by generators other than $q_{\gamma^n_0},q_{\gamma_{n-1}}$, such that $U(A)=anq_{\gamma^{n-1}_0}-bnq_{\gamma_{n-2}}$. The only way to eliminate $-bnq_{\gamma_{n-2}}$ is having $bq_{\gamma^2_{n-2}}$ in $A$, which adds $bnq_{\gamma^2_{n-3}}$  to $U(A)$. The only way to compensate such term is add a $b q_{\gamma^3_{n-3}}$ to $A$. We can keep the argument going, and claim that $A$ must be $b \sum_{i=2}^{n-1} q^i_{\gamma_{n-i}}$, then $U(A)=bnq_{\gamma^{n-1}_0}-bnq_{\gamma_{n-2}}\ne anq_{\gamma^{n-1}_0}-bnq_{\gamma_{n-2}} $, since $a-b\ne 0$. The claim follows.
    \end{proof}      
\end{proof}

When $n\le k$, there are augmentations with zero order of semi-dilation. For example, one can use the augmentation from natural prequantization bundle filling, then the order of semi-dilation is $0$ since the symplectic cohomology vanishes \cite{ritter2014floer}. However, we conjecture that $\Hcx(Y)\ge 1^{\SD}$ whenever $n\le k$. It is possible that there are $BL_\infty$ augmentations that are not from (even singular) fillings. Note that $n>k$ is the region where the quotient singularity is terminal. Hence we ask whether there is relation between this algebro-geometric property with contact property of the link via the hierarchy functor $\Hcx$.
\begin{conjecture}
	For discrete $G\subset U(n)$, if $\C^n/G$ is an isolated singularity, then $\Hcx(S^{2n-1}/G,\xi_{std})=0^{\SD}$ if the singularity is terminal.
\end{conjecture}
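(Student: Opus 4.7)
\medskip

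\noindent\textbf{Proof proposal.} The plan is to generalize the proof strategy of Theorem \ref{thm:quotient} for cyclic groups to an arbitrary finite $G \subset U(n)$ acting freely on $S^{2n-1}$, using the Reid--Tai criterion for terminality as the input that replaces the explicit inequality $n>k$. The starting point is a Morse--Bott description of the Reeb dynamics on $(S^{2n-1}/G,\xi_{std})$: since the Reeb flow on $S^{2n-1}$ is the Hopf $S^1$-action and commutes with $G$, it descends to an orbifold $S^1$-action on $S^{2n-1}/G$ whose quotient is the orbifold $\CP^{n-1}/G$. After perturbing by a $C^2$-small Morse function on $\CP^{n-1}/G$ adapted to the orbifold stratification (as in \cite{bourgeois2009symplectic} and the setup of Theorem \ref{thm:quotient}), the Reeb orbits of action $\le 2\pi$ come in two flavours: (i) orbits $\gamma_{p}$ over critical points $p$ on the smooth locus, which are $|G|$-fold covers of an orbit whose grading satisfies $\mu_{CZ}(\gamma_p)+n-3 = 2(|G|-1)+\ind(p)$; and (ii) for each nontrivial $g\in G$ with eigenvalues $e^{2\pi i a_j(g)/r(g)}$, $0\le a_j(g)<r(g)$, and each critical point $q$ of an auxiliary Morse function on the fixed locus $\Fix(g)\subset \CP^{n-1}$ modulo the centralizer, a short Reeb orbit $\gamma_{g,q}$ whose Conley-Zehnder index, computed via the natural disk cap in $B^{2n}/G$, is
\[
\mu_{CZ}(\gamma_{g,q})+n-3 \;=\; 2\,\mathrm{age}(g)-2 + \ind(q)+\varepsilon(g),
\]
where $\mathrm{age}(g)=\sum_j a_j(g)/r(g)$ and $\varepsilon(g)$ is a bounded Morse--Bott correction coming from the dimension of the orbifold stratum containing $[z_g]$.

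The second step is to exhibit the class responsible for $\Pl=1$. The standard holomorphic plane through the origin in $B^{2n}$ descends to a proper $J$-holomorphic plane in $\widehat{B^{2n}/G}$ through any chosen point $o$, and after a neck-stretching along $\partial(B^{2n}/G)=S^{2n-1}/G$ (mirroring the argument used for $\#\overline{\cM}_{Y,o}(\{\gamma_0^k\},\emptyset)=k$ in Theorem \ref{thm:quotient}), this produces a nonzero geometric count $\#\overline{\cM}_{Y,o}(\{\gamma_{\max}\},\emptyset)\neq 0$ where $\gamma_{\max}$ is the orbit over the unique maximum of the chosen Morse function on the smooth locus. Because all generators have the same parity as the corresponding Morse index, $q_{\gamma_{\max}}$ is automatically closed for every algebraic augmentation. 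The key step, and the main obstacle, is to prove that this count, together with the value of the $U$-map on $q_{\gamma_{\max}}$ (modulo correction terms), is independent of the augmentation.

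The third step is where terminality enters. The Reid--Tai condition $\mathrm{age}(g)>1$ for every nontrivial $g\in G$ translates, via the index formula above, into the statement that for every short orbit $\gamma_{g,q}$ one has
\[
\mu_{CZ}(\gamma_{g,q})+n-3 \;\ge\; 2\mathrm{age}(g)-2 \;>\; 0,
\]
strictly, after absorbing the bounded Morse correction by choosing the perturbation small enough. I would then carry out the analogue of Proposition \ref{prop:max}: for any $\Gamma^-\ne\emptyset$ consisting of orbits of total action $\le$ that of $\gamma_{\max}$, combine a Morse--Bott/cascades compactness argument (constant projected curves, so the negative asymptotics are forced to sit over $p_{\max}$ in the smooth-locus case or over orbifold minima in the short-orbit case) with the terminality-induced strict inequality to show $\vdim\overline{\cM}_{Y,o}(\{\gamma_{\max}\},\Gamma^-)<0$, and similarly for $\vdim\overline{\cM}_Y(\{\gamma_{\max}\},\Gamma^-)$ when $\Gamma^-$ contains any short orbit. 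In the cyclic case this reduces exactly to the inequality $n>k$ used in \cite{zhou2020mathbb}; in the general case one needs the full strength of Reid--Tai applied to every $g$ because short orbits of distinct conjugacy classes of $g$ can potentially appear as negative asymptotics. Once this vanishing is established for all augmentations, independence of $\epsilon$ of the $\Pl=1$ witness follows, and the same dimension bookkeeping together with the $U$-map computation at the minimal-period level (which, by the filtered spectral sequence on $\LCH$ and Proposition \ref{prop:u}, is the multiplication by the orbifold first Chern class of the prequantization line bundle on $\CP^{n-1}/G$) shows that $U(q_{\gamma_{\max}})$ lies in the span of classes of strictly smaller period. Iterating as in the proof of the third claim in Theorem \ref{thm:quotient} and adding the appropriate correction terms produces a class witnessing $\SD=0$ for every augmentation, which gives $\Hcx(Y)=0^{\SD}$.

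The hard part will be step three: controlling moduli spaces with negative asymptotics at orbifold-stratum orbits when $G$ is noncyclic. In the cyclic setting the Reeb orbits are totally ordered by period and by Morse index along a single $\CP^{n-1}$, which makes the inductive cancellation in Theorem \ref{thm:quotient} transparent; for general $G$ the orbifold strata of $\CP^{n-1}/G$ can have positive dimension and multiple conjugacy classes of isotropy, so one must upgrade the cascades compactness argument of \cite{bourgeois2009symplectic} to this orbifold setting, and one almost certainly needs the $S^1$-equivariant polyfold transversality of \cite{zhou2020quotient} (as in Remark \ref{rmk:n=3}) to rule out parity-odd contributions that the naive dimension count leaves open. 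Once this orbifold-cascades machinery is in place, the Reid--Tai inequality is exactly the numerical input required to close the argument.
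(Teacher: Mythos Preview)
The statement you are attempting to prove is a \emph{Conjecture} in the paper, not a theorem; the paper offers no proof and explicitly presents it as an open question, motivated by the observation that in the cyclic case the condition $n>k$ of Theorem~\ref{thm:quotient} coincides exactly with terminality of $\C^n/\Z_k$. There is therefore no proof in the paper to compare your proposal against.

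That said, your outline is precisely the natural strategy one would extract from the cyclic case, and it is a plausible program. The identification of the Reid--Tai age inequality $\mathrm{age}(g)>1$ as the replacement for the numerical bound $n>k$ is correct: in the diagonal cyclic case the minimal age is $n/k$, so Reid--Tai reduces to $n>k$, and the index estimates in Theorem~\ref{thm:quotient} are driven by exactly this number. Two remarks. First, a minor labeling slip: in the cyclic proof the plane-bounding class is $q_{\gamma_0^k}$, the $|G|$-fold cover of the orbit over the \emph{minimum} of the Morse function on $\CP^{n-1}$ (the shortest simple orbit), not the maximum; your $\gamma_{\max}$ should be the analogous minimum-index orbit. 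Second, and more substantively, the gap you yourself flagged at the end is the real content of the conjecture. For nonabelian $G$ the orbifold $\CP^{n-1}/G$ has strata of varying dimension indexed by conjugacy classes, the corresponding short Reeb orbits have Conley--Zehnder shifts governed by ages of different group elements, and carrying out the cascades compactness plus dimension count uniformly over all augmentations (in particular ruling out negative asymptotics on every orbifold stratum) requires an orbifold Morse--Bott package and $S^1$-equivariant transversality that the paper does not develop. Your proposal should be read as a reasonable research program toward the conjecture rather than a proof.
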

Combining with Theorem \ref{thm:brieskorn}, we can also ask the following question.
\begin{question}
	Is the planarity of an isolated terminal singularity always $1$? Is it true for terminal hypersurface singularities?
\end{question}

Similarly to Theorem \ref{thm:quotient}, \cite[Theorem 1.1 (2)]{zhou2020filings} can be rephrased as follows.
\begin{theorem}\label{thm:product}
	Let $V$ be an exact domain, then $\Hcx(\partial(V\times \D))=0^{\SD}$.
\end{theorem}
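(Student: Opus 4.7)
The plan is to specialize the argument of Theorem \ref{thm:SOBD} to the $k=1$ case and combine it with an action-filtered analysis of the $U$-map. The assumption $c_1(V)=0$ (or existence of a perfect Morse function) in Theorem \ref{thm:SOBD} was only needed to handle curves with multiple positive punctures on distinct boundary components of $S_k$; when $k=1$ these complications evaporate and no hypothesis on $V$ is required. First I would equip $Y = \partial(V\times \D)$ with its trivial SOBD (page $V$, single-component paper $V \times \partial \D$), deform as in \S\ref{s6} using a $C^2$-small Morse function $H$ on $V$ with a unique minimum $p_{\min}$, and identify the simple Reeb orbits of low action as $\gamma_p = S^1 \times \{p\}$ for $p \in \mathrm{Crit}(H)$, with the action of $\gamma_{p_{\min}}$ strictly minimal among simple orbits (multi-covers having action roughly a multiple of that of any $\gamma_p$).

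Next I would apply Proposition \ref{prop:SOBD} with $k=1$: every rational $J$-holomorphic curve in $\R\times Y$ with simply-covered positive asymptotics among $\{\gamma_p\}$ is either a trivial cylinder, a cylinder in $\R\times S^1\times V$ corresponding to a negative gradient flow line of $H$, or a plane with a single positive asymptotic and no negative ends. Following the scheme in the proof of Theorem \ref{thm:SOBD}, I would exactly embed $V$ into a Liouville domain $W$ whose boundary bounds a smooth projective compactification $\overline{W}$ with normal crossing divisor $D$ (via \cite{MR4171373}), then count rational curves in $\overline{W}\times \CP^1$ in the class $[\ast]\times[\CP^1]$ through a chosen point $o \in V \times \D$. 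Splitting $J$ gives a transverse count of $1$; neck-stretching along $\partial(V\times \D)$ and applying the classification yields, for the appropriate critical point $p_\ast$ (namely $p_\ast$ such that $o$ lies in its unstable manifold), a unique rigid plane, so $\eta_{V\times\D}(q_{\gamma_{p_\ast}})\neq 0$ and hence $\widehat{\ell}_{\bullet,\epsilon_W}(q_{\gamma_{p_\ast}})\neq 0$ via Proposition \ref{prop:equi}.

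To upgrade to arbitrary augmentations, I would argue that $q_{\gamma_{p_{\min}}}$ (say) is closed in $(V_\alpha,\widehat{\ell}_\epsilon)$ for every $\epsilon$ by an action argument: a nonvanishing contribution requires negative asymptotics whose total action does not exceed that of $\gamma_{p_{\min}}$, but $\gamma_{p_{\min}}$ is strictly minimal among simple orbits and no augmented completion can produce a simple generator as output. The same action inequality, combined with the classification of Proposition \ref{prop:SOBD}, shows $\widehat{\ell}_{\bullet,\epsilon}(q_{\gamma_{p_{\min}}})$ has no augmentation-dependent contributions, so its nonvanishing propagates from $\epsilon_W$ to arbitrary $\epsilon$. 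This gives $\Pl(Y)=1$.

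Finally, for $\SD=0$, by Definition \ref{def:SD} and Proposition \ref{prop:u} it suffices to produce, for each $\epsilon$, a class $x\in \LCH_*(Y,\epsilon)$ with $\widehat{\ell}^1_{\bullet,\epsilon}(x)=1$ and $U(x)=0$. I would take $x=[q_{\gamma_{p_{\min}}}]$ and inspect the formula \eqref{eqn:u}: each $\overline{\cM}^1_Y$ or $\overline{\cM}^2_Y$ starting at $\gamma_{p_{\min}}$ is governed by the same action inequality as above, so every such count vanishes regardless of how $\epsilon$ caps off trailing punctures. The main obstacle will be making this action-filtration argument airtight in the presence of multi-cover orbits and augmentation contributions; the key technical input is that any $\epsilon$-capped configuration contributing to $U(q_{\gamma_{p_{\min}}})$ must either produce a simple output orbit of action $<\cA(\gamma_{p_{\min}})$, which does not exist, or feed a multi-cover into the augmentation while matching actions, which the explicit action accounting of Remark \ref{rmk:action} rules out when $H$ is sufficiently small.
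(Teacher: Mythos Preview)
There is a concrete gap in your choice of orbit. In the SOBD setup of \S\ref{s6}, the perturbation $H$ on the vertebra $V$ is required to vanish near the paper region and to have a \emph{unique maximum and no minimum}; so there is no critical point $p_{\min}$. Moreover, the neck-stretched $[\ast\times\CP^1]$--curve through a generic $o=(v_0,z_0)$ lands, via the Morse--Bott/cascade picture, in the full-dimensional unstable manifold of $p_{\max}$, so the planarity-$1$ class is carried by $\gamma_{p_{\max}}$, not by a minimal-action orbit. Since $\gamma_{p_{\max}}$ has the \emph{largest} action among the simple orbits $\gamma_p$, your ``strictly minimal action, hence no room for negative ends'' argument for the vanishing of $\ell^1_\epsilon$ and of $U$ does not apply to it: there are plenty of potential target orbits $\gamma_q$ with smaller action.

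The paper's proof handles this differently. It first invokes Oancea's K\"unneth formula for symplectic cohomology (giving $SH^*(V\times\D)=0$) together with Corollary~\ref{cor:SD} to conclude $\SD(V\times\D)=0$ for the augmentation $\epsilon_{V\times\D}$; this is what actually supplies $U(q_{\gamma_{p_{\max}}})=0$. The action argument is then used only in the weaker form ``no room for \emph{more than one} negative end'' (the gap between simple orbits and double covers), which shows that $\ell^1_\epsilon$, $\ell^1_{\bullet,\epsilon}$, and $U$ applied to $q_{\gamma_{p_{\max}}}$ are independent of the augmentation, thereby upgrading $\SD=0$ from $\epsilon_{V\times\D}$ to every $\epsilon$. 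In short, K\"unneth is the missing ingredient; your attempt to bypass it with a minimal-action orbit fails because that orbit is not the one carrying the point-constrained plane.
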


\subsection{An obstruction to IP}
In dimension $3$, obstructions to planar open book decomposition were studied from many different perspectives in \cite{etnyre2004planar,ozsvath2005planar}. In higher dimensions, obstructions to supporting an iterated planar structure were found in \cite{acu2018planarity}. By Corollary \ref{cor:lower} and Theorem \ref{thm:upper}, we the following easy to check obstruction to iterated planar structure.
\begin{corollary}\label{cor:ob}
	If contact manifold $Y$ admits an exact filling that is not $k$-uniruled for any $k$, then $Y$ is not iterated planar.
\end{corollary}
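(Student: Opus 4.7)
The plan is to derive a contradiction by playing the upper bound on planarity from iterated planarity against the lower bound coming from uniruledness of any exact filling. Concretely, I would argue by contrapositive: assume $Y$ is iterated planar, and then show that every exact filling of $Y$ must be $k$-uniruled for some $k$.

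First, suppose $(Y,\xi)$ is iterated planar. Then there is some $k\geq 1$ for which $(Y,\xi)$ is $k$-IP, namely where $k$ is the number of boundary components of the lowest-dimensional page $W^2$ in the defining tower of Lefschetz fibrations. Applying Theorem \ref{thm:upper}, this yields the bound $\Pl(Y)\leq k<\infty$. Next, given any exact filling $W$ of $Y$, Corollary \ref{cor:lower} provides the inequality $\Pl(Y)\geq \U(W)$. Combining these two inequalities gives $\U(W)\leq k<\infty$, so $W$ is $k$-uniruled for this same $k$. Contrapositively, if $Y$ admits some exact filling $W$ that fails to be $k$-uniruled for every $k$ (i.e.\ $\U(W)=\infty$), then $Y$ cannot be iterated planar.

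There is essentially no obstacle here, since both key ingredients are already established earlier in the paper; the corollary is just the composition of the two bounds. The only point worth stating carefully in the write-up is that the bound from Theorem \ref{thm:upper} applies to the particular $k$ inherited from the IP structure and hence is uniform in the choice of exact filling $W$, so that any single ``non-uniruled'' $W$ suffices to rule out iterated planarity of $Y$.
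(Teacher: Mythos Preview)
Your proof is correct and matches the paper's approach exactly: the corollary is stated as an immediate consequence of Theorem~\ref{thm:upper} and Corollary~\ref{cor:lower}, combined just as you do.
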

As an application of this corollary, we have the following.
\begin{corollary}\label{cor:cotangent}
	Let $Q$ be a hyperbolic manifold of dimension $\ge 3$, then $S^*Q$ is not iterated planar.
\end{corollary}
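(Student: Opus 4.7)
The plan is to apply Corollary \ref{cor:ob}: it suffices to produce an exact filling $W$ of $S^*Q$ with $\U(W) = \infty$. We take $W = D^*Q$, the unit disk cotangent bundle (a Weinstein filling whose completion is $T^*Q$), and aim to show that $T^*Q$ is not $k$-uniruled for any $k$.

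Suppose for contradiction that $D^*Q$ were $(k,\Lambda)$-uniruled. By Proposition \ref{prop:unirule_equiv}, for every $p \in T^*Q$ and every admissible $J$ integrable near $p$, there exists a rational $J$-holomorphic curve $u$ through $p$ with $r \le k$ positive punctures asymptotic to iterates of closed geodesics $\gamma_1^{d_1},\dots,\gamma_r^{d_r}$ on $Q$, of total contact action $\sum_i d_i\,\mathrm{length}(\gamma_i) \le \Lambda$. Specializing $p$ to the zero section $Q \subset T^*Q$, the composition $\pi\circ u$ extends continuously to a genus-$0$ surface with $r$ boundary circles, the $i$-th wrapping $\gamma_i$ with degree $d_i$. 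Since $\pi_2(Q) = 0$, this extension is unobstructed and forces the relation $\gamma_1^{d_1}\cdots\gamma_r^{d_r} = 1$ in $\pi_1(Q)$ (up to conjugation). For $r = 1$ this contradicts the torsion-freeness of $\pi_1(Q)$, and $r = 0$ gives a closed holomorphic sphere in $T^*Q$, which must be constant by asphericity.

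For $n := \dim Q \ge 4$ and $r \ge 2$, an index count settles the matter. After passing to an orientable cover if needed, $c_1(T^*Q) = 0$; the virtual dimension of the moduli space with the marked point constrained to $Q$ is
\[
\mathrm{vdim} = (n-3)(2-r) + \sum_i \mu_{CZ}(\gamma_i^{d_i}) + 2 - 2n.
\]
In a hyperbolic manifold, the linearized Poincar\'e return map of any closed geodesic is positive hyperbolic (each $2\times 2$ block has eigenvalues $e^L, e^{-L}$ where $L$ is the covered length), so its Conley--Zehnder index vanishes; after a standard Morse--Bott perturbation of the $S^1$-family of parametrizations, the perturbed orbits satisfy $\mu_{CZ}(\gamma^d) \in \{0,1\}$ independently of $d$. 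Plugging in yields $\mathrm{vdim} \le -4 + r(4-n)$, which is strictly negative for $n \ge 4$. Hence for generic $J$ the constrained moduli space is empty, contradicting uniruledness.

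The main obstacle is the remaining case $n = 3$: here the naive index count allows non-empty constrained moduli spaces once $r \ge 4$, so the dimension argument alone is insufficient. To handle this case I would instead combine the Viterbo isomorphism $SH^*(D^*Q) \cong H_*(\Lambda Q)$ with Gromov's exponential growth of $H_*(\Lambda Q)$ for negatively curved $Q$, and contradict this with a polynomial growth bound on $SH^*(W)$ forced by $k$-uniruledness of $W$ in the spirit of McLean \cite{mclean2014symplectic}. Making this growth-rate bound explicit from the definition of $k$-uniruledness is the key remaining step; once this is in place, polynomial versus exponential growth closes the $n = 3$ case as well.
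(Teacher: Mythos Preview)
The paper's proof is a one-line citation to a result of Viterbo (recorded in \cite[Theorem 1.7.5]{eliashberg2000introduction}) that $T^*Q$ is not $k$-uniruled for any $k$ when $Q$ is hyperbolic, followed by Corollary~\ref{cor:ob}. Your approach is to reprove this directly via an index count, which is in fact how the Viterbo argument goes; you are on the right track but introduce a self-inflicted complication that makes the $n=3$ case look problematic when it is not.

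The Reeb flow on $S^*Q$ is the geodesic flow, which for hyperbolic $Q$ is Anosov. In particular every closed Reeb orbit is already non-degenerate and there is \emph{no} Morse--Bott $S^1$-family to perturb (you may be conflating this with the Hamiltonian Floer picture on $T^*Q$ for an autonomous Hamiltonian, where one does see $S^1$-families). The Conley--Zehnder index of the Reeb orbit equals the Morse index of the underlying closed geodesic, and in negative curvature there are no conjugate points, so this Morse index is zero. Plugging $\mu_{CZ}=0$ (not $\in\{0,1\}$) into your own formula gives
\[
\mathrm{vdim} \;=\; (n-3)(2-r) + 2 - 2n \;=\; -4 + r(3-n),
\]
which is $\le -4$ for every $r\ge 1$ and every $n\ge 3$. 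Passing to underlying simple curves (which have no more punctures and still pass through the generic point), this rules out the relevant moduli spaces for generic $J$ in all dimensions $n\ge 3$, including $n=3$. Your speculative growth-rate argument is therefore unnecessary, and since, as you acknowledge, the step from $k$-uniruledness to a polynomial bound on $SH^*$ is not established, that route would in any case leave a genuine gap.
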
 
\begin{proof}
	The claim follows from a result of Viterbo \cite[Theorem 1.7.5]{eliashberg2000introduction} that $T^*Q$ is not $k$-uniruled for any $k$. 
\end{proof}

For other classes of cosphere bundles, by Theorem \ref{thm:brieskorn}, $\Hcx(S^*S^n)=1^{\SD}$ for $n\ge 2$. By Corollary \ref{cor:product}, $\Hcx(S^*T^n)=2^{\Pl}$ for $n\ge 2$. By Theorem \ref{thm:BO-iso}, since $SH^*(T^*Q)\ne 0$ for any $Q$, we know $\Hcx(S^*Q)>0^{\SD}$ (assuming Claim \eqref{claim:u}). As a consequence, there is no exact cobordism from $S^*Q$ to $\partial(V\times \D)$ for any Liouville domain $V$, which is a generalization of a result of Gromov \cite{gromov1985pseudo}. By \cite[Proposition 5.1]{zhou2019symplectic}, $T^*Q$ admits a $k$-dilation for some $k\ge 1$ for rationally-inessential $n$-manifold $Q$, i.e. if $H_n(Q;\Q)\to H_n(B\pi_1(Q);\Q)$ vanishes, then we can update the estimate $\Hcx(S^*Q)$ by figuring out $k$. For Lagrangian $Q$ that is a $K(\pi,1)$ space, we have $\Hcx(S^*Q)\ge 2^{\Pl}$, since $T^*Q$ carries no $k$-semi-dilation for any $k$. 

\begin{corollary}\label{cor:tight}
	For every $n\ge 3$, there exists a tight $S^{2n-1}$ with the standard almost contact structure that is not iterated planar.	
\end{corollary}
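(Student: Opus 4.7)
The plan is to deduce Corollary~\ref{cor:tight} as an immediate consequence of the preceding corollary, applied to the almost contact sphere $(S^{2n-1}, J_{\mathrm{std}})$. For each $n\ge 3$, the standard contact structure $\xi_{\mathrm{std}}$ on $S^{2n-1}$ represents the homotopy class $[J_{\mathrm{std}}]$ and is Weinstein-fillable by the standard ball $(B^{2n}, \lambda_{\mathrm{std}})$, hence in particular exactly fillable. The preceding corollary then furnishes a contact structure $\xi'$ on $S^{2n-1}$, homotopic to $J_{\mathrm{std}}$ as an almost contact structure, such that $(S^{2n-1}, \xi')$ is not iterated planar.

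It remains to verify that $\xi'$ can be taken to be tight. I would read off tightness from the construction underlying the preceding corollary, which produces $\xi'$ as the contact boundary of an exact (in fact Weinstein) filling $W'$ of $S^{2n-1}$ in the almost contact class $[J_{\mathrm{std}}]$, engineered to have $\Pl(\partial W')=\infty$; the non-IP conclusion then follows from the contrapositive of Theorem~\ref{thm:upper}. Since any exactly fillable contact manifold is tight---an overtwisted disk would violate Gromov-type compactness and positivity of $\omega$-energy on the filling---we conclude that $\xi'$ is tight, as required.

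The genuine technical content lies in the preceding corollary, and the principal obstacle is constructing a Weinstein filling $W'$ of $S^{2n-1}$ with infinite planarity in the prescribed almost contact class. By Corollary~\ref{cor:lower}, it suffices to build $W'$ whose completion fails to be $k$-uniruled for every $k$, since $\Pl(\partial W')\ge \U(W')$ via the augmentation $\epsilon_{W'}$ induced by the filling. Natural candidate fillings include smoothings realizing $S^{2n-1}$ as a surgery modification of the link of an isolated singularity whose Milnor fibre is not uniruled, or plumbings designed to kill uniruledness while preserving both the smooth type and the almost contact class of the boundary. The delicate point is not the planarity computation itself, which is dispatched by the uniruledness lower bound, but rather performing these modifications without altering the diffeomorphism type $S^{2n-1}$ or the almost contact homotopy class $[J_{\mathrm{std}}]$; this is controlled by standard smooth classification results for links of singularities and for almost contact structures on spheres.
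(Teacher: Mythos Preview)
The result you call ``the preceding corollary'' actually comes \emph{after} Corollary~\ref{cor:tight} in the paper and is proved by invoking Corollary~\ref{cor:tight}: one takes the tight sphere $Y'$ with $\Pl(Y')=\infty$ produced there and forms $Y\# Y'$. So your reduction is circular. The substantive work you defer to that corollary --- building an exact filling $W'$ of $S^{2n-1}$ with $\Pl(\partial W')=\infty$ in the standard almost contact class --- is exactly the content of Corollary~\ref{cor:tight} itself, and your sketch of it (``smoothings realizing $S^{2n-1}$ as a surgery modification of the link of an isolated singularity'', ``plumbings designed to kill uniruledness'') is a list of candidate approaches rather than a construction.

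The paper's proof is direct and concrete. By Theorem~\ref{thm:brieskorn}, the link $LB(n+2,n)$ of $x_0^{n+2}+\dots+x_n^{n+2}=0$ has $\Pl=\infty$. One then increases the exponents to obtain a Brieskorn manifold $Y$ which is a homotopy sphere; by Proposition~\ref{prop:embed} and functoriality this only increases $\Hcx$, so still $\Pl(Y)=\infty$. Since the group of homotopy spheres is finite, some iterated connected sum $\#^k Y$ is the standard smooth $S^{2n-1}$ with the standard almost contact structure. The Weinstein $1$-handle cobordism from $\sqcup^k Y$ to $\#^k Y$ together with the monoidal structure of $\Pl$ gives $\Pl(\#^k Y)=\infty$, hence $\#^k Y$ is not iterated planar by Theorem~\ref{thm:upper}. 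Tightness follows since $\#^k Y$ is Weinstein fillable by the boundary connected sum of the Brieskorn varieties. Your instinct to use Milnor fibres of non-uniruled singularities is correct, but the missing ingredients are naming the specific singularities (Brieskorn) and controlling the smooth and almost contact type of the boundary via connected sums and Kervaire--Milnor finiteness.
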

\begin{proof}
	Note that the contact boundary of the Brieskorn variety $x_0^{n+2}+\ldots+x_{n}^{n+2}=1$ has planarity order $\infty$ by Theorem \ref{thm:brieskorn}. By \cite[Proposition 3.6]{MR3483060}, there are $a_i\ge n+2$, such that $Y:=LB(a_0,\ldots,a_n)$ is an exotic sphere.  Proposition \ref{prop:embed} implies that $\Pl(Y)=\infty$. Then there exists $k$ such that $\#^kY$ is the standard smooth sphere,  where $\#$ is the contact connected sum. However, the almost contact structure, which can be computed from \cite[(19), (20)]{MR3483060}, may not be standard. By \cite[Theorem 1.2]{MR2104476}, there exists a Weinstein fillable contact sphere $Y'$, such that $\#^k Y\# Y'$ is the standard almost contact sphere. Then $\Pl(\#^k Y\# Y')\ge \Pl(Y)\otimes \Pl (Y)\otimes \ldots \otimes \Pl(Y')=\infty$ as $\Pl(Y')\ge 1$ and the claim follows.
\end{proof}

\begin{corollary}\label{cor:noIP}
	In all dimension $\ge 5$, if $(Y,J)$ is an almost contact manifold which has an exactly fillable contact representation $(Y,\xi)$. Then there is a contact structure $\xi'$ in the homotopy class of $J$, such that $(Y,\xi')$ is not iterated planar. In particular, any almost contact simply connected $5$-manifolds admits a contact representation which is not iterated planar.
\end{corollary}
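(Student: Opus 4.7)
The plan is to prove both statements by a contact connected sum construction, using the overtwisted-sphere-like contact structures from Corollary~\ref{cor:tight} as a ``planarity bomb'' that we can graft onto any almost contact class without disturbing the homotopy type. Concretely, I would fix a dimension $2n-1\geq 5$ (so $n\geq 3$) and let $(S^{2n-1},\xi_0)$ denote a contact representative of the standard almost contact class with $\Pl(\xi_0)=\infty$, which exists by Corollary~\ref{cor:tight}. Given any $(Y,J)$ admitting an exactly fillable contact representative $(Y,\xi)$, I would then set
\[
(Y,\xi') := (Y,\xi)\,\#\,(S^{2n-1},\xi_0),
\]
using the standard contact connected sum (attachment of a contact/Weinstein $1$-handle). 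Smoothly $Y\# S^{2n-1}\cong Y$, so $\xi'$ really is a contact structure on $Y$; and because $(S^{2n-1},J_{std})$ is the identity for connected sum at the level of almost contact structures, $\xi'$ lies in the homotopy class $J$.

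Next I would apply functoriality of $\Pl$ to propagate infinite planarity from $\xi_0$ to $\xi'$. The connected sum comes with a Weinstein (hence exact) cobordism $(Y,\xi)\sqcup(S^{2n-1},\xi_0)\to (Y,\xi')$, coming from the $1$-handle attachment. Because $(Y,\xi)$ is exactly fillable, Proposition~\ref{prop:aug} produces a $BL_\infty$-augmentation of $\RSFT(Y,\xi)$, so $\Pl(\xi)\geq 1$; likewise $\Pl(\xi_0)=\infty\geq 1$. The monoidal structure of Theorem~\ref{thm:main} then gives $\Pl(\xi\sqcup\xi_0)=\max\{\Pl(\xi),\Pl(\xi_0)\}=\infty$, and the functorial inequality applied to the cobordism above yields $\Pl(\xi')\geq\Pl(\xi\sqcup\xi_0)=\infty$. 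If $(Y,\xi')$ were $k$-iterated planar for some finite $k$, Theorem~\ref{thm:upper} would force $\Pl(\xi')\leq k<\infty$, a contradiction. Hence $(Y,\xi')$ is not iterated planar, proving the first statement.

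For the ``in particular'' clause I would invoke Geiges's theorem that every almost contact simply connected $5$-manifold is almost Weinstein fillable \cite{geiges1991contact}; after performing the usual Weinstein handle decomposition this produces an honest Weinstein (hence exactly fillable) contact representative in each almost contact homotopy class on such a $Y^5$. Applying the construction of the previous paragraph to this representative gives the desired non-iterated-planar contact structure in the prescribed almost contact class.

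The main technical point to nail down is the standard but essential fact that the contact connected sum with $(S^{2n-1},\xi_0)$ does not alter the almost contact homotopy type. This reduces to observing that $\xi_0$ is in the standard almost contact class and that the standard almost contact sphere is the identity for connected sum on almost contact structures, which is a direct consequence of the fact that $(S^{2n-1},J_{std})$ bounds the standard almost complex disk $(D^{2n},J_{std})$; everything else in the argument is formal bookkeeping with the already-proven functoriality of $\Hcx$ and the upper bound from Theorem~\ref{thm:upper}.
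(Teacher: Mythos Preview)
Your proof is correct and follows essentially the same approach as the paper: take the connected sum of the exactly fillable $(Y,\xi)$ with the tight sphere from Corollary~\ref{cor:tight}, use that $\Pl(\xi)\ge 1$ together with the monoidal structure and functoriality under the Weinstein $1$-handle cobordism to conclude $\Pl(\xi')=\infty$, and then invoke Theorem~\ref{thm:upper} (the paper cites Corollary~\ref{cor:ob}, which amounts to the same thing) to rule out an iterated planar structure; the $5$-dimensional case is handled identically via Geiges together with \cite{cieliebak2012stein}. Your write-up is in fact more explicit than the paper's about why the almost contact class is preserved under connected sum with $(S^{2n-1},\xi_0)$, which the paper leaves implicit.
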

\begin{proof}
	Let $Y'$ be the tight sphere from corollary \ref{cor:tight}.  Since $\Pl(Y,\xi)>0$ as $(Y,\xi)$ has an exact filling, then $\Hcx(Y\#Y')=\infty^{\Pl}$. By Corollary \ref{cor:ob}, $Y\#Y'$ is not iterated planar. The last claim follows from any almost contact simply connected $5$-manifold is almost Weinstein fillable \cite{geiges1991contact}, in particular, there is a contact representation that is Weinstein fillable by \cite{cieliebak2012stein}.
\end{proof}

\bibliographystyle{plain} 
\bibliography{ref}
\Addresses
\end{document}